\documentclass[11pt]{amsart}   
\usepackage[margin=1in]{geometry}
\usepackage{amsmath, amssymb, amsthm,hyperref} 
\usepackage{mathtools,xcolor}
\usepackage{enumitem}
\usepackage{multicol}
\usepackage{graphicx}

\newcommand{\Rstar}{R_*}

\newtheorem{theorem}{Theorem}[section]
\newtheorem{lemma}[theorem]{Lemma}
\newtheorem{proposition}[theorem]{Proposition}
\newtheorem{corollary}[theorem]{Corollary}
\theoremstyle{definition}
\newtheorem{definition}[theorem]{Definition}
\newtheorem{example}[theorem]{Example}

\newtheorem{remark}[theorem]{Remark}

\newcommand{\qphi}[5]{{}_{#1}\phi_{#2}\!\left(#3;#4;#5\right)}

\newcommand{\R}{\mathbb{R}}
\newcommand{\C}{\mathbb{C}}

\newcommand{\Q}{\mathbb{Q}}

\numberwithin{equation}{section} 
\usepackage{color}

\newcommand{\cbl}{\color{black}}

\begin{document}

\title{On $q$-real and $q$-complex numbers}

\author{Pavel Etingof}

\address{Department of Mathematics, MIT, Cambridge, MA 02139, USA}

\begin{abstract} In \cite{MO1}, S. Morier-Genoud and V. Ovsienko introduced 
the notion of the $q$-rational number $[x]_q$, $x\in \Bbb Q$. This is a rational function of $q$ specializing to $x$ at $q=1$, obtained by $q$-deforming the continued fraction expansion of $x$. In \cite{MO2} the same authors introduced a more general notion of the $q$-real number $[x]_q$, $x\in \Bbb R$ -- a Laurent series in $q$ (with only nonnegative powers if $x\ge 0$) converging to the rational function $[x]_q$ when $x\in \Bbb Q$. There is by now quite a nice theory of such $q$-deformations, summarized in \cite{MO3}. In particular, in \cite{LMOV} it is proved that if $x\in \Bbb Q_{>1}$ then the series $[x]_q$ converges for $|q|<3-2\sqrt{2}\approx 0.17$ and conjectured that  for all $x\in \Bbb R_{>1}$ this series converges in some disk centered in the origin, with the expected common radius of convergence $R_*=\frac{3-\sqrt{5}}{2}\approx 0.38$, achieved when $x=\frac{1+\sqrt{5}}{2}$ is the golden ratio.  This was proved for rational $x$ in \cite{EGMS} using the theory of Kleinian groups. \cbl

In this paper we (partially) prove this conjecture by showing that for all $x\in \Bbb R_{>1}$, the series $[x]_q$ converges in the disk $|q|<3-2\sqrt{2}$ to a nonvanishing holomorphic function. This is achieved by giving an expansion of $1/[x]_q$ into a $q$-adically convergent series of rational functions 
which converges absolutely and uniformly on compact sets in an explicit region $D$ containing this disk. 
We also show that this expansion converges to a positive analytic function on the interval $(-\frac{3-\sqrt{5}}{2},1)$, giving a definition of $[x]_q$ for $q$ from this interval.
 Moreover, we show that the result of \cite{EGMS} implies convergence of $[x]_q$ in a larger disk, $|q|<2-\sqrt{3}\approx 0.27$.  \cbl

At the same time, we show that there exist uncountably many 
$x$ for which $[x]_q$ has radius of convergence $\le R_*$ (conjecturedly, exactly $R_*$), contrary to what is conjectured in \cite{LMOV}. 
We also give examples of explicit computation
of $[x]_q$ for transcendental numbers $x$, for instance $x={\rm cotan}(1)$ and $x=e$.

We also prove sharp inequalities for the numerators and denominators of
$q$-rational numbers on the unit circle and determine the closure of the
set of their values when $q$ is not a root of unity.

We also show that coefficientwise reduction modulo every integer $m\ge2$
is injective on the Cantor line (the extended real line with doubled-up rationals and Cantor set topology). 
For $m=2$ it identifies the Cantor line with $\mathbb P^1(\mathbb F_2((q)))$.  We give an explicit inverse
algorithm, extend the rationality results to reduction modulo every prime,
characterize the quadratic series over $\mathbb F_2(q)$ with eventually
periodic decoded continued fraction, give criteria for eventual parity of the
integral coefficients, and compute the number corresponding to $1+q^n$.

Finally, we propose a definition of the $q$-complex number $[\tau]_q$, a meromorphic function of $\tau\in \Bbb C_+$. This definition coincides with that of \cite{O} when $\tau$ is in the $PSL_2(\Bbb Z)$-orbit of the 3-rd or 4-th root of $1$ in the upper half-plane, but is different otherwise and seems to have somewhat better properties. In particular, 
$[\tau]_q$ expresses via hypergeometric functions of modular functions of $\tau$. 
\end{abstract}

\maketitle

\centerline{\bf To Yuri Tschinkel on his 60th birthday with admiration}

\tableofcontents

\section{Introduction}

Let $c_1,...,c_N$ be a sequence of positive integers
such that $c_i\ge 2$. To this sequence one attaches a negative continued 
fraction 
$$
[[c_1,...,c_N]]:=c_1-\frac{1}{c_2-...\frac{1}{c_N}}\in \Bbb Q\cap (1,\infty),
$$
and any rational number $>1$ can be uniquely represented in this way. 

One may also consider infinite continued fractions. Namely, 
let $\bold c:=\lbrace c_j,j\ge 1\rbrace$ be a sequence of integers $\ge 2$. The 
negative continued fraction attached to $\bold c$ is 
\(
[[c_1,c_2,...]]:=c_1-\frac{1}{c_2-\frac{1}{c_3-...}}.
\)
It is classical that this continued fraction converges to a real number $x(\bold c)\ge 1$ in the sense that $x(\bold c)$ is the limit of the truncations  
$[[c_1,...,c_N]]$ 
as $N\to \infty$, and the assignment $\bold c\mapsto x(\bold c)$ is a bijection 
between the set of sequences $\bold c$ and the interval $[1,\infty)$. 
Under this bijection, rational numbers correspond to sequences 
that eventually equal $2$, and quadratic irrationals correspond to eventually periodic 
sequences. 

Recently S. Morier-Genoud and V. Ovsienko proposed a $q$-deformation of this story (\cite{MO1,MO2}). Namely, they defined the $q$-deformed negative continued fraction 
$$
[[c_1,...,c_N]]_q:=[c_1]_q-\frac{q^{c_1-1}}{[c_2]_q-...\frac{q^{c_{N-1}-1}}{[c_N]_q}}
$$
and the infinite version 
$$
[[c_1,c_2,...]]_q:=[c_1]_q-\frac{q^{c_1-1}}{[c_2]_q-\frac{q^{c_2-1}}{[c_3]_q-...}},
$$
where $[n]_q=\frac{1-q^n}{1-q}$. This infinite fraction converges in the $q$-adic sense to an element $[x(\bold c)]_q\in \Bbb Z[[q]]$ with constant term $1$ in the sense that $[x(\bold c)]_q$ is the limit of the truncations  
$[[c_1,...,c_N]]_q$ as $N\to \infty$. This way, we can define $[x]_q\in \Bbb Z[[q]]$ for all 
$x\in [1,\infty)$, namely $[x]_q=[x(\bold c)]_q$ when $x=x(\bold c)$. 
Moreover, the assignment $x\mapsto [x]_q$ can be extended from $[1,\infty)$ to $\Bbb R$  using the formula  
\(
[x-1]_q=q^{-1}([x]_q-1),
\)
which produces an element of $\Bbb Z((q))$ if $x<0$. In particular, 
\(
[x]_q=\frac{1-q^x}{1-q}
\)
for any $x\in \Bbb Z$. This definition is essentially determined by the requirement that $q$-deformation should be compatible with modular equivariance. 

The series $[x]_q$ have many interesting connections to combinatorics 
described in \cite{MO1,MO2,MO3}. They also turned out to arise 
in the theory of stability conditions for 2-Calabi-Yau categories, see \cite{BBL}, and in computing Jones polynomials of rational knots (\cite{MO1}).   

The series $[x]_q$ converges to a rational function of $q$ for $x\in \Bbb Q$ and to an explicit quadratic irrational function of $q$ when $x$ is a quadratic irrational (\cite{MO1,LM}). We also explicitly compute $[x]_q$ for some transcendental numbers $x$, such as ${\rm cotan}\frac{1}{s}$, $s\in \Bbb Z_{\ge 1}$, and more generally 
certain ratios of special values of Bessel functions. The answer is given by 
ratios of special values of $q$-Bessel functions. 
We also solve a problem posed in \cite[Subsection 5.1]{MO2} by giving an exact basic hypergeometric formula for the $q$-analog $[e]_q$ of the number $e$. 

In particular, in all these cases the series $[x]_q$ has a positive radius of convergence $R(x)>0$.\footnote{Since $[x]_q$ is a series with integer coefficients, 
it is clear that if $x\notin \Bbb Z$ then $R(x)\le 1$. The problem of classification of rational numbers $x$ with $R(x)=1$ is very
nontrivial and was studied in \cite{EVW}.}

In \cite{LMOV} it is proved that if $x\in \Bbb Q$ then $R(x)\ge 3-2\sqrt{2}$ and  conjectured that $R(x)>0$ for all $x\in \Bbb R$ and moreover, $R(x)\ge R_*:=\frac{3-\sqrt{5}}{2}$; this is attained for the golden ratio $x=\frac{1+\sqrt{5}}{2}$.  This was proved for rational $x$ in \cite{EGMS} using the theory of Kleinian groups. \cbl

In Theorem \ref{ma1}, we extend the result of \cite{LMOV} to all $x\in \Bbb R$, giving a partial proof of this conjecture.  

Namely, let $D\subset \Bbb C$ be the open region with boundary given in polar coordinates by the equation 
$$
r=1+2\sin\tfrac{\theta}{2}-2\sqrt{\sin\tfrac{\theta}{2}(1+\sin\tfrac{\theta}{2})},\ 0\le \theta\le 2\pi:
$$

\begin{figure}[htbp]
\vspace{-10pt}
    \centering
  \includegraphics[width=0.4\columnwidth]{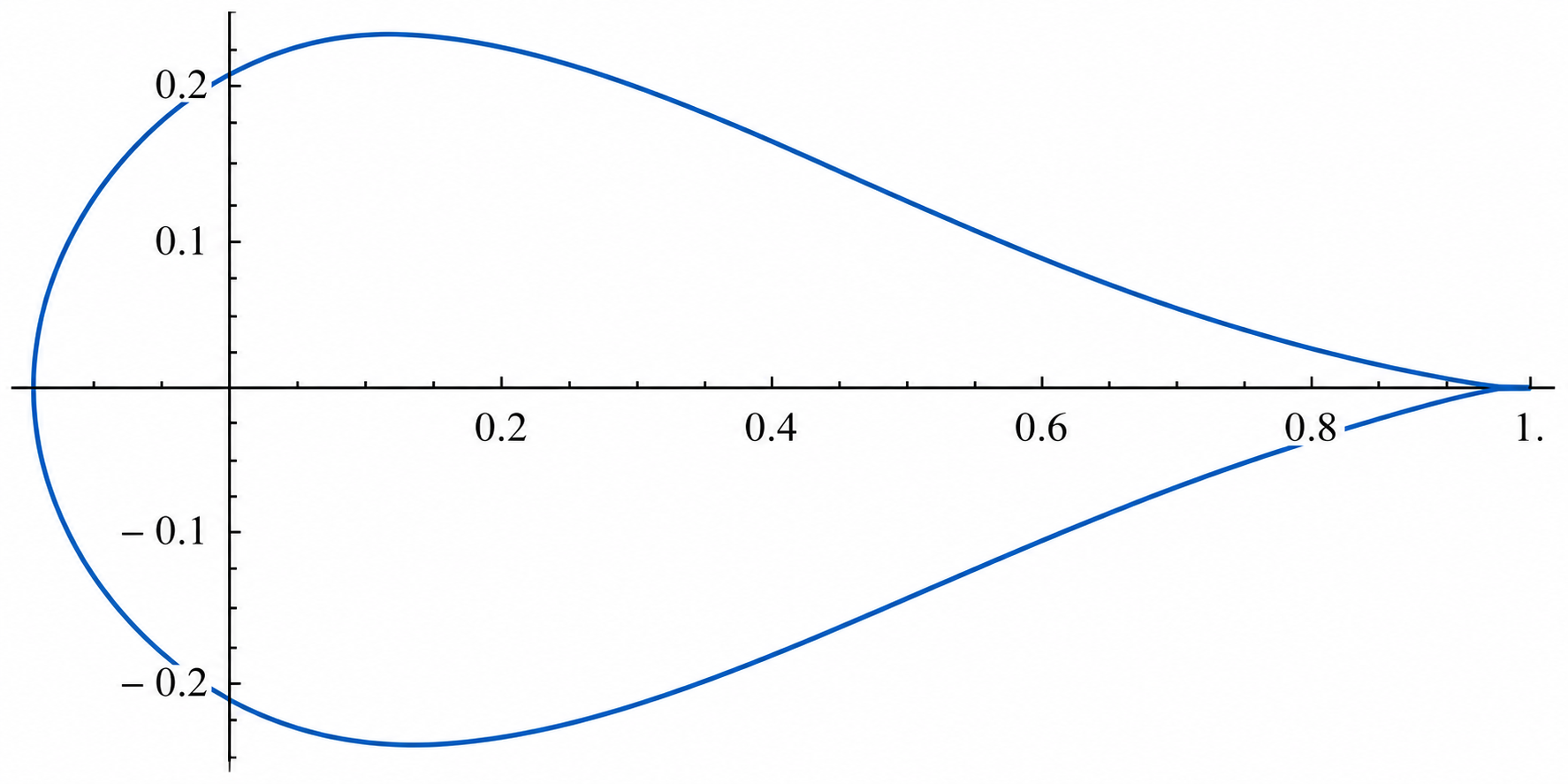}
  \label{fig:polarplot}
\end{figure}

In particular, the largest disk centered at the origin contained in $D$ has radius $3-2\sqrt{2}$. 

\begin{theorem}\label{ma1} (i) For any $x\ge 1$, the power series $[x]_q$ 
converges\footnote{For quadratic irrationals, this is proved in the PhD thesis of Ludivine Leclere (\cite{Le}), (see Theorems 4.4.4 and 4.4.5, p.63).} in the disk $|q|< 3-2\sqrt{2}$.

(ii) The holomorphic function defined by $[x]_q$ analytically continues 
to a non-vanishing holomorphic function on $D$.
\end{theorem}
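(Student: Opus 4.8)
The plan is to produce the analytic continuation of $[x]_q$ through the telescoping series for its reciprocal promised in the abstract, and to reduce all of the analytic content to one uniform lower bound on the $q$-continuants. Writing $x=x(\mathbf c)$, I would encode the truncation $F_N:=[[c_1,\dots,c_N]]_q$ as a Möbius iteration: with $T_c(z)=[c]_q-q^{c-1}/z$ and $M_c=\begin{pmatrix}[c]_q & -q^{c-1}\\ 1 & 0\end{pmatrix}$ one has $F_N=(M_{c_1}\cdots M_{c_N})\cdot\infty=p_N/p_N'$, where $(p_N,p_N')$ is the first column of the product. These continuants obey the common three–term recursion $u_N=[c_N]_q u_{N-1}-q^{c_{N-1}-1}u_{N-2}$ with $(p_0,p_1)=(1,[c_1]_q)$, $(p_0',p_1')=(0,1)$, and in particular $p_N(0)=p_N'(0)=1$ for all $N$. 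Since $\det M_c=q^{c-1}$, a one–line computation using $q_N=-q^{c_N-1}p_{N-1}$ gives the determinant identity $p_Np_{N-1}'-p_{N-1}p_N'=-q^{e_{N-1}}$ with $e_{N-1}:=\sum_{i=1}^{N-1}(c_i-1)$, whence the two telescoping expansions
$$\frac{1}{[x]_q}=\frac{1}{[c_1]_q}+\sum_{N\ge 2}\frac{q^{e_{N-1}}}{p_N\,p_{N-1}},\qquad [x]_q=[c_1]_q-\sum_{N\ge 2}\frac{q^{e_{N-1}}}{p_N'\,p_{N-1}'}.$$
Because $c_i\ge 2$ forces $e_{N-1}\ge N-1$, each series converges $q$-adically, and its sum is the formal power series $1/[x]_q$ (resp.\ $[x]_q$); this is what reconciles the analytic limit with the $q$-adic one.

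To upgrade $q$-adic convergence to absolute, locally uniform convergence on $D$ it suffices, by the identities above, to bound the denominators $|p_N|,|p_N'|$ below by a positive constant, locally uniformly in $q\in D$. Indeed, the boundary equation rearranges to $\sin\tfrac{\theta}{2}=\tfrac{(1-r)^2}{4r}$ (equivalently $r^2-2(1+2\sin\tfrac\theta2)r+1=0$), from which one sees that $\overline D$ meets the unit circle only at $q=1$; hence every compact $K\subset D$ satisfies $\sup_K|q|=:\rho_K<1$, and the general term of each series is then dominated by $\delta^{-2}\rho_K^{\,N-1}$, a convergent geometric bound. So the whole problem collapses to exhibiting such a lower bound. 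The mechanism I would use is the classical value–region method for continued fractions: for each $q$ in the prospective domain produce a region $\Omega=\Omega(q)\subset\C$ with $0,\infty\notin\overline\Omega$ such that $T_c(\Omega)\subseteq\Omega$ for \emph{every} integer $c\ge 2$ simultaneously, with a uniform contraction factor. Granting this, the tails $y_N=[[c_N,c_{N+1},\dots]]_q$ and the limit $[x]_q=y_1$ all lie in $\overline\Omega$, the continuant ratios $p_N/p_{N-1}$ and $p_N'/p_{N-1}'$ (which satisfy the analogous Möbius recursions) stay bounded away from $0$, and the required lower bounds on $|p_N|,|p_N'|$ follow by induction from the recursion.

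The main obstacle is precisely the construction of this common invariant region and the identification of the exact admissible set of $q$ with $D$. The analysis is governed by the fixed points of $T_c$, the roots of $z^2-[c]_q z+q^{c-1}=0$: since $T_c'=q^{c-1}/z^2$, one root sits near $1$ and is attracting while the other sits near $0$ and is repelling, and convergence can only degenerate when the two collide, i.e.\ when $[c]_q^2=4q^{c-1}$. The collision nearest the origin occurs for $c=3$ at $q=-\tfrac{3-\sqrt5}{2}=-\Rstar$ on the negative real axis — the golden–ratio branch point responsible for the conjectural radius $\Rstar$ — which lies just \emph{outside} $D$ (whose reach along $\theta=\pi$ is only $3-2\sqrt2$), consistent with the conjecture. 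The weaker but fully uniform-in-$c$ invariant region that one can actually enforce for all $c\ge 2$ at once cuts out the smaller domain $D$, and its boundary curve is exactly the locus where $\overline\Omega$ first reaches $0$; its in–radius along $\theta=\pi$ is $3-2\sqrt2$, recovering \cite{LMOV}. Carrying out this estimate — verifying $T_c(\Omega)\subseteq\Omega$ and $0\notin\overline\Omega$ uniformly in $c$ over all of $D$, rather than just on the disk — is the genuinely technical step, and I expect it to be where the explicit shape of $D$ must be coaxed out of the inequalities.

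Finally, non–vanishing in (ii) comes for free once both expansions are controlled. The first series exhibits $1/[x]_q$ as a holomorphic function $G$ on $D$ with $G(0)=1$, and the second exhibits $[x]_q$ as a holomorphic function $H$ on $D$ with $H(0)=1$; since $H\cdot G=1$ as formal power series, it holds throughout the connected domain $D$ by analytic continuation, so $H=[x]_q$ is nowhere zero on $D$. Part (i) is then the restriction to the disk $|q|<3-2\sqrt2\subset D$.
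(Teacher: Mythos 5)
Your overall architecture --- the telescoping expansion of $1/[x]_q$ over the $q$-continuants, reduction of the analysis to a denominator estimate, and non-vanishing via a companion expansion --- is the right one and close in spirit to the paper's. But the estimate you reduce everything to is false, and this is a genuine gap rather than a deferred technicality. You claim it suffices to bound $|p_N|,|p_N'|$ below by a positive constant $\delta$ uniformly in $N$ (equivalently, to produce an invariant region keeping the ratios $p_N/p_{N-1}$ bounded away from $0$ strongly enough that the induction yields an $N$-independent lower bound), and you then dominate the general term by $\delta^{-2}\rho_K^{\,N-1}$. No such $\delta$ exists on $D$: take $q=-0.15\in D$ (recall $D$ reaches to $-(3-2\sqrt{2})\approx -0.172$ on the negative axis) and the constant sequence $c_i=c$ with $c$ large. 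Then $q^{c-1}$ is negligible and the recursion gives $a_N(q)\approx [c]_q\,a_{N-1}(q)$ with $|[c]_q|\approx |1-q|^{-1}=1/1.15<1$, so $a_N(q)\to 0$ geometrically; the same happens at any $q\in D$ with $|1-q|>1$. The series nevertheless converges, but for a different reason: the numerator $q^{C_j}$, with $C_j=\sum_{i\le j}(c_i-1)\ge j$, decays fast enough to beat the possible decay of the denominators. The paper's key step (Lemma \ref{keybound}) is precisely the correct substitute for your bound: it is the \emph{ratio} estimate $|a_{N+1}(q)|\ge \alpha\,|a_N(q)|$ with $\alpha=r^{1/2}a$, where $a>1$ is determined by $q$ via \eqref{eqfora} and $\alpha<1$ off the positive real axis, combined with the observation that the $j$-th term is then at most $r^{C_j}\alpha^{-2j-1}\le r^{\sum_i(c_i-2)}\,r^{-1/2}a^{-2j-1}$, a geometric series with ratio $a^{-2}<1$. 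In other words the quantity that must be controlled is $r/\alpha^2=a^{-2}$, not $\inf_N|a_N|$; your invariant-region scheme would have to be reformulated to deliver this relative bound, at which point it becomes essentially the paper's induction, and the shape of $D$ emerges as the locus where such an $a>1$ exists.

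Two smaller points. First, your non-vanishing argument via the product of the two telescoping series (for $1/[x]_q$ and for $[x]_q$, with denominators $a_N$ and $b_N$) is a legitimate alternative to the paper's route, which instead deduces non-vanishing from the modular relation $[1/x]_q[-x]_q=-q^{-1}$ of Lemma \ref{modinv}; since $b_N(c_1,\dots,c_N|q)=a_{N-1}(c_2,\dots,c_N|q)$, the same ratio estimate controls both series. Second, your determinant identity, the $q$-adic convergence, and the reconciliation of the analytic and $q$-adic sums via uniform convergence are all correct and match the paper. But both the convergence and the non-vanishing are contingent on repairing the central estimate as above.
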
 

Theorem \ref{ma1} is proved by writing $1/[x]_q$ as a $q$-adically convergent series of rational functions which converges in $D$ absolutely and uniformly on compact sets. We also show 
that this series converges on the interval $(-\frac{3-\sqrt{5}}{2},1)$ to a positive analytic function, which allows us to define the function $[x]_q$ on this interval. We then proceed to investigate analytic properties of this function. 
In particular, we disprove the conjectural $q$-version of the 
Hurwitz irrational number theorem proposed in \cite{LMOV}. 

 Moreover, using the result of \cite{EGMS}, we extend 
Theorem \ref{ma1} to a larger disk. 

\begin{theorem}\label{ma1a} For any $x\ge 1$, the 
power series $[x]_q$ 
converges in the disk $|q|< 2-\sqrt{3}$ and defines a non-vanishing holomorphic function 
on this disk.
\end{theorem}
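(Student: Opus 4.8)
The plan is to deduce Theorem \ref{ma1a} from two inputs already available to us: the rational case proved in \cite{EGMS}, which gives $R(x)\ge 2-\sqrt 3$ for every $x\in\Bbb Q_{>1}$, and the $q$-adic approximation machinery underlying Theorem \ref{ma1}. The key observation is that for irrational $x$ we do not have direct control, so we must transfer the rational bound to the irrational case by a limiting/continuity argument in the $q$-adic topology. Concretely, I would fix an irrational $x=x(\bold c)$ with continued fraction data $\bold c=\{c_j\}$ and consider the truncations $x_N:=[[c_1,\dots,c_N]]\in\Bbb Q_{>1}$. Each $[x_N]_q$ is a rational function of $q$ to which \cite{EGMS} applies, so $1/[x_N]_q$ is holomorphic and nonvanishing on $|q|<2-\sqrt 3$. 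The first step is to record that $[x_N]_q\to[x]_q$ $q$-adically with explicit rate: since the truncations of a negative continued fraction stabilize coefficient-by-coefficient, $[x]_q-[x_N]_q\in q^{m(N)}\Bbb Z[[q]]$ with $m(N)\to\infty$.

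The heart of the argument is to upgrade this $q$-adic convergence to genuine analytic convergence on compact subsets of the disk $|q|<2-\sqrt 3$, and this is where the expansion of $1/[x]_q$ from the proof of Theorem \ref{ma1} does the real work. That proof expresses $1/[x]_q$ as a series of rational functions $\sum_k f_k(q)$ converging absolutely and uniformly on compacts in $D$; I would show that the same expansion, applied to each rational truncation $x_N$, has tails controlled uniformly in $N$ by the golden-ratio worst case, so that a uniform Vitali/Montel-type argument applies. More precisely, the family $\{1/[x_N]_q\}_N$ is locally uniformly bounded on $|q|<2-\sqrt 3$ (the rational bound of \cite{EGMS} gives holomorphy and nonvanishing, and the uniformity comes from the common radius $2-\sqrt 3$ being independent of $N$), and it converges $q$-adically, hence pointwise on a neighborhood of $0$. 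By Vitali's theorem a locally bounded sequence of holomorphic functions that converges on a set with a limit point converges locally uniformly; thus $1/[x_N]_q\to g(q)$ locally uniformly on $|q|<2-\sqrt 3$, with $g$ holomorphic. Since $g$ agrees with the power series $1/[x]_q$ near $0$ (the $q$-adic limit matches the Taylor coefficients), $[x]_q$ converges on the whole disk and equals $1/g$.

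It remains to see that $[x]_q$ is nonvanishing, equivalently that $g$ has no zeros on $|q|<2-\sqrt 3$. Here I would invoke Hurwitz's theorem: each $1/[x_N]_q$ is nonvanishing on the disk by \cite{EGMS}, and a locally uniform limit of nowhere-vanishing holomorphic functions is either nowhere-vanishing or identically zero. Since $g(0)=1\neq 0$, the limit is not identically zero, so $g$ is nowhere vanishing, giving that $[x]_q=1/g$ is holomorphic and nonvanishing on $|q|<2-\sqrt 3$. The main obstacle I anticipate is establishing the local uniform boundedness of $\{1/[x_N]_q\}$ with a bound independent of $N$; the bare statement of \cite{EGMS} gives holomorphy and nonvanishing on the disk for each fixed rational $x_N$, but to run Vitali one needs a \emph{uniform} bound on compact subdisks. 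I expect this to follow from the explicit uniform tail estimates in the expansion from Theorem \ref{ma1}, combined with the fact that the continued-fraction truncations $x_N$ all have the same worst-case constant $R_*$ governing their convergence radii; making this uniformity precise, rather than merely pointwise in $N$, is the technical crux.
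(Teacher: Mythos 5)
Your overall architecture (pass to the rational truncations $x_N$, use $q$-adic convergence to identify the limit, then Vitali plus Hurwitz) is a sensible skeleton, and indeed the paper also works with $1/[x_N]_q$, which are exactly the partial sums of the expansion \eqref{seri}. The Hurwitz step is fine \emph{conditional} on locally uniform convergence. But the step you yourself flag as the crux --- local uniform boundedness of $\{1/[x_N]_q\}_N$ on compact subsets of $|q|<2-\sqrt{3}$ with a bound independent of $N$ --- is a genuine gap, and neither of the two sources you propose for it can supply it. The ``explicit uniform tail estimates in the expansion from Theorem \ref{ma1}'' (Lemma \ref{keybound} and \eqref{eqq4}) are only valid on the region $D$, whose largest inscribed disk has radius $3-2\sqrt{2}\approx 0.17$, strictly smaller than $2-\sqrt{3}\approx 0.27$; they say nothing on the larger disk. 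And the statement that ``the uniformity comes from the common radius being independent of $N$'' is not an argument: a sequence of functions holomorphic and nonvanishing on a common disk need not be locally bounded (e.g.\ $f_N\equiv N$, or $f_N(q)=(1-q/\rho)^{-N}$ with $\rho$ just outside the disk). There is also no established ``golden-ratio worst case'' comparison for the continuants; that extremality is precisely what has to be proved.

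What the paper actually does to close this gap is quantitatively different and nontrivial. From \cite{EGMS} it extracts not just nonvanishing but the location of \emph{all} roots of the continuants $a_N(\bold c|q)$ in the annulus $R_*\le |q|\le R_*^{-1}$, and then solves an extremal problem (Lemma \ref{auxle}) over all monic degree-$d$ polynomials with constant term $1$ and roots in that annulus, yielding the lower bound $|a_N(q)|\ge L\,s^{C_N/2}$ with $s=(R_*-r)(R_*^{-1}-r)=1-3r+r^2$. Even this is not enough: since $s<1$, the naive term estimate $\bigl|q^{C_N}/(a_Na_{N+1})\bigr|\le L^{-2}(r/s)^{C_N}s^{-(c_{N+1}-1)/2}$ blows up when the partial quotients $c_{N+1}$ grow rapidly, because the numerator $q^{C_N}$ has degree $C_N$ while $a_{N+1}$ has degree $C_{N+1}$. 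The paper handles this with a bootstrapping contradiction argument (Lemma \ref{auxle1}) using the auxiliary polynomials $A_N(q)=a_N(q)-q^{c_N-1}(1-q)a_{N-1}(q)$, which have degree $1+C_N$ independent of $c_{N+1}$ and whose roots are also controlled (Lemma \ref{nonzero}). None of this machinery is present in your proposal, and without it the uniform bound --- and hence the Vitali step --- does not go through. The threshold $r<2-\sqrt{3}$ is exactly the condition $r<s$, which only emerges from these estimates.
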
  

We also study the analytic properties of $[x]_q$ as a function of $x$ when $0<q<1$.
It follows from \cite{MO1,MO2} that this function is increasing, so 
it gives rise to a Stieltjes measure $\mu_q=d[x]_q$ on $\Bbb R$. 
We show that $\mu_q$ is concentrated on a set of Lebesgue measure $0$, 
hence $\frac{d}{dx}[x]_q=0$ almost everywhere on $\Bbb R$. Moreover, 
if $q$ is not too close to $1$, we show that $\mu_q$ is purely atomic, concentrated at rational numbers, and we expect this to hold for all $q$. 

We next study the numerator and denominator polynomials of $q$-rational numbers on
the unit circle. For $x\in\Bbb Q$, write
$[x]_q=R_x(q)/S_x(q)$ in the standard normalization and put
\(
 T_x(q):=(1-q)R_x(q)-S_x(q).
\)
Let
\[
 C_-:=\left\{q:|q|=1,\ {\rm Re}\,q\le\frac12\right\},
 \qquad
 C_+:=\left\{q:|q|=1,\ {\rm Re}\,q>\frac12\right\}.
\]
A unitarity argument for the Jones polynomial of rational four-plats
shows that
\begin{equation}\label{inee1}
 |T_x(q)|+(1-q-q^{-1})|S_x(q)|\le |1-q|^3,
 \qquad q\in C_-,
\end{equation}
with equality at $q=e^{\pm \pi i/3}$ for every rational $x$; in particular, 
\begin{equation}\label{inee2}
|T_x(q)|\le |1-q|^3,\text{ and } |S_x(q)|\le \frac{|1-q|^3}{1-q-q^{-1}}\text{ if } q\ne  e^{\pm \pi i/3}.
\end{equation}
On the complementary arc we prove
\begin{equation}\label{inee3}
 |T_x(q)|\ge\sqrt{q+q^{-1}-1}\,|S_x(q)|,
 \qquad q\in C_+.
\end{equation} 
For $q\ne1$, \eqref{inee3} says that 
all finite $q$-rational values lie in the disk complement $E_q\subset \Bbb C\Bbb P^1$ defined by the inequality
\[
 \left|z-\frac1{1-q}\right|\ge 
 \frac{\sqrt{q+q^{-1}-1}}{|1-q|}
\]
(when $q$ tends to $1$ along the unit circle, this inequality degenerates into ${\rm Im}z\ge 0$ or ${\rm Im}z\le 0$, depending on whether 
$q$ approaches $1$ from above or below, and for $q=1$ the numbers $[x]_q=x$ indeed lie in the intersection of these two half-planes). If $q$ is not a root of unity and $X_q:=\{[x]_q:x\in\Bbb Q\}\subset\Bbb C\Bbb P^1$, then the closure 
$\overline{X_q}$ is $\Bbb C\Bbb P^1$ for $q\in C_-$, whereas for
$q\in C_+$ it is $E_q$. This shows that the inequality \eqref{inee3} is sharp; see Theorem
\ref{unitmainthm}.

Finally, motivated by \cite{O} and again guided by modular equivariance, we propose a notion of $q$-complex number $[\tau]_q$, where $\tau$ belongs to the upper half-plane $\Bbb C_+$, which is a meromorphic function of $\tau$. This notion coincides with that of \cite{O} when the lattice $\langle 1,\tau\rangle$ has rotational symmetries
(i.e., $\tau$ is in the $PSL_2(\Bbb Z)$-orbit of 
the third or fourth root of $1$), but is different elsewhere. 
However, this notion seems to have somewhat better properties. In particular $[\tau]_q$ can be computed explicitly: it is a ratio of hypergeometric functions
of a modular function of $\tau$. 

We also study coefficientwise reduction of $q$-real numbers.  Reduction
modulo every prime $p\ge 2$ is injective on the Cantor line, while
reduction modulo $2$ identifies it with
$\mathbb P^1(\mathbb F_2((q)))$.  We give an explicit decoder, extend the
rationality statements to every prime modulus, characterize the quadratic
series over $\mathbb F_2(q)$ whose decoded negative continued fraction is
eventually periodic, characterize polynomial images and eventual parity, and
compute the point corresponding to $1+q^n$.

The paper is organized as follows. Section 2 is dedicated to preliminaries; it is based on \cite{MO1,MO2,MO3} and contains no original results.
In Section 3 we give exact formulas for $q$-deformations of several transcendental numbers. Subsection 3.1 treats Bessel ratios, including ${\rm cotan}\frac{1}{s}$ for $s\in \Bbb Z_{\ge 1}$, while Subsection 3.2 gives an exact formula for $[e]_q$, solving a problem posed in \cite[Subsection 5.1]{MO2}. Section 4 is dedicated to the proofs of Theorems \ref{ma1},\ref{ma1a} and the study of the analytic properties of the function $[x]_q$ for real $x$ as a function of $q$. In Section 5 we study
$[x]_q$ as a function of $x$ when $0<q<1$. In Section 6 we study the Stieltjes measure $\mu_q=d[x]_q$. Section 7 is devoted to the behavior of the numerators and denominators of $q$-rational numbers on the unit circle. In Section 8 we study coefficientwise reduction of $q$-real numbers, with special emphasis on reduction modulo $2$. Finally, in Section 9 we develop the theory of $q$-complex numbers.

{\bf Acknowledgements.} I am very grateful to V. Ovsienko for introducing me to the topic and many useful discussions, and A. Veselov for helpful comments on preliminary versions of this text. This work was partially supported by the NSF grant DMS-2001318. 

I used ChatGPT 5.6 Pro for proofreading and editing the final version of the paper. In particular, based on a roadmap I provided, it wrote Subsection 3.2 and Sections 7 and 8.

\section{Preliminaries} \label{preli}

In this section we review the results of \cite{MO1,MO2}.

\subsection{Negative continued fractions}
Recall the classical theory of infinite negative continued fractions. Let
$\bold c:=\lbrace c_j,j\ge 1\rbrace$ be a sequence of integers $\ge 2$. 
The $N$-th convergent for this sequence 
is the $N$-truncated negative continued fraction attached to $\bold c$,
$$
x_N(\bold c)=x_N:=[[c_1,...,c_N]]=c_1-\frac{1}{c_2-...\frac{1}{c_N}}\in \Bbb Q\cap (1,\infty).
$$
For instance, $x_1(\bold c)=c_1,\ x_2(\bold c)=c_1-\frac{1}{c_2}$, etc. Thus
\(
x_N=\frac{a_N}{b_N} 
\)
where $a_N=a_N(\bold c),b_N=b_N(\bold c)$ are positive integers satisfying the recursion 
\begin{equation}\label{rec1}
u_{N}=c_Nu_{N-1}-u_{N-2}
\end{equation}
with $a_0=1,a_1=c_1$, $b_0=0,b_1=1$. For example, 
$$
a_2=c_1c_2-1,\ a_3=c_1c_2c_3-c_1-c_3,\ b_2=c_2, b_3=c_2c_3-1, {\rm etc.}
$$
and in general $b_N(c_1,c_2,c_3,...)=a_{N-1}(c_2,c_3,...)$.
Thus $a_N,b_N$ depend only on $c_1,...,c_N$, so e.g. we can write 
$a_N(\bold c)=a_N(c_1,...,c_N)$.

The numbers $a_N,b_N$ are positive. This can be seen by writing \eqref{rec1} as 
\(
u_{N}-u_{N-1}=(c_N-2)u_{N-1}+(u_{N-1}-u_{N-2}),
\)
implying by induction that if $u_1\ge 0,\ u_1\ge u_0$ then 
$u_N\ge 0,\ u_N\ge u_{N-1}$ for all $N$. The polynomials 
$a_N(\bold c)$ are called {\it Euler continuants}.  

We also have 
\(
\lim_{N\to \infty}x_N=x\in \Bbb R_+
\)
where  
$$
x(\bold c)=x:=[[c_1,c_2,...]]=c_1-\frac{1}{c_2-\frac{1}{c_3-...}}.
$$
We have $x\in [1,\infty)$, and any $x$ in this interval has a unique such representation. 

Note that 
$$
a_Nb_{N+1}-a_{N+1}b_N=a_N(c_{N+1}b_N-b_{N-1})-(c_{N+1}a_N-a_{N-1})b_N=
a_{N-1}b_N-a_Nb_{N-1}. 
$$
Thus $a_Nb_{N+1}-a_{N+1}b_N=a_0b_1-a_1b_0=1$, so 
$$
\frac{b_{N+1}}{a_{N+1}}-\frac{b_N}{a_N}=\frac{1}{a_Na_{N+1}}.
$$
It follows that
$$
\frac{1}{x_N}=\sum_{j=0}^{N-1}\frac{1}{a_ja_{j+1}},\
\frac{1}{x}=\sum_{j=0}^\infty \frac{1}{a_ja_{j+1}}.
$$

Finally, note that since $[[2,2,...]]=1$, if $c_N>2$ then we have 
\(
[[c_1,...,c_N,2,2,...]]=[[c_1,...,c_N-1]].
\)

\subsection{Negative $q$-continued fractions}
Now consider the $q$-analog of this theory (\cite{MO1,MO2,MO3}). Let $[n]_q:=\frac{1-q^n}{1-q}$. 
Set 
$$
[x(\bold c)]_{N,q}=[x]_{N,q}:=[c_1]_q-\frac{q^{c_1-1}}{[c_2]_q-...\frac{q^{c_{N-1}-1}}{[c_N]_q}}.
$$
For instance, $[x(\bold c)]_{1,q}=[c_1]_q$, $[x(\bold c)]_{2,q}=[c_1]_q-\frac{q^{c_1-1}}{[c_2]_q}$, etc. 
Thus 
\(
[x]_{N,q}=[x_N]_q=\frac{a_N(q)}{b_N(q)} 
\)
where $a_N(q)=a_N(\bold c|q),b_N(q)=b_N(\bold c|q)$ are polynomials with integer coefficients satisfying the recursion 
\begin{equation}\label{rec2}
u_{N}=[c_N]_qu_{N-1}-q^{c_{N-1}-1}u_{N-2}
\end{equation}
with $a_0=1,a_1=[c_1]_q$, $b_0=0,b_1=1$. 
For example, 
$$
a_2=[c_1]_q[c_2]_q-q^{c_1-1},\ a_3=[c_1]_q[c_2]_q[c_3]_q-q^{c_2-1}[c_1]_q-q^{c_1-1}[c_3]_q,\ b_2=[c_2]_q, b_3=[c_2]_q[c_3]_q-q^{c_2-1}, 
$$
and in general $b_N(c_1,c_2,c_3,...|q)=a_{N-1}(c_2,c_3,...|q)$.
Thus $a_N,b_N$ depend only on $c_1,...,c_N$, so e.g. we can write 
$a_N(\bold c|q)=a_N(c_1,...,c_N|q)$.

The polynomials $a_N,b_N$ have non-negative coefficients. This can be seen by writing \eqref{rec2} as 
$$
u_{N}-q^{c_N-1}u_{N-1}=q[c_{N}-2]_qu_{N-1}+(u_{N-1}-q^{c_{N-1}-1}u_{N-2}),
$$
implying by induction that if $u_1\ge 0$ and $u_1\ge q^{c_1-1}u_0$ then 
$u_N\ge 0$ and $u_N\ge q^{c_N-1}u_{N-1}$ coefficient-wise for all $N$. 
Also $a_N(\bold c|0)=b_N(\bold c|0)=1$ for $N\ge 1$. Finally, it follows by induction that the degree 
of $a_N(\bold c|q)$ is 
\(
C_N:=\sum_{j=1}^N(c_j-1)
\)
(so $C_N\ge N$), and that both the leading coefficient and constant term 
of $a_N(\bold c|q)$ equal $1$. The polynomials $a_N(\bold c|q)$ 
are called the {\it $q$-deformed Euler continuants}.

We also have \(\lim_{N\to \infty}[x]_{N,q}=[x]_q\in \Bbb Z[[q]]\) 
($q$-adic limit), where  
$$
[x(\bold c)]_q=[x]_q:=[[c_1,c_2,...]]_q=[c_1]_q-\frac{q^{c_1-1}}{[c_2]_q-\frac{q^{c_2-1}}{[c_3]_q-...}}.
$$
Indeed, note that 
$$
a_Nb_{N+1}-a_{N+1}b_N=a_N([c_{N+1}]_qb_N-q^{c_{N}-1}b_{N-1})-([c_{N+1}]_qa_N-q^{c_{N}-1}a_{N-1})b_N=
$$
$$
q^{c_{N}-1}(a_{N-1}b_N-a_Nb_{N-1}). 
$$
Thus 
$$
a_Nb_{N+1}-a_{N+1}b_N=q^{C_{N}}(a_0b_1-a_1b_0)=q^{C_{N}}.
$$ 
Hence 
$$
\frac{b_{N+1}}{a_{N+1}}-\frac{b_N}{a_N}=\frac{q^{C_{N}}}{a_Na_{N+1}}.
$$

Thus we get 

\begin{proposition} 
\begin{equation}\label{seri0}
\frac{1}{[x]_{N,q}}=\sum_{j=0}^{N-1}\frac{q^{C_{j}}}{a_j(q)a_{j+1}(q)}.
\end{equation}
So $[x]_{N,q}$ is $q$-adically convergent to $[x]_q\in \Bbb Z[[q]]$
such that
\begin{equation}\label{seri}
\frac{1}{[x]_q}=\sum_{j=0}^{\infty}\frac{q^{C_j}}{a_j(q)a_{j+1}(q)}.
\end{equation}
\end{proposition}

Note that similarly to the $q=1$ case, for $c_N>2$ we have 
\(
[[c_1,...,c_N,2,2,...]]_q=[[c_1,...,c_N-1]]_q.
\)

\subsection{Modular equivariance}
Observe that for any $x>1$, 
\begin{equation}\label{xp1q0}
[x+1]_q=q[x]_q+1,
\end{equation} 
i.e., 
\begin{equation}\label{xp1q}
[x+n]_q=q^n[x]_q+[n]_q
\end{equation}  
for $n\in \Bbb Z_{\ge 0}$. Using this formula, 
the assignment $x\to [x]_q$ can be extended from $(1,\infty)$ to the whole real line,
so that \eqref{xp1q} holds for all $n\in \Bbb Z$.
If $x<0$ then $[x]_q$ defined in this way is a Laurent series in $q$. 
The following lemma upgrades \eqref{xp1q} to modular equivariance of $[x]_q$. 

\begin{lemma} \label{modinv} For all nonzero $x\in \Bbb R$, we have 
$$
[1/x]_q[-x]_q=-q^{-1}.
$$
Thus the leading term of $[x]_q$ is $1$ if $x\ge 1$, 
$q^k$ if $\frac{1}{k+1}\le x<\frac{1}{k}$ 
and $-q^{-k}$ if $-k\le x<-k+1$ for integer $k\ge 1$. 
\end{lemma} 

\begin{proof} It suffices to treat the case $x>0$. Let $n$ be the smallest integer $\ge x$. We prove the lemma by induction in $n$. If $n=1$, i.e., 
$0<x\le 1$, the statement follows immediately from \eqref{xp1q}, which provides the base of induction. For the induction step, let $n>1$, and assume the statement has been proved for smaller numbers. Let $y:=x-1$. By \eqref{xp1q} and the induction assumption,
$$
[-x]_q=q^{-1}([-y]_q-1)=-q^{-1}\left(\frac{q^{-1}}{[\frac{1}{y}]_q}+1\right)
$$
and by \eqref{xp1q} and the base of induction applied to $\frac{y}{1+y}$
$$
[\tfrac{1}{x}]_q=[\tfrac{1}{1+y}]_q=[1-\tfrac{y}{1+y}]_q=1+q[-\tfrac{y}{1+y}]_q=1-\frac{1}{[1+\tfrac{1}{y}]_q}=
1-\frac{1}{q[\tfrac{1}{y}]_q+1}=\frac{[\tfrac{1}{y}]_q}{[\tfrac{1}{y}]_q+q^{-1}},
$$
which yields $[\tfrac{1}{x}]_q[-x]_q=-q^{-1}$, as desired. This proves the first statement, and the second one follows immediately. 
\end{proof} 

\begin{corollary}\label{qin} For $x\in \Bbb Q$ 
\begin{equation}\label{qinv}
[x]_{q^{-1}}=-q[-x]_q.
\end{equation}
\end{corollary}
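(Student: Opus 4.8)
Since $x\in\mathbb{Q}$, both sides of \eqref{qinv} are rational functions of $q$, so the plan is to prove \eqref{qinv} purely as an identity of rational functions; in particular every manipulation (inversion, the substitution $q\mapsto q^{-1}$) is legitimate and no convergence is needed. Write $P(x)$ for the assertion $[x]_{q^{-1}}=-q[-x]_q$. I would first record two symmetries of $P$. Substituting $q\mapsto q^{-1}$ in the identity $P(x)$ and rearranging shows at once that $P(x)$ is equivalent to $P(-x)$ (the statement is invariant under the simultaneous flip $x\mapsto-x$, $q\mapsto q^{-1}$). Next, using \eqref{xp1q0} at $q$ and at $q^{-1}$ together with $[y-1]_q=q^{-1}([y]_q-1)$, a direct computation shows that $P(x)$ and $P(x+1)$ are equivalent: both $[x]_{q^{-1}}$ and $-q[-x]_q$ transform under $x\mapsto x+1$ by the same rule $h\mapsto q^{-1}h+1$.

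The crucial step is closure under inversion, $P(x)\Rightarrow P(1/x)$ for $x\neq 0$, and here I would combine Lemma \ref{modinv} at two different arguments. Applying the lemma to $-x$ gives $[-1/x]_q[x]_q=-q^{-1}$, so the right-hand side of $P(1/x)$ equals $-q\cdot(-q^{-1}/[x]_q)=1/[x]_q$. Applying the lemma with $q$ replaced by $q^{-1}$ gives $[1/x]_{q^{-1}}[-x]_{q^{-1}}=-q$; substituting $[-x]_{q^{-1}}=-q[x]_q$ (which is $P(-x)$, available from the negation symmetry) turns the left-hand side $[1/x]_{q^{-1}}$ into $-q/(-q[x]_q)=1/[x]_q$ as well, so $P(1/x)$ holds. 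I expect this to be the main point of the argument, since it is where the lemma does its work and where one must juggle it at $q$ and at $q^{-1}$ simultaneously; one must also note that $[x]_q\not\equiv 0$ for $x\neq 0$ (its value at $q=1$ is $x$) so that the divisions are valid.

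It then remains to bootstrap from a base case to all of $\mathbb{Q}$. The base case is $x\in\mathbb{Z}$: from \eqref{xp1q} one gets $[-n]_q=-q^{-n}[n]_q$, and a short computation gives $[n]_{q^{-1}}=q^{1-n}[n]_q$, whence $P(n)$ for every integer $n$. For the inductive step I would reduce an arbitrary rational to integers via continued fractions: using shift-invariance I may assume $x>1$ and non-integral, write $x=[[c_1,\dots,c_N]]$ with $N\ge 2$, and set $y=[[c_2,\dots,c_N]]\in\mathbb{Q}_{>1}$, so that $x=c_1-1/y=T^{c_1}\nu\iota(y)$, where $T,\nu,\iota$ denote $x\mapsto x+1$, $x\mapsto-x$, $x\mapsto1/x$. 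Since $y$ has a strictly shorter continued fraction, $P(y)$ holds by induction on $N$, and the three closure properties established above then yield $P(x)$. Equivalently, one can phrase this as: the set $\{x\in\mathbb{Q}:P(x)\}$ contains $\mathbb{Z}$ and is stable under $T$, $\nu$, $\iota$, which generate (an extension of) the $PSL_2(\mathbb{Z})$-action on $\mathbb{P}^1(\mathbb{Q})$ via $S=\nu\iota$, and the orbit of $0$ under this action is all of $\mathbb{Q}$.
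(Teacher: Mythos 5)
Your proposal is correct and is essentially the paper's own argument: the paper likewise observes that \eqref{qinv} is stable under the $PSL_2(\Bbb Z)$-action on $x$ (via \eqref{xp1q} and Lemma \ref{modinv}) and then reduces to a single base point ($x=0$ there, the integers in your write-up). You have simply made explicit the closure computations under $T$, negation, and inversion, and the induction on the length of the continued fraction that realizes "it suffices to check one orbit representative."
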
 

\begin{proof} By \eqref{xp1q} and Lemma \ref{modinv}, equation \eqref{qinv}
is stable under the action of $PSL_2(\Bbb Z)$ on $x$. Thus it suffices to check 
this equation for $x=0$, in which case both sides equal zero. 
\end{proof} 

Thus the theory of $q$-numbers for $|q|\ge 1$ is equivalent to one for $|q|\le 1$, 
so we will focus on this case. 

\section{Exact formulas for some $q$-deformed transcendental numbers}

\subsection{$q$-deformed Bessel ratios}\label{besselratios}

Consider the (renormalized) $q$-Bessel function
$$
J(c,q,z):=(z;q)_\infty \cdot \qphi21{0,0}{c;q}{-z}={}_0\phi_1(c;q;-cz)=\sum_{n=0}^\infty \frac{(-1)^nq^{n(n-1)}c^nz^{n}}{(c;q)_n (q;q)_n}.
$$
If $|q|<1$ and $c\ne q^{-m}$ for $m\in \Bbb Z_{\ge 0}$, then 
this series converges for all $z\in \Bbb C$. 

\begin{lemma}\label{kslem} (\cite{Is}; see also \cite{KS}, formula (6.4)) We have 
$$
\frac{J(qy,q,x)}{J(y,q,x)}=\frac{1-y}{1-y-\frac{xy}{1-qy-\frac{qxy}{1-q^2y-...}}}
$$
\end{lemma}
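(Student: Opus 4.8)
The plan is to convert the $q$-Bessel function ratio on the left-hand side into a negative $q$-continued fraction by repeatedly applying a three-term contiguous relation (a $q$-difference equation) for $J(y,q,x)$ in the parameter $y$, then unfolding that relation into the continued fraction on the right. Let me sketch the structure.

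First I would establish the key contiguous relation. The function $J(y,q,x) = {}_0\phi_1(y;q;-yx)$ satisfies a three-term recurrence relating $J(y,q,x)$, $J(qy,q,x)$, and $J(q^2y,q,x)$; concretely I expect a relation of the shape
$$
(1-y)\,J(q y,q,x) = (1-y)\,J(y,q,x) - x y\, J(q^2 y,q,x),
$$
or something equivalent after collecting terms. I would verify this directly from the series definition $J(y,q,x)=\sum_{n\ge 0}\frac{(-1)^n q^{n(n-1)}y^n x^n}{(y;q)_n(q;q)_n}$ by comparing coefficients of $x^n$ on both sides, using the standard shift identities $(qy;q)_n = \frac{1-q^n y}{1-y}(y;q)_n$ and the telescoping behavior of $q^{n(n-1)}$. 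This is the computational heart of the argument, so I would take care to pin down the exact coefficients; a clean way is to note that both sides, viewed as power series in $x$, satisfy the same first-order $q$-recursion in $n$ with the same initial term.

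Next I would divide the contiguous relation through by $(1-y)J(qy,q,x)$ to solve for the ratio $R(y):=\frac{J(qy,q,x)}{J(y,q,x)}$. The relation rearranges into
$$
\frac{J(qy,q,x)}{J(y,q,x)} = \frac{1-y}{\,1-y - xy\,\dfrac{J(q^2 y,q,x)}{J(qy,q,x)}\,},
$$
which exhibits $R(y)$ in terms of $R(qy)=\frac{J(q^2y,q,x)}{J(qy,q,x)}$, the same ratio with $y$ replaced by $qy$. Iterating this substitution—replacing $R(qy)$ by its own expression in $R(q^2y)$, and so on—generates exactly the nested fraction
$$
\frac{1-y}{1-y-\dfrac{xy}{1-qy-\dfrac{qxy}{1-q^2y-\cdots}}},
$$
where the factors $q^k$ appearing in the numerators and in the terms $1-q^k y$ track the $k$-fold application of the $y\mapsto qy$ shift. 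I would record the truncated version as a finite continued fraction equal to $R(y)$ up to a tail involving $R(q^N y)$.

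The main obstacle will be convergence of the iteration: the formal unfolding is straightforward, but to assert genuine equality of the infinite continued fraction with the ratio I must control the tail. Here I would use the hypothesis $|q|<1$: as $N\to\infty$, the shifted argument $q^N y\to 0$, and $J(q^N y,q,x)\to J(0,q,x)=\sum_n \frac{(-1)^n q^{n(n-1)}(0)\cdots}{}$, which collapses to a well-behaved limit (indeed $J(0,q,x)=\sum_n\frac{(-1)^n q^{n(n-1)}x^n}{(q;q)_n}\cdot 0^n$-type simplification, giving a convergent nonvanishing value for the relevant normalization). Thus the correction term $q^N y\cdot R(q^N y)$ in the denominator tends to $0$, and the truncated continued fractions converge to $R(y)$. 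I would phrase this either $q$-adically (matching the ambient framework of Section 2, where such continued fractions converge in $\Z[[q]]$) or analytically for $|q|<1$ with $c\ne q^{-m}$ so that all the $J$-values are defined and the denominators stay away from zero. Establishing that these denominators do not vanish along the iteration is the delicate point and is what makes the convergence argument, rather than the algebraic unfolding, the crux of the proof.
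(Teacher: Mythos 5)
The paper does not actually prove this lemma; it is quoted from Ismail \cite{Is} (see also \cite{KS}), so your proposal is compared against the standard proof in the literature, which is exactly the contiguous-relation argument you outline. Your strategy is the right one and, suitably corrected, it works.

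The one concrete problem is that the three-term relation you display is wrong, and as written it would \emph{not} unfold to the stated continued fraction. Writing $J(y):=J(y,q,x)$ and using $(qy;q)_n=\frac{(1-q^ny)}{1-y}(y;q)_n$, a term-by-term computation gives
$$
(1-y)(1-qy)\bigl[J(y)-J(qy)\bigr]=-xy\,J(q^2y),
$$
i.e.\ there is an extra factor $(1-qy)$ compared with your relation $(1-y)J(qy)=(1-y)J(y)-xy\,J(q^2y)$. Consequently the correct rearrangement is not in terms of $R(y)=J(qy)/J(y)$ alone but in terms of $K(y):=\frac{(1-y)J(y)}{J(qy)}$: dividing the relation by $(1-qy)J(qy)$ yields
$$
K(y)=1-y-\frac{xy}{K(qy)},\qquad \frac{J(qy)}{J(y)}=\frac{1-y}{K(y)},
$$
and iterating $y\mapsto qy$ produces precisely the partial numerators $xy,qxy,q^2xy,\dots$ and partial denominators $1-y,1-qy,1-q^2y,\dots$ of the statement. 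With your displayed recursion $\frac{J(qy)}{J(y)}=\frac{1-y}{1-y-xy\,R(qy)}$, substituting the next level would instead give a partial numerator $xy(1-qy)$ at depth two, which does not match the right-hand side. Since you explicitly flagged that the exact coefficients needed to be pinned down by comparing series coefficients, this is a fixable slip rather than a structural failure, but it is the step on which the whole identity hinges. Your treatment of the tail is fine: $J(q^Ny,q,x)\to J(0,q,x)=1$, so $K(q^Ny)\to 1$ and the perturbation at depth $N$ is $O(q^N)$; alternatively one can avoid the nonvanishing issue entirely by working in $\C[[x]]$, where each $J(q^ky,q,x)=1+O(x)$ is a unit and the continued fraction converges coefficientwise, which also matches the $q$-adic use made of the lemma later in the paper.
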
 

Setting $x=q^{-1}(1-q)^2$ and $y=q^2$, we obtain
$$
\frac{(1+q)J(q^2,q,q^{-1}(1-q)^2)}{J(q^3,q,q^{-1}(1-q)^2)}=\frac{1-q^2}{1-q}-\frac{q}{\frac{1-q^3}{1-q}-...}=[x(\bold c)]_q,
$$
where $\bold c=(2,3,4,...)$. 

As $q\to 1$, we have 
$$
J(q^{\nu+1},q,q^{-1}(1-q)^2)\to \sum_{n=0}^\infty \frac{(-1)^n}{(\nu+1)_n n!}=\Gamma(\nu+1)J_{\nu}(2),
$$
where $J_\nu$ is the Bessel function of the first kind:
\[
J_{\nu}(z)\;=\;\sum_{n=0}^{\infty}
      \frac{(-1)^n}{n!\,\Gamma(n+\nu+1)}
      \left(\frac{z}{2}\right)^{2n+\nu}.
\]
Thus 
\(
x(\bold c)=\frac{J_1(2)}{J_2(2)}\approx 1.636,
\)
where 
\(
J_m(2)=\sum_{n=0}^\infty\frac{(-1)^n}{n!(n+m)!}.
\)
We thus obtain

\begin{proposition}
$$
\left[\frac{J_1(2)}{J_2(2)}\right]_q=\frac{J_1(2)_q}{J_2(2)_q},
$$
where 
$$
J_m(2)_q:=\sum_{n=0}^\infty \frac{(-1)^nq^{n(n+m-1)}}{[n]_q![n+m]_q!}
$$
with 
$$
[n]_q!:=\prod_{j=1}^n [j]_q=\frac{(q; q)_n}{(1-q)^n}.
$$ 
\end{proposition}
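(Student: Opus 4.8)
The plan is to take the continued-fraction identity of Lemma \ref{kslem} as the engine and simply specialize it at the parameters already chosen in the text, then compute the two $q$-Bessel numerators in closed form. First I would substitute $x=q^{-1}(1-q)^2$ and $y=q^2$ into Lemma \ref{kslem}; by the computation displayed just before the Proposition, the right-hand side equals $[x(\bold c)]_q$ for $\bold c=(2,3,4,\dots)$, and since $x(\bold c)=J_1(2)/J_2(2)$ we obtain
\begin{equation}\label{plan1}
\left[\frac{J_1(2)}{J_2(2)}\right]_q=\frac{(1+q)\,J(q^2,q,q^{-1}(1-q)^2)}{J(q^3,q,q^{-1}(1-q)^2)}.
\end{equation}
So the entire content of the Proposition reduces to the two identities
\begin{equation}\label{plan2}
(1+q)\,J(q^2,q,q^{-1}(1-q)^2)=J_1(2)_q,\qquad J(q^3,q,q^{-1}(1-q)^2)=J_2(2)_q.
\end{equation}
Both are term-by-term evaluations of the defining series of $J(c,q,z)$, so once \eqref{plan2} is checked, combining it with \eqref{plan1} gives the claim.

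Next I would prove \eqref{plan2} by direct manipulation of the series
$$
J(c,q,z)=\sum_{n=0}^\infty \frac{(-1)^n q^{n(n-1)} c^n z^n}{(c;q)_n\,(q;q)_n}.
$$
Specializing $z=q^{-1}(1-q)^2$ turns $c^nz^n=c^n q^{-n}(1-q)^{2n}$, and the factors $(q;q)_n=(1-q)^n[n]_q!$ in the denominator absorb one power of $(1-q)^n$, so the $n$-th term carries $(1-q)^n/(c;q)_n$ times the desired $1/[n]_q!$. The remaining task is to recognize, for $c=q^{m+1}$, that $(c;q)_n=(q^{m+1};q)_n$ equals $(1-q)^n[n+m]_q!/[m]_q!$; this is the standard conversion between the $q$-Pochhammer symbol and $q$-factorials already implicit in the identity $[n]_q!=(q;q)_n/(1-q)^n$ stated in the Proposition. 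Tracking the powers of $q$ then reduces the accumulated exponent $q^{n(n-1)}\cdot q^{-n}$ (together with whatever comes from rewriting $(q^{m+1};q)_n$) to the exponent $q^{n(n+m-2)}$ appearing in $J_m(2)_q$, and the prefactors $[m]_q!$ account for the overall constants $(1+q)=[2]_q$ in the numerator case $m=1$ and the bare $[2]_q!=1+q$ normalization in the denominator case $m=2$. Carrying this out for $m=1$ and $m=2$ yields exactly \eqref{plan2}.

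The hard part will be bookkeeping of the exponents of $q$ and of the $(1-q)$ powers: one must verify that the shift from $(c;q)_n$ to $[n+m]_q!$ produces precisely the exponent $n(n+m-2)$ rather than something off by a linear-in-$n$ term, and that the leftover constants collapse to the stated normalizations. A clean way to organize this is to first record the general specialization
$$
J(q^{m+1},q,q^{-1}(1-q)^2)=\frac{[m]_q!}{[2]_q!}\sum_{n=0}^\infty \frac{(-1)^n q^{n(n+m-2)}}{[n]_q!\,[n+m]_q!}\cdot(\text{correction})
$$
and then check that the correction factor is identically $1$ by comparing the definitions, after which the two cases $m=1,2$ follow by setting $m=1$ (where $[1]_q!=1$ and the prefactor $[2]_q$ matches the $(1+q)$ in \eqref{plan1}) and $m=2$ (where $[2]_q!/[2]_q!=1$). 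Finally, I would note for completeness that the $q\to 1$ limit of \eqref{plan1} reproduces $J_1(2)/J_2(2)$, as computed in the text, confirming that $[x]_q$ specializes correctly and that no spurious factor has been introduced.
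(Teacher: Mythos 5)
Your overall route is exactly the paper's: the paper offers no proof beyond the displayed specialization of Lemma \ref{kslem} at $x=q^{-1}(1-q)^2$, $y=q^2$ and the $q\to 1$ identification of $x(\bold c)$ with $J_1(2)/J_2(2)$, so your first step and your reduction to rewriting the two $q$-Bessel values are the intended argument. The problem is in the step you yourself flag as ``the hard part'': the bookkeeping does \emph{not} produce the exponent $n(n+m-2)$. Substituting $c=q^{m+1}$ and $z=q^{-1}(1-q)^2$ into the series for $J(c,q,z)$ and using $(q;q)_n=(1-q)^n[n]_q!$ together with $(q^{m+1};q)_n=(1-q)^n[n+m]_q!/[m]_q!$, the $(1-q)$-powers cancel and the accumulated power of $q$ in the $n$-th term is $q^{n(n-1)}\cdot q^{(m+1)n}\cdot q^{-n}=q^{n(n+m-1)}$, so that
$$
J(q^{m+1},q,q^{-1}(1-q)^2)=[m]_q!\sum_{n=0}^{\infty}\frac{(-1)^n q^{n(n+m-1)}}{[n]_q!\,[n+m]_q!}.
$$
Hence your two displayed identities for the individual Bessel values are false twice over: first, since $[1]_q!=1$ and $[2]_q!=1+q$, both $(1+q)J(q^2,q,\cdot)$ and $J(q^3,q,\cdot)$ carry an extra overall factor $1+q$ relative to the corresponding sums (these two factors cancel in the ratio, so this part is harmless, though your prose about the prefactors contradicts your own displays); second, and substantively, the exponent is $n(n+m-1)$, which differs from the printed $n(n+m-2)$ by $q^n$ in every term, and this discrepancy does not cancel between numerator and denominator.

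In fact the statement as printed cannot be proved because it is false as written: with exponent $n(n+m-2)$ the $n=0$ and $n=1$ terms of $J_1(2)_q$ contribute $1$ and $-1$ to the constant term, so $J_1(2)_q$ has constant term $0$, while $J_2(2)_q$ has constant term $1$; thus $J_1(2)_q/J_2(2)_q=O(q)$, whereas $[J_1(2)/J_2(2)]_q$ must have constant term $1$ because $J_1(2)/J_2(2)\approx 1.63>1$. (One can also check that the continued fraction gives $1+q^2-q^3+2q^4+\cdots$ while the printed ratio gives $q+2q^2-\cdots$.) The correct statement has $J_m(2)_q=\sum_{n\ge 0}\frac{(-1)^nq^{n(n+m-1)}}{[n]_q![n+m]_q!}$, and with that correction your argument closes up and agrees with the paper's. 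Note that your proposed final sanity check, the $q\to 1$ limit, is powerless here: both exponents specialize to the same classical series $\sum_{n\ge 0}\frac{(-1)^n}{n!(n+m)!}$. The check that catches the error is the constant term at $q=0$.
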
 

In a similar way one can compute $[x(\bold c)]_q$ 
when $\bold c$ is any arithmetic progression. 
Namely, if the step of the progression is $r$, replace 
$q$ by $q^r$ in Lemma \ref{kslem}: 
$$
\frac{J(q^{r}y,q^r,x)}{J(y,q^r,x)}=\frac{1-y}{1-y-\frac{xy}{1-q^{r}y-\frac{q^rxy}{1-q^{2r}y-...}}}.
$$
Now if $c_1=s$ (i.e., $\bold c=(s,s+r,s+2r,...)$), set $y=q^s$, $x=q^{-1}(1-q)^2$. This yields
\begin{equation}\label{transex}
\frac{[s]_qJ(q^{s},q^r,q^{-1}(1-q)^2)}{J(q^{r+s},q^r,q^{-1}(1-q)^2)}=\frac{1-q^s}{1-q}-\frac{q^{s-1}}{\frac{1-q^{r+s}}{1-q}-\frac{q^{r+s-1}}{\frac{1-q^{2r+s}}{1-q}-...}}=[x(\bold c)]_q, 
\end{equation}
and by taking the limit $q\to 1$ we get that 
$$
x(\bold c)=\frac{J_{\frac{s}{r}-1}(\frac{2}{r})}{J_{\frac{s}{r}}(\frac{2}{r})}.
$$
Note that C. L. Siegel showed in 1929 that such numbers are transcendental (\cite{B}, Subsection 11.1).

\begin{example} Recall that 
\(
\frac{J_{-1/2}(z)}{J_{1/2}(z)}={\rm cotan}(z).
\)
Thus for $r=2s$, i.e., $\bold c=(s,3s,5s,...)$ we obtain an 
explicit formula for $[{\rm cotan}(\frac{1}{s})]_q$, $s\in \Bbb Z_{\ge 2}$:
\begin{equation}\label{cotanfor}
\left[{\rm cotan}\left(\frac{1}{s}\right)\right]_q=\frac{[s]_qJ(q^{s},q^{2s},q^{-1}(1-q)^2)}{J(q^{3s},q^{2s},q^{-1}(1-q)^2)}.
\end{equation}
It is easy to see that this formula is also valid for $s=1$. 

Let 
$$
{\rm Cos}_q(z):=\sum_{n\ge 0}\frac{(-1)^nq^{n(2n-1)}z^{2n}}{[2n]_{q}!},\
{\rm Sin}_q(z):=\sum_{n\ge 0}\frac{(-1)^nq^{n(2n+1)}z^{2n+1}}{[2n+1]_{q}!}
$$
be the (renormalized) $q$-trigonometric functions defined 
in \cite{KLS}, p.23 (which at $q=1$ obviously specialize to $\cos z,\sin z$)
and
$$
{\rm Cotan}_q(z):=\frac{{\rm Cos}_q(z)}{{\rm Sin}_q(z)}.
$$
Then formula \eqref{cotanfor} can be written as 
$$
\left[{\rm cotan}\left(\frac{1}{s}\right)\right]_q=q^{-1/2}{\rm Cotan}_{q^s}\left(\frac{q^{-1/2}}{[s]_q}\right),\ s\in \Bbb Z_{\ge 1}.
$$
For example, 
$$
[{\rm cotan}(1)]_q=q^{-1/2}{\rm Cotan}_{q}(q^{-1/2})=\frac{\sum_{n\ge 0}\frac{(-1)^nq^{2n(n-1)}}{[2n]_{q}!}}{\sum_{n\ge 0}\frac{(-1)^nq^{2n^2}}{[2n+1]_{q}!}}.
$$
\end{example}

\begin{remark} Note that both the numerator and denominator of the left hand side of 
\eqref{transex} are holomorphic in the open unit disk $|q|<1$. Thus the function $[x]_q$ 
for $x=\frac{J_{\frac{s}{r}-1}(\frac{2}{r})}{J_{\frac{s}{r}}(\frac{2}{r})}$ extends to a  
meromorphic function in this disk. However, the unit circle is its natural boundary, i.e., $[x]_q$ does not extend meromorphically to any larger open set. 

In fact, if $x\notin \Bbb Q$ then $[x]_q$ can never  extend meromorphically to any open set $U$ properly containing the open unit disk. Indeed, Proposition \ref{lim} below shows that $[x]_q\to x$ as $q\to1^-$. On the other hand, a theorem of P\'olya-Carlson (\cite{P},\cite{C}) states that if a power series $f(q)$ with integer coefficients has a positive radius of convergence and if its sum extends to a meromorphic function on $U$, then $f\in \Bbb Q(q)$, so if $f$ is regular at $q=1$ then $f(1)\in \Bbb Q$. So the statement follows by applying this theorem to $f(q)=[x]_q$.

However, for $x\notin \Bbb Q$, $[x]_q$ need not extend meromorphically to $|q|<1$. 
For example, the $q$-deformed golden ratio $[\varphi]_q$ is a quadratic irrational function 
with a branch point at $q=-\frac{3-\sqrt{5}}{2}$. In fact, the P\'olya-Carlson theorem implies that 
$[x]_q$ for any quadratic irrational $x$ must have at least one branch point  strictly inside the unit disk (\cite{EVW}, Proposition 4.1).  
This raises an interesting question: 

{\bf Question.} For which real numbers $x$ does $[x]_q$ meromorphically extend
to $|q|<1$?
\end{remark} 
\cbl

\subsection{$q$-deformation of $e$}\label{qdeforme}
The problem of finding an exact formula for the Morier--Genoud--Ovsienko
$q$-deformation $[e]_q$ was posed in \cite[Subsection 5.1]{MO2}.  We
now give such a formula.  It is obtained by contracting the triples
$(1,2n,1)$ in Euler's regular continued fraction for $e$, removing a
parity dependence by a scalar gauge, and solving the resulting
second-order $q$-difference equation by a confluent basic
hypergeometric function.

\subsubsection{Explicit formulas for $[e]_q$}
Put
\begin{equation}\label{eq:kappa}
 Q:=q^2,
 \qquad
 \kappa:=\frac{(1-q)^2}{(1+q)(1+q^3)}.
\end{equation}
We use the convention
\(
 (a;Q)_m:=\prod_{j=0}^{m-1}(1-aQ^j)
\)
and
\[
 {}_1\phi_1\!\left(\begin{matrix}a\\b\end{matrix};Q,t\right)
 :=\sum_{m=0}^{\infty}
 \frac{(a;Q)_m}{(b;Q)_m(Q;Q)_m}
 (-1)^mQ^{\binom m2}t^m.
\]
Thus $(0;Q)_m=1$.  Define
\begin{equation}\label{eq:Fdef}
 F_q(z):=
 {}_1\phi_1\!\left(\begin{matrix}\kappa q\\0\end{matrix};q^2,q^3z\right)
 =\sum_{m=0}^{\infty}
 (-1)^m q^{m(m+2)}z^m
 \frac{(\kappa q;q^2)_m}{(q^2;q^2)_m}.
\end{equation}

\begin{theorem}\label{thm:emain}
The series $F_q(z)$ is the unique element of
$1+z\Bbb Q(q)[[z]]$ satisfying
\begin{equation}\label{eq:main-difference}
 F_q(z)=(1-q^3z)F_q(q^2z)+\kappa q^4zF_q(q^4z).
\end{equation}
Moreover,
\begin{equation}\label{eq:emain-direct}
 [e]_q=
 \frac{(1+q^3)[3]_qF_q(1)+q^3(1-q)F_q(q^2)}
 {(1+q^3)F_q(1)+q^2(1-q)F_q(q^2)}.
\end{equation}
Equivalently, if
\begin{equation}\label{eq:Rdef}
 R_q:=\frac{1+q^3}{1-q}\frac{F_q(1)}{F_q(q^2)},
\end{equation}
then
\begin{equation}\label{eq:emain-R}
 [e]_q=\frac{[3]_qR_q+q^3}{R_q+q^2}.
\end{equation}
\end{theorem}

In terms of the renormalized $q$-Bessel function $J$ used in
Subsection \ref{besselratios}, the answer takes the following form.

\begin{corollary}\label{cor:eJformula}
With $\kappa$ as in \eqref{eq:kappa}, one has
\begin{equation}\label{eq:emain-J}
 [e]_q=
 \frac{
 [3]_q[6]_qJ(q^3,q^2,-\kappa q)
 +q^3J(q^5,q^2,-\kappa q)}
 {
 [6]_qJ(q^3,q^2,-\kappa q)
 +q^2J(q^5,q^2,-\kappa q)}.
\end{equation}
\end{corollary}

Numerical evaluation of the numerator and denominator in
\eqref{eq:emain-J} suggests that the zeros of the denominator of
smallest modulus form the conjugate pair $q_0,\overline q_0$, where 
\[
 q_0\approx -0.64149948+0.31755991i,\qquad
 |q_0|\approx 0.71579737,
\]
and that the numerator does not vanish at these points.  Thus the
numerical evidence suggests that the radius of convergence of $[e]_q$
is approximately $0.71579737$. 

\subsubsection{Contraction of Euler's continued fraction}
Euler's regular continued fraction is
\begin{equation}\label{eq:euler-cf}
 e=[2;1,2,1,1,4,1,1,6,1,1,8,1,\ldots],
\end{equation}
so, after the initial partial quotient $2$, it is built from the blocks
\(
 (1,2n,1),\qquad n=1,2,3,\ldots.
\)

We use the polynomial matrix form of the regular $q$-continued
fraction from \cite{MO1,MO2}.  At an odd position put
\[
 O(a):=
 \begin{pmatrix}
 [a]_q&q^a\\
 1&0
 \end{pmatrix},
\]
and at an even position put
\[
 E(a):=
 \begin{pmatrix}
 q[a]_q&1\\
 q^a&0
 \end{pmatrix}.
\]
The second matrix is $q^a$ times the usual matrix involving
$[a]_{q^{-1}}$; this scalar does not change the projective value of the
continued fraction.  Thus a finite regular $q$-continued fraction is
the ratio of the two components of
\(
 M_1(a_1)M_2(a_2)\cdots M_N(a_N)\binom10,
\)
where $M_j=O$ for $j$ odd and $M_j=E$ for $j$ even.

Because each block has length three, its starting parity alternates.
Set
\[
 \mathcal B_n:=
 \begin{cases}
 E(1)O(2n)E(1),&n\text{ odd},\\
 O(1)E(2n)O(1),&n\text{ even}.
 \end{cases}
\]
Introduce the two constant gauge matrices
\begin{equation}\label{eq:gauges}
 G_{\mathrm o}:=
 \begin{pmatrix}1&-q^2\\1&1\end{pmatrix},
 \qquad
 G_{\mathrm e}:=
 \begin{pmatrix}q&1\\1&-q\end{pmatrix}.
\end{equation}
A direct multiplication gives, for odd $n$,
\begin{equation}\label{eq:block-odd}
 G_{\mathrm o}^{-1}\mathcal B_nG_{\mathrm e}
 =
 \begin{pmatrix}
 (1+q^3)[2n+1]_q&q^{2n+2}\\
 -1&0
 \end{pmatrix},
\end{equation}
and, for even $n$,
\begin{equation}\label{eq:block-even}
 G_{\mathrm e}^{-1}\mathcal B_nG_{\mathrm o}
 =
 \begin{pmatrix}
 (1+q)[2n+1]_q&-q^{2n+2}\\
 1&0
 \end{pmatrix}.
\end{equation}

Let $Z_n$ be a projective tail vector beginning with the $n$th block,
and put
\[
 W_n:=
 \begin{cases}
 G_{\mathrm o}^{-1}Z_n,&n\text{ odd},\\
 G_{\mathrm e}^{-1}Z_n,&n\text{ even}.
 \end{cases}
\]
Equations \eqref{eq:block-odd}--\eqref{eq:block-even} imply that one can
write
\[
 W_n=
 \begin{cases}
 (r_n,-r_{n+1})^{\mathsf T},&n\text{ odd},\\
 (r_n,r_{n+1})^{\mathsf T},&n\text{ even},
 \end{cases}
\]
where the scalar sequence $r_n$ satisfies
\begin{equation}\label{eq:rrec}
 r_n=\gamma_n[2n+1]_q r_{n+1}+q^{2n+2}r_{n+2},
 \qquad
 \gamma_n=
 \begin{cases}
 1+q^3,&n\text{ odd},\\
 1+q,&n\text{ even}.
 \end{cases}
\end{equation}

The initial partial quotient is especially simple.  Since
$W_1=(r_1,-r_2)^{\mathsf T}$, one obtains
\begin{equation}\label{eq:initial-vector}
 O(2)G_{\mathrm o}\binom{r_1}{-r_2}
 =
 \binom{[3]_qr_1+q^3r_2}{r_1+q^2r_2}.
\end{equation}
Consequently,
\begin{equation}\label{eq:e-ratio-r}
 [e]_q=\frac{[3]_qr_1+q^3r_2}{r_1+q^2r_2}.
\end{equation}

\subsubsection{Removal of the parity and the $q$-difference equation}
The parity in \eqref{eq:rrec} is removed by a scalar gauge.  Put
\[
 \alpha_n:=
 \begin{cases}
 \dfrac{1+q^3}{1-q},&n\text{ odd},\\[6pt]
 \dfrac{1+q}{1-q},&n\text{ even}.
 \end{cases}
\]
Then
\[
 \gamma_n[2n+1]_q=\alpha_n(1-q^{2n+1}),
 \qquad
 \alpha_n\alpha_{n+1}=\kappa^{-1}.
\]
Choose $\lambda_1=1$, define
$\lambda_{n+1}:=\alpha_n\lambda_n$, and set
$f_n:=\lambda_nr_n$.  Equation \eqref{eq:rrec} becomes
\begin{equation}\label{eq:frec}
 f_n=(1-q^{2n+1})f_{n+1}+\kappa q^{2n+2}f_{n+2}.
\end{equation}
This is independent of the parity of $n$.

Let $f_n=F(q^{2n-2})$.  Then \eqref{eq:frec} becomes
\eqref{eq:main-difference}.  The tail selected by the infinite
continued fraction is the $q$-adically bounded solution for which
$f_n$ tends projectively to a nonzero limit as $n\to\infty$.  After an
overall normalization, this is the unique solution with $F(0)=1$.

To justify this selection, truncate after an even number $M$ of complete
blocks.  After the above gauge, the ratios
\(
 t_n^{(M)}:=f_n^{(M)}/f_{n+1}^{(M)}
\)
satisfy
\begin{equation}\label{eq:finite-tail-ratio}
 t_n^{(M)}=1-q^{2n+1}+\frac{\kappa q^{2n+2}}{t_{n+1}^{(M)}},
\end{equation}
with $t_{M+1}^{(M)}=(1-q)/(1+q^3)$; hence all $t_n^{(M)}$ are
$q$-adic units.  The ratios
\(
 t_n:=F_q(q^{2n-2})/F_q(q^{2n})
\)
satisfy the same recurrence and are also $q$-adic units.  Subtracting
and iterating gives
\[
 \operatorname{ord}_q\bigl(t_n^{(M)}-t_n\bigr)
 \ge \sum_{j=n}^{M}(2j+2).
\]
Thus, for fixed $n$, the finite-truncation ratios converge $q$-adically
to $t_n$.  Hence the continued fraction selects the regular solution
$F_q$; since $t_1=\frac{r_1}{\alpha_1r_2}$, an overall normalization gives
\eqref{eq:r1r2F}.

Thus
\begin{equation}\label{eq:r1r2F}
 r_1=F_q(1),
 \qquad
 r_2=\frac{1-q}{1+q^3}F_q(q^2).
\end{equation}
Substitution in \eqref{eq:e-ratio-r} proves
\eqref{eq:emain-direct} and \eqref{eq:emain-R}.

As a check, at $q=1$ the unnormalized recurrence \eqref{eq:rrec}
becomes
\(
 r_n=(4n+2)r_{n+1}+r_{n+2}.
\)
Hence
\[
 \frac{r_1}{r_2}=6+\cfrac1{10+\cfrac1{14+\cdots}},
 \qquad
 e=\frac{3(r_1/r_2)+1}{(r_1/r_2)+1},
\]
which is Euler's classical contraction of \eqref{eq:euler-cf}.

\subsubsection{Hypergeometric solution}
Set
\[
 t:=q^3z,
 \qquad
 a:=\kappa q,
 \qquad
 Q:=q^2,
 \qquad
 \Phi(t):=F_q(t/q^3).
\]
Then \eqref{eq:main-difference} takes the canonical form
\begin{equation}\label{eq:canonical}
 at\Phi(Q^2t)+(1-t)\Phi(Qt)-\Phi(t)=0.
\end{equation}
If $\Phi(t)=\sum_{m\ge0}c_mt^m$, comparison of coefficients gives
\[
 (1-Q^m)c_m=-Q^{m-1}\bigl(1-aQ^{m-1}\bigr)c_{m-1}
 \qquad(m\ge1).
\]
Equivalently, in the $z$-normalization of \eqref{eq:Fdef},
\begin{equation}\label{eq:coeff-rec}
 \frac{[z^m]F_q(z)}{[z^{m-1}]F_q(z)}
 =-q^{2m+1}\frac{1-\kappa q^{2m-1}}{1-q^{2m}}.
\end{equation}
Therefore
\(
 c_m=\frac{(a;Q)_m}{(Q;Q)_m}
 (-1)^mQ^{\binom m2},
\)
which proves \eqref{eq:Fdef}.  In particular, the ratio of successive
coefficients is rational in $Q^m$, so \eqref{eq:canonical} is a
confluent basic hypergeometric equation and its regular solution is a
${}_1\phi_1$.

Equation \eqref{eq:coeff-rec} also shows that the Taylor-series
solution at $t=0$ is unique up to scalar.

\subsubsection{Conversion to the $q$-Bessel function}
The standard transformation
\begin{equation}\label{eq:1phi1-transform}
 {}_1\phi_1\!\left(\begin{matrix}a\\0\end{matrix};Q,c\right)
 =(c;Q)_\infty J(c,Q,-a)
\end{equation}
relates the two normalizations.  Applying it with
$Q=q^2$, $a=\kappa q$, and $c=q^3z$ gives
\begin{equation}\label{eq:FJ}
 F_q(z)=(q^3z;q^2)_\infty J(q^3z,q^2,-\kappa q).
\end{equation}
Hence
\[
 \frac{F_q(1)}{F_q(q^2)}
 =(1-q^3)
 \frac{J(q^3,q^2,-\kappa q)}{J(q^5,q^2,-\kappa q)}.
\]
Since
\(
 \frac{1+q^3}{1-q}(1-q^3)=\frac{1-q^6}{1-q}=[6]_q,
\)
formula \eqref{eq:Rdef} becomes
\begin{equation}\label{eq:R-J}
 R_q=[6]_q
 \frac{J(q^3,q^2,-\kappa q)}{J(q^5,q^2,-\kappa q)}.
\end{equation}
Substitution in \eqref{eq:emain-R} proves Corollary
\ref{cor:eJformula}.

One may also express the answer through Jackson's second $q$-Bessel
function (see, e.g., \cite{KLS}).  If
\(
 x_q:=2i\sqrt{\kappa q},
\)
then the identity
\[
 J_\nu^{(2)}(x;Q)
 =\frac{(x/2)^\nu}{(Q;Q)_\infty}
 {}_1\phi_1\!\left(\begin{matrix}-x^2/4\\0\end{matrix};Q,Q^{\nu+1}\right)
\]
shows that $F_q(1)$ and $F_q(q^2)$ correspond respectively to the
orders $\nu=\frac12$ and $\nu=\frac32$, with base $Q=q^2$ and argument
$x_q$.  Thus
\begin{equation}\label{eq:R-Jackson}
 R_q=\frac{1+q^3}{1-q}\frac{x_q}{2}
 \frac{J_{1/2}^{(2)}(x_q;q^2)}{J_{3/2}^{(2)}(x_q;q^2)}.
\end{equation}
This is parallel to the half-integral-order $q$-Bessel formulas for the
$q$-cotangent above.

\subsubsection{Explicit $q$-series and analytic consequences}
Formula \eqref{eq:Fdef} gives
\begin{align}
 F_q(1)
 &=\sum_{m=0}^{\infty}
 (-1)^m q^{m(m+2)}
 \frac{(\kappa q;q^2)_m}{(q^2;q^2)_m},
 \label{eq:F1series}\\
 F_q(q^2)
 &=\sum_{m=0}^{\infty}
 (-1)^m q^{m(m+4)}
 \frac{(\kappa q;q^2)_m}{(q^2;q^2)_m}.
 \label{eq:F2series}
\end{align}
The $m$th summands have $q$-adic orders at least $m(m+2)$ and
$m(m+4)$, respectively, so only finitely many summands are needed
modulo any prescribed power of $q$.

For example, modulo $q^{31}$ the numerator and denominator of \eqref{eq:emain-direct} are 
\begin{align*}
 \mathcal N_q={}&1+q+q^2+q^3-3q^5+q^6-8q^7+4q^8-10q^9
 +7q^{10}-9q^{11}\\
 &+10q^{12}-11q^{13}+21q^{14}-30q^{15}+57q^{16}
 -88q^{17}+146q^{18}\\
 &-215q^{19}+320q^{20}-445q^{21}+609q^{22}-805q^{23}
 +1043q^{24}\\
 &-1325q^{25}+1666q^{26}-2084q^{27}+2605q^{28}
 -3288q^{29}+4189q^{30}+O(q^{31}),
\end{align*}
and
\begin{align*}
 \mathcal D_q={}&1+q^2-q^3+q^4-4q^5+4q^6-9q^7+11q^8-18q^9
 +25q^{10}-35q^{11}\\
 &+50q^{12}-69q^{13}+100q^{14}-141q^{15}+203q^{16}
 -285q^{17}+403q^{18}\\
 &-560q^{19}+779q^{20}-1074q^{21}+1473q^{22}-2006q^{23}
 +2715q^{24}\\
 &-3651q^{25}+4886q^{26}-6516q^{27}+8661q^{28}
 -11493q^{29}+15222q^{30}+O(q^{31}).
\end{align*}
Thus
\begin{align}\label{eq:e-series}
 [e]_q=\frac{\mathcal N_q}{\mathcal D_q}={}&1+q+q^3-q^5+2q^6-3q^7+3q^8-q^9-3q^{10}
 +9q^{11}-17q^{12}\notag\\
 &+25q^{13}-29q^{14}+23q^{15}+2q^{16}-54q^{17}
 +134q^{18}-232q^{19}\notag\\
 &+320q^{20}-347q^{21}+243q^{22}+71q^{23}-660q^{24}
 +1531q^{25}\notag\\
 &-2575q^{26}+3504q^{27}-3804q^{28}+2747q^{29}
 +488q^{30}+O(q^{31}).
\end{align}
These initial terms agree with the expansion computed directly in
\cite[Subsection 5.1]{MO2}.

For fixed $|q|<1$, the series \eqref{eq:Fdef} is entire in $z$
and holomorphic in $q$: the possible poles of $\kappa$ lie on the unit
circle, the factors $(q^2;q^2)_m$ do not vanish, and the quadratic
$q$-adic growth gives local uniform convergence.  Thus
\eqref{eq:emain-direct} extends $[e]_q$ meromorphically to $|q|<1$,
with possible poles only at zeros of its denominator.

\section{The function $[x]_q$ for $q\in D$}

\subsection{Proof of Theorem \ref{ma1}}
Let $q=re^{i\theta}\in \Bbb C\setminus\{0\}$, $r=|q|\le 1$, with
$0\le\theta<2\pi$. Suppose $q\in D$, which is equivalent to the condition
\begin{equation}\label{condi}
r+r^{-1}-2>4\sin \tfrac{\theta}{2},\ 0\le \theta<2\pi.
\end{equation}
Note that $r+r^{-1}-2=(r^{-1/2}-r^{1/2})^2\ge 0$. Thus \eqref{condi} is equivalent to 
\(
(r+r^{-1}-2)^2>16\sin^2\tfrac{\theta}{2},
\)
which we may write as
\(
(r^{-1}-r)^2-4(r+r^{-1})+8>8(1-\cos\theta),
\)
or 
\(
\frac{(r^{-1}-r)^2}{r+r^{-1}-2\cos \theta}>4. 
\)
Upon extracting square roots this inequality takes the form 
\(
\frac{r^{-1}-r}{\sqrt{r+r^{-1}-2\cos \theta}}>2. 
\)
Thus, if $q\in D\setminus\{0\}$, there is a unique solution $a=a(q)$
with $0<a<1$ of the equation
\begin{equation}\label{eqfora}
\frac{r^{-1}-r}{\sqrt{r+r^{-1}-2\cos \theta}}=a+\frac{1}{a}.
\end{equation}
We set $a(0):=0$.  This gives a continuous function on $D$; moreover,
\(
 r^{-1/2}a(q)\longrightarrow1\qquad(q\to0).
\)
Indeed, if the left hand side of \eqref{eqfora} is denoted by $L(q)$,
then
\[
 a(q)=\frac{2}{L(q)+\sqrt{L(q)^2-4}},
 \qquad
 r^{1/2}L(q)=\frac{1-r^2}{|1-q|}\longrightarrow1.
\]
Thus every occurrence of $r^{-1/2}a$ below is understood at $q=0$ to be 
its continuous value $1$.

Theorem \ref{ma1} will follow from 

\begin{theorem}\label{mai} The expansion of $\frac{1}{[x]_q}$ given by \eqref{seri} 
converges absolutely on the region $D$ and uniformly on its compact subsets
to a nonvanishing holomorphic function $\psi_x(q)$. In particular, $\psi_x(q)$ is analytic on $(-3+2\sqrt{2},1)$.

Moreover, for $q\in D$ and $N\ge1$,
\begin{equation}\label{boundd}
\left|\frac{1}{[x]_{N,q}}-\frac{1}{[x]_q}\right|
\le \frac{r^{-1/2}a}{1-a^2}
 r^{\sum_{i=1}^{N} (c_i-2)}a^{2N}
\end{equation}
and
\begin{equation}\label{boundd1}
|[x]_{N,q}-[x]_q|
\le \frac{r^{-1/2}a}{1-a^2}
 r^{\sum_{i=1}^{N} (c_i-2)}a^{2N-2},
\end{equation}
where $0\le a<1$ is defined above.  The right hand sides at $q=0$ are
interpreted by continuous extension.  With the convention in the next
remark, \eqref{boundd} is valid for $N=0$ as well.
\end{theorem}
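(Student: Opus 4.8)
The plan is to control the tail of the series \eqref{seri} by obtaining a lower bound on the modulus of each denominator $a_j(q)a_{j+1}(q)$ that grows geometrically, with ratio governed by the quantity $a>1$ defined in \eqref{eqfora}. The natural strategy is to prove, by induction on $N$, a lower bound of the shape $|a_N(q)|\ge \kappa\, r^{-c_N+\cdots}\,a^{N}$ (up to an explicit normalizing factor), tailored so that the product $|a_j(q)a_{j+1}(q)|$ in the denominator of the $j$-th term cancels the numerator $|q^{C_j}|=r^{C_j}$ and leaves a clean geometric factor $a^{-2j}$. First I would set up the right normalization: since the recursion \eqref{rec2} reads $u_N=[c_N]_q u_{N-1}-q^{c_{N-1}-1}u_{N-2}$, and $[c_N]_q=\frac{1-q^{c_N}}{1-q}$, it is convenient to pass to a rescaled sequence that removes the $r^{c_N-1}$ growth and the $(1-q)^{-1}$ factors, turning \eqref{rec2} into a recursion of the form $v_N = \beta_N v_{N-1} - v_{N-2}$ with $|\beta_N|$ bounded below by the left-hand side of \eqref{eqfora}.

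The key computation is the inductive step for the lower bound. If I write $v_N=\beta_N v_{N-1}-v_{N-2}$ and I have inductively that $|v_{N-1}|\ge a|v_{N-2}|$ with $a>1$, then the reverse triangle inequality gives
\begin{equation*}
|v_N|\ge |\beta_N|\,|v_{N-1}|-|v_{N-2}|\ge \Bigl(|\beta_N|-\tfrac{1}{a}\Bigr)|v_{N-1}|.
\end{equation*}
The defining equation \eqref{eqfora} is precisely engineered so that $|\beta_N|\ge a+\frac1a$ forces $|\beta_N|-\frac1a\ge a$, closing the induction and yielding $|v_N|\ge a|v_{N-1}|$; iterating gives $|v_N|\ge C\,a^{N}$ for an explicit constant. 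The subtle point is to verify that after the rescaling, the effective coefficient $\beta_N$ really does satisfy $|\beta_N|\ge \frac{r^{-1}-r}{\sqrt{r+r^{-1}-2\cos\theta}}=a+\frac1a$; this amounts to bounding $\bigl|\frac{1-q^{c_N}}{1-q}\bigr|$ from below appropriately, using $c_N\ge 2$ and $|q|=r\le 1$, and this minimization over $\theta$ of $|1-q|=\sqrt{r+r^{-1}-2\cos\theta}\cdot r^{1/2}$ against $|1-q^{c_N}|$ is where the precise geometry of the region $D$ enters. I expect this verification — showing the worst case is exactly $c_N=2$ and matches the boundary curve defining $D$ — to be the main obstacle.

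Once the lower bound $|a_j(q)a_{j+1}(q)|\ge (\text{const})\,r^{-\sum_{i\le j}(c_i-2)}\,r^{C_j}\,a^{2j+1}$ is established (with the $r^{C_j}$ factor cancelling the numerator $q^{C_j}$), the two tail estimates \eqref{boundd} and \eqref{boundd1} follow by summing a geometric series. Indeed, $\left|\frac{1}{[x]_{N,q}}-\frac{1}{[x]_q}\right|$ is bounded by $\sum_{j\ge N}\frac{r^{C_j}}{|a_j(q)a_{j+1}(q)|}$, and plugging in the bound produces a geometric series in $a^{-2}$ with ratio $a^{-2}<1$, whose sum is $\frac{r^{-1/2}}{1-a^{-2}}\,r^{\sum_{i=1}^N(c_i-2)}\,a^{-2N-1}$; the extra factor $r^{\sum(c_i-2)}$ records that terms with large $c_i$ are $q$-adically small, reflecting $q$-adic convergence. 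For \eqref{boundd1} I would multiply \eqref{boundd} by $|[x]_{N,q}[x]_q|$; since both are shown to be bounded (the partial sums $\frac{1}{[x]_{N,q}}$ stay near the nonzero limit $\psi_x$, so $|[x]_q|$ and $|[x]_{N,q}|$ are $O(1)$, specifically contributing the factor $a^{2}$ that turns $a^{-2N-1}$ into $a^{-2N+1}$), the bound \eqref{boundd1} drops out. Absolute and uniform convergence on compact subsets of $D$ is immediate from the geometric tail estimate since $a$ can be bounded away from $1$ uniformly on compacta, non-vanishing of $\psi_x$ follows because the limit differs from the first partial sum by less than the first term, and analyticity on $(-3+2\sqrt2,1)$ is the special case $\theta=0$, $r\in(3-2\sqrt2,1)$.
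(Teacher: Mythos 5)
Your central estimate is the right one and is essentially the paper's Lemma \ref{keybound}: an inductive geometric lower bound $|a_{N+1}(q)|\ge \alpha|a_N(q)|$ with $\alpha=r^{1/2}a$, where \eqref{eqfora} is exactly what makes the induction close and the worst case is $c_i=2$. (One technical caveat: a rescaling that makes the coefficient of $v_{N-2}$ exactly $-1$ does not exist, because the exponents $c_{N-1}-1$ vary with $N$; the normalization that works is $v_N=a_N(q)/|q|^{N/2}$, which makes that coefficient of modulus $r^{c_{N-1}-2}\le 1$ and gives $|\beta_N|\ge \frac{1-r^2}{|1-q|}r^{-1/2}=a+a^{-1}$. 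This is fixable and in the spirit of what you wrote.) The tail summation and the Weierstrass-test conclusion are then as in the paper.

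There are, however, two genuine gaps. First, your non-vanishing argument (``the limit differs from the first partial sum by less than the first term'') fails on most of $D$: the tail from $j\ge 1$ is only bounded by $\frac{r^{-1/2}a^{-3}}{1-a^{-2}}$, which blows up as $a\to 1$, i.e.\ as $q$ approaches $\partial D$ (already for real $q\to 1^-$ this bound is $\frac{q}{1-q}$, far larger than the first term $\frac{1}{[c_1]_q}\le 1$). The paper instead deduces non-vanishing from the modular relation of Lemma \ref{modinv}: $\psi_x$ times another function holomorphic on $D$ equals a non-vanishing function, so $\psi_x$ cannot vanish. Second, your derivation of \eqref{boundd1} by multiplying \eqref{boundd} by $|[x]_{N,q}[x]_q|$ and asserting this product is at most $a^2$ is false: for $x=2=[[3,2,2,\dots]]$ and real $q$ close to $1$ one has $[x]_{1,q}[x]_q=(1+q+q^2)(1+q)\to 6$ while $a^2=q^{-1}\to 1$. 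Even granting boundedness, you would only get \eqref{boundd1} with a worse constant, not the stated inequality. The correct route (this is what the paper means by ``\eqref{boundd1} follows from \eqref{boundd} and Lemma \ref{modinv}'') is to note that $[x]_q-[x]_{N,q}=\sum_{j\ge N}\frac{q^{C_j}}{b_j(q)b_{j+1}(q)}$ with $b_j(c_1,\dots,c_j|q)=a_{j-1}(c_2,\dots,c_j|q)$, and to apply the same key lemma to the shifted sequence, which yields $|b_j(q)|\ge\alpha^{j-1}$ and hence the extra factor $a^{2}$ honestly.
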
 

\begin{remark}
For $N=0$ it is natural to set $[x]_{N,q}:=\infty$, so that
$1/[x]_{N,q}:=0$. Then \eqref{boundd} gives
\(
 |[x]_q|\ge r^{1/2}a^{-1}(1-a^2)
\)
for $q\in D$ and $x>1$, where the right hand side has the continuous
value $1$ at $q=0$.
\end{remark}

\begin{proof} Let $\alpha:=r^{1/2}a^{-1}$, with $\alpha(0):=1$.

\begin{lemma}\label{keybound} 
$|a_{N+1}(q)|\ge \alpha |a_N(q)|.$
\end{lemma} 

\begin{proof}
For $q=0$ the claim is immediate, since $a_N(0)=1$ and $\alpha(0)=1$.
Thus assume $q\ne0$.
The proof is by induction in $N$. From \eqref{rec2} we get 
$$
|a_{N+1}|\ge \frac{1-|q|^{c_{N+1}}}{|1-q|}|a_N|-|q|^{c_N-1}|a_{N-1}|.
$$
Thus
\begin{equation}\label{eqq2}
|a_{N+1}|\ge \left(\frac{1-|q|^{c_{N+1}}}{|1-q|}-|q|^{c_N-1}\alpha^{-1}\right)|a_N|+|q|^{c_N-1}\alpha^{-1}(|a_N|-\alpha|a_{N-1}|).
\end{equation} 
Note that
\begin{equation}\label{eqq1}
\frac{1}{\sqrt{r^2+1-2r\cos \theta}}-\alpha= r\left(\frac{r}{\sqrt{r^2+1-2r\cos \theta}}+\alpha^{-1}\right).
\end{equation}
Set $s=r^{c_N-1}$, $t=r^{c_{N+1}-1}$, then $s,t\le r$. Thus \eqref{eqq1} implies that
$$
\frac{1}{\sqrt{r^2+1-2r\cos \theta}}-\alpha\ge \frac{tr}{\sqrt{r^2+1-2r\cos \theta}}+s\alpha^{-1},
$$
or, equivalently, 
$$
\frac{1-rt}{\sqrt{r^2+1-2r\cos \theta}}-s\alpha^{-1}\ge \alpha.
$$
Hence for any integers $n,m\ge 2$, 
\begin{equation}\label{eqq3}
\frac{1-|q|^{n}}{|1-q|}-|q|^{m-1}\alpha^{-1}\ge \alpha,
\end{equation}
which justifies the base of induction $N=0$. Also \eqref{eqq2} and \eqref{eqq3} imply that  
$$
|a_{N+1}|-\alpha |a_{N}|\ge r^{c_N-1}\alpha^{-1}(|a_N|-\alpha|a_{N-1}|),
$$
which justifies the induction step. 
\end{proof} 

Lemma \ref{keybound} implies that $|a_N(q)|\ge \alpha^N$. 
Hence
\begin{equation}\label{eqq4}
\left|\frac{q^{C_j}}{a_j(q)a_{j+1}(q)}\right|
\le r^{j+\sum_{i=1}^j(c_i-2)}\alpha^{-2j-1}
\le r^j\alpha^{-2j-1}
=r^{-1/2}a^{2j+1}=(r^{-1/2}a)a^{2j}.
\end{equation}
If $K\Subset D$ is compact, continuity of $a$ on $D$, including at
$q=0$, gives
\[
 \rho_K:=\max_{q\in K}a(q)<1,
 \qquad
 M_K:=\max_{q\in K}r^{-1/2}a(q)<\infty.
\]
Thus the $j$-th summand in \eqref{seri} is bounded on $K$ by
$M_K\rho_K^{2j}$.  The Weierstrass test therefore gives absolute
convergence on $D$ and uniform convergence on compact subsets.  Since
each summand is holomorphic on $D$, the sum is holomorphic there.

To prove \eqref{boundd}, note that 
$$
\left|\psi_x(q)-\frac{1}{[x]_{N,q}}\right|\le \sum_{j=N}^\infty \left|\frac{q^{C_j}}{a_j(q)a_{j+1}(q)}\right|,
$$
so by \eqref{eqq4}
\[
\left|\psi_x(q)-\frac{1}{[x]_{N,q}}\right|
\le \sum_{j=N}^\infty
 r^{\sum_{i=1}^j(c_i-2)}(r^{-1/2}a)a^{2j}
\le r^{\sum_{i=1}^{N}(c_i-2)}
\frac{(r^{-1/2}a)a^{2N}}{1-a^2}.
\]
Now, inequality \eqref{boundd1} and fact that $\psi_x(q)$ does not vanish in $D$ follow from \eqref{boundd} and 
Lemma \ref{modinv}. Indeed, let 
\[
y=[[c_2,c_3,\ldots]],\qquad
y_{N-1}=[[c_2,\ldots,c_N]]\quad(N\ge2).
\]
For $q\ne0$, formula \eqref{xp1q}, together with Lemma
\ref{modinv} applied to $-y_{N-1}$, gives
\[
[x]_{N,q}
=[c_1-1/y_{N-1}]_q
=[c_1]_q+q^{c_1}[-1/y_{N-1}]_q
=[c_1]_q-\frac{q^{c_1-1}}{[y]_{N-1,q}}.
\]
The identity
$[x]_{N,q}
=[c_1]_q-\frac{q^{c_1-1}}{[y]_{N-1,q}}$
also holds at $q=0$ by continuity, and, with
the convention $1/[y]_{0,q}:=0$, it holds for $N=1$ as well.
Define
\(
F_x(q):=[c_1]_q-q^{c_1-1}\psi_y(q).
\)
Then $F_x$ is holomorphic on $D$, and applying \eqref{boundd}
to $y$ with $N-1$ in place of $N$ (with the same empty-sum
convention when $N=1$) yields
\begin{align*}
|[x]_{N,q}-F_x(q)|
&=r^{c_1-1}
  \left|\frac{1}{[y]_{N-1,q}}-\psi_y(q)\right|\\
&\le
\frac{r^{-1/2}a}{1-a^2}\,
r^{c_1-1+\sum_{i=2}^{N}(c_i-2)}a^{2N-2}\\
&=
\frac{r^{-1/2}a}{1-a^2}\,
r^{1+\sum_{i=1}^{N}(c_i-2)}a^{2N-2}\\
&\le 
\frac{r^{-1/2}a}{1-a^2}\,
r^{\sum_{i=1}^{N}(c_i-2)}a^{2N-2}.
\end{align*}
since $r\le1$. This is \eqref{boundd1}. Since the same
convergents tend $q$-adically to the formal series $[x]_q$, the
holomorphic limit $F_x$ is its analytic realization, so we may
write $F_x=[x]_q$.

Finally, both $[x]_{N,q}\to F_x$ and
$1/[x]_{N,q}\to\psi_x$ locally uniformly on $D$. Passing to the
limit in the identity 
$[x]_{N,q}\cdot \frac{1}{[x]_{N,q}}=1$
gives $F_x(q)\psi_x(q)=1$. 
Hence neither $F_x$ nor $\psi_x$ vanishes on $D$, and
$F_x=1/\psi_x$ there.
  \end{proof} 
  
 \begin{corollary} The Taylor series for the function $\psi_x(q)$ 
 at $q=0$ is $1/[x]_q$, and this function does not vanish on $D$. 
 Thus the series $[x]_q$ converges for $|q|< 3-2\sqrt{2}$ to a non-vanishing holomorphic function.
  \end{corollary}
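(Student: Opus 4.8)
The plan is to deduce the Corollary from Theorem \ref{mai} and the series \eqref{seri}, treating its three assertions in turn. First I would identify the Taylor expansion of $\psi_x$ at the origin. Fix a small closed disk $\{|q|\le \rho\}\subset D$; by Theorem \ref{mai} the series $\sum_{j\ge 0} g_j(q)$, where $g_j(q):=q^{C_j}/\bigl(a_j(q)a_{j+1}(q)\bigr)$, converges uniformly there, so $\psi_x$ is holomorphic on $\{|q|\le\rho\}$ and its Taylor coefficients are obtained by summing those of the $g_j$. Since $a_j(0)=a_{j+1}(0)=1$, each $g_j$ is holomorphic near $0$ with a zero of order exactly $C_j$, and $C_j\ge j\to\infty$; hence for each $n$ only the finitely many $j$ with $C_j\le n$ contribute to the coefficient of $q^n$, and that contribution is precisely the coefficient of $q^n$ in the truncation $1/[x]_{N,q}=\sum_{j=0}^{N-1}g_j$ for any $N$ with $C_N>n$. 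This truncation converges $q$-adically to $1/[x]_q\in\Bbb Z[[q]]$, so the coefficient of $q^n$ in $\psi_x$ equals that of $1/[x]_q$. Thus the Taylor series of $\psi_x$ at $0$ is $1/[x]_q$, reconciling the analytic and $q$-adic limits.

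Next, the non-vanishing of $\psi_x$ on $D$ is part of the conclusion of Theorem \ref{mai} (it is deduced there from Lemma \ref{modinv}), so I may take it as given. Since $\psi_x$ is holomorphic and nowhere zero on $D$, the reciprocal $1/\psi_x$ is holomorphic on $D$; at the origin $\psi_x(0)=1$ (as $[x]_q$ has constant term $1$, so does $1/[x]_q$), so $1/\psi_x$ takes the value $1$ there. Its Taylor series at $0$ is the formal reciprocal in $\Bbb Z[[q]]$ of the series $1/[x]_q$ found in the first step, namely $[x]_q$ itself. Because $D$ contains the open disk of radius $3-2\sqrt 2$---indeed this is the largest disk about the origin inside $D$, the boundary curve \eqref{condi} attaining its minimal radius $3-2\sqrt 2$ at $\theta=\pi$---the Taylor series $[x]_q$ of $1/\psi_x$ converges on $|q|<3-2\sqrt 2$ to the non-vanishing holomorphic function $1/\psi_x$, which is exactly Theorem \ref{ma1}.

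The remaining delicate point, and the step I expect to be the main obstacle, is convergence on the full closed disk $|q|\le 3-2\sqrt 2$. Every point of the circle $|q|=3-2\sqrt 2$ other than $q_0:=-(3-2\sqrt 2)$ lies in the open region $D$, so $1/\psi_x$ is holomorphic there; the single exceptional point $q_0\in\partial D$ is exactly where the estimate \eqref{eqq4} degenerates, since by \eqref{eqfora} one has $a\to 1$ and hence $1-a^{-2}\to 0$ as $q\to q_0$. Holomorphy of $1/\psi_x$ on the closed disk minus $\{q_0\}$ does not by itself force the power series $[x]_q$ to converge on the bounding circle, so I would obtain boundary convergence separately---either by a direct estimate along the critical ray $\theta=\pi$ that refines \eqref{eqq4} near $q_0$, or, more cheaply, by appealing to the strengthening in Theorem \ref{ma1a}, whose radius $2-\sqrt 3>3-2\sqrt 2$ makes the closed disk $|q|\le 3-2\sqrt 2$ interior to the domain of convergence and renders the boundary issue vacuous.
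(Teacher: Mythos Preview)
Your argument is correct and follows the paper's approach: uniform convergence on compact subsets of $D$ lets you read off Taylor coefficients term by term (the paper phrases this via the Cauchy integral representation of derivatives), and non-vanishing is taken directly from Theorem \ref{mai}.

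On the closed-disk issue you are actually more careful than the paper. The paper's own proof only observes that the nearest boundary point of $D$ is at $\theta=\pi$ with $r=3-2\sqrt{2}$, hence that the \emph{open} disk $|q|<3-2\sqrt{2}$ lies in $D$; it does not treat the boundary point $-(3-2\sqrt{2})$ separately. The ``$\le$'' in the Corollary thus appears to be a slip for ``$<$'', consistent with the statement of Theorem \ref{ma1}(i). Your proposed fix via Theorem \ref{ma1a} is valid but is a forward reference; within Section 4.1 only the open disk is established.
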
 
  
\begin{proof} The closest point of $\partial D$ to the origin corresponds to $\theta=\pi$, in which case $r=3-2\sqrt{2}$. Thus, repeatedly differentiating series \eqref{seri} at $0$ and using uniform convergence of Theorem \ref{mai} and the Cauchy integral representation of derivatives, we get that the Taylor series of $\psi_x(q)$ is $1/[x]_q$. 
\end{proof}  
 
This completes the proof of Theorem \ref{ma1}.  
    
\subsection{Behavior of the function $x\mapsto [x]_q$ when $q\in D$}
\begin{corollary} Let $q\in D$. For any $x,y>1$ with $x_N=y_N$
 \begin{equation}\label{boundd2}
 \left|\frac{1}{[x]_q}-\frac{1}{[y]_q}\right|
 \le \frac{2r^{-1/2}a}{1-a^2}
 r^{\sum_{i=1}^{N} (c_i-2)}a^{2N}
\end{equation}
 and 
\begin{equation}\label{boundd3}
 |[x]_q-[y]_q|
 \le \frac{2r^{-1/2}a}{1-a^2}
 r^{\sum_{i=1}^{N} (c_i-2)}a^{2N-2}.
\end{equation}
\end{corollary}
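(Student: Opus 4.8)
The plan is to deduce both inequalities directly from Theorem~\ref{mai} by passing through the common $N$-th convergent and applying the triangle inequality; no new analytic input is needed. The crucial observation is that the hypothesis $x_N = y_N$ forces $x$ and $y$ to share the same first $N$ partial quotients. Indeed, writing $x = x(\mathbf{c})$ and $y = x(\mathbf{c}')$, the equality $[[c_1,\dots,c_N]] = [[c_1',\dots,c_N']]$ of the two length-$N$ convergents implies, by uniqueness of the negative continued fraction representation of fixed length, that $c_i = c_i'$ for $i = 1,\dots,N$. Since the $q$-convergent $[x]_{N,q} = a_N(\mathbf{c}\,|\,q)/b_N(\mathbf{c}\,|\,q)$ depends only on $c_1,\dots,c_N$, this yields the identity $[x]_{N,q} = [y]_{N,q}$; moreover the exponent $\sum_{i=1}^N (c_i - 2)$ occurring in \eqref{boundd} and \eqref{boundd1} is the same whether read off from $\mathbf{c}$ or from $\mathbf{c}'$, and the quantity $a>1$ of \eqref{eqfora} depends only on $q$.

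Granting this, I would write
$$
\left|\frac{1}{[x]_q}-\frac{1}{[y]_q}\right|\le \left|\frac{1}{[x]_q}-\frac{1}{[x]_{N,q}}\right|+\left|\frac{1}{[y]_{N,q}}-\frac{1}{[y]_q}\right|,
$$
where I have inserted $1/[x]_{N,q} = 1/[y]_{N,q}$ to break the difference symmetrically. Each summand on the right is bounded by \eqref{boundd}, and because the two bounds carry the same exponent $\sum_{i=1}^N (c_i-2)$ and the same $a$, their sum is exactly twice the common bound, which is the right-hand side of \eqref{boundd2}. The estimate \eqref{boundd3} follows in the same way: I split $|[x]_q - [y]_q|$ through the common value $[x]_{N,q} = [y]_{N,q}$ and apply \eqref{boundd1} to each of the two halves.

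The main, and essentially only, obstacle is the bookkeeping step of establishing $[x]_{N,q} = [y]_{N,q}$ from $x_N = y_N$; once this identity is secured, the corollary is a purely formal consequence of the per-number approximation bounds of Theorem~\ref{mai}. I would therefore take care to justify the uniqueness of the length-$N$ negative continued fraction expansion of a rational number (entries $\ge 2$), recovering the partial quotients $c_1,\dots,c_N$ from $x_N$ by the greedy algorithm, so that agreement of the convergents genuinely transfers to agreement of the $q$-deformed continuants $a_N(\,\cdot\,|\,q)$ and $b_N(\,\cdot\,|\,q)$, and hence of $[x]_{N,q}$ and $[y]_{N,q}$.
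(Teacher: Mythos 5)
Your argument is correct and essentially the same as the paper's: both pass through the common convergent via $[x]_{N,q}=[x_N]_q=[y_N]_q=[y]_{N,q}$ and apply the triangle inequality with the bounds of Theorem~\ref{mai}, and your care in checking that $x_N=y_N$ forces $c_i=c_i'$ for $i\le N$ (so the exponent $\sum_{i=1}^N(c_i-2)$ is unambiguous) is a reasonable unpacking of what the paper leaves implicit. The only cosmetic difference is that for \eqref{boundd3} you apply \eqref{boundd1} twice directly, whereas the paper deduces it from \eqref{boundd2} via Lemma~\ref{modinv}; both give the same bound.
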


\begin{proof} 
Note that $[x]_{N,q}=[x_N]_q=[y_N]_q=[y]_{N,q}$. 
 Thus \eqref{boundd2},\eqref{boundd3} follow from Theorem \ref{mai} and the triangle inequality. 
 \end{proof} 
 
\begin{proposition}\label{ratcon} (i) The function $x\mapsto [x]_q$ is continuous at irrational $x$ and right-continuous at rational $x$.

(ii) If $q\ne 0$ then the function $x\mapsto [x]_q$ is discontinuous from the left
at every rational point $x=[[c_1,...,c_N]]$. Moreover, 
\([x]_q^-:=\lim_{y\to x-}[y]_q=[[c_1,...,c_N,\infty]]_q,\) where 
$$
[[c_1,...,c_N,\infty]]_q:=\lim_{n\to \infty}[[c_1,...,c_N,n]]_q=
[c_1]_q-\frac{q^{c_1-1}}{[c_2]_q-...\frac{q^{c_{N-1}-1}}{[c_N]_q-q^{c_N-1}(1-q)}}.
$$
Thus the jump of this function at $x$ is 
$$
{\rm Jump}_x(q):=[x]_q-[x]_q^-=[[c_1,...,c_N]]_q-[[c_1,...,c_N,\infty]]_q,
$$
which is positive for $0<q<1$.

(iii) We have 
$$
\lim_{x\to +\infty}[x]_q=\frac{1}{1-q};\quad
\lim_{x\to -\infty}|[x]_q|=\infty,\ q\ne 0.
$$
\end{proposition}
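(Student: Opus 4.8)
The plan is to combine the cylinder structure of negative continued fractions with the uniform approximation $[x]_q\approx [x]_{N,q}$ furnished by \eqref{boundd1}. First I would record the geometric picture at $q=1$: for a fixed finite word $(c_1,\dots,c_N)$ with all $c_i\ge 2$, the set of $x\in[1,\infty)$ whose negative continued fraction begins with $(c_1,\dots,c_N)$ is obtained by letting the tail $t=[[c_{N+1},c_{N+2},\dots]]$ range over $[1,\infty)$ in $\tfrac{t\,a_N-a_{N-1}}{t\,b_N-b_{N-1}}$; this is the half-open interval $I_N=[\ell_N,x_N)$, where $x_N=[[c_1,\dots,c_N]]$ is the excluded right endpoint (attained as $t\to\infty$) and $\ell_N$ is the included left endpoint (attained at $t=1$). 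Since $1/x=\sum_{j\ge 0}(a_ja_{j+1})^{-1}$ has all terms positive, the convergents decrease strictly to $x$ from above, so every point lies to the left of its own $x_N$; consequently an irrational $x$ (whose tail exceeds $1$ at every level) is interior to each of its intervals $I_N$, whereas a rational $x=[[c_1,\dots,c_N]]$ is exactly the left endpoint of all its deeper intervals and the excluded right endpoint of the interval $I_N(c_1,\dots,c_N)$ immediately to its left. The key analytic input is that, since $c_i\ge 2$ and $r=|q|<1$ on $D$, \eqref{boundd1} gives $\sup_x\bigl|[x]_q-[x]_{N,q}\bigr|\le \tfrac{r^{-1/2}}{1-a^{-2}}a^{-2N+1}\to 0$, i.e. the locally constant functions $x\mapsto[x]_{N,q}=[x_N]_q$ converge to $[x]_q$ uniformly in $x$.

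For part (i) I would note that each $[x]_{N,q}$ is constant on the half-open intervals $[\ell_N,x_N)$, hence right-continuous everywhere and two-sided continuous at every point interior to one of them. Right-continuity of $[x]_q$ at all $x$ then follows because a uniform limit of right-continuous functions is right-continuous, and two-sided continuity at irrational $x$ follows because a uniform limit of functions continuous at $x$ is continuous at $x$ (each $[\cdot]_{N,q}$ is locally constant near an irrational point).

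For part (ii), fix a rational $x=[[c_1,\dots,c_N]]$ and let $y\to x^-$. By the interval picture $y$ eventually lies in $I_N(c_1,\dots,c_N)=[\ell_N,x)$, so $y$ begins with $(c_1,\dots,c_N)$, and since $y\to x^-$ forces the tail $t(y)\to\infty$, its next digit $m:=c_{N+1}(y)\to\infty$. Applying \eqref{boundd1} at level $N+1$ gives
\[
\bigl|[y]_q-[[c_1,\dots,c_N,m]]_q\bigr|\le \frac{r^{-1/2}}{1-a^{-2}}\,r^{\sum_{i=1}^{N}(c_i-2)+(m-2)}\,a^{-2N-1},
\]
whose right-hand side tends to $0$ as $m\to\infty$ thanks to the factor $r^{m-2}$; meanwhile $[[c_1,\dots,c_N,m]]_q\to[[c_1,\dots,c_N,\infty]]_q$ as $m\to\infty$, obtained by replacing $[m]_q$ with $\tfrac1{1-q}$ in the last level of the recursion \eqref{rec2}. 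Hence the left limit is $[[c_1,\dots,c_N,\infty]]_q\neq[x]_q$, giving the left discontinuity. For the sign of the jump I would write the two values as $\tfrac{a_N}{b_N}$ and $\tfrac{a_N-\varepsilon a_{N-1}}{b_N-\varepsilon b_{N-1}}$ with $\varepsilon=(1-q)q^{c_N-1}$, so that the continuant identity $a_{N-1}b_N-a_Nb_{N-1}=q^{C_{N-1}}$ yields
\[
\mathrm{Jump}_x(q)=\frac{\varepsilon\,q^{C_{N-1}}}{b_N\,(b_N-\varepsilon b_{N-1})},
\]
which is positive for $0<q<1$, since numerator and denominator are then both positive.

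For part (iii), when $x\to+\infty$ the first digit $c_1=\lfloor x\rfloor+1\to\infty$, so \eqref{boundd1} at level $1$ gives $[x]_q=[c_1]_q+O\!\left(r^{c_1-2}\right)\to\tfrac1{1-q}$. For $x\to-\infty$ set $w=-x\to+\infty$ and use Lemma \ref{modinv}, $[x]_q=[-w]_q=-q^{-1}/[1/w]_q$; since $1/w\to0^+$ and $[u]_q=q^{-1}([u+1]_q-1)\to 0$ as $u\to0^+$ by \eqref{xp1q0} together with the right-continuity at $1$ from part (i), while Lemma \ref{modinv} guarantees $[1/w]_q\neq0$, we conclude $|[x]_q|\to\infty$. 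The main obstacle is the bookkeeping in part (ii): one must check carefully that as $y\to x^-$ the digit $m=c_{N+1}(y)$ genuinely tends to infinity with the first $N$ digits frozen, and that the $m$-dependent error from \eqref{boundd1} decays fast enough to isolate the limit $[[c_1,\dots,c_N,\infty]]_q$. The interplay between the growing digit $m$ and the decaying factor $r^{m-2}$ is precisely what produces a genuine left jump for $q\neq1$ while collapsing to ordinary continuity at $q=1$.
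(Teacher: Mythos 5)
Your proposal is correct and follows essentially the same route as the paper: the paper's proof of (i)--(ii) is exactly the observation that the digits $c_i(y)$ stabilize (or, for (ii), that $c_{N+1}(y)\to\infty$) under the appropriate one-sided approach, combined with the uniform-in-$x$ error bound \eqref{boundd1} from Theorem \ref{mai}, and its proof of (iii) uses the same reduction via Lemma \ref{modinv} and the value $[0]_q^-$. You merely spell out the cylinder-interval picture and the continuant computation of the jump (which the paper records separately in Proposition \ref{jumpformula}) in more detail.
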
 

\begin{proof} (i) If $x(m)\to x$ for irrational $x$ or $x(m)\to x$ from the right
for rational $x$ then for each $i$ the number $c_i(x(m))$ stabilizes as $m\to \infty$. 
Thus (i) follows from Theorem \ref{mai}. 
 
(ii) It is easy to see that if $q\ne 0$ and $x\in \Bbb Q$ then 
$[x]_q^-\ne [x]_q$. Indeed,  
$$
[[c_1,c_2,...,c_N]]_q=[c_1]_q-\frac{q^{c_1-1}}{[[c_2,...,c_N]]_q},\
[[c_1,c_2,...,c_N,\infty]]_q=[c_1]_q-\frac{q^{c_1-1}}{[[c_2,...,c_N,\infty]]_q},
$$
so $[[c_1,...,c_N]]_q=[[c_1,...,c_N,\infty]]_q$ implies 
$[[c_2,...,c_N]]_q=[[c_2,...,c_N,\infty]]_q$, thus by induction 
$[c_N]_q=[c_N]_q-q^{c_N-1}(1-q)$, a contradiction.
The rest of the proof is analogous to the proof of (i). 

(iii) By Lemma \ref{modinv} and (i),(ii) 
$$
\lim_{x\to +\infty}[x]_q=\lim_{x\to 0-}[-\tfrac{1}{x}]_q=-\lim_{x\to 0-}\tfrac{q^{-1}}{[x]_q}=-\frac{q^{-1}}{[0]_q^-}=-\frac{q^{-1}}{1-q^{-1}}=\frac{1}{1-q},
$$
$$
\lim_{x\to -\infty}|[x]_q|=\lim_{x\to 0+}|[-\tfrac{1}{x}]_q|=\lim_{x\to 0+}\tfrac{|q|^{-1}}{|[x]_q|}=\infty.
$$
\end{proof} 

Proposition \ref{ratcon} implies that for $q\in D$, to define the function $x\mapsto [x]_q$ for irrational $x$, we may simply extend it by continuity from $x\in \Bbb Q$. 

\begin{remark} Theorem \ref{ma1}, Proposition \ref{ratcon} and Corollary \ref{qin} imply that for $q\in D\setminus 0$, we can define the function $[x]_{q^{-1}}$, $x\in \Bbb R$, which 
is analytic in $D$, namely $[x]_{q^{-1}}=-q[-x]_q$, and that as a function of $x$ 
it is continuous at irrational points and left-continuous at rational points. 
Also it has a right limit $[x]_{q^{-1}}^+=\lim_{y\to x+}[y]_{q^{-1}}$, namely 
$[x]_{q^{-1}}^+=-q[-x]_q^-$. 
\end{remark} 

\subsection{Proof of Theorem \ref{ma1a}} 

The proof of Theorem \ref{ma1a} is based on the result of \cite{EGMS} on the location of the roots 
of the $q$-continuant polynomials $a_N(\bold c|q)$. Namely, Morier-Genoud, Ovsienko and Veselov showed in \cite{MOV}, Theorem 1 that $q$ is a root of one of these polynomials if and only if the corresponding Burau representation of the braid group $B_3$ is not faithful, and that the non-faithfulness locus is contained in the annulus 
\(
3-2\sqrt{2}\le |q|\le 3+2\sqrt{2}.
\)
They also conjectured that the annulus can be shrunk to 
\(
R_*\le |q|\le R_*^{-1},
\)
and this was proved in \cite{EGMS}, Corollary 6.4. This implies that the series $[x]_q$ for every rational $x$ converges 
in the disk $|q|<R_*$. 

\begin{lemma}\label{nonzero} 
For $0<|q|<R_*$ and $x\in \Bbb Q\setminus 0$ we have  
$[x]_q\ne 0$, $[x]_q^-\ne 0$, $[x]_{q^{-1}}\ne 0$, 
$[x]_{q^{-1}}^+\ne 0$. 
\end{lemma}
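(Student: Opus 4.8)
The plan is to reduce all four assertions to nonvanishing of $[y]_q$ and its left limit $[y]_q^-$ for an arbitrary $y\in\Bbb Q\setminus 0$, and then to deduce these from the EGMS zero-free region for the $q$-continuants together with the modular identities. By Corollary \ref{qin} and the Remark following Proposition \ref{ratcon} we have $[x]_{q^{-1}}=-q[-x]_q$ and $[x]_{q^{-1}}^+=-q[-x]_q^-$, and since $-q\ne0$, the third and fourth claims are equivalent to $[-x]_q\ne0$ and $[-x]_q^-\ne0$. As $-x$ again runs over $\Bbb Q\setminus0$, it suffices to prove $[y]_q\ne0$ and $[y]_q^-\ne0$ for every $y\in\Bbb Q\setminus0$. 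I would work on the punctured disk $0<|q|<R_*$, which is forced by the factor $-q^{-1}$ in Lemma \ref{modinv}: for instance $[\tfrac12]_q=q/(1+q)$ vanishes at $q=0$.

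The first substantive step is to show that none of these quantities has a pole on $0<|q|<R_*$. Given $y$, choose $n\in\Bbb Z$ with $y+n>1$, so $y+n=[[c_1,\dots,c_M]]$ and $[y+n]_q=a_M(q)/b_M(q)$. By \cite{EGMS} (Corollary 6.4) every $q$-continuant $a_N(\mathbf c|q)$, and hence $b_M(q)=a_{M-1}(c_2,\dots|q)$, is zero-free on $|q|<R_*$, so $[y+n]_q$ is holomorphic there; then \eqref{xp1q} gives $[y]_q=q^{-n}\bigl([y+n]_q-[n]_q\bigr)$, finite for $q\ne0$. For the left limit I would use Proposition \ref{ratcon}(ii): when $y+n>1$, $[y+n]_q^-$ is the limit as $m\to\infty$ of $[[c_1,\dots,c_M,m]]_q=a_{M+1}(c_1,\dots,c_M,m|q)/b_{M+1}(\dots)$, whose numerator and denominator are continuants, hence zero-free on $|q|<R_*$ for each $m$ by EGMS. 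Since $[m]_q\to(1-q)^{-1}$ uniformly on compact subsets of the disk $|q|<R_*$ (which lies in $|q|<1$), the polynomials $a_{M+1}(\dots,m|q)$ and $b_{M+1}(\dots,m|q)$ converge locally uniformly, and by Hurwitz's theorem their limits are nonvanishing on $|q|<R_*$ (they are not identically zero, taking the value $1$ at $q=0$). Thus $[y+n]_q^-$ is in fact holomorphic and already \emph{nonvanishing} on $|q|<R_*$ for $y+n>1$, and $[y]_q^-=q^{-n}\bigl([y+n]_q^--[n]_q\bigr)$ is finite for $q\ne0$.

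Finally I would close the argument by modular duality. Substituting $t\mapsto1/t$ in Lemma \ref{modinv} gives $[t]_q[-1/t]_q=-q^{-1}$, and since $t\mapsto-1/t$ is increasing on each component of $\Bbb R\setminus0$, letting $t\to y^-$ yields the left-limit identity $[y]_q^-\,[-1/y]_q^-=-q^{-1}$ as well. For $0<|q|<R_*$ the right-hand side of each identity is nonzero while, by the previous step, both factors on the left are finite; hence each factor is nonzero, giving $[y]_q\ne0$ and $[y]_q^-\ne0$ (and simultaneously the same for $-1/y\in\Bbb Q\setminus0$). The main obstacle is the left-limit case: $[y]_q^-$ is not itself a ratio of $q$-continuants but only a limit of such, so the per-$m$ nonvanishing supplied by EGMS must be promoted to nonvanishing of the limiting numerator and denominator. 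This is exactly where Hurwitz's theorem and the locally uniform convergence $[m]_q\to(1-q)^{-1}$ enter, and it is the only place the argument goes beyond a direct citation of the continuant zero-free region.
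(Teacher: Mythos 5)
Your proof is correct and follows essentially the same route as the paper's: EGMS Corollary 6.4 for zero-freeness of the $q$-continuants (hence finiteness of $[x]_q$ after translating into $(1,\infty)$), Lemma \ref{modinv} to force nonvanishing, a Hurwitz-type limit argument for the left limits, and $[x]_{q^{-1}}=-q[-x]_q$ for the remaining two claims. Your observation that $q=0$ must be excluded (e.g.\ $[\tfrac12]_q=q/(1+q)$ vanishes there) is a valid, if harmless, refinement of the statement that the paper elides, and you make explicit the Hurwitz step that the paper's one-line treatment of $[x]_q^-$ leaves implicit.
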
 

\begin{proof} Recall that if $x=[[c_1,...,c_N]]$ then 
$[x]_q=\frac{a_N(c_1,...,c_N|q)}{a_{N-1}(c_2,...,c_N|q)}$.
So by \cite{EGMS}, Corollary 6.4, $[x]_q\in \Bbb C$ is well defined.
Hence by translation symmetry it is well defined for all $x\in \Bbb Q$.
But if $x\ne 0$ then by Lemma \ref{modinv} $[x]_q[-1/x]_q=-q^{-1}$, so $[x]_q\ne 0$. Since $[x]_q^-=\lim_{y\to x-}[y]_q$, it follows that 
$[x]_q^-[-1/x]_q^-=-q^{-1}$, hence $[x]_q^-\ne 0$. 
Since $[x]_{q^{-1}}=-q[-x]_q$, we also get 
$[x]_{q^{-1}}\ne 0$, $[x]_{q^{-1}}^+\ne 0$. 
\end{proof} 

Let $A_R$ be the annulus $R\le |z|\le R^{-1}$, where $R\le 1$.
Let $S(d,R)$ be the (compact) set of monic complex polynomials of degree $d$ and constant term $1$ with all roots contained in $A_R$. For $0<r<R$ and a polynomial $a\in S(d,R)$, let $M_{a,r}$ be the minimal value of $|a(z)|$ on the circle $|z|=r$ (or, equivalently, on the disk $|z|\le r$), and let $M(d,R,r):=\min_{a\in S(d,R)} M_{a,r}$.

\begin{lemma}\label{auxle} For even $d$
 $$
M(d,R,r)=(R-r)^{\frac{d}{2}}(R^{-1}-r)^{\frac{d}{2}},
$$
 and for odd $d$
$$
M(d,R,r)=(R-r)^{\frac{d-1}{2}}(R^{-1}-r)^{\frac{d-1}{2}}(1-r).
$$
\end{lemma}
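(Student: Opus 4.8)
The plan is to split the argument into a lower bound valid for every admissible polynomial and a matching construction that attains it. Writing $a(z)=\prod_{i=1}^d(z-\zeta_i)$, the constraints defining $S(d,R)$ become $R\le|\zeta_i|\le R^{-1}$ together with $\prod_i|\zeta_i|=|a(0)|=1$, since the constant term is $1$. First I would record the elementary estimate that for $|z|=r$ and any root with $|\zeta_i|\ge R>r$ one has $|z-\zeta_i|\ge|\zeta_i|-r$, so that
$$
|a(z)|=\prod_{i=1}^d|z-\zeta_i|\ \ge\ \prod_{i=1}^d\bigl(|\zeta_i|-r\bigr)
$$
for every $z$ on the circle. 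This reduces the lower bound to the purely real problem of minimizing $\prod_i(\rho_i-r)$ over $\rho_i\in[R,R^{-1}]$ subject to $\prod_i\rho_i=1$, where $\rho_i=|\zeta_i|$.

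Next I would solve this optimization. Passing to logarithmic variables $x_i=\log\rho_i\in[\log R,-\log R]$, the constraint becomes the linear relation $\sum_i x_i=0$, and the objective becomes $\sum_i h(x_i)$ with $h(x)=\log(e^x-r)$. A direct computation gives $h''(x)=-re^x/(e^x-r)^2<0$, so $h$ is concave and the objective is a concave function on a compact polytope; its minimum is therefore attained at a vertex. I would then enumerate the vertices: at a vertex at least $d-1$ of the $x_i$ sit at an endpoint $\pm\log R$, and the remaining coordinate is pinned by $\sum_i x_i=0$. Solving the resulting arithmetic shows the only admissible vertices are, for even $d$, the configuration with $d/2$ roots of modulus $R$ and $d/2$ of modulus $R^{-1}$, and for odd $d$, the configuration with $(d-1)/2$ roots of each of these moduli and one of modulus $1$. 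Evaluating $\prod_i(\rho_i-r)$ at these configurations yields exactly the claimed quantities, establishing that $M(d,R,r)$ is bounded below by the value asserted in the lemma.

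For the reverse inequality I would exhibit a polynomial attaining the bound. The key observation is that equality $|z-\zeta_i|=|\zeta_i|-r$ forces $\zeta_i$ to be a positive multiple of $z$, so all roots must lie on a single ray through the origin. I therefore place the optimal moduli on the ray of argument $\phi$, i.e.\ $\zeta_i=\rho_i e^{i\phi}$, and choose $\phi$ so that the constant-term condition $\prod_i\zeta_i=(-1)^d$ holds: since $\prod_i\rho_i=1$ this amounts to $e^{id\phi}=(-1)^d$, solved by $\phi=0$ for even $d$ and by $\phi=\pi/d$ for odd $d$. At the point $z_0=re^{i\phi}$ all factors simultaneously attain their minimum, giving $|a(z_0)|=\prod_i(\rho_i-r)$; combined with the lower bound already proved for this same $a$, this shows $M_{a,r}$ equals the asserted value, whence $M(d,R,r)$ is at most that value.

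I expect the main obstacle to be the optimization step rather than the construction: one must argue rigorously that the product is minimized at the extreme configurations and not in the interior, which is precisely what the concavity-plus-vertex argument secures (a concave function on a polytope dominates, at each point, the minimum over the vertices of its convex decomposition). A secondary subtlety, easy to overlook, is the parity issue in the achievability half: for odd $d$ the constant-term constraint cannot be met with all roots real and positive, and this is exactly why the rotation $\phi=\pi/d$ is needed so that every factor's minimum is realized at one common point $z_0$.
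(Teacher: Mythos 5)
Your proof is correct, and it reaches the extremal configuration by a genuinely different route than the paper. The paper argues variationally at a minimizer: if two roots lie in the interior of the annulus $A_R$, they can be perturbed with their product held fixed so that $|(q-z_1)(q-z_2)|$ strictly decreases, forcing all but at most one root onto $\partial A_R$, after which a parity count distributes them between the two boundary circles. You instead first discard all angular information via the reverse triangle inequality $|z-\zeta_i|\ge |\zeta_i|-r$, reducing the lower bound to minimizing $\prod_i(\rho_i-r)$ over moduli $\rho_i\in[R,R^{-1}]$ with $\prod_i\rho_i=1$, and then solve that real problem by observing that $\sum_i\log(e^{x_i}-r)$ is concave on the slice $\{\sum x_i=0\}$ of the cube, so the minimum sits at a vertex; the vertex enumeration reproduces exactly the paper's even/odd dichotomy. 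Your version has the advantage of making fully rigorous a step the paper leaves informal (the ``tweaking'' of two interior roots), at the cost of being longer; the paper's perturbation is slicker but would need a short computation to justify that the two-root deformation really decreases the product. Your attainment step is also fine, though slightly more elaborate than necessary: placing all roots on the ray of argument $\pi$ (i.e.\ at $-R$, $-R^{-1}$, and $-1$), as the paper does, satisfies the constant-term condition $\prod_i\zeta_i=(-1)^d$ in both parities simultaneously, with the minimum attained at $q=-r$, so the case split $\phi=0$ versus $\phi=\pi/d$ can be avoided.
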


\begin{proof} Write $a(q)=(q-z_1)...(q-z_d)$. This
identifies $S(d,R)$ with the set of tuples
$(z_1,...,z_d)\in {\rm Sym}^dA_R$ 
with $z_1...z_d=(-1)^d$. Let $a\in S(d,R)$ 
be a minimizer for $M_{a,r}$ and $q$ be a point with $|q|=r$ such that 
$|a(q)|=M(d,R,r)$. If $z_1,z_2$ lie in the interior of $A_R$ then by the open mapping theorem they can be tweaked so that $z_1z_2$ stays constant but the absolute value of $(q-z_1)(q-z_2)$ decreases. Such a deformation with 
the rest of $z_i$ fixed keeps $(z_1,...,z_d)$ in $S(d,R)$, which contradicts the assumption that $a$ is a minimizer. Thus all but one $z_j$ must be on the boundary of $A_R$. 

Moreover, for $R<1$ it follows that if no roots of $a(z)$ are in the interior of $A_R$ then $d$ is even and $d/2$ of them are on the inner boundary and $d/2$ on the outer boundary, and if one root is in the interior then it has absolute value $1$, $d$ is odd,  and $(d-1)/2$ roots are on the inner boundary and $(d-1)/2$ on the outer boundary. Indeed, let $k$ roots
lie on $|z|=R$ and $\ell$ roots on $|z|=R^{-1}$, counted with
multiplicity. Since the constant term of $a$ is $1$,
$|z_1\cdots z_d|=1$. If there is no interior root, then
$1=R^kR^{-\ell}=R^{k-\ell}$, so $k=\ell$; hence $d$ is even and $k=\ell=d/2$. If there is one
interior root $w$, then $1=|w|R^{k-\ell},\ \text{so}\ |w|=R^{\ell-k}$.
Because $\ell-k\in\mathbb Z$ and $R<|w|<R^{-1}$, the only possibility
is $\ell-k=0$. Thus $|w|=1$, $k=\ell$, and
$k=\ell=(d-1)/2$, so $d$ is odd. Finally,
\begin{equation}\label{rootineq}
|q-z|\ge\bigl||z|-r|
\end{equation}
 for every root $z$.

Thus we get 
\(
|a(q)|\ge (R-r)^{\frac{d}{2}}(R^{-1}-r)^{\frac{d}{2}}
\)
if $d$ is even and 
$$
|a(q)|\ge (R-r)^{\frac{d-1}{2}}(R^{-1}-r)^{\frac{d-1}{2}}(1-r)
$$
if $d$ is odd; if $R=1$, these inequalities 
both reduce to $|a(q)|\ge (1-r)^d$ and follow immediately 
from \eqref{rootineq}. Moreover, these bounds are sharp and attained for 
$a(z)=(z+R)^{\frac{d}{2}}(z+R^{-1})^{\frac{d}{2}}$ for even $d$ and 
$a(z)=(z+R)^{\frac{d-1}{2}}(z+R^{-1})^{\frac{d-1}{2}}(z+1)$
for odd $d$, at $q=-r$. 
\end{proof}

Now we are ready to prove Theorem \ref{ma1a}. Let $R:=R_*$ and $s:=(R-r)(R^{-1}-r)=1-3r+r^2$. Suppose $r<s$, which happens iff $1-4r+r^2>0$, i.e., iff $r<2-\sqrt{3}$. Let $|q|\le r$. 
By Corollary 6.4 of \cite{EGMS}, for all $N\ge 0$ we have $a_N(q)\ne 0$.
 
\begin{lemma}\label{auxle1} For any $0\le \beta<1$ we have 
$$
\left|\frac{q^{C_N}}{a_N(q)a_{N+1}(q)}\right|=o((r/s)^{\beta C_N}),\ N\to \infty, 
$$
uniformly in $q$.
\end{lemma}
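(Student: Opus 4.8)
The plan is to combine the root-location theorem of \cite{EGMS} with Lemma~\ref{auxle} to control the denominators appearing in \eqref{seri}. Since $a_N(\mathbf c|q)$ is monic of degree $C_N$ with constant term $1$, and by Corollary 6.4 of \cite{EGMS} all of its roots lie in the annulus $A_R$ with $R=R_*$, we have $a_N\in S(C_N,R)$, and likewise $a_{N+1}\in S(C_{N+1},R)$. First I would record the clean consequence of Lemma~\ref{auxle} that $M(d,R,r)\ge s^{d/2}$ for every $d$: this is an equality for even $d$, and for odd $d$ it reduces to $1-r\ge\sqrt s$, i.e. $(1-r)^2\ge 1-3r+r^2$, which holds for all $r\ge 0$. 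Hence for $|q|\le r$,
\[
|a_N(q)|\ge s^{C_N/2}.
\]

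Next I would isolate the genuine difficulty by factoring $a_{N+1}=a_N\,y_{N+1}$, where $y_{N+1}:=a_{N+1}/a_N$, and applying the bound above to \emph{both} copies of $a_N$ rather than bounding $a_{N+1}$ through its own degree. Together with $|q^{C_N}|\le r^{C_N}$ this yields the reduction
\[
\left|\frac{q^{C_N}}{a_N(q)a_{N+1}(q)}\right|
=\frac{|q|^{C_N}}{|a_N(q)|^2\,|y_{N+1}(q)|}
\le\frac{r^{C_N}}{s^{C_N}\,|y_{N+1}(q)|}
=\frac{(r/s)^{C_N}}{|y_{N+1}(q)|}.
\]
Because $r<s$ and $C_N\ge N\to\infty$, once a uniform bound $|y_{N+1}(q)|\ge c>0$ is available the lemma is immediate: writing $(r/s)^{C_N}=(r/s)^{\beta C_N}(r/s)^{(1-\beta)C_N}$, the factor $(r/s)^{(1-\beta)C_N}$ tends to $0$ for every $\beta<1$. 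It is essential here to avoid the naive estimate $|a_{N+1}(q)|\ge s^{C_{N+1}/2}$: that route produces the spurious factor $s^{-(c_{N+1}-1)/2}$, which blows up whenever the partial quotient $c_{N+1}$ is large, so the degree of $a_{N+1}$ must never be allowed to enter the bound.

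The main obstacle is therefore the uniform positive lower bound on the continued-fraction tails $y_{N+1}(q)=[c_{N+1}]_q-q^{c_N-1}/y_N(q)$, valid for all $N$ and all $\mathbf c$, throughout the range $3-2\sqrt2\le r<2-\sqrt3$. For $r<3-2\sqrt2$ one can close an elementary induction on $|y_{N+1}|\ge L$ directly from the recursion \eqref{rec2}, exactly as in Lemma~\ref{keybound}: it succeeds as soon as $L^2-(1-r)L+r\le 0$ has a real solution, i.e. as soon as $1-6r+r^2\ge 0$, which is precisely the threshold $r=3-2\sqrt2$. Beyond it the two-term recursion no longer self-improves, and this is exactly the point where the Kleinian-group input of \cite{EGMS} becomes indispensable. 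I would extract the needed bound by viewing $y_{N+1}(q)$ as a value of the orbit generated by the M\"obius transformations attached to the transfer matrices of \eqref{rec2}, and using the faithfulness/discreteness statement underlying \cite{EGMS} to keep this orbit bounded away from $0$ and $\infty$ uniformly in $N$ and in $\mathbf c$ for $|q|<R_*$. I expect the verification of this uniformity — not the bookkeeping around Lemma~\ref{auxle} — to be the crux of the argument.
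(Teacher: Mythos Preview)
Your reduction to a uniform lower bound $|y_{N+1}(q)|\ge c>0$ is correct in the sense that such a bound would finish the proof, but you have not actually supplied one, and the sentence ``I would extract the needed bound \ldots\ using the faithfulness/discreteness statement underlying \cite{EGMS}'' is not a proof. Faithfulness of the Burau representation gives $a_N(q)\ne 0$ for $|q|<R_*$, and discreteness of the associated Kleinian group constrains the group, but neither yields directly that the M\"obius orbit $\{y_N\}$ stays uniformly bounded away from $0$ over all $N$ and all admissible sequences $\mathbf c$. In fact the root-location input you have in hand (Lemma~\ref{auxle}) gives only $|a_{N+1}|\ge L\,s^{C_{N+1}/2}$, and there is no matching upper bound on $|a_N|$ that would convert this into a lower bound on the ratio; the degrees $C_N$ and $C_{N+1}$ differ by $c_{N+1}-1$, which is unbounded, so the naive attempt produces exactly the spurious factor you warned against. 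As written, the proposal is a plan with its central step missing.

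The paper avoids the need for any uniform bound on $y_{N+1}$ by arguing by contradiction and introducing a second family of ``annulus'' polynomials. Assuming the term is $\ge K(r/s)^{\beta C_N}$ along an infinite set of $N$, one first gets an \emph{upper} bound on $|a_{N+1}|$. The new idea is the auxiliary polynomial
\[
A_N(q):=a_N(q)-q^{c_N-1}(1-q)\,a_{N-1}(q)=(1-q)\,a_{N+1}(c_1,\dots,c_N,\infty\mid q),
\]
which is the numerator of the left limit $[x_N]_q^-$. By Lemma~\ref{nonzero} its roots also lie in $A_{R_*}$, so Lemma~\ref{auxle} gives $|A_N(q)|\ge L\,s^{(1+C_N)/2}$. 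Since $a_{N+1}=(A_N-q^{c_{N+1}}a_N)/(1-q)$, the smallness of $|a_{N+1}|$ forces $r^{c_{N+1}}|a_N|$ to be nearly as large as $|A_N|$, i.e.\ $|a_N|\gtrsim s^{(1+C_N)/2}r^{-c_{N+1}}$. Combining this with $|a_{N+1}|\ge L\,s^{C_{N+1}/2}$ (now the factor $r^{-c_{N+1}}$ exactly cancels the extra degree) yields the bound $O((r/s)^{C_N})$ for the term, contradicting the assumed lower bound since $\beta<1$. The point is that the pair $(a_N,A_N)$, not $(a_N,a_{N+1})$, is what the EGMS input controls well enough to close the argument; your proposal never invokes $A_N$, and without it the uniform bound on $y_{N+1}$ remains an assertion.
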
 

\begin{proof}
Suppose that the asserted uniform little-$o$ estimate fails.  Then there
are a constant $K>0$, integers $N_k\to\infty$, and points $q_k$ with
$|q_k|\le r$ such that
\begin{equation}\label{ee}
\left|\frac{q_k^{C_{N_k}}}
 {a_{N_k}(q_k)a_{N_k+1}(q_k)}\right|
\ge K(r/s)^{\beta C_{N_k}}.
\end{equation}
In the rest of the proof write $N=N_k$ and $q=q_k$.  Lemma \ref{auxle}
gives a constant
\(
 L:=\min\left\{1,\frac{1-r}{\sqrt{s}}\right\}>0
\)
such that, uniformly for $|q|\le r$,
\begin{equation}\label{e0}
 |a_N(q)|\ge Ls^{C_N/2}.
\end{equation}
Consequently, \eqref{ee} implies
\begin{equation}\label{e1}
 |a_{N+1}(q)|
 \le\frac{(r^{1-\beta}s^{\beta-1/2})^{C_N}}{KL}.
\end{equation}

Consider
\[
 P_N(q):=a_N(q)-q^{c_N-1}(1-q)a_{N-1}(q)
 =(1-q)a_{N+1}(c_1,\ldots,c_N,\infty\mid q).
\]
If $x_N=[[c_1,\ldots,c_N]]$, then $P_N$ is the numerator of
$[x_N]_q^-$ for $|q|<1$ and of $[x_N]_q^+$ for $|q|>1$.
Lemma \ref{nonzero} therefore shows that its roots lie in $A_{R_*}$.
Since $P_N$ is monic of degree $1+C_N$ and has constant term $1$,
Lemma \ref{auxle} gives, with the same $L$,
\begin{equation}\label{e2}
 |P_N(q)|\ge Ls^{(1+C_N)/2}.
\end{equation}
On the other hand, the recurrence and \eqref{e1} give
\[
 |P_N(q)-q^{c_{N+1}}a_N(q)|
 \le \frac{1+r}{KL}
 \bigl(r^{1-\beta}s^{\beta-1/2}\bigr)^{C_N}.
\]
The ratio of the right hand side to the lower bound in \eqref{e2} is
$O((r/s)^{(1-\beta)C_N})$, hence tends to zero uniformly.  Therefore,
for all sufficiently large $k$,
\(
 |a_N(q)|\ge\tfrac12Ls^{(1+C_N)/2}r^{-c_{N+1}}.
\)
Using this estimate, $C_{N+1}\ge C_N+1$, and \eqref{e0} at $N+1$, we
obtain
\begin{align*}
\left|\frac{q^{C_N}}{a_N(q)a_{N+1}(q)}\right|
&\le \frac{2r^{C_{N+1}}}
 {Ls^{(1+C_N)/2}|a_{N+1}(q)|}\\
&\le \frac{2}{L^2}
 (r/s)^{(1+C_N+C_{N+1})/2}
 =O((r/s)^{C_N}).
\end{align*}
Since $r/s<1$ and $\beta<1$, this contradicts \eqref{ee}.  The
little-$o$ estimate is therefore uniform on $|q|\le r$.
\end{proof}

For every $r<2-\sqrt3$ we have $r<s$.  Taking, for example,
$\beta=1/2$ in Lemma \ref{auxle1}, and using $C_N\ge N$, shows that
\eqref{seri} converges absolutely and uniformly on $|q|\le r$.
Now let $K$ be a compact subset of the open disk
$|q|<2-\sqrt3$.  Choose $r$ with
$\max_{q\in K}|q|<r<2-\sqrt3$ and apply the preceding conclusion.
Thus the series converges uniformly on every such $K$.  The rest of
the proof of Theorem \ref{ma1a} is the same as for Theorem \ref{ma1};
in particular, nonvanishing follows from Lemma \ref{modinv}.

\subsection{Formulas for the jump of $[x]_q$}
Note that by \eqref{xp1q} 
\(
{\rm Jump}_{x+1}(q)=q{\rm Jump}_x(q).
\)
Also recall that 
$$
\prod_{i=0}^{N-1} [[c_{i+1},...,c_N]]_q=a_N(c_1,...,c_N|q).
$$

\begin{proposition}\label{jumpbo} If $x=[[c_1,...,c_N]]>1$ then for any $m\ge 2$
$$
{\rm Jump}_{m-\frac{1}{x}}(q)=\frac{q^{m-1}}{[x]_q[x]_q^-}{\rm Jump}_x(q).
$$
Thus for $1\le j\le N-1$
$$
{\rm Jump}_x(q)={\rm Jump}_{x(j)}(q)\prod_{i=1}^{j} \frac{q^{c_i-1}}{[x(i)]_q[x(i)]_q^-},
$$
where $x(i):=[[c_{i+1},...,c_N]]$. Hence
$$
{\rm Jump}_x(q)=q^{c_N-1}(1-q)\prod_{i=1}^{N-1} \frac{q^{c_i-1}}{[x(i)]_q[x(i)]_q^-}=
\frac{q^{C_N}}{a_{N-1}(c_2,...,c_N|q)a_{N}(c_2,...,c_N,\infty|q)},
$$
where 
$$
a_{N}(c_2,...,c_N,\infty|q):=\lim_{n\to \infty}a_{N}(c_2,...,c_N,n|q).
$$ 
\end{proposition} 

\begin{proof} This follows directly from the definitions.
\end{proof} 

\begin{example}\label{exajump} For $m\in \Bbb Z_{\ge 1},n\in \Bbb Z_{\ge 2}$ we have 
$$
{\rm Jump}_m(q)=q^{m-1}(1-q),\ {\rm Jump}_{m-\frac{1}{n}}(q)=
\frac{q^{m+n-2}(1-q)}{(1+q+...+q^{n-2}+q^{n-1})(1+q+...+q^{n-2}+q^n)},
$$
$$
{\rm Jump}_{m+\frac{1}{n}}(q)=\frac{q^{m+n-1}(1-q)}{(1+q+...+q^{n-2}+q^{n-1})(1+q^2+...+q^{n-1}+q^n)}.
$$
\end{example}

 \subsection{A counterexample to the $q$-deformed Hurwitz irrational number conjecture} \label{counte} 

It is clear that if $x$ is in the $PSL_2(\Bbb Z)$-orbit $O_\varphi$ of the golden ratio $\varphi=\frac{1+\sqrt{5}}{2}$ then the radius of convergence of the power series $[x]_q$ is $R_*:=\frac{3-\sqrt{5}}{2}$. 
In \cite{LMOV}, Conjecture 1.1 it is conjectured that 

(1) for every real $x$ the radius of convergence $R(x)$ of $[x]_q$
is at least $R_*$, and 

(2) if $x\notin O_\varphi$ then $R(x)$ is strictly bigger than $R_*$. 

Yet we construct below uncountably many $x\notin O_\varphi$ for which
$R(x)\le R_*$ (i.e., counterexamples to (2)).\footnote{A proof of (1) will be given in the 
forthcoming paper \cite{EO}.}

Define a real number $x$ by the negative continued fraction
\[
  x = \Bigl[\!\!\Bigl[\,\underbrace{2,3,\dots,3}_{n_1},\,
              \underbrace{2,3,\dots,3}_{n_2-n_1},\,
              \underbrace{2,3,\dots,3}_{n_3-n_2},\ldots\Bigr]\!\!\Bigr],
\]
where the sequence $\{n_i\}$ grows sufficiently fast. We define $n_{m}$ inductively, assuming $n_1,\dots,n_{m-1}$ have already been defined. Let
\[
  x(m)=\Bigl[\!\!\Bigl[\,\underbrace{2,3,\dots,3}_{n_1},\,
              \underbrace{2,3,\dots,3}_{n_2-n_1},\,\ldots,\,
              \underbrace{2,3,\dots,3}_{n_m-n_{m-1}},\,2,\overline{3}\Bigr]\!\!\Bigr],
\]
where $\overline 3$ denotes an infinite sequence of threes. Let $\beta_i(m)$ be the coefficients of $[x(m)]_q$ and $\beta_i$ those of $[x]_q$, where we set \(x(0)=[[2,\overline 3]]=\varphi\) and let \(\beta_i(0)\) denote the coefficients of \([\varphi]_q\).
Then
\(
\beta_i(m-1)=\beta_i,\ i\le n_{m}.
\)
Since $x(m-1)$ is a $PSL_2(\mathbb Z)$-image of $\varphi = \bigl[\!\!\bigl[2,\overline 3\bigr]\!\!\bigr]$, we have
\[
  \limsup_{n\to\infty} \lvert \beta_n(m-1)\rvert^{1/n} = \frac{1}{\Rstar}.
\]
Thus there exist infinitely many choices $n_{m} > n_{m-1}$ such that
\[
  \lvert \beta_{n_{m}}(m-1)\rvert^{1/n_{m}} \ge \frac{1}{\Rstar + \dfrac{1}{m}}.
\]
So we pick this $n_{m}$. 

Since  $\beta_{n_{m}}(m-1) = \beta_{n_{m}}$, we have 
\[
  \lvert \beta_{n_m}\rvert^{1/n_m} \ge \frac{1}{\Rstar + \dfrac{1}{m}}.
\]
for all $m$. Hence
\[
  \limsup_{n\to\infty} \lvert \beta_n\rvert^{1/n} \ge \frac{1}{\Rstar},
\]
which means that the radius of convergence of $[x]_q$ is $\le \Rstar$. Since at each step we have infinitely many choices, we obtain uncountably many counterexamples. 

\subsection{Behavior of $[x]_q$ when $q<0$} \label{realbeh}
 
 We would now like to extend the function $[x]_q$ to $-R_*<q\le -3+2\sqrt{2}$. 
 
\begin{theorem}\label{ma2} Let $x>1$. Then the series $\frac{1}{[x]_q}=\sum_{N\ge 0}\frac{q^{C_N}}{a_N(q)a_{N+1}(q)}$ converges absolutely on $(-R_*,1)$ and uniformly on compact sets to a continuous positive function of $q$. 
\end{theorem}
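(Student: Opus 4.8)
The plan is to treat the two halves of the interval by completely different mechanisms, since on $[0,1)$ everything is governed by positivity while on $(-R_*,0)$ one must fight for the full range up to $R_*$.

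\emph{Positive axis.} For $q\in[0,1)$ I would argue purely from positivity. Writing $\rho_N:=a_N(q)/a_{N-1}(q)$, the recursion \eqref{rec2} reads $\rho_N=[c_N]_q-q^{c_{N-1}-1}/\rho_{N-1}$, and since $[c]_q\ge 1+q$ and $q^{c-1}\le q$ for $c\ge 2$, a one-line induction gives $\rho_N\ge 1$, hence $a_N(q)\ge 1$. Consequently every term of \eqref{seri} is non-negative and
\[
\frac{q^{C_N}}{a_N(q)a_{N+1}(q)}\le q^{C_N}\le q^{N},
\]
so the Weierstrass $M$-test yields absolute and uniform convergence on each $[0,r_0]$, $r_0<1$; the sum is continuous, and it is positive because the $N=0$ term equals $1/[c_1]_q$.

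\emph{Negative axis, setup.} Fix $0<r_0<R_*$ and put $q=-r$ with $0\le r\le r_0$. By \cite{EGMS} (through Lemma \ref{nonzero}) none of the $a_N$, nor the auxiliary polynomials $A_N$ that are the numerators of the one-sided limits $[x_N]_q^-$, vanish for $|q|<R_*$; since all equal $1$ at $q=0$ and are continuous, we get $a_N(-r)>0$ and $A_N(-r)>0$ on $[0,R_*)$. Thus the ratios $\rho_N=a_N(-r)/a_{N-1}(-r)$ are finite and positive, and, exactly as on the positive axis,
\[
|{\rm term}_N|=\frac{r^{C_N}}{a_N(-r)a_{N+1}(-r)}=\frac{1}{\rho_{N+1}}\prod_{k=1}^{N}\frac{r^{c_k-1}}{\rho_k^{2}} .
\]
Absolute and locally uniform convergence will follow once this product is shown to decay geometrically, uniformly in $\bold c$ and in $r\in[0,r_0]$.

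\emph{The key threshold and the obstacle.} The arithmetic fact that pins the radius to $R_*$ is $(1-R_*)^2=R_*$, i.e. $1-r>\sqrt r$ precisely for $r<R_*$. Note first that the estimate of Lemma \ref{auxle}, which uses only that the roots lie in the annulus $R_*\le|z|\le R_*^{-1}$ with product $1$, gives merely $|a_N(-r)|\ge s^{C_N/2}$ with $s=1-3r+r^2$, hence only $r<2-\sqrt3$; among all such root configurations the extremal one is genuinely worse than any arising from a continuant, so reaching $R_*$ forces one to exploit the reality of $q$ and the actual recursion. I would analyze $\rho_N=[c_N]_{-r}-(-r)^{c_{N-1}-1}/\rho_N^{-1}$ directly: a $2$ at position $k$ produces the dangerous factor $r/\rho_k^2$ and demands $\rho_k>\sqrt r$, but at such a step $\rho_k$ is anchored near $[2]_{-r}=1-r>\sqrt r$, while any larger entry contributes only $r^{c_k-1}/\rho_k^2$ and is harmless. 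The real difficulty, which I expect to be the crux, is that individual factors can exceed $1$ (e.g. a $2$ following a $3$), so the decay must be established over blocks; one needs a Lyapunov quantity or a forward-invariant region for the pair $(\rho_{N-1},\rho_N)$ on which the multi-step products are uniformly $<1$ for all $r\le r_0$. The natural way to organize this is to show that the slowest-growing continuant is the one for the golden ratio $c_k\equiv 3$, for which $\rho_N\to\lambda_+(r)=\tfrac12\bigl((1-r+r^2)+\sqrt{(1+r+r^2)(1-3r+r^2)}\bigr)$, giving per-degree growth rate $\sqrt{\lambda_+(r)}$ that exceeds $\sqrt r$ exactly when $\lambda_+(r)>r$, i.e. when $r<R_*$, in agreement with the known radius $R_*$ for $\varphi$.

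\emph{Conclusion.} Granting the geometric decay, the series converges absolutely and uniformly on $[-r_0,0]$; since $r_0<R_*$ is arbitrary, combining with the positive-axis estimate gives absolute convergence, uniform on compact subsets of $(-R_*,1)$, to a continuous function $\psi_x$. Finally $\psi_x>0$ throughout: it is positive on $[0,1)$ and continuous, and it cannot vanish because $1/\psi_x=[x]_q$ is finite and nonzero on $(-R_*,1)$ — for rational $x$ this is $b_N/a_N$ with both continuants nonvanishing by \cite{EGMS}, and the general case follows by uniform approximation by rationals together with Lemma \ref{modinv}. Hence $\psi_x$ has constant sign, namely positive, which completes the proof.
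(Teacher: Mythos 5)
Your treatment of $q\in[0,1)$ is correct and complete (and somewhat more elementary than the paper's, which subsumes this case into the complex-domain statement of Theorem \ref{mai}). The problem is the negative axis: the entire analytic content of Theorem \ref{ma2} there is the uniform geometric decay of the terms $r^{C_N}/\bigl(a_N(-r)a_{N+1}(-r)\bigr)$ for $r$ all the way up to $R_*$, and your proposal does not prove it --- it states it as a goal (``Granting the geometric decay\dots'') after correctly diagnosing why the easy routes fail. Invoking \cite{EGMS} through Lemma \ref{nonzero} gives only the qualitative fact $a_N(-r)>0$, with no lower bound, and as you yourself observe the quantitative bound of Lemma \ref{auxle} stalls at $r<2-\sqrt{3}$. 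What remains --- showing that the products $\prod_{k}r^{c_k-1}/\rho_k^{2}$ decay geometrically over blocks, uniformly in the sequence of entries $c_k$ and in $r\le r_0<R_*$ --- is exactly what the paper's proof spends its effort on: it renormalizes $Q_N=r^{-(c_{N+1}-1)/2}R_N$, runs an induction split into four cases according to the parities of $c_N$ and $c_{N+1}$ (parity matters because with $q=-r$ the sign of $q^{c_N-1}$ in \eqref{rec2} alternates, turning some subtractions into additions), proves $Q_N\ge a>1$ when $c_{N+1}$ is odd and a weaker bound otherwise, and then pairs each ``bad'' index in $T_{10}$ with a compensating later index so that the full product stays bounded below. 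None of this is automatic, and the heuristic you offer in its place --- that the all-threes (golden ratio) continuant is the slowest-growing, so that $\lambda_+(r)>r$ for $r<R_*$ settles the matter --- is itself an unproved extremality claim, not a reduction.

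A secondary remark: your closing positivity argument (``the general case follows by uniform approximation by rationals together with Lemma \ref{modinv}'') needs care, since a locally uniform limit of positive functions can vanish. The correct route, as in the paper, is that once convergence is established for all real $x$ (in particular for $-1/x$), the identity of Lemma \ref{modinv} forces $\psi_x(q)\ne 0$ because $\psi_{-1/x}(q)$ is finite, and positivity then follows from continuity together with the value $1$ at $q=0$. That part is repairable; the missing decay estimate is not a detail but the theorem.
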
  
 
\begin{proof} 
In view of Theorem \ref{ma1} it suffices to consider the case 
$q<0$. Let $q=-r$ for $0<r<R_*$. Then  
$r^{-1}-1+r>2$ and $r^{-1/2}-r^{1/2}>1$. Let $a>1$ solve the equation 
\(
 a+a^{-1}=r^{-1}-1+r,
 \)
 i.e., 
 $$
 a= \frac{1}{2}(r -1+r^{-1}+ \sqrt{(r - 3 +r^{-1})(r+1+r^{-1})}).
 $$
 Let 
 $$
 R_N:=\frac{a_{N+1}(q)}{a_{N}(q)},\ Q_N:=r^{-\frac{c_{N+1}-1}{2}}R_N.
 $$ 
We have 
\begin{equation}\label{recrn}
 R_{N}=[c_{N+1}]_q-\frac{q^{c_N-1}}{R_{N-1}},\ R_0=[c_1]_q.
\end{equation}
Hence 
\begin{equation}\label{recrn1}
 Q_N=\frac{r^{-c_{N+1}/2}-(-1)^{c_{N+1}}r^{c_{N+1}/2}}{r^{-1/2}+r^{1/2}}
 +(-1)^{c_N}\frac{r^{\frac{c_N-c_{N+1}}{2}}}{Q_{N-1}},\ Q_0=\frac{r^{-c_1/2}-(-1)^{c_1}r^{c_1/2}}{r^{1/2}+r^{-1/2}}.
\end{equation}

Let us find the conditions on $b>0$ such that we can prove by induction 
that $Q_N\ge a$ if $c_{N+1}$ is odd and $Q_N\ge b$ if $c_{N+1}$ is even. 

Note that the functions $x\mapsto x^{-1}\pm x$ are decreasing on $(0,1)$. Thus  \eqref{recrn1} implies that $Q_0\ge r^{-1}-1+r>a$ if $c_{1}$ is odd and 
$Q_0\ge r^{-1/2}-r^{1/2}$ when $c_{1}$ is even. 
So first of all we must have 
\(
b\le r^{-1/2}-r^{1/2}.
\)

Now consider 4 cases. 

{\bf Case 11:} $c_N$ and $c_{N+1}$ are odd. 
Consider two subcases. 

{\bf Subcase 11a:} $c_{N+1}=3$.  
In this case by the induction assumption \eqref{recrn} gives 
$$
Q_N= r^{-1}-1+r-\frac{r^{\frac{c_N-c_{N+1}}{2}}}{Q_{N-1}}\ge r^{-1}-1+r-\frac{1}{a}=a,
$$
as $c_N-c_{N+1}=c_N-3\ge 0$. 

{\bf Subcase 11b:} $c_{N+1}\ge 5$.  
In this case \eqref{recrn} 
gives 
$$
R_N= \frac{1+r^{c_{N+1}}}{1+r}-\frac{r^{c_N-1}}{R_{N-1}}\ge \frac{1}{1+r}-\frac{r^{\frac{c_N-1}{2}}}{Q_{N-1}}\ge \frac{1}{1+r}-\frac{r}{Q_{N-1}},
$$
again using that $c_N-3\ge 0$. So by the induction assumption
$$
Q_N\ge r^{-\frac{c_{N+1}-1}{2}}R_N\ge r^{-\frac{c_{N+1}-1}{2}}(\tfrac{1}{1+r}-\tfrac{r}{a})=
$$
$$
r^{-\frac{c_{N+1}-5}{2}}(a+(r^{-1}-1)(a-\tfrac{r^2}{1-r^2}))>r^{-\frac{c_{N+1}-5}{2}}a\ge a,
$$
as $a>1>\frac{r^2}{1-r^2}$.

{\bf Case 01:} $c_N$ is even, $c_{N+1}$ is odd. In this case \eqref{recrn1} gives 
$$
Q_N\ge \frac{r^{-c_{N+1}/2}-(-1)^{c_{N+1}}r^{c_{N+1}/2}}{r^{-1/2}+r^{1/2}}\ge r^{-1}-1+r>a.
$$
Moreover, if $c_N=2$ then using that $c_N-c_{N+1}=2-c_{N+1}\le -1$, \eqref{recrn1} gives
$$
Q_N\ge  \frac{r^{-c_{N+1}/2}-(-1)^{c_{N+1}}r^{c_{N+1}/2}}{r^{-1/2}+r^{1/2}}+\frac{r^{-\frac{c_{N+1}-2}{2}}}{Q_{N-1}} \ge r^{-1}-1+r+\frac{r^{-1/2}}{Q_{N-1}}.
$$

{\bf Case 10:} $c_N$ is odd, $c_{N+1}$ is even. 
In this case using the inequality $c_{N}-1\ge 2$ and the induction assumption, 
$$
R_N\ge  1-r-\frac{r^{\frac{c_N-1}{2}}}{a}\ge 1-r-\frac{r}{a}>r^{1/2}-r>r^{3/2},
$$
so 
\(
Q_N\ge r^{-1/2}R_N> 1-r^{1/2}>r.
\)
Moreover, if $c_{N+1}\ge 4$ then 
$$
Q_N\ge r^{-\frac{c_{N+1}-1}{2}}R_N>r^{-\frac{c_{N+1}-4}{2}}> 1.
$$
Thus $b$ must satisfy the inequality 
\(
b\le r^{-1/2}-r^{1/2}-\frac{r^{1/2}}{a}.
\)

{\bf Case 00:} $c_N$ and $c_{N+1}$ are even. In this case \eqref{recrn1} gives 
$$
Q_N\ge \frac{r^{-c_{N+1}/2}-(-1)^{c_{N+1}}r^{c_{N+1}/2}}{r^{-1/2}+r^{1/2}}\ge r^{-1/2}-r^{1/2}.
$$
Moreover, if $c_N=2$ then it gives 
$$
Q_N\ge \frac{r^{-c_{N+1}/2}-(-1)^{c_{N+1}}r^{c_{N+1}/2}}{r^{-1/2}+r^{1/2}}+\frac{r^{-\frac{c_{N+1}-2}{2}}}{Q_{N-1}}\ge r^{-1/2}-r^{1/2}+\frac{1}{Q_{N-1}}.
$$
So the condition for $b$ is again $b\le r^{-1/2}-r^{1/2}$. 

We conclude that the induction goes through if $b\le r^{-1/2}-r^{1/2}-\frac{r^{1/2}}{a}$. The condition for such $b$ to exist is that the right hand side is positive, i.e. that
$a>\frac{r}{1-r},$
which holds since $\frac{1-r}{r}+\frac{r}{1-r}=r^{-1}-1+\frac{r}{1-r}>\frac{1}{a}+a=r^{-1}-1+r$. 
Thus we get 

\begin{lemma}\label{leee1} We have $Q_N>0$ for all $N$, and 
if $c_{N+1}$ is odd then $Q_N\ge a$. 
\end{lemma} 

We also obtain 

\begin{lemma}\label{endpoint} For $N\ge 0$ we have 
\[
 Q_N\ge r^{-\frac{c_{N+1}-5}{2}}.
\]
\end{lemma} 

\begin{proof} 
For $N=0$ this follows directly from formula \eqref{recrn1} for $Q_0$. For $N\ge1$,
Case $11$ gives the claim immediately: if $c_{N+1}=3$, then
$Q_N\ge a>1>r$, while if $c_{N+1}\ge5$, Subcase $11b$ gives the stronger
bound $Q_N>a r^{-(c_{N+1}-5)/2}$.  In Case $01$,
\[
 Q_N\ge \frac{r^{-c_{N+1}/2}}{r^{-1/2}+r^{1/2}}
 =\frac{r^{1/2}}{1+r}r^{-c_{N+1}/2}
 \ge r^{5/2}r^{-c_{N+1}/2},
\]
since $r^2(1+r)<1$.  In Case $10$, the estimate $R_N>r^{3/2}$ gives
\[
 Q_N>r^{3/2}r^{-(c_{N+1}-1)/2}
 =r^2r^{-c_{N+1}/2}\ge r^{5/2}r^{-c_{N+1}/2}.
\]
Finally, in Case $00$,
\[
 Q_N\ge r^{-c_{N+1}/2}
 \frac{1-r^{c_{N+1}}}{r^{-1/2}+r^{1/2}}
 \ge r^{1/2}(1-r)r^{-c_{N+1}/2}
 \ge r^{5/2}r^{-c_{N+1}/2},
\]
because $1-r\ge r^2$ for $0<r<R_*$. 
\end{proof}

Now we are ready to prove Theorem \ref{ma2}. 
Using that $a_N=\prod_{j=0}^{N-1}R_j$ and
$R_j=r^{(c_{j+1}-1)/2}Q_j$, for $N\ge1$ we get
\[
 \left|\frac{q^{C_N}}{a_N(q)a_{N+1}(q)}\right|
 =r^{-\frac{c_{N+1}-1}{2}}Q_0^{-2}Q_N^{-1}
   \prod_{j=1}^{N-1}Q_j^{-2}
 =\frac{r^{\frac{c_1-c_{N+1}}{2}}}{|[c_1]_q|}
   \prod_{j=1}^{N}\frac1{Q_{j-1}Q_j}.
\]
Therefore Lemma \ref{endpoint} gives
\begin{equation}\label{intermineq}
 \left|\frac{q^{C_N}}{a_N(q)a_{N+1}(q)}\right|
 \le r^{-2}Q_0^{-2}\left(\prod_{j=1}^{N-1}Q_j\right)^{-2}.
\end{equation}
Thus to prove the theorem, it suffices to show that 
for some $b>1$ and constant $C>0$, 
\(
\prod_{j=1}^N \frac{Q_j}{b}\ge C
\)
for all $N$; then $\left|\frac{q^{C_N}}{a_N(q)a_{N+1}(q)}\right|=O(b^{-2N})$ as 
$N\to \infty$. Indeed, then \eqref{intermineq} will yield
\[
 \left|\frac{q^{C_N}}{a_N(q)a_{N+1}(q)}\right|
 \le r^{-2}Q_0^{-2}C^{-2}b^{-2(N-1)}=O(b^{-2N}).
\]
 
 Pick $b\in (1,a)$ sufficiently close to $1$. 
In particular, let $b<r^{-1/2}-r^{1/2}$, which can be arranged since $r^{-1/2}-r^{1/2}>1$.
Let $T_{ij}(N)\subset [1,N]$ for $i,j\in \lbrace 0,1\rbrace$
be the subset of $k$ such that $c_k,c_{k+1}$ are 
congruent mod $2$ to $i$ and $j$ respectively. 

If $j\in T_{11}(N)$ then by Case 11, $Q_j\ge a\ge b$. 
Thus it suffices to show that for some $C>0$, 
\begin{equation}\label{tripprod}
\prod_{j\in T_{01}(N)\cup T_{10}(N)\cup T_{00}(N)}\frac{Q_j}{b}\ge C.
\end{equation}  

For $j\in T_{10}(N)$ let $j'$ be the smallest element of $T_{01}(N)$ such that $j'>j$; 
this is defined for all $j$ except possibly the largest one, $j_*$, for which by Case 10, 
\(
Q_{j_*}\ge 1-r^{1/2}>r.
\)
By Case 10 for $c_{j+1}\ge 4$ and Case 01, we have 
$$
Q_jQ_{j'}\ge r^{-1}(r^{-1/2}-r^{1/2}-\tfrac{r^{1/2}}{a})(r^{-1}-1+r)\ge r^{-1}-1+r\ge 2\ge b^2, 
$$
except when $c_{j+1}=2$. In this case, if 
$j'>j+1$ then $c_{j+2}$ is even and by the second inequality in Case 00 we have 
$$
Q_jQ_{j+1}\ge Q_j(r^{-1/2}-r^{1/2}+\tfrac{1}{Q_j})= (r^{-1/2}-r^{1/2})Q_j+1\ge (r^{-1/2}-r^{1/2})(r^{-1/2}-r^{1/2}-\tfrac{r^{1/2}}{a})+1\ge 
b^2. 
$$
Finally, if $j'=j+1$ then by Case 10 for $c_{j+1}=2$ we have 
$$
Q_jQ_{j'}=Q_jQ_{j+1}\ge (r^{-1}-1+r)Q_j+r^{-1/2}\ge 1-r+r^2+r^{-1/2}\ge b^2.
$$
Thus we may delete from $T_{01}(N)\cup T_{10}(N)\cup T_{00}(N)$ all pairs $(j,j')$ with $j'=j+1$ and $(j,j+1)$ with $j'>j+1$. We will then be left with a subset $T\subset [1,N]$ such that $T\setminus (T_{00}\cup T_{01})$ is either empty or consists of $j_*$. But if $j\in T_{00}(N)$ then by Case 00, $Q_j\ge r^{-1/2}-r^{1/2}\ge b$, and if $j\in T_{01}$ then by Case 01, 
$Q_j\ge r^{-1}-1+r\ge 2\ge b$.

Thus \eqref{tripprod} holds with $C=r$.

To make the convergence uniform on compact subintervals, fix
$0<r_0<r_1<R_*$.  All strict inequalities used above are uniform for
$r\in[r_0,r_1]$: the functions $a(r)$,
$r^{-1/2}-r^{1/2}$, the three lower bounds for the paired products in
Cases $10$-$01$, and $(1-r^{1/2})/r$ are continuous and are strictly
larger than $1$ on this compact interval.  Hence one may choose a
single $b>1$ for which all the preceding individual and paired
estimates hold for every $r\in[r_0,r_1]$, and the possible unpaired
factor satisfies $Q_{j_*}/b\ge r\ge r_0$.  Thus
\(
 \prod_{j=1}^N\frac{Q_j}{b}\ge r_0
\)
uniformly in $r\in[r_0,r_1]$ and in the continued-fraction entries.
Moreover, Lemma \ref{endpoint} and $c_1\ge2$ give
$Q_0\ge r^{3/2}$, so
\(
 r^{-2}Q_0^{-2}\le r_0^{-5}.
\)
The bound \eqref{intermineq} is therefore dominated on the whole
compact interval by one convergent geometric series.  Compact sets
meeting $q=0$ are already covered by Theorem \ref{ma1}.  This proves
the stated uniform convergence on compact subsets of $(-R_*,1)$.

It remains to show that the sum $\psi_x(q)$ of series \eqref{seri} does not vanish 
for $q\in (-R_*,1)$. This, as before, follows from Lemma \ref{modinv}.

Theorem \ref{ma2} is proved. 
\end{proof} 

\begin{remark} Note that the endpoint $-R_*$ in Theorem \ref{ma2} is sharp, since $[1+\varphi]_q$ has a quadratic branch point singularity there, 
and the convergents $[1+\varphi]_{N,q}$ start oscillating 
for $q<-R_*$ (the equation $z=[3]_q-\frac{q^2}{z}$ for 
the supposed limit of these convergents has no real solutions in this range). 
\end{remark} 

Let us now study the properties of $[x]_q$ for $q=-r$, where $0<r<R_*$. It is easy to see that \eqref{xp1q} holds for all $x\ge 1$, and as before we use this recursion to extend $[x]_q$ to all real numbers $x$. 
 
For $n\in \Bbb Z$ let 
$$
I_n:=\inf_{n\le x<n+1}[x]_q,\ S_n:=\sup_{n\le x<n+1}[x]_q.
$$ 

\begin{proposition} 
These bounds are attained. Namely, if $n$ is odd, then
\[
 I_n=[n+2-\varphi]_q,\qquad S_n=[n-1+\varphi]_q,
\]
whereas if $n$ is even, then
\[
 I_n=[n-1+\varphi]_q,\qquad S_n=[n+2-\varphi]_q.
\]
\end{proposition} 

\begin{proof} 
By \eqref{xp1q} it suffices to prove the statement for $n=1$. We have
\(
I_{n+1}=1-rS_n,\ S_{n+1}=1-rI_n.
\)
Thus for odd $n\in \Bbb Z$
$$
I_n=\tfrac{1}{1+r}+r^{n-1}(I_1-\tfrac{1}{1+r}),\ S_n= \tfrac{1}{1+r}+r^{n-1}(S_1-\tfrac{1}{1+r})
$$
and for even $n\in \Bbb Z$ 
$$
I_n=\tfrac{1}{1+r}-r^{n-1}(S_1-\tfrac{1}{1+r}),\ S_n= \tfrac{1}{1+r}-r^{n-1}(I_1-\tfrac{1}{1+r}).
$$
Let 
$$
I:=\inf_{j\ge 1}(I_j)=\inf_{x\ge 1}[x]_q,\ S:=\sup_{j\ge 1}S_j=\sup_{x\ge 1}[x]_q.
$$
Since $1\le x\le 2$, we have
\(
[x]_q=1-r+\frac{r}{[(2-x)^{-1}]_q},
\)
hence
\(
I_1=1-r+\frac{r}{S},\ S_1=1-r+\frac{r}{I}.
\)
It follows that 
for odd $n\in \Bbb Z$
$$
I_n=\tfrac{1}{1+r}+r^{n}(\tfrac{1}{S}-\tfrac{r}{1+r}),\ S_n= \tfrac{1}{1+r}+r^{n}(\tfrac{1}{I}-\tfrac{r}{1+r})
$$
and for even $n\in \Bbb Z$ 
$$
I_n=\tfrac{1}{1+r}-r^{n}(\tfrac{1}{I}-\tfrac{r}{1+r}),\ S_n= \tfrac{1}{1+r}-r^{n}(\tfrac{1}{S}-\tfrac{r}{1+r}).
$$
So $I_n,S_n\to \frac{1}{1+r}$ as $n\to +\infty$. Thus $I\le \frac{1}{1+r}\le S$. 
Hence $\frac{1}{I}-\frac{r}{1+r}>0$. It follows that $I\le I_2=1-r+r^2-\frac{r^2}{I}\le 1-r+r^2-r^2(1+r)=1-r-r^3$. Note that $I_1\ge 1-r$, so $I<I_1$. 
Thus $I=I_2$. We first determine which root of the resulting quadratic occurs. Put
\[
 L:=ra,\qquad U:=1-r+a^{-1}=r^{-1}-a,
\]
and, for $c\ge2$, set
\[
 F_c(z):=[c]_{-r}-\frac{(-r)^{c-1}}{z},\qquad T(z):=1-rz.
\]
Then $F_{c+1}=T\circ F_c$. Put $b:=(1-r)/r$. Since $b>1$ and
\[
 b+b^{-1}=r^{-1}-1+\frac{r}{1-r}>a+a^{-1},
\]
we have $a<b$, and hence $L<1-r<U$. Moreover,
\[
 F_2(L)=U,\qquad F_2(U)=1-\frac{r^2}{a}\in(L,U),
 \qquad T(U)=L,\qquad T(L)=1-r^2a\in(L,U).
\]
Here $F_2(U)>L$ follows from
$F_2(U)-L=1-r(a+r/a)>r-r^2>0$, while
$T(L)<U$ follows from
$U-T(L)=a^{-1}-r+r^2a=(1-ra+r^2a^2)/a>0$; the other two
inequalities follow from the monotonicity of $F_2$ and $T$.
Thus both $F_2$ and $T$, and hence every $F_c$, preserve $[L,U]$.
Since $[2]_{-r}=1-r\in[L,U]$, the same is true of every
$[c]_{-r}=T^{c-2}(1-r)$. Backward induction through a finite continued
fraction therefore shows that every finite $q$-convergent belongs to
$[L,U]$. Passing to the limit gives $[x]_{-r}\in[L,U]$ for every
$x\ge1$, so $I\ge L$.

The equation
\[
 I=1-r+r^2-\frac{r^2}{I}
\]
has roots $ra=L$ and $r/a<L$. Hence
\[
 I=I_2=ra.
\]
Moreover, $1+\varphi=[[3,3,\ldots]]$, and its $q$-value is a fixed point
of $F_3$. The other fixed point is $r/a<L$, so
\[
 [1+\varphi]_{-r}=ra=I_2.
\]
It follows that
\[
 S_1=\frac{1-I_2}{r}=r^{-1}-a=1-r+a^{-1}=[\varphi]_{-r}.
\]
As above, $S_2\le1-r+r^2<S_1$, while
$S_n\to(1+r)^{-1}$ and $S_1>(1+r)^{-1}$; hence $S=S_1$. Finally,
\[
 I_1=1-r+\frac rS
 =1-r+\frac r{1-r+a^{-1}}
 =1-\frac{r^2}{a}=[3-\varphi]_{-r}.
\]
\end{proof} 

\begin{corollary}\label{bounded} There exist $K,L>0$ such that 
$$
|[x]_{N,q}|\le K+Lr^x,\ |[x]_q|\le K+Lr^x.
$$
\end{corollary}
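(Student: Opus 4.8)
The plan is to extract Corollary \ref{bounded} directly from the explicit formulas for $I_n$ and $S_n$ in the preceding Proposition, which already does all the real work. For every integer $n$ and every $x\in[n,n+1)$ we have $I_n\le[x]_q\le S_n$, so it suffices to bound $\max(|I_n|,|S_n|)$ by $K+Lr^x$ with constants depending only on $r$.

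First I would observe that each of $I_n,S_n$ has the form $\tfrac1{1+r}\pm r^nC$, where $C$ is one of the two fixed quantities $\tfrac1I-\tfrac r{1+r}$ and $\tfrac1S-\tfrac r{1+r}$ (the sign being governed by the parity of $n$). Since $I=\tfrac ra$ and $S=1+a-r$ are explicit positive numbers, the constant $M:=\max\bigl(\bigl|\tfrac1I-\tfrac r{1+r}\bigr|,\bigl|\tfrac1S-\tfrac r{1+r}\bigr|\bigr)$ is finite, and the triangle inequality gives $\max(|I_n|,|S_n|)\le \tfrac1{1+r}+Mr^n$ for all $n\in\Bbb Z$. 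To turn the exponent $n$ into $x$, note that $x<n+1$ and $0<r<1$ force $r^x>r^{n+1}$, hence $r^n\le r^{-1}r^x$. Therefore $|[x]_q|\le \tfrac1{1+r}+\tfrac Mr r^x$, and I would take $K=\tfrac1{1+r}$, $L=\tfrac Mr$. Because the Proposition's formulas hold for all $n\in\Bbb Z$, this bound is valid for every real $x$, including $x<1$ (where $n\le 0$ and $r^n\ge1$), matching the blow-up of $|[x]_q|$ as $x\to-\infty$.

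For the truncations I would use the identity $[x]_{N,q}=[x_N]_q$ from the preliminaries together with the monotonicity of the convergents: since $\tfrac1{x_N}=\sum_{j=0}^{N-1}\tfrac1{a_ja_{j+1}}$ increases in $N$, we have $x_N\ge x$ for all $N$. As $x_N$ is a real (rational) number with $x_N\ge x$ and $r<1$, the bound just proved applied at $x_N$ gives $|[x]_{N,q}|=|[x_N]_q|\le K+Lr^{x_N}\le K+Lr^x$ with the same constants, uniformly in $N$; for $x<1$ one first reduces to the case $x\ge1$ via the recursion \eqref{xp1q}.

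I do not expect a genuine obstacle here: once the preceding Proposition is available the corollary is essentially bookkeeping. The only points needing attention are the uniformity in $N$ of the truncation bound, which is secured by the inequality $x_N\ge x$, and the conversion $r^n\le r^{-1}r^x$ that absorbs the shift between the integer exponent $n$ and the real exponent $x$.
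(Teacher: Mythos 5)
Your proposal is correct and follows the route the paper intends: the corollary is stated without proof precisely because it is the bookkeeping you carry out, reading the bound off the explicit formulas for $I_n,S_n$ in the preceding Proposition and converting $r^n$ to $r^{-1}r^x$, with the truncation case handled via $[x]_{N,q}=[x_N]_q$ and $x_N\ge x$.
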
 

\begin{remark}
The preceding estimates do not settle convergence of \eqref{seri} at
$q=-R_*$ for arbitrary $x$.  There is, however, a simple endpoint
example.  If $x=1+\varphi=[[3,3,\ldots]]$ and
$y_N=[x]_{N,-R_*}$, then
\(
 y_N=2R_*-\frac{R_*^2}{y_{N-1}},\qquad y_1=2R_*,
\)
so
\(
 y_N=\frac{N+1}{N}R_*.
\)
Consequently the partial sums of \eqref{seri} are
\[
 \frac1{y_N}=\frac{N}{(N+1)R_*}
 =\sum_{j=1}^N\frac{1}{j(j+1)R_*}.
\]
Thus the endpoint series converges in this example, with a tail of
order $1/N$, and its reciprocal limit is
$[1+\varphi]_{-R_*}=R_*$.
\end{remark}

\subsection{Analyticity of $[x]_q$ on the interval $(-R_*,1)$}

We will now refine the method of proof of Theorem \ref{ma2} to show that series \eqref{seri} converges absolutely and uniformly on compact sets in some $x$-independent neighborhood of the interval $(-R_*,1)$ in the complex plane, implying analyticity of $[x]_q$ on this interval. Since the only point of $\partial D$ with absolute value $3-2\sqrt{2}$ 
is $-3+2\sqrt{2}$, this implies that the uniform radius $R_\bullet=\inf_{x\in \Bbb R}R(x)$ of convergence of the Laurent series $[x]_q$ is strictly bigger than $3-2\sqrt{2}$. Actually, with a bit more work our bounds can be quantified to produce an explicit neighborhood with this property and explicit lower bound for $R_\bullet$, but we won't do so here, since this would make the formulas more cumbersome while the result clearly would not be optimal. 

\begin{theorem}\label{analy} There exists a neighborhood $U$ of the interval $(-R_*,1)$ such that for any $x>1$, the series $\frac{1}{[x]_q}=\sum_{N\ge 0}\frac{q^{C_N}}{a_N(q)a_{N+1}(q)}$ is absolutely convergent on $U$ uniformly on compact sets. Thus it converges to a holomorphic function 
on $U$. So the function $q\mapsto [x]_q$ on $(-R_*,1)$ is real analytic. 
\end{theorem}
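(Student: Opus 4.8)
The plan is to upgrade the real estimates obtained in the proof of Theorem \ref{ma2} from the interval $(-R_*,1)$ to a complex neighborhood, exploiting that those estimates are strict inequalities holding uniformly in the sequence $\bold c$ (equivalently, in $x$) and in $N$. Since the subinterval $(-3+2\sqrt2,1)$ already lies in $D$, Theorem \ref{mai} supplies a complex neighborhood of it on which \eqref{seri} converges uniformly on compact sets; it therefore suffices to produce, for every compact subinterval $J\subset(-R_*,-3+2\sqrt2]$, an $x$-independent complex neighborhood of $J$ with the same property. Taking the union of these neighborhoods over an exhaustion of $(-R_*,1)$ by compact subintervals then yields the desired open set $U$.

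First, I would record the quantitative content of the proof of Theorem \ref{ma2}. There, for $q=-r$ with $r\in(0,R_*)$, one constructs $a=a(r)>1$ and a constant $b=b(r)>1$, and the case analysis (Cases $00,01,10,11$, together with the pairing of consecutive indices) establishes lower bounds on the $Q_j$ and on products $Q_jQ_{j'}$ that exceed the threshold $1$ by a margin. The key observation is that on a compact $J\subset(-R_*,-3+2\sqrt2]$ this margin is bounded below by a positive constant $\delta_0=\delta_0(J)$ independent of $x$ and $N$; the margin does degenerate as $r\to R_*^-$ (indeed the condition $a<\tfrac{1}{1-r}$ becomes tight there, which is exactly why the interval is open at $-R_*$), but this only forces the neighborhood $U$ to taper near the endpoint, which is harmless.

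Next, I would redo the induction of Theorem \ref{ma2} for complex $q$ near the negative real axis. For complex $q$ I keep $R_N(q)=a_{N+1}(q)/a_N(q)$ (holomorphic, since the denominators are nonzero), set $Q_N=r^{-(c_{N+1}-1)/2}R_N$ with $r=|q|$, and track $|Q_N|$ in place of $Q_N$. All ingredients of the recursions \eqref{recrn}--\eqref{recrn1} --- the values $[c_{N+1}]_q$, the powers $q^{c_N-1}$, and the powers of $r$ --- are holomorphic (or real-analytic) and only finitely many ``shapes'' of them occur, governed by the parities of $c_N,c_{N+1}$ and by the finite list of small values ($c=2,3$ and the monotone ``large $c$'' regimes) isolated in the case analysis. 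Applying the reverse triangle inequality to \eqref{recrn} and using that each $[c_{N+1}]_q$ and $q^{c_N-1}$ differs from its value at $q=-r$ by $O(\mathrm{dist}(q,J))$, I would choose $\eta=\eta(J)>0$ so small that every one of the finitely many case inequalities survives with $b$ replaced by some $b'=b'(J)>1$, for all $q$ with $\mathrm{dist}(q,J)<\eta$, uniformly in $x$ and $N$; because this is an inductive estimate, uniformity in $N$ is automatic, exactly as in the real case. The resulting bound $\bigl|\tfrac{q^{C_N}}{a_N(q)a_{N+1}(q)}\bigr|=O\bigl((b')^{-2N}\bigr)$, valid uniformly on $\{\mathrm{dist}(q,J)<\eta\}$ and in $x$, feeds the Weierstrass $M$-test: each summand is holomorphic there (the roots of $a_N$ lie in $R_*\le|q|\le R_*^{-1}$ by \cite{EGMS}, Corollary 6.4, and $a_N$ has no roots on $(0,1)$ by positivity of its coefficients), so \eqref{seri} converges absolutely and uniformly on compact sets to a holomorphic function $\psi_x$. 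Non-vanishing of $\psi_x$ on $(-R_*,1)$ follows from Lemma \ref{modinv} as in Theorem \ref{ma2} and persists on a neighborhood by continuity, so $[x]_q=1/\psi_x(q)$ is holomorphic near the interval and hence real analytic on it.

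The main obstacle is the third step: the real case analysis repeatedly exploited the exact signs $(-1)^{c_N}$ and the reality of the $Q_N$ to make the two terms of the recursion reinforce rather than cancel, whereas for complex $q$ these phases are only approximate. The crux is therefore to check that the uniform slack $\delta_0(J)$ dominates the phase errors introduced by the perturbation, case by case. I expect the borderline configurations --- the pairings with $c_{j+1}=2$, the cases $c_j=c_{j+1}=2$, and the handling of the single exceptional index $j_*$ --- to require the most careful bookkeeping, but no idea beyond uniform continuity and the strictness of the real inequalities should be needed.
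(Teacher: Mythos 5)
Your overall strategy---perturb the real-axis estimates of Theorem \ref{ma2} into a complex neighborhood of a compact subinterval, rerun the induction, and finish with the Weierstrass test---is the same as the paper's, but the two steps you declare ``automatic'' are where all the work lies, and as stated your plan does not close. First, tracking only $|Q_N|$ and applying the reverse triangle inequality to \eqref{recrn} cannot reproduce the real case analysis: several of the decisive inequalities (the pairing bounds for $j\in T_{10}$, and the bonus terms $+r^{-1/2}/Q_{N-1}$ and $+1/Q_{N-1}$ in Cases 01 and 00 with $c_N=2$) rely on two terms adding \emph{constructively}. For example, the real argument gives $Q_jQ_{j+1}\ge (r^{-1}-1+r)Q_j+r^{-1/2}$; if you only know a lower bound on $|Q_j|$ and use the reverse triangle inequality, the $r^{-1/2}$ flips sign and for $r$ near $R_*$ the resulting bound is negative, hence useless. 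You therefore must control the full complex deviation of $Q_N$ from its real value, not just its modulus---this is exactly the quantity $D_N=(P_N-P_N^*)/P_N$ that the paper introduces.

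Second, even for that deviation the naive induction hypothesis ``$|D_{N-1}|<\varepsilon\Rightarrow|D_N|<\varepsilon$'' is false: the recursion multiplies the inherited error by a factor of size roughly $r^{(c_N-c_{N+1})/2}/(Q_NQ_{N-1})$, and when $c_{N+1}$ is large this is only bounded by $r^{-5/2}$ (using \eqref{5/2} and $Q_{N-1}>r$), so the relative error can genuinely grow by a factor $r^{-5/2}$ in a single step, no matter how small you take the neighborhood. The paper's Lemma \ref{keylemma} resolves this with a two-step dichotomy---either $|D_N|<\varepsilon$, or $|D_N|<r^{-5/2}\varepsilon$ but then $|D_{N+1}|<\varepsilon$ again, because a large $c_N$ at the following step supplies a compensating factor $r^{5/2}$---together with choosing the angular width $\theta_*(\varepsilon,t)$ so that the inhomogeneous phase errors $r^{n/2}|e^{i(n+1)\theta}-1|$ are dominated by $\varepsilon^2$. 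This dichotomy is a genuine additional idea, not a consequence of uniform continuity plus the strictness of the real inequalities; your assertion that ``uniformity in $N$ is automatic, exactly as in the real case'' is precisely the gap.
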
 

\begin{proof}  Since the result is already known in the region $D$, we may replace the interval $(-R_*,1)$ by $(-R_*,-t]$ for some $t>0$. Furthermore, we may 
replace it with $[-R_*+t,-t]$, as long as we can treat arbitrarily small $t>0$. 
 
Let $q=-re^{i\theta}$, $-\pi<\theta\le \pi$, where $r=|q|$. Let
$$
P_N:=r^{-\frac{c_{N+1}-1}{2}}(r^{-1/2}+r^{1/2})R_N(-r)=(r^{-1/2}+r^{1/2})Q_N,\ 
P_N^*:=r^{-\frac{c_{N+1}-1}{2}}(r^{-1/2}+e^{i\theta}r^{1/2})R_N(q).
$$
Let 
\(
D_N:=\frac{P_N-P_N^*}{P_N}.
\)

The proof is based on the following key lemma. 

\begin{lemma}\label{keylemma} For sufficiently small $\varepsilon>0$ there exists 
$\theta_*(\varepsilon,t)>0$ such that if $r\in [t,R_*-t]$ 
and $|\theta|<\theta_*(\varepsilon,t)$ then
$$
 |D_{N-1}|<\varepsilon \implies\text{ either }|D_N|<\varepsilon\text{ or }|D_N|<r^{-5/2}\varepsilon\text{ and }|D_{N+1}|<\varepsilon.
 $$
 Thus if $|D_0|<\varepsilon$ then $|D_N|<r^{-5/2}\varepsilon$ for all $N\ge 0$. 
\end{lemma} 

\begin{proof} By \eqref{recrn1} we have 
$$
P_N=r^{-c_{N+1}/2}-(-1)^{c_{N+1}}r^{c_{N+1}/2}-(-1)^{c_N-1}\frac{(r^{-1/2}+r^{1/2})^2r^{\frac{c_N-c_{N+1}}{2}}}{P_{N-1}}
$$
and 
$$
P_N^*=r^{-c_{N+1}/2}-(-e^{i\theta})^{c_{N+1}}r^{c_{N+1}/2}
-(-e^{i\theta})^{c_N-1}\frac{(r^{-1/2}+e^{i\theta}r^{1/2})^2r^{\frac{c_N-c_{N+1}}{2}}}{P_{N-1}^*}.
$$
Thus 
$$
P_N-P_N^*=
$$
$$
(-1)^{c_{N+1}}r^{c_{N+1}/2}(e^{ic_{N+1}\theta}-1)+
 (-1)^{c_N-1}r^{\frac{c_N-c_{N+1}}{2}}\left(e^{i(c_N-1)\theta}\frac{(r^{-1/2}+e^{i\theta}r^{1/2})^2}{P_{N-1}^*}-\frac{(r^{-1/2}+r^{1/2})^2}{P_{N-1}}\right)=
 $$
 $$
 (-1)^{c_{N+1}}r^{c_{N+1}/2}(e^{ic_{N+1}\theta}-1)+
 (-1)^{c_N-1}r^{\frac{c_N-c_{N+1}}{2}}e^{i(c_N-1)\theta}\frac{(r^{-1/2}+e^{i\theta}r^{1/2})^2(P_{N-1}-P_{N-1}^*)}{P_{N-1}P_{N-1}^*}+
 $$
 $$
 (-1)^{c_N-1}r^{\frac{c_N-c_{N+1}}{2}}\frac{r^{-1}(e^{i(c_N-1)\theta}-1)+2(e^{ic_N\theta}-1)+r(e^{i(c_N+1)\theta}-1)}{P_{N-1}}.
 $$
 It follows that 
 $$
 D_N=
 (-1)^{c_{N+1}}\frac{r^{c_{N+1}/2}(e^{ic_{N+1}\theta}-1)}{P_N}+
 (-1)^{c_N-1}r^{\frac{c_N-c_{N+1}}{2}}e^{i(c_N-1)\theta}\frac{(r^{-1/2}+e^{i\theta}r^{1/2})^2D_{N-1}}{P_NP_{N-1}^*}+
 $$
 $$
 (-1)^{c_N-1}r^{\frac{c_N-c_{N+1}}{2}}\frac{r^{-1}(e^{i(c_N-1)\theta}-1)+2(e^{ic_N\theta}-1)+r(e^{i(c_N+1)\theta}-1)}{P_NP_{N-1}}.
 $$
 Suppose 
$|D_{N-1}|<\varepsilon.$ Then $|P_{N-1}^*|\ge (1-\varepsilon)P_{N-1}$. Thus we obtain
\begin{equation}\label{lastine}
\begin{aligned}
 |D_N|\le{}&
 \frac{r^{\frac{c_{N+1}}{2}}|e^{ic_{N+1}\theta}-1|}
 {(r^{-1/2}+r^{1/2})Q_N}
 +r^{\frac{c_N-c_{N+1}}{2}}
 \frac{|D_{N-1}|}{(1-\varepsilon)Q_NQ_{N-1}}\\
 &+r^{\frac{c_N-c_{N+1}}{2}}
 \frac{|r^{-1}(e^{i(c_N-1)\theta}-1)+2(e^{ic_N\theta}-1)
 +r(e^{i(c_N+1)\theta}-1)|}
 {(r^{-1/2}+r^{1/2})^2Q_NQ_{N-1}}.
\end{aligned}
\end{equation}
 It follows that there is $K>0$ such that if $|\theta|<K\varepsilon$ 
 then for $c_N,c_{N+1}< 12$ \eqref{lastine} implies $|D_N|<\varepsilon$. 
 Thus we may focus on the case when $c_N\ge 12$ or $c_{N+1}\ge 12$. 
 
 For any $\delta>0$ there exists $\theta_0(\delta)$ 
 such that if $|\theta|<\theta_0(\delta)$ then $r^{n/2}|e^{i(n+1)\theta}-1|<\delta$ for all $n\ge 1$. Then, using that $Q_N\ge r$, 
 $$
 |D_N|\le r^{-1/2}\delta+\frac{r^{-\frac{c_{N+1}}{2}}}{Q_NQ_{N-1}}
 \left(r^{\frac{c_N}{2}}\frac{|D_{N-1}|}{1-\varepsilon}+
 \delta\right).
 $$
We may choose 
\(
\delta<\frac{t^6\varepsilon^2}{1-\varepsilon}. 
\)
This yields, using 
that $|D_{N-1}|<\varepsilon$:
$$
 |D_N|\le \frac{r^6\varepsilon}{1-\varepsilon}\left(r^{-1/2}\varepsilon+r^{-\frac{c_{N+1}}{2}}
 \frac{r^{\frac{c_N-12}{2}}+ \varepsilon}{Q_NQ_{N-1}}\right).
 $$
 Using Lemma \ref{endpoint}, this yields
 $$
 |D_N|\le \frac{r^6\varepsilon}{1-\varepsilon}\left(r^{-1/2}\varepsilon+r^{-5/2}
 \frac{r^{\frac{c_N-12}{2}}+\varepsilon}{Q_{N-1}}\right).
 $$
Hence, given that $Q_{N-1}\ge 1-r^{1/2}>r$, we get
 \(
 |D_N|< r^{-5/2}\varepsilon.
 \)
 
If $c_N\ge12$, Lemma \ref{endpoint} improves the preceding estimate to
\(
 |D_N|<r^{5/2}\varepsilon<\varepsilon,
\)
which gives the first alternative.  It remains to consider
$c_N<12$ and $c_{N+1}\ge12$.  Put
$\eta:=t^{-5/2}\varepsilon$, and choose $\varepsilon$ so small that
$\eta<1/2$ and
\(
 \frac{R_*^{9/2}}{1-\eta}<\frac12.
\)
The estimate already proved gives $|D_N|<r^{-5/2}\varepsilon\le\eta$,
so $|P_N^*|\ge(1-\eta)P_N$. Applying \eqref{lastine} at the index $N+1$, now with this larger input bound,
gives
\[
 |D_{N+1}|\le r^{-1/2}\delta+
 \frac{r^{-c_{N+2}/2}}{Q_{N+1}Q_N}
 \left(r^{c_{N+1}/2}\frac{|D_N|}{1-\eta}+\delta\right).
\]
By Lemma \ref{endpoint} and $c_{N+1}\ge12$, the right hand side is at
most
\[
 r^{-1/2}\delta+
 \frac{r^{9/2}}{1-\eta}\varepsilon+r\delta.
\]
After shrinking $\delta$, and hence $\theta_*$, so that
$(t^{-1/2}+R_*)\delta<\varepsilon/2$, this is $<\varepsilon$.
This proves the second alternative and completes the proof.
\end{proof}
 
We are now ready to prove Theorem \ref{analy}.
Take $0<t<R_*/2$ and apply Lemma \ref{keylemma} with $t/2$ in place
of $t$.  Define the open annular sector
\[
 U_t:=\{-re^{i\theta}:\ t/2<r<R_*-t/2,
 \ |\theta|<\theta_*(\varepsilon,t/2)\}.
\]
It contains the interval $[-R_*+t,-t]$.  We may also ensure the base
condition in Lemma \ref{keylemma} uniformly in $c_1$.  Indeed,
\[
 P_0=r^{-c_1/2}-(-1)^{c_1}r^{c_1/2},\qquad
 P_0^*=r^{-c_1/2}-(-e^{i\theta})^{c_1}r^{c_1/2},
\]
and hence
\(
 |D_0|\le\frac{r^{c_1}|e^{ic_1\theta}-1|}{1-r^{c_1}}.
\)
The supremum of the right hand side over $c_1\ge2$ and
$t/2\le r\le R_*-t/2$ tends to zero with $\theta$: split the integers
$c_1$ into a fixed finite set and a tail, and use exponential decay in
the tail.  Shrinking $\theta_*$ therefore gives $|D_0|<\varepsilon$,
so Lemma \ref{keylemma} may be iterated.  

We now use exactly the pairing from the proof of Theorem~\ref{ma2}.
For each $j\in T_{10}(M)$ for which there is a next index
$j'\in T_{01}(M)$, let $j'$ denote that index and set
\[
 k(j):=
 \begin{cases}
  j+1,&c_{j+1}=2\text{ and }j'>j+1,\\
  j',&\text{otherwise}.
 \end{cases}
\]
The pairs $(j,k(j))$ are disjoint, and at most the last index
$j_*\in T_{10}(M)$ remains unpaired.  By the compact-uniform form of
the estimates in the proof of Theorem~\ref{ma2} on
$[t/2,R_*-t/2]$, there is a constant $b_0>1$, independent of $M$ and
$r$, such that
\(
 Q_jQ_{k(j)}\ge b_0^2
\)
for every pair, while every index in
$T_{11}(M)\cup T_{00}(M)\cup T_{01}(M)$ which is not used in a pair
satisfies $Q_j\ge b_0$.  The possible terminal index satisfies
$Q_{j_*}\ge1-r^{1/2}$.

Let
\[
 Q_j^*:=\frac{P_j^*}{r^{-1/2}+r^{1/2}},\qquad
 \delta:=\left(\frac t2\right)^{-5/2}\varepsilon.
\]
Since $Q_j^*=(1-D_j)Q_j$, Lemma~\ref{keylemma} gives
$|Q_j^*|\ge(1-\delta)Q_j$ for every $j$.  After decreasing
$\varepsilon$, we may assume that $(1-\delta)b_0>1$ and choose
\(
 1<b<(1-\delta)b_0.
\)
It follows that every paired product satisfies
$|Q_j^*Q_{k(j)}^*|\ge b^2$, every unpaired nonterminal factor is at
least $b$, and the possible terminal factor is bounded below by
$(1-\delta)(1-r^{1/2})$.  Counting the factors therefore gives
\[
 \prod_{j=1}^{M}|Q_j^*|\ge C_t b^M,
 \qquad
 C_t:=\min\left\{1,
 \frac{(1-\delta)(1-\sqrt{R_*-t/2})}{b}\right\}>0,
\]
uniformly in $M$ and $r$.

The argument in the proof of Theorem \ref{ma2} thus implies 
that the $N$-th term of \eqref{seri} is $O(b^{-2N})$. 
Namely, put
\[
 h(\theta):=\frac{r^{-1/2}+e^{i\theta}r^{1/2}}
 {r^{-1/2}+r^{1/2}}.
\]
Then $|h(\theta)|\le1$, and the definitions of $P_N^*$ and $Q_N^*$ give
\(
 R_N(q)=r^{\frac{c_{N+1}-1}{2}}h(\theta)^{-1}Q_N^*.
\)
Consequently, for $N\ge1$,
\[
 \left|\frac{q^{C_N}}{a_N(q)a_{N+1}(q)}\right|
 =r^{-\frac{c_{N+1}-1}{2}}|h(\theta)|^{2N+1}
 |Q_0^*|^{-2}|Q_N^*|^{-1}
 \prod_{j=1}^{N-1}|Q_j^*|^{-2}.
\]
Moreover, $P_N^*=(1-D_N)P_N$, hence $Q_N^*=(1-D_N)Q_N$.  With
$\delta=(t/2)^{-5/2}\varepsilon$ as above, Lemma \ref{keylemma} and Lemma \ref{endpoint} imply
\[
 |Q_N^*|\ge(1-\delta)Q_N
 \ge(1-\delta)r^{-\frac{c_{N+1}-5}{2}}.
\]
Thus
\[
 r^{-\frac{c_{N+1}-1}{2}}|Q_N^*|^{-1}
 \le (1-\delta)^{-1}r^{-2}.
\]
The same estimate at $N=0$, together with $c_1\ge2$, gives
\(
 |Q_0^*|\ge(1-\delta)r^{3/2},
\)
so the factor $|Q_0^*|^{-2}$ is uniformly bounded on the compact
$r$-interval under consideration.

Applying the product estimate above with $M=N-1$ and using that
$|h(\theta)|\le1$, we get 
\[
 \left|\frac{q^{C_N}}{a_N(q)a_{N+1}(q)}\right|=O(b^{-2N})
\]
uniformly on compact subsets of the chosen neighborhood, as desired.

Finally, let $t$ range over any sequence decreasing to $0$ and set
\(
 U:=D\cup\bigcup_t U_t.
\)
This is an open neighborhood of $(-R_*,1)$. Every compact subset of
$U$ is covered by finitely many of the regions on which the preceding
normal-convergence estimates hold, so the convergence is uniform on
compact subsets of $U$.

This implies Theorem \ref{analy}. 
 \end{proof}

\begin{remark} Theorem \ref{analy} gives additional evidence for the conjecture 
of \cite{LMOV} that the power series $[x]_q$ always converges for $|q|<R_*$, and together with Theorems \ref{ma1},\ref{ma1a} raises the question: which simply-connected domains $0\in \mathcal D\subset \Bbb C$ admit analytic continuation of $[x]_q$ 
for all $x\in \Bbb R$? Is there the maximal one? 

Corollary 6.4 of \cite{EGMS} suggests the following candidate.  Choose
a simply connected domain $\Omega$ containing $D$ and the disk
$|q|<R_*$ but avoiding the two branch points $e^{\pm\pi i/3}$, and
on $\Omega$ choose the branch of $\sqrt{1-q+q^2}$ which has value
$1$ at $q=0$.  Let $\mathcal D$ be the connected component containing
$0$ of
\[
 \left\{q\in\Omega: |q+1+\sqrt{1-q+q^2}|>3\sqrt{|q|}\right\}.
\]
This is the intended open drop-like region; by the cited corollary it
is zero-free for all the $q$-continuants $a_N(q)$.  It contains $D$
and the disk $|q|<R_*$, and
$\mathcal D\cap\mathbb R=(-R_*,1)$.
\end{remark} 

\section{Behavior of $[x]_q$ as a function of $x$ for $0<q<1$}

\subsection{Monotonicity of $x\mapsto [x]_q$}
In this subsection we discuss monotonicity of the function $x\mapsto [x]_q$ 
when $0<q<1$ is fixed. These results are not really original, 
 as they follow easily from the known properties 
 of $q$-rational and $q$-real numbers, see \cite{MO2,BBL}. 

\begin{proposition}\label{inc} The function $x\mapsto [x]_q$ is strictly increasing 
for $x\in \Bbb R$, and 
$$
\lim_{x\to -\infty}[x]_q=-\infty.
$$ 
\end{proposition}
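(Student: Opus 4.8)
The plan is to establish weak monotonicity on the ray $(1,\infty)$ first, then transport it to all of $\mathbb{R}$, upgrade it to strict monotonicity, and finally read off the limit at $-\infty$.

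\emph{Reduction and one-step maps.} By \eqref{xp1q} we have $[x]_q=q^{-n}([x+n]_q-[n]_q)$ for every $n\in\mathbb{Z}$, and since $0<q<1$ this exhibits $[x]_q$ as an increasing affine function of $[x+n]_q$. Hence for arbitrary reals $x<y$ I may choose $n$ with $x+n,y+n>1$ and reduce $[x]_q<[y]_q$ to $[x+n]_q<[y+n]_q$, so it suffices to work on $(1,\infty)$. There, writing $x=[[c_1,c_2,\dots]]$ and $x'=[[c_2,c_3,\dots]]=1/(c_1-x)$, Lemma \ref{modinv} together with \eqref{xp1q} yields the peeling identity $[x]_q=F_{c_1}([x']_q)$, where $F_c(t):=[c]_q-q^{c-1}/t$. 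The two elementary facts I will lean on are that $F_c$ is strictly increasing in $t>0$ with $F_c(1)=[c-1]_q$ and $F_c(+\infty)=[c]_q$, and that $c\mapsto[c]_q=\frac{1-q^c}{1-q}$ is strictly increasing with $[c]_q-[c-1]_q=q^{c-1}$. I also record the positivity input $[x']_q>0$, which is immediate from \eqref{seri} since for $0<q<1$ every summand $q^{C_j}/(a_j(q)a_{j+1}(q))$ is positive.

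\emph{A lower bound and the cylinder picture.} A short induction on the convergents $x_N=[[c_1,\dots,c_N]]$ shows $[x_N]_q=F_{c_1}([x'_{N-1}]_q)>F_{c_1}(1)=[c_1-1]_q\ge 1$; passing to the limit and using that some $c_i\ge 3$ whenever $x>1$ gives the strict lower bound $[x]_q>1$ for all $x>1$. Consequently, every $x>1$ with $c_1(x)=c$ lies in $(c-1,c]$ and satisfies $[x]_q=F_c([x']_q)\in([c-1]_q,[c]_q]$, the top value being attained only at the integer $x=c$. Because these intervals are mutually adjacent and ordered in $c$, monotonicity across distinct values of $c_1$ is automatic; within a fixed $c_1$ it follows from the strict monotonicity of $F_{c_1}$ together with the equivalence $x<y\iff x'<y'$, i.e. by induction on the depth of the (finite) continued fraction of a rational. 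This establishes $[x]_q<[y]_q$ for all rationals $1<x<y$.

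\emph{Passage to reals, strictness, and the limit.} Since $(0,1)\subset D$, Proposition \ref{ratcon} applies: $[\cdot]_q$ is right-continuous everywhere and continuous at irrationals, so weak monotonicity passes from $\mathbb{Q}\cap(1,\infty)$ to $(1,\infty)$ by taking one-sided limits, and then to all of $\mathbb{R}$ via the shift reduction above. To upgrade to strict monotonicity, suppose $[x]_q=[y]_q$ for some $x<y$; then $[\cdot]_q$ is constant, hence continuous, on $[x,y]$, contradicting the positivity of $\mathrm{Jump}_r(q)$ at any rational $r\in(x,y)$ (Proposition \ref{ratcon}(ii) for $r>1$, extended to all rationals by $\mathrm{Jump}_{r+1}(q)=q\,\mathrm{Jump}_r(q)>0$). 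Finally, monotonicity gives $[x]_q\le[n]_q$ for $x\le n$, while $[n]_q=\frac{1-q^n}{1-q}\to-\infty$ as $n\to-\infty$ (because $q^n\to+\infty$), whence $\lim_{x\to-\infty}[x]_q=-\infty$.

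The step I expect to be the genuine obstacle is the within-cylinder comparison, which is self-referential: it compares tails that are themselves $q$-reals. The real work is to organize it as a clean induction on continued-fraction depth for rationals and to handle the terminating (integer) cases where a tail equals $+\infty$ — all unified by the identity $F_c(+\infty)=[c]_q$, so that the value at a rational is the supremum of its cylinder. Once rationals are settled, the irrational cases are free by continuity, and the strictness and limit statements are routine consequences of the positive jumps and the integer values.
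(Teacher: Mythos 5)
Your proof is correct in substance, but it takes a genuinely different route from the paper at the one step that carries all the content. The paper's proof simply \emph{cites} \cite{MO1} for the comparison $[x_N]_q<[y_N]_q$ of $q$-rationals, combines this with the fact that the convergents $[x]_{N,q}$ decrease to $[x]_q$ (a consequence of \eqref{seri} having positive terms for $0<q<1$), and then runs the same squeeze you do, sandwiching $x<z_1<z_2<y$ with rationals; the limit at $-\infty$ is read off from Proposition \ref{ratcon}(iii) rather than from $[n]_q\to-\infty$. You instead re-derive the rational monotonicity from scratch via the peeling identity $[x]_q=F_{c_1}([x']_q)$ and the cylinder decomposition, organized as an induction on the first index where the two continued fractions differ. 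This is essentially a self-contained reproof of the \cite{MO1} input, which is a legitimate trade: it costs you the lower bound $[x']_q>1$ (your downward induction from the first $c_i\ge 3$ is the right way to get strictness there, though you only gesture at it) and a careful bookkeeping of cylinder images, but it makes the proposition independent of the external reference. Note also that your first-differing-index induction works verbatim for arbitrary distinct reals in $(1,\infty)$, not just rationals, so the detour through $\Bbb Q$ and Proposition \ref{ratcon} is not actually needed for monotonicity (only the positivity $[x']_q>0$ from Theorem \ref{mai} is). Two small slips, neither fatal: the cylinder $\{x:c_1(x)=c\}$ is $[c-1,c)$ with image $[\,[c-1]_q,[c]_q)$ — the \emph{infimum} $F_c(1)=[c-1]_q$ is attained at the integer endpoint and the supremum $F_c(+\infty)=[c]_q$ is not, the reverse of what you wrote — and your separate strictness argument via jumps is redundant, since the rational squeeze $[x]_q\le[z_1]_q<[z_2]_q\le[y]_q$ already yields strict inequality.
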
 

\begin{proof} By \eqref{xp1q} it suffices to prove the Proposition for $x>1$.
For $0<q<1$, \eqref{seri0} gives
\[
 \frac1{[x]_{N+1,q}}-\frac1{[x]_{N,q}}
 =\frac{q^{C_N}}{a_N(q)a_{N+1}(q)}>0,
\]
because the continuants have positive coefficients and constant term
$1$.  Hence the positive sequence $[x]_{N,q}=[x_N]_q$ is decreasing;
by Theorem \ref{mai} it tends to $[x]_q$.

Now, if $x<y$ then for large $N$ we have $x_N<y_N$, 
so by \cite{MO1},  
\(
[x]_{N,q}=[x_N]_q<[y_N]_q=[y]_{N,q}.
\)
Now take rational numbers $z_1,z_2$ such that
\(
x<z_1<z_2<y.
\)
Since $x_N\to x$ and $y_N\to y$, for all sufficiently large $N$ we
have
\(
x_N<z_1<z_2<y_N.
\)
All four numbers in this inequality are rational. Hence, by the
strict monotonicity of $q$-rationals proved in \cite{MO1},
\(
[x_N]_q<[z_1]_q<[z_2]_q<[y_N]_q.
\)
Letting $N\to\infty$ and using
\[
[x_N]_q=[x]_{N,q}\longrightarrow[x]_q,
\qquad
[y_N]_q=[y]_{N,q}\longrightarrow[y]_q,
\]
we obtain
\(
[x]_q\le [z_1]_q<[z_2]_q\le[y]_q.
\)
In particular, $[x]_q<[y]_q$, as desired.

The last statement follows from monotonicity and Proposition \ref{ratcon}.
\end{proof} 

\begin{proposition}\label{lim} For all $x\in \Bbb R$, one has $\lim_{q\to 1^-}[x]_q=x$.
\end{proposition}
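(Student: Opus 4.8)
The plan is to avoid any naive interchange of limits and instead exploit the monotonicity of $x\mapsto[x]_q$ established in Proposition \ref{inc}, combined with the fact that the statement is already elementary for rational $x$. The obvious route — writing $[x]_q=\lim_{N}[x_N]_q$ and swapping $\lim_N$ with $\lim_{q\to1}$ — breaks down precisely at $q=1$: the convergence estimate \eqref{boundd1} is controlled by the quantity $a>1$ of \eqref{eqfora}, and as $q\to1^-$ along the real axis one computes $a\to1^+$, so the prefactor $\tfrac{1}{1-a^{-2}}$ blows up and uniformity in $N$ is destroyed. This is unavoidable, since $q=1$ lies on $\partial D$ rather than in its interior. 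A squeeze argument between rational neighbors of $x$ circumvents this entirely.

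First I would record the base case: for any $w\in\Q$, the series $[w]_q$ is a rational function of $q$ that is regular at $q=1$ and satisfies $\lim_{q\to1}[w]_q=w$ (\cite{MO1}). Concretely, if $w=[[c_1,\dots,c_N]]>1$ then $[w]_q=a_N(q)/b_N(q)$, and since $b_N(1)$ is a positive integer there is no pole at $q=1$, so $\lim_{q\to1}[w]_q=a_N(1)/b_N(1)=x_N=w$; for rational $w<1$ the same conclusion follows from the recursion \eqref{xp1q}, because $[w+n]_q-[n]_q\to(w+n)-n=w$ as $q\to1$.

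Now fix $x\in\R$ and restrict to $q\in(0,1)$. For arbitrary rationals $y<x<z$, Proposition \ref{inc} gives
$$
[y]_q<[x]_q<[z]_q .
$$
Letting $q\to1^-$ and invoking the base case yields
$$
y\le\liminf_{q\to1^-}[x]_q\le\limsup_{q\to1^-}[x]_q\le z .
$$
Since $y$ can be chosen arbitrarily close to $x$ from below and $z$ arbitrarily close to $x$ from above within $\Q$, both the $\liminf$ and the $\limsup$ equal $x$, whence $\lim_{q\to1^-}[x]_q=x$.

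The argument is short once monotonicity is available, and the only genuinely substantive point — the reason the result is not a triviality — is the degeneration of the uniform convergence of the convergents at $q=1$. It is exactly this failure that forces the comparison with rational neighbors in place of a direct limit interchange; I expect this conceptual point, rather than any computation, to be the main thing a reader should notice.
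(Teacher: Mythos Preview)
Your proof is correct and follows essentially the same approach as the paper's: both argue the rational case directly, then for general $x$ sandwich $[x]_q$ between $[y]_q$ and $[z]_q$ for rational $y<x<z$ via Proposition~\ref{inc}, pass to $\liminf$/$\limsup$, and squeeze. Your additional discussion of why the naive limit interchange fails is a helpful remark but is not present in the paper's terse version.
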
 

\begin{proof} If $x\in \Bbb Q$, the result is obvious. 
So let $x$ be irrational and $a,b\in \Bbb Q$ be 
such that $a<x<b$. Then by Proposition \ref{inc}, for any $0<q<1$ 
we have $[a]_q<[x]_q<[b]_q$. Thus $\liminf_{q\to 1}[x]_q\ge a$ and 
$\limsup_{q\to 1}[x]_q\le b$. Since this holds for any $a<x<b$, 
the proposition follows.  
\end{proof} 

\subsection{Monotonicity of $[x]_q$ with respect to $q$}

\begin{proposition} For any $x>1$:

(i) we have 
$$
\frac{d}{dq}[x]_q\ge 0,
$$ 
and if $0<q_1<q_2<1$ then 
$$
[x]_{q_1}<[x]_{q_2};
$$

(ii) $[x]_q<x$ for $q<1$. 
\end{proposition}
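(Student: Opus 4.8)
The plan is to prove both monotonicity in $q$ and the bound $[x]_q < x$ by reducing to the rational case via the convergents and the continuity/monotonicity results already established. For part (i), the key structural fact I would exploit is that each convergent $[x]_{N,q} = a_N(q)/b_N(q)$ is a rational function of $q$ with a manageable $q$-derivative, so the plan is first to establish the monotonicity statement $\frac{d}{dq}[x]_q \ge 0$ at the level of the convergents and then pass to the limit. Concretely, I would differentiate the series \eqref{seri} for $1/[x]_q$ term by term (justified by the uniform convergence on compact subsets of $D$ from Theorem \ref{mai}, which covers the real interval $(0,1)$) and show that $\frac{d}{dq}\frac{1}{[x]_q} \le 0$; since $[x]_q > 0$ for $x>1$ and $q\in(0,1)$, this is equivalent to $\frac{d}{dq}[x]_q \ge 0$.

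The cleaner route for part (i) is probably to work directly with the series \eqref{seri}. Writing $\frac{1}{[x]_q} = \sum_{j\ge 0} \frac{q^{C_j}}{a_j(q)a_{j+1}(q)}$, I would aim to show each summand is a decreasing function of $q$ on $(0,1)$, or at least that the whole sum is. The first term is $\frac{1}{[c_1]_q}$ up to indexing; since $[n]_q = 1+q+\cdots+q^{n-1}$ is increasing in $q$ on $(0,1)$, its reciprocal is decreasing, which is encouraging. The general term involves $q^{C_j}$ in the numerator (increasing) against the product $a_j(q)a_{j+1}(q)$ of $q$-continuants (also increasing, with nonnegative coefficients), so monotonicity of the individual terms is not automatic and would require a more careful argument. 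I expect the main obstacle is precisely controlling the sign of the derivative of these mixed ratios $q^{C_j}/(a_j a_{j+1})$: the competition between the growing numerator and the growing denominator is genuinely delicate, and term-by-term monotonicity may simply fail, forcing me to argue about the sum as a whole.

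An alternative, and likely more robust, approach to the strict inequality $[x]_{q_1}<[x]_{q_2}$ is to avoid differentiation entirely and instead use the approximation estimates already proved. Using that the convergents $[x]_{N,q}=[x_N]_q$ are themselves rational $q$-numbers, I would invoke the known monotonicity of $[x_N]_q$ in $q$ for rational arguments (from \cite{MO2,BBL}, as the proposition itself signals these results are ``not really original''), then pass to the limit using the uniform bounds \eqref{boundd1} to preserve the strict inequality between two distinct values $q_1<q_2$. The strictness would follow by first establishing the inequality at a single well-chosen convergent and then noting the limit cannot collapse it, much as in the proof of Proposition \ref{inc}.

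For part (ii), $[x]_q < x$ for $q<1$, the plan is to combine the monotonicity in $q$ from part (i) with the limiting value $\lim_{q\to 1^-}[x]_q = x$ established in Proposition \ref{lim}. If $x\mapsto [x]_q$ is genuinely increasing in $q$ and tends to $x$ as $q\to 1^-$, then for any fixed $q<1$ we must have $[x]_q < \lim_{q'\to 1^-}[x]_{q'} = x$, with strictness coming from the strict monotonicity proved in part (i). For integer or rational $x$ one can check this directly from the explicit formula $[x]_q = \frac{1-q^x}{1-q}$ (for $x\in\Z$) or from the rational function, providing a useful sanity check; the irrational case then follows by the continuity in $x$ from Proposition \ref{ratcon} together with the monotonicity in $x$ from Proposition \ref{inc}, sandwiching $[x]_q$ between nearby rational values. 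The only subtlety here is ensuring the strict inequality survives the limit $q\to 1$, which it does precisely because part (i) gives \emph{strict} monotonicity for $q<1$.
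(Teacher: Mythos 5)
Your proposal circles the right reductions (rational case first, then limits via Proposition \ref{ratcon}; part (ii) from part (i) plus Proposition \ref{lim}, which is exactly what the paper does for (ii)), but it never supplies the one step that carries the mathematical content: a proof that $\frac{d}{dq}[x]_q\ge 0$ for \emph{rational} $x$. Your first route — differentiating \eqref{seri} term by term — genuinely fails, as you half-suspect: already for $\bold c=(2,2,\dots)$ the $j=1$ term is $\frac{q}{(1+q)(1+q+q^2)}$, which is \emph{increasing} near $q=0$, so the individual summands are not monotone and "arguing about the sum as a whole" is just a restatement of the claim. Your fallback is to cite the rational case from \cite{MO2,BBL}, but the paper's disclaimer about those references concerns the monotonicity of $x\mapsto [x]_q$ in the \emph{previous} subsection, not monotonicity in $q$; the latter is proved from scratch here. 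The paper's argument is an induction on the length of the negative continued fraction: writing $[x]_q=[n+1]_q-\frac{q^n}{[y]_q}$ with $y=(n+1-x)^{-1}$, differentiating, using $[y]_q\ge 1$ and the induction hypothesis $[y]_q'\ge 0$, and reducing to the elementary inequality $1-q^n\bigl(n(q^{-1}-1)+1\bigr)\ge 0$ for $n\ge 1$, which holds because $z\mapsto q^z\bigl(z(q^{-1}-1)+1\bigr)$ is decreasing on $[1,\infty)$ and equals $1$ at $z=1$. Nothing in your proposal substitutes for this.

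There is a second, smaller gap: strictness. Passing to the limit over convergents only yields $[x]_{q_1}\le [x]_{q_2}$, and "noting the limit cannot collapse it, much as in Proposition \ref{inc}" has no analogue here — the sandwiching trick of Proposition \ref{inc} exploits two distinct rational points strictly between $x$ and $y$, whereas here you are comparing the same $x$ at two values of $q$, and the convergents bound $[x]_q$ from one side only. The paper gets strictness for free from analyticity: $q\mapsto [x]_q$ is a nonconstant real-analytic function on $(0,1)$ (by Theorem \ref{mai}, since $(0,1)\subset D$, and nonconstant because $[x]_0=1$ while $[x]_q\to x$ as $q\to 1$), so a nonnegative derivative forces strict increase. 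You should make that mechanism explicit rather than appeal to the limit "not collapsing."
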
 

\begin{proof} (i) It is enough to show that $[x]_q'\ge 0$ (as  
$[x]_q$ is a nonconstant analytic function, so cannot be constant on an interval).
We first prove the statement for $x\in \Bbb Q$. 
If $x\in \Bbb Z$, the statement is easy, so we assume $x\notin \Bbb Z$. 
Let 
\(
c_1(x)=n+1\ge 2.
\)
We have 
\(
[x]_q=\frac{1-q^{n+1}}{1-q}-\frac{q^{n}}{[y]_q},
\)
where $y:=(n+1-x)^{-1}$. Differentiating this equality, we get 
$$
[x]_q'=\frac{1-(n+1)q^{n}+nq^{n+1}}{(1-q)^2}-\frac{nq^{n-1}}{[y]_q}+\frac{q^{n}}{[y]_q^2}[y]_q'.
$$
Assume that $[y]_q'\ge 0$. Then, since $[y]_q\ge 1$, we have
$$
(1-q)^2[x]_q'\ge 1-(n+1)q^{n}+nq^{n+1}-n(1-q)^2q^{n-1}=1-q^n(n(q^{-1}-1)+1).
$$
It is easy to check by differentiation that the function $z\mapsto q^z(z(q^{-1}-1)+1)$ is decreasing 
on $[1,\infty)$. Hence its value for every integer $n\geq1$ is bounded above by its value at $n=1$. Thus if $n=1$ then
\(
(1-q)^2[x]_q'\ge 1-q((q^{-1}-1)+1)=0.
\)
So it follows by induction in the length of the negative continued fraction 
representation of $x$ that $[x]_q'\ge 0$. 

Now consider the general case. By Proposition \ref{ratcon}, the rational case implies that 
if $q_1<q_2$ then $[x]_{q_1}\le [x]_{q_2}$. This implies that $[x]_q'\ge 0$. 

(ii) follows from (i) and Proposition \ref{lim}.
\end{proof} 

\subsection{The Lebesgue derivative of $[x]_q$}
By Lebesgue's theorem, the function $x\to [x]_q$ must be differentiable almost everywhere with respect to the Lebesgue measure, and its Stieltjes differential is a positive measure
$\mu_q$ on $\Bbb R$ such that 
for $x<y$
\(
[y]_q-[x]_q=\mu_q((x,y]). 
\)
The measure $\mu_q$ has a representation 
$$
\mu_q=\mu_q^*+\sum_{x\in \Bbb Q}{\rm Jump}_x(q)\delta_x,
$$
where $\mu_q^*$ is an atomless, regular measure on $\Bbb R$.
We have $\mu_q^*=d[x]_q^*$, where 
\(
[x]_q^*=[x]_q+\sum_{y\in \Bbb Q,y>x}{\rm Jump}_y(q)
\)
is a continuous, non-decreasing function. 

\begin{proposition} $\frac{d}{dx}[x]_q=\frac{d}{dx}[x]_q^*=0$ almost everywhere. 
\end{proposition} 

\begin{proof} 
For a typical real number $x$ (i.e., outside a set of Lebesgue measure $0$), one has (\cite{It}, Theorem 3.1)
\(
\frac{1}{N}\log|x-x_N|\to 0, N\to \infty,
\)
so $|x-x_N|$ decays 
subexponentially. This together with Theorem \ref{mai} implies that 
\(
\frac{[x]_q-[x_N]_q}{x-x_N}\to 0, N\to \infty. 
\)
Thus if the derivative of $[x]_q$ exists at the point $x$, it must equal zero, as desired. 

Since $[x]_q-[x]_q^*$ is a countable linear combination of shifted Heaviside functions
with positive coefficients, it has zero derivative almost everywhere, so the statement follows for $[x]_q^*$ as well.  
\end{proof}

\section{The total jump of $[x]_q$ on $(1,\infty)$}

This section is dedicated to showing, under various assumptions, 
that the sum of the jumps of $[x]_q$ at rational points 
in $(1,\infty)$,
\begin{equation}\label{jumpser}
{\rm Jump}(q):=\sum_{x\in \Bbb Q\cap (1,\infty)} {\rm Jump}_x(q)
\end{equation}
 equals the difference between its limits at $+\infty$ and at $1$ along this interval, i.e., $\frac{1}{1-q}-1=\frac{q}{1-q}$. This implies a similar statement on any open interval of the real line
and means that the measure $\mu_q$ is purely atomic, i.e., $\mu_q^*=0$.  

\subsection{The formal total jump}
We first study ${\rm Jump}(q)$ as a formal series in $q$.  

\begin{proposition}\label{jumpform} (i) For $x=[[c_1,...,c_N]]>1$,  
${\rm Jump}_x(q)=q^{C_N}(1+O(q))\in \Bbb Z[[q]]$.  
Thus series \eqref{jumpser} is $q$-adically convergent. 

(ii) We have  
$$
{\rm Jump}(q)=[\infty]_q-[1]_q=\frac{q}{1-q}=q+q^2+...
$$
\end{proposition}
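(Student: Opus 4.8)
The plan is to prove Proposition \ref{jumpform} in two stages, establishing part (i) as a $q$-adic convergence statement and then deriving the closed form in part (ii) by a telescoping argument combined with the asymptotic behavior of $[x]_q$ recorded in Proposition \ref{ratcon}(iii).

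For part (i), I would start from the product formula for the jump obtained in Proposition \ref{jumpformula}, namely
\[
{\rm Jump}_x(q)=\frac{q^{C_N}}{a_N(c_1,\dots,c_N|q)\,a_{N+1}(c_1,\dots,c_N,\infty|q)}.
\]
Here $a_N(c_1,\dots,c_N|q)$ and $a_{N+1}(c_1,\dots,c_N,\infty|q)$ are power series in $q$ with constant term $1$ (this follows from the normalization recorded in the preliminaries, where every $q$-continuant has constant term and leading coefficient $1$, and the $\infty$-continuant is the $q$-adic limit of continuants with this property). Consequently the reciprocal of the product of these two factors is a well-defined element of $\Bbb Z[[q]]$ with constant term $1$, so multiplying by $q^{C_N}$ gives ${\rm Jump}_x(q)=q^{C_N}(1+O(q))$. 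Since $C_N=\sum_{j=1}^N(c_j-1)\ge N$ grows without bound as the continued fraction lengthens, and for each fixed valuation only finitely many rationals $x$ contribute a term of that valuation, the series \eqref{jumpser} is $q$-adically convergent, proving (i).

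For part (ii), the idea is that the total jump should be a telescoping of the jumps against the full increase of $[x]_q$ over $(1,\infty)$. By Proposition \ref{ratcon}, the function $x\mapsto [x]_q$ is right-continuous everywhere, continuous at irrationals, and has a downward jump precisely at each rational $x>1$ equal to ${\rm Jump}_x(q)>0$; moreover its left and right limits at the endpoints are $[1]_q=1$ (right limit at $1$) and $\frac{1}{1-q}$ (limit at $+\infty$). Formally, summing all the jumps should recover the total decrease of the jump part, and since the continuous (atomless) part $\mu_q^*$ contributes nothing at the level of formal $q$-adic identities — because each rational truncation $x_N$ captures the function exactly up to high $q$-adic order via Theorem \ref{mai} — the sum of the jumps equals the full variation $\frac{1}{1-q}-1=\frac{q}{1-q}$. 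Concretely I would organize the computation by grouping rationals according to their integer part $\lfloor x\rfloor=m$, use the recursion ${\rm Jump}_{x+1}(q)=q\,{\rm Jump}_x(q)$ from before Proposition \ref{jumpformula} to reduce to the interval $(1,2)$, and then sum the jumps on $(m,m+1)$ against the telescoping differences of left/right limits; at the formal level the atomless part drops out because it has strictly higher $q$-adic valuation than any finite truncation error.

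The main obstacle I anticipate is making the telescoping rigorous as a $q$-adic identity rather than merely as a real-analytic one: I must argue that the formal sum over the dense set of rationals genuinely reconstitutes $\frac{q}{1-q}$ with no contribution surviving from $\mu_q^*$. The cleanest route is probably to avoid analysis entirely and instead verify the identity order by order in $q$: for each $k$, only finitely many rationals $x$ have ${\rm Jump}_x(q)$ with nonzero coefficient in degrees $\le k$ (by part (i) and the growth of $C_N$), so the coefficient of $q^k$ in ${\rm Jump}(q)$ is a finite sum that I can match against the coefficient of $q^k$ in $\frac{q}{1-q}=\sum_{k\ge1}q^k$. The identity $[\infty]_q-[1]_q=\frac{q}{1-q}$ is then checked by observing that $[\infty]_q:=\lim_{x\to+\infty}[x]_q=\frac{1}{1-q}$ by Proposition \ref{ratcon}(iii) and $[1]_q=1$, so the hard part is really the bookkeeping that shows the formal jump series telescopes exactly to this difference.
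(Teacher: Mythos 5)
Your proposal is correct and follows essentially the same route as the paper: part (i) from the product formula of Proposition \ref{jumpformula} (both continuant factors in the denominator have constant term $1$), and part (ii) by verifying the identity modulo $q^{n+1}$ for each $n$, where only the finitely many rationals $y_1(n)<\dots<y_m(n)$ with $C_N\le n$ contribute. The one observation that finishes the ``bookkeeping'' you flag at the end is that modulo $q^{n+1}$ the function $x\mapsto [x]_q$ is constant on each interval $[y_i(n),y_{i+1}(n))$ (since $[x]_q\equiv [x_N]_q \pmod{q^{C_N}}$ by \eqref{seri}), so ${\rm Jump}_{y_i(n)}(q)\equiv [y_i(n)]_q-[y_{i-1}(n)]_q$ and the finite sum telescopes exactly to $[\infty]_q-[1]_q=\frac{q}{1-q}$, with no residual contribution to account for.
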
 

\begin{proof} (i) follows easily from the definition, so we may focus on (ii).
It suffices to prove (ii) modulo $q^{n+1}$ for every $n$. 
But (i) implies that modulo $q^{n+1}$, the jump ${\rm Jump}_x(q)$ 
is only nonzero if $x=[[c_1,...,c_N]]$ with $C_N\le n$, 
in particular $N\le n$. 
There are finitely many such numbers $y_1(n)<...<y_m(n)$, 
and $[x]_q$ is constant modulo $q^{n+1}$ for $y_i(n)\le x<y_{i+1}(n)$, $0\le i\le m$, where 
$y_0(n):=1$ and $y_{m+1}(n):=\infty$. Thus modulo $q^{n+1}$ we have 
$[y_m(n)]_q=[\infty]_q=\frac{1}{1-q}$ and 
\(
{\rm Jump}_{y_i(n)}(q)=[y_i(n)]_q-[y_{i-1}(n)]_q,
\)
so 
$$
{\rm Jump}(q)=\sum_{i=1}^m{\rm Jump}_{y_i(n)}(q)=[y_m(n)]_q-[y_0(n)]_q=\frac{q}{1-q}.
$$
\end{proof} 

\subsection{The numerical total jump} 

Now let us study the numerical series
\(
{\rm Jump}(q)=[1]_q^*-[1]_q=[1]_q^*-1.
\)
For each fixed $0\le q<1$ its terms are nonnegative. More precisely,
if $F=\{x_1<\cdots<x_m\}$ is a finite subset of
$\Bbb Q\cap(1,\infty)$, then monotonicity of $x\mapsto[x]_q$ gives
$[x_1]_q^-\ge [1]_q$ and $[x_i]_q^-\ge [x_{i-1}]_q$ for $i\ge2$.
Consequently,
\begin{align*}
\sum_{x\in F}{\rm Jump}_x(q)
&=\sum_{i=1}^m\bigl([x_i]_q-[x_i]_q^-\bigr)\\
&\le [x_m]_q-[1]_q\\
&\le \lim_{x\to+\infty}[x]_q-[1]_q
 =\frac{1}{1-q}-1=\frac{q}{1-q}.
\end{align*}
Taking the supremum over all finite $F$ shows that \eqref{jumpser}
converges for every fixed $q\in[0,1)$ and that
\(
0\le {\rm Jump}(q)\le \frac{q}{1-q}.
\)

We would now like to study convergence of series \eqref{jumpser} for complex $q$. 
Let $D'\subset D$ be the region defined by the inequality 
$a^2<1-r:$
\begin{figure}[htbp]
\vspace{-10pt}
    \centering
  \includegraphics[width=1.1\columnwidth]{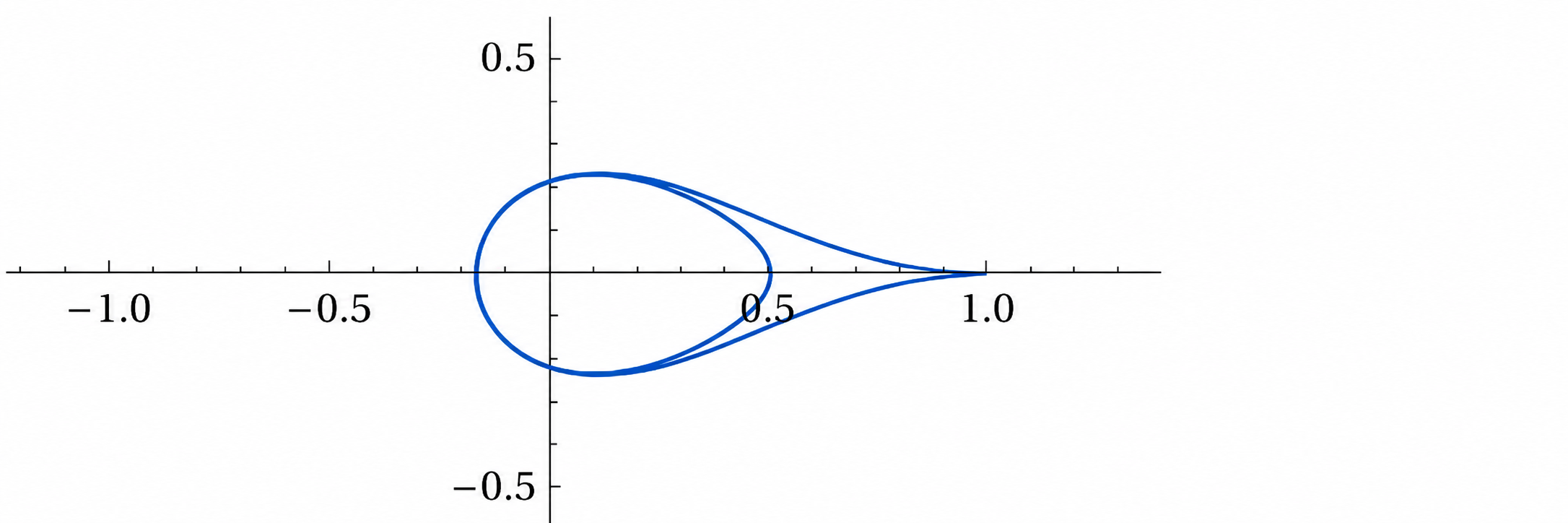}
  \vspace{-10pt}
  %\label{fig:polarplot}
\end{figure}

The picture is deceptive here -- the boundaries of $D'$ and $D$ don't touch, they are 
just very close on the negative real axis. Namely, $\partial D'$ intersects 
the real axis at $1/2$ and $-\rho_0$, where 
\(
\rho_0\approx 0.1705<3-2\sqrt{2}\approx 0.1716. 
\)
(recall that $\partial D$ intersects the negative real axis at $-3+2\sqrt{2}$). 
The largest disk centered at the origin contained in $D'$ has radius $\rho_0$. 

\begin{corollary}\label{unifo} The series \eqref{jumpser} converges 
absolutely and uniformly on compact sets in $D'$ to the function $\frac{q}{1-q}$. 
\end{corollary}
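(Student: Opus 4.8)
The plan is to establish Corollary \ref{unifo} by combining the $q$-adic identity from Proposition \ref{jumpform}(ii) with an absolute convergence estimate valid on $D'$, then invoking analytic continuation to identify the sum with $\frac{q}{1-q}$. The central object to control is a single jump term, which by Proposition \ref{jumpbo} has the explicit form
\[
{\rm Jump}_x(q)=\frac{q^{C_N}}{a_N(c_1,\dots,c_N|q)\,a_{N+1}(c_1,\dots,c_N,\infty|q)}.
\]
The key observation is that this is precisely the $N$-th term of the series \eqref{seri} for $1/[x]_q^-$, so the bounds already proved for the continuant denominators apply directly. From Lemma \ref{keybound} we have $|a_N(q)|\ge \alpha^N=(r^{1/2}a)^N$, and the same induction applies to $a_{N+1}(c_1,\dots,c_N,\infty|q)$ since the limiting continuant satisfies the same recursion \eqref{rec2}. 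Thus on $D$ one gets a bound of the shape $|{\rm Jump}_x(q)|\le C\,r^{C_N}\alpha^{-2N-1}$, paralleling \eqref{eqq4}.

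First I would make precise how to sum these bounds over \emph{all} rational $x\in(1,\infty)$, not merely along a single continued-fraction expansion. The natural indexing is by the finite sequences $(c_1,\dots,c_N)$ with $c_i\ge 2$ and $c_N\ge 3$ (to avoid the redundancy $[[\dots,c_N,2,2,\dots]]=[[\dots,c_N-1]]$). The total sum then becomes a sum over all such sequences, which I would organize as a product/tree expansion: grouping by the first index $c_1$ and recursing gives a bound that factorizes. The condition defining $D'$, namely $a^{-2}<1-r$, is exactly what is needed to make this multi-index geometric sum converge, because summing $a^{-2N}$ over branching sequences costs an extra factor per level that the defining inequality of $D'$ is calibrated to absorb. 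Concretely, I expect the per-level generating factor to be controlled by something like $\sum_{c\ge 2} r^{c-1}a^{-2}<\frac{a^{-2}}{1-r}<1$, which is why $D'$ rather than $D$ is the relevant region.

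Having established absolute and uniform convergence on compact subsets of $D'$, the sum defines a holomorphic function there. To identify it with $\frac{q}{1-q}$, I would use that both this holomorphic function and $\frac{q}{1-q}$ have the same Taylor coefficients at $q=0$: Proposition \ref{jumpform}(ii) gives the equality $q$-adically, i.e. as an identity of formal power series, and uniform convergence on compact sets near the origin guarantees that the analytic sum has the asserted Taylor series. By the identity theorem for holomorphic functions, the two agree on all of the connected region $D'$. This last step is routine once the convergence is in hand.

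The main obstacle will be the bookkeeping in the multi-index summation: unlike the single-expansion estimate \eqref{boundd}, here one must sum $|{\rm Jump}_x(q)|$ over a branching set of continued-fraction words, and the continuant bound $|a_N(q)|\ge\alpha^N$ is uniform but the exponent $r^{C_N}$ with $C_N=\sum(c_i-1)$ couples to the word through its entries. The delicate point is verifying that the defining inequality $a^{-2}<1-r$ of $D'$ is both necessary and sufficient to make the resulting multiple geometric series converge, and in particular that the near-tangency of $\partial D'$ and $\partial D$ on the negative real axis (the values $-\rho_0$ versus $-3+2\sqrt{2}$) reflects a genuine threshold rather than slack in the estimate. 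I would want to check that the factor counting distinct rationals, together with the $a^{-2N-1}$ decay, assembles into a convergent sum precisely on $D'$; getting the constant $\rho_0\approx 0.1705$ to come out correctly is where the care is needed.
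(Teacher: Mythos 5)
Your proposal is correct and follows essentially the same route as the paper: bound each $|{\rm Jump}_x(q)|$ by $r^{\sum_i(c_i-2)}\,r^{-1/2}a^{-2N-1}$ using Proposition \ref{jumpbo} together with the continuant lower bound $|a_N(q)|\ge\alpha^N$ from Lemma \ref{keybound}, sum the resulting geometric series over all words $(c_1,\dots,c_N)$ with per-level factor $\frac{a^{-2}}{1-r}<1$ (exactly the defining inequality of $D'$), and identify the limit with $\frac{q}{1-q}$ via the Weierstrass test and the formal identity of Proposition \ref{jumpform}. The only nitpick is the exponent in your per-level factor (it should be $\sum_{c\ge 2}r^{c-2}a^{-2}=\frac{a^{-2}}{1-r}$, not $\sum_{c\ge 2}r^{c-1}a^{-2}$), but you land on the correct threshold condition regardless.
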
 

\begin{proof}  We will need the following lemma. 
\begin{lemma}\label{bouu} For $q\in D$ and $x=[[c_1,c_2,...,c_N]]$ 
\[
|{\rm Jump}_x(q)|\le r^{\sum_{i=1}^N (c_i-2)}a^{2N}.
\]
\end{lemma} 

\begin{proof} Put
\[
A_{N-1}:=a_{N-1}(c_2,\ldots,c_N\mid q),
\qquad
A_N^\infty:=a_N(c_2,\ldots,c_N,\infty\mid q),
\]
and recall that $\alpha=r^{1/2}a^{-1}$.  By Lemma \ref{keybound},
\(
|A_{N-1}|\ge \alpha^{N-1}.
\)
Moreover, for every integer $n\ge2$, applying the same lemma to the
sequence
\(
(c_2,\ldots,c_N,n)
\)
gives
\(
|a_N(c_2,\ldots,c_N,n\mid q)|\ge\alpha^N.
\)
Letting $n\to\infty$ therefore yields
\(
|A_N^\infty|\ge\alpha^N.
\)

We also have $\alpha\le1$.  Indeed, by \eqref{eqfora},
\(
a+\frac1a
\le r^{-1/2}+r^{1/2}.
\)
The function $u\mapsto u+u^{-1}$ is decreasing on $(0,1]$, so this
inequality implies $a\ge r^{1/2}$.  Hence
\(
 \alpha=r^{1/2}a^{-1}\le1.
\)

Proposition \ref{jumpbo} now gives
\begin{align*}
|{\rm Jump}_x(q)|
&=
\frac{r^{C_N}}{|A_{N-1}A_N^\infty|}\\
&\le
r^{C_N}\alpha^{-2N+1}\\
&\le
r^{C_N}\alpha^{-2N}\\
&=
r^{\sum_{i=1}^N(c_i-2)}a^{2N}.
\end{align*}
where in the last equality we used
\(
C_N=N+\sum_{i=1}^N(c_i-2).
\)
This proves the asserted estimate.  The argument also covers $N=1$
with the convention $a_0=1$.
\end{proof} 

But if $q\in D'$ then 
\[
\sum_{N\ge 1}\sum_{c_1,...,c_N\ge 2}
r^{\sum_{i=1}^N (c_i-2)}a^{2N}
=
\sum_{N\ge 1}\left(\frac{a^2}{1-r}\right)^N
=\frac{a^2}{1-r-a^2}.
\]
So the Corollary follows from Lemma \ref{bouu}, Proposition \ref{jumpform} and the Weierstrass test. 
\end{proof} 

Corollary \ref{unifo} implies

\begin{corollary}\label{atomic} If $0<q<1/2$ then the measure $\mu_q$ is purely atomic, i.e., 
$\mu_q^*=0$. In other words, for every $x\in \Bbb R_{>0}$, 
$$
[x]_q=\sum_{y\in \Bbb Q\cap (0,x]}{\rm Jump}_y(q),
$$
and $[x]_q^*=\frac{1}{1-q}$ for all $x\in \Bbb R$. 
\end{corollary}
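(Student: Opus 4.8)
The plan is to deduce Corollary~\ref{atomic} directly from Corollary~\ref{unifo}, so the real content has already been established and what remains is a clean bookkeeping argument. First I would invoke Corollary~\ref{unifo} at a real point $q\in(0,1/2)$: it gives that the total jump series \eqref{jumpser} converges to $\frac{q}{1-q}$. Recalling the decomposition $\mu_q=\mu_q^*+\sum_{x\in\Bbb Q}{\rm Jump}_x(q)\delta_x$ with $\mu_q^*$ atomless, I would evaluate the total mass of $\mu_q$ on the interval $(1,\infty)$. On one hand this total mass is $\lim_{x\to+\infty}[x]_q-\lim_{x\to 1+}[x]_q=\frac{1}{1-q}-1=\frac{q}{1-q}$ by Proposition~\ref{ratcon}(iii) and right-continuity at $1$. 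On the other hand it equals $\mu_q^*((1,\infty))+{\rm Jump}(q)=\mu_q^*((1,\infty))+\frac{q}{1-q}$. Comparing, $\mu_q^*((1,\infty))=0$.

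Next I would remove the restriction to $(1,\infty)$. Using the translation equivariance ${\rm Jump}_{x+1}(q)=q\,{\rm Jump}_x(q)$ from \eqref{xp1q} and the analogous scaling of $[x]_q$, the same computation on any interval $(n,n+1)$ shows $\mu_q^*$ assigns zero atomless mass there; since $\Bbb R$ is the countable union of such intervals together with the integer points (which carry only atoms), I conclude $\mu_q^*=0$ on all of $\Bbb R$, hence $K_q=\emptyset$. With $\mu_q^*=0$ the measure $\mu_q$ is the pure sum of its atoms, so for any $x$ one has $[x]_q=[x]_q(1)+\mu_q((1,x])$, and unwinding the definition of $\mu_q$ as a sum of jumps gives the stated formula $[x]_q=\sum_{y\in\Bbb Q\cap(0,x]}{\rm Jump}_x(y)$ together with $[x]_q^*=\frac{1}{1-q}$.

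I do not anticipate a genuine obstacle, since the analytic heavy lifting is done in Corollary~\ref{unifo}; the only point requiring care is the comparison of total masses, where I must make sure the telescoping identity for $\mu_q$ on a half-open interval is applied with the correct endpoint conventions (right-continuity at rationals, left-discontinuity carrying the jump), so that the atomless remainder $\mu_q^*$ is correctly isolated. The formula for $[x]_q$ as a sum over rational $y\le x$ should then be read off by noting that $[x]_q^*$, being continuous, non-decreasing, and equal to $\frac{1}{1-q}$ on a dense set once $\mu_q^*=0$, must be constant, which pins down the decomposition.
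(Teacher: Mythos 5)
Your proof is correct and follows essentially the same route the paper intends: the paper derives Corollary \ref{atomic} from Corollary \ref{unifo} by exactly this comparison of the total mass $\mu_q((1,\infty))=\frac{q}{1-q}$ with the atomic part ${\rm Jump}(q)=\frac{q}{1-q}$, forcing $\mu_q^*((1,\infty))=0$, and then extending to all of $\Bbb R$ by the translation equivariance \eqref{xp1q}. Your bookkeeping of the endpoint conventions (right-continuity, atoms carried by half-open intervals) is the only delicate point and you handle it correctly.
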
 

\begin{remark}\label{range}  We expect that Corollary \ref{unifo} holds on a larger region containing the interval $(0,1)$. In particular, we expect that  
series \eqref{jumpser} converges to $\frac{q}{1-q}$ and hence Corollary 
\ref{atomic} holds for any $q\in (0,1)$. In the following subsection 
we provide a method of extending the range of $q\in (0,1)$ 
in which we can prove this. 
\end{remark} 

\subsection{Improving jump bounds} 

For $\ell\ge 1$ let ${\rm Jump}(q)_\ell$ be the sum of ${\rm Jump}_x(q)$ 
over all rational $x>1$ whose negative continued fraction has 
length $\ell(x)=\ell$; clearly, ${\rm Jump}(q)_\ell<\infty$ since 
$\sum_{\ell\ge 1}{\rm Jump}(q)_\ell={\rm Jump}(q)\le \frac{q}{1-q}$. 
For example, 
\(
{\rm Jump}(q)_1=\sum_{m=2}^\infty q^{m-1}(1-q)=q.
\)
Let ${\rm Jump}(q,z)$ be the generating function of ${\rm Jump}(q)_\ell$:
$$
{\rm Jump}(q,z)=\sum_{x\in \Bbb Q\cap (1,\infty)}{\rm Jump}_x(q)z^{\ell(x)}=  \sum_{\ell\ge 1}{\rm Jump}(q)_\ell z^\ell.
$$

Let $q_*$ be the largest number in $(0,1]$ such that for every
$0\le q<q_*$ there exists $z>1$ for which
${\rm Jump}(q,z)<\infty$. Equivalently, the decomposition by continued
fraction length converges exponentially fast, in the sense that
\[
\sum_{\ell\ge1}{\rm Jump}(q)_\ell z^\ell<\infty
\qquad\text{for some }z>1.
\]
\begin{proposition}\label{extrange}
For any $0<r<q_*$, there is $\delta>0$ 
such that \eqref{jumpser} converges absolutely 
and uniformly on compact sets in the disk $|q-r|<\delta$; 
in other words, this holds in some neighborhood of the interval 
$(0,q_*)$. Thus \eqref{jumpser} converges in $(0,q_*)$ 
to the function $\frac{q}{1-q}$ and Corollary 
\ref{atomic} holds for $q\in (0,q_*)$. 
\end{proposition}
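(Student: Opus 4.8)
The plan is to leverage the generating function $\mathrm{Jump}(q,z)$ in exactly the same way that the explicit geometric estimate was used in the proof of Corollary \ref{unifo}. Fix $0<r<q_*$. By definition of $q_*$ there is some $z_0>1$ with $\mathrm{Jump}(r,z_0)<\infty$, meaning the positive series $\sum_{\ell\ge 1}\mathrm{Jump}(r)_\ell z_0^\ell$ converges. The key point I would establish first is that each partial jump sum $\mathrm{Jump}(q)_\ell=\sum_{\ell(x)=\ell}\mathrm{Jump}_x(q)$ is, by Proposition \ref{jumpform}(i), a power series in $q$ with \emph{nonnegative} integer coefficients whose lowest-order term has degree at least $\ell$ (since $C_N\ge N$). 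Consequently, for complex $q$ with $|q|\le r$ we have the crude but decisive domination
\begin{equation}\label{monodom}
\sum_{\ell(x)=\ell}|\mathrm{Jump}_x(q)|\le \sum_{\ell(x)=\ell}\mathrm{Jump}_x(r)=\mathrm{Jump}(r)_\ell,
\end{equation}
because replacing $q$ by its absolute value $r$ only increases the modulus of each term, the coefficients being nonnegative. This is the same monotonicity trick underlying the footnoted argument for uniform convergence on subintervals of $[0,1)$.

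Next I would promote this into an estimate on a genuine complex neighborhood of $r$, not merely the disk $|q|\le r$. Choose $\delta>0$ small enough that $r+\delta<q_*$ and so that $(r+\delta)z_0\le \rho$ for some $\rho<z_0$; concretely, since $z_0>1$ we may shrink $\delta$ until $r+\delta<r z_0/\rho$ for a fixed $\rho\in(r/z_0\cdot z_0,z_0)=(r,z_0)$, keeping $\rho>r$. For any $q$ with $|q-r|<\delta$ we have $|q|<r+\delta$, so \eqref{monodom} with $r$ replaced by $r+\delta$ gives
\begin{equation}\label{tailbd}
\sum_{\ell\ge L}\sum_{\ell(x)=\ell}|\mathrm{Jump}_x(q)|\le \sum_{\ell\ge L}\mathrm{Jump}(r+\delta)_\ell\le z_0^{-L}\sum_{\ell\ge L}\mathrm{Jump}(r+\delta)_\ell z_0^\ell,
\end{equation}
and the right-hand side is a tail of the convergent series $\mathrm{Jump}(r+\delta,z_0)<\infty$, hence tends to $0$ as $L\to\infty$ uniformly in such $q$. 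This is precisely the Weierstrass $M$-test: the series \eqref{jumpser} is dominated on the disk $|q-r|<\delta$ by the $q$-independent convergent series $\sum_\ell \mathrm{Jump}(r+\delta)_\ell$, so it converges absolutely and uniformly on compact subsets of that disk.

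Finally, uniform convergence of \eqref{jumpser} to a holomorphic function on each such disk, combined with Proposition \ref{jumpform}(ii) which identifies the $q$-adic (hence formal, hence pointwise for small $q$) sum as $\frac{q}{1-q}$, forces the sum to equal $\frac{q}{1-q}$ throughout the connected neighborhood of $(0,q_*)$ by analytic continuation. Since the neighborhoods $|q-r|<\delta$ cover the interval $(0,q_*)$, the stated convergence holds on a neighborhood of $(0,q_*)$ and in particular on $(0,q_*)$ itself; Corollary \ref{atomic} then applies verbatim for $q\in(0,q_*)$, as its proof only used that $\mathrm{Jump}(q)=\frac{q}{1-q}$. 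The main obstacle I anticipate is the passage from the disk $|q|\le r$ (where the nonnegativity domination \eqref{monodom} is immediate) to a full two-sided complex neighborhood of $r$; this requires the slack of enlarging $r$ to $r+\delta<q_*$ and paying for it with the factor $z_0^{-L}$ in \eqref{tailbd}, and one must check that $\delta$ can be chosen uniformly enough to retain $(r+\delta)<q_*$ while keeping $z_0>1$ fixed. Everything else is routine application of the Weierstrass test and the already-established formal identity.
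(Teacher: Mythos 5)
There is a genuine gap, and it sits at the first step of your argument. The domination $\sum_{\ell(x)=\ell}|{\rm Jump}_x(q)|\le {\rm Jump}(r)_\ell$ for $|q|\le r$ requires the termwise bound $|{\rm Jump}_x(q)|\le {\rm Jump}_x(|q|)$, and this is false --- in fact it goes the other way. By Proposition \ref{jumpformula}, ${\rm Jump}_x(q)=q^{C_N}/\bigl(a_N(c_1,\dots,c_N|q)\,a_{N+1}(c_1,\dots,c_N,\infty|q)\bigr)$, and both denominator polynomials have nonnegative coefficients, so $|a_N(q)|\le a_N(|q|)$ and hence $|{\rm Jump}_x(q)|\ge {\rm Jump}_x(|q|)$ for all complex $q$, with the left side blowing up near the zeros of $a_N$ (which lie in the annulus $R_*\le |q|\le R_*^{-1}$). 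Concretely, ${\rm Jump}_m(q)=q^{m-1}-q^m$ and ${\rm Jump}_{3/2}(q)=q^2-2q^3+q^4+\cdots$ already have negative coefficients, so the coefficient positivity you invoke is simply not there; Proposition \ref{jumpform}(i) asserts only membership in $\Bbb Z[[q]]$ with leading term $q^{C_N}$. (The aggregated series ${\rm Jump}(q)_2=\sum_{n\ge 2}q^n/\bigl([n]_q([n-1]_q+q^n)\bigr)$ also has a negative coefficient, namely $-1$ at $q^4$; and even coefficientwise positivity of the aggregate would not let you pull absolute values inside the sum over $x$.) The positivity underlying the paper's footnoted argument is positivity of the real \emph{values} ${\rm Jump}_x(q)>0$ for $q\in(0,1)$, coming from monotonicity of $x\mapsto [x]_q$; it does not complexify. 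A sanity check that something must be wrong: your tail estimate never genuinely uses $z_0>1$ (tails of any convergent positive series tend to $0$), so if your domination held you would obtain absolute convergence of \eqref{jumpser} for every $|q|<1$ and pure atomicity of $\mu_q$ on all of $(0,1)$, which the paper only conjectures.

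The actual proof proceeds by a perturbation argument that your route bypasses entirely: using the recursion \eqref{recrn} for $R_N=a_{N+1}(q)/a_N(q)$, one shows by induction on $N$ that for $|q-r|<\delta$ the relative error $\Delta_N=(R_N(r)-R_N(q))/R_N(r)$ stays below a fixed $\varepsilon$, whence $|a_N(c_1,\dots,c_N|q)|\ge a_N(c_1,\dots,c_N|r)(1-\varepsilon)^N$ and therefore $|{\rm Jump}_x(q)|\le {\rm Jump}_x(r)(1-\varepsilon)^{-2\ell(x)-1}$. The exponential loss in $\ell(x)$ is precisely what the weight $z^{\ell(x)}$ in ${\rm Jump}(q,z)$ exists to absorb: choosing $1<(1-\varepsilon)^{-2}<z$ and invoking ${\rm Jump}(r,z)<\infty$ --- the defining property of $r<q_*$ --- produces the Weierstrass majorant. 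That is the step where the hypothesis $r<q_*$ does real work, and it has no counterpart in your argument.
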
 

\begin{proof}
Let $R_N:=R_N(r)$ and $R_N^*:=R_N(q)$. 
By \eqref{recrn}, 
$$
R_N-R_N^*=\frac{1-r^{c_{N+1}}}{1-r}-\frac{1-q^{c_{N+1}}}{1-q}-\frac{r^{c_{N}-1}}{R_{N-1}}+\frac{q^{c_{N}-1}}{R_{N-1}^*}.
$$
Let $\Delta_N:=\frac{R_N-R_N^*}{R_N}$, then 
$$
\Delta_N=\frac{1}{R_N}\left(\frac{1-r^{c_{N+1}}}{1-r}-\frac{1-q^{c_{N+1}}}{1-q}\right)-\frac{r^{c_{N}-1}}{R_{N}R_{N-1}}+\frac{q^{c_{N}-1}}{R_NR_{N-1}^*}.
$$
We would like to find $\varepsilon>0$ such that for sufficiently small $|q-r|$, the inequality $|\Delta_{N-1}|<\varepsilon$ implies that $|\Delta_N|<\varepsilon$. 
Then we will get 
\(
|R_N^*|\ge R_N(1-\varepsilon). 
\)
Recall that
\(
 R_N\ge \alpha=r^{1/2}a^{-1}.
\)
But for a positive real parameter $r$, equation \eqref{eqfora} gives
$a=r^{1/2}$ and hence $\alpha=1$. Thus $R_N\ge 1$. Pick
$K<1-r$. Then for small enough
$\varepsilon$, if $|q-r|$ is sufficiently small compared to
$\varepsilon$ then
$$
\left|\frac{1-r^n}{1-r}-\frac{1-q^n}{1-q}\right|<\frac{K\varepsilon}{2},\ |q^{n-1}-r^{n-1}|<\frac{K\varepsilon}{2}
$$ 
for all $n\ge 2$.
Then we get 
\[
|\Delta_N|\le \frac{K\varepsilon}{2R_N}+\frac{K\varepsilon}{2R_{N}R_{N-1}(1-\varepsilon)}+\frac{r^{c_{N}-1}|\Delta_{N-1}|}{R_NR_{N-1}(1-\varepsilon)}\le \varepsilon\frac{K+r}{1-\varepsilon}.
\]
Since $K+r<1$, for sufficiently small $\varepsilon$ the right hand side is $<\varepsilon$, as desired.

Now fix $z>1$ such that ${\rm Jump}(r,z)<\infty$, which is possible since
$r<q_*$. For a string $\bold d:=(d_1,d_2,\ldots)$, set
\[
R_k(q,\bold d):=
\frac{a_{k+1}(d_1,\ldots,d_{k+1}\mid q)}
     {a_k(d_1,\ldots,d_k\mid q)}.
\]
Then
\[
R_0(q,\bold d)=[d_1]_q,\qquad
R_k(q,\bold d)=[d_{k+1}]_q-
\frac{q^{d_k-1}}{R_{k-1}(q,\bold d)}
\quad(k\ge1).
\]
The preceding ratio argument therefore gives, uniformly in all the
entries,
\(
|R_k(q,\bold d)|\ge(1-\varepsilon)R_k(r,\bold d)
\)
whenever $|q-r|$ is sufficiently small. More precisely, since 
\[
a_m(d_1,\ldots,d_m\mid q)
=
\prod_{k=0}^{m-1}R_k(q,\bold d),
\]
for every
sufficiently small $\varepsilon>0$ there exists $\delta_0>0$ such that,
if $|q-r|<\delta_0$, then
\begin{equation}\label{contes1}
|a_m(d_1,\ldots,d_m\mid q)|
\ge
(1-\varepsilon)^m
a_m(d_1,\ldots,d_m\mid r)
\end{equation}
for every $m\ge1$ and $(d_1,...,d_m)$. 
Applying the same estimate to $(d_1,\ldots,d_{m-1},n)$ and then letting
$n\to\infty$ also gives
\begin{equation}\label{contes2}
|a_m(d_1,\ldots,d_{m-1},\infty\mid q)|
\ge
(1-\varepsilon)^m
a_m(d_1,\ldots,d_{m-1},\infty\mid r).
\end{equation}

Choose $\sigma>1$ with $r\sigma<1$, and then choose an integer $L$ so
large that
\(
\sigma^{-L}\frac{r\sigma}{1-r\sigma}<1.
\)
Put
\(
\lambda:=(1-\varepsilon)^{-2},
\)
so that
\(
(1-\varepsilon)^{-2N+1}\le\lambda^N.
\)
By taking $\varepsilon>0$ sufficiently small and choosing $\rho>r$
sufficiently close to $r$, we may arrange that
\[
\rho\sigma<1,\qquad
\lambda\left(\frac{\rho}{r}\right)^L<z,
\qquad
\lambda\sigma^{-L}
\frac{\rho\sigma}{1-\rho\sigma}<1.
\]
After decreasing $\delta_0$ if necessary, fix $\delta>0$ so that
$|q-r|<\delta$ implies $|q|\le\rho$ and the two continuant estimates
\eqref{contes1},\eqref{contes2} hold.

Let
\[
x=[[c_1,\ldots,c_N]],
\qquad
C_N=\sum_{i=1}^N(c_i-1).
\]
The formula of Proposition \ref{jumpbo}, whose two
denominator continuants have lengths $N-1$ and $N$, gives
\[
|{\rm Jump}_x(q)|
\le
{\rm Jump}_x(r)
\left(\frac{|q|}{r}\right)^{C_N}
(1-\varepsilon)^{-2N+1}.
\]
If $C_N\le LN$, it follows that
\[
|{\rm Jump}_x(q)|
\le
{\rm Jump}_x(r)
\left[
\lambda\left(\frac{\rho}{r}\right)^L
\right]^N
\le
{\rm Jump}_x(r)z^N.
\]
Consequently, the sum of all terms with $C_N\le LN$ is bounded,
and uniformly so for $|q-r|<\delta$ as
${\rm Jump}(r,z)<\infty$. 

For the remaining strings, for which $C_N>LN$, note that the relevant
finite continuants are at least $1$ at $r$, and the same is true for
the terminal-$\infty$ continuant by passage to the limit from finite
terminal entries. Thus the continuant comparison gives
\[
|{\rm Jump}_x(q)|
\le
\lambda^N\rho^{C_N}.
\]
Writing $k_i=c_i-1\ge1$, we obtain
\begin{align*}
\sum_{\substack{c_1,\ldots,c_N\ge2\\ C_N>LN}}
\rho^{C_N}
&\le
\sigma^{-LN}
\sum_{k_1,\ldots,k_N\ge1}
(\rho\sigma)^{k_1+\cdots+k_N} \\
&=
\left(
\sigma^{-L}
\frac{\rho\sigma}{1-\rho\sigma}
\right)^N.
\end{align*}
Hence the sum of all terms with $C_N>LN$ is bounded by the convergent
geometric series
\[
\sum_{N\ge1}
\left(
\lambda\sigma^{-L}
\frac{\rho\sigma}{1-\rho\sigma}
\right)^N.
\]

The Weierstrass test therefore proves absolute and uniform convergence
of \eqref{jumpser} in the disk $|q-r|<\delta$. Since $r\in(0,q_*)$ was
arbitrary, the union of these disks is a connected open neighborhood
of $(0,q_*)$. On the nonempty subinterval where Corollary
\ref{unifo} applies, the sum is $\frac{q}{1-q}$. The identity theorem
therefore shows that the same equality holds throughout $(0,q_*)$.
This proves the proposition.
\end{proof} 

It remains to determine $q_*$. We expect that $q_*=1$, so that the statements of Remark \ref{range} hold for all $0<q<1$. However, so far we only know that $q_*\ge 1/2$. Let us give a better lower bound for $q_*$. 

For $s\ge 1$, let ${\rm Jump}_*(q,z)_s$ be the sum ${\rm Jump}_x(q)z^{\ell(x)}$ over all $x=[[c_1,...,c_N]]$ such that $c_1\ge 3$ and the number of $i$ with $c_i\ge 3$ equals $s$. Then 
\begin{equation}\label{starformula}
{\rm Jump}(q,z)=q^{-1}\sum_{s=1}^\infty {\rm Jump}_*(q,z)_s.
\end{equation}
By Proposition \ref{jumpbo}, 
${\rm Jump}_x(q)\le q^{C_N}(1-q)$, which implies that 
${\rm Jump}_*(q,z)_s<\infty$ for all $s$ as long as $qz<1$. 

Next, note that if in Proposition \ref{jumpbo} one has $1\le i\le j\le N-1$ and $c_{j+1}\ge 3$ then 
\(
x(i)>[[c_{i+1},...,c_j,c_{j+1}-1]],
\)
so
\begin{equation}\label{xiine}
[x(i)]_q>[x(i)]_q^-\ge [[c_{i+1},...,c_j,c_{j+1}-1]]_q.
\end{equation} 

\begin{corollary}\label{betterform} If $x=[[c_1,...,c_N]]$ and $1\le j\le N-1$ with $c_{j+1}\ge 3$ then 
$$
{\rm Jump}_x(q)\le {\rm Jump}_{x(j)}(q)\prod_{i=1}^{j} \frac{q^{c_i-1}}{[[c_{i+1},...,c_j,c_{j+1}-1]]_q^2}={\rm Jump}_{x(j)}(q)\frac{q^{C_j}}{a_{j}(c_{2},...,c_j,c_{j+1}-1|q)^2}.
$$
\end{corollary}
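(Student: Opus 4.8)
The plan is to derive the claimed bound directly from the exact product formula for ${\rm Jump}_x(q)$ in Proposition~\ref{jumpbo} by estimating its denominators from below, and then to identify the resulting product with an explicit $q$-continuant. Throughout I work in the regime $0<q<1$, where the positivity and monotonicity facts I need are available.

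First I would recall from Proposition~\ref{jumpbo} that for $1\le j\le N-1$,
$$
{\rm Jump}_x(q)={\rm Jump}_{x(j)}(q)\prod_{i=1}^{j}\frac{q^{c_i-1}}{[x(i)]_q[x(i)]_q^-},
\qquad x(i):=[[c_{i+1},\dots,c_N]].
$$
Every factor here is a positive real number: the numerators $q^{c_i-1}$ are positive, the leading factor ${\rm Jump}_{x(j)}(q)$ is positive since $x\mapsto[x]_q$ is strictly increasing by Proposition~\ref{inc} (so each jump is positive), and the denominators $[x(i)]_q[x(i)]_q^-$ are positive because for $0<q<1$ the values $[y]_q$ and $[y]_q^-$ at arguments $\ge 1$ are $\ge 1$. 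This positivity is exactly what licenses replacing the denominators by smaller positive quantities while only increasing the product.

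The key step is the termwise lower bound on the denominators. Since $c_{j+1}\ge 3$, the truncated tuple $(c_{i+1},\dots,c_j,c_{j+1}-1)$ consists of integers $\ge 2$ and hence defines a genuine negative continued fraction, so inequality~\eqref{xiine} applies and gives $[x(i)]_q>[x(i)]_q^-\ge [[c_{i+1},\dots,c_j,c_{j+1}-1]]_q>0$. Multiplying the two estimates yields $[x(i)]_q[x(i)]_q^-\ge [[c_{i+1},\dots,c_j,c_{j+1}-1]]_q^{2}$. Substituting this lower bound into each denominator and invoking the positivity above produces the asserted inequality
$$
{\rm Jump}_x(q)\le {\rm Jump}_{x(j)}(q)\prod_{i=1}^{j}\frac{q^{c_i-1}}{[[c_{i+1},\dots,c_j,c_{j+1}-1]]_q^{2}}.
$$

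Finally I would evaluate the product in closed form. The numerator telescopes trivially, $\prod_{i=1}^{j}q^{c_i-1}=q^{\sum_{i=1}^{j}(c_i-1)}=q^{C_j}$. For the denominator, I observe that the factors $[[c_{i+1},\dots,c_j,c_{j+1}-1]]_q$, as $i$ runs from $1$ to $j$, are precisely the successive tails of the length-$j$ sequence $(c_2,\dots,c_j,c_{j+1}-1)$; applying the product-of-tails identity for $q$-continuants recalled at the start of this subsection to that sequence gives $\prod_{i=1}^{j}[[c_{i+1},\dots,c_j,c_{j+1}-1]]_q=a_j(c_2,\dots,c_j,c_{j+1}-1|q)$, and squaring yields the stated denominator. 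The only genuine care needed is the index bookkeeping in matching the product of tails to exactly $a_j(c_2,\dots,c_j,c_{j+1}-1|q)$ and in checking that the condition $c_{j+1}\ge 3$ is what guarantees $c_{j+1}-1\ge 2$ so that \eqref{xiine} is legitimately applicable; beyond this, the argument is an assembly of the exact jump formula, one established inequality, and routine positivity, so I do not expect a substantive obstacle.
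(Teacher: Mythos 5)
Your proof is correct and follows essentially the same route as the paper: the paper likewise combines the exact product formula of Proposition \ref{jumpbo} with the termwise lower bound \eqref{xiine} on the denominators and then identifies the resulting product with $q^{C_j}/a_j(c_2,\dots,c_j,c_{j+1}-1|q)^2$ via the product-of-tails identity for $q$-continuants. Your added remarks on positivity and on why $c_{j+1}\ge 3$ is needed are exactly the right justifications, just spelled out more explicitly than in the paper's one-line derivation.
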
 

\begin{proof} 
Monotonicity of $x\mapsto[x]_q$ for $0<q<1$ gives the two inequalities in
\eqref{xiine}.  Substituting them into the first product formula of Proposition
\ref{jumpbo} yields
\[
 {\rm Jump}_x(q)\le {\rm Jump}_{x(j)}(q)
 \prod_{i=1}^{j}\frac{q^{c_i-1}}
 {[[c_{i+1},\ldots,c_j,c_{j+1}-1]]_q^2}.
\]
Finally, the product-of-tails identity for the length-$j$ string
$(c_2,\ldots,c_j,c_{j+1}-1)$ gives
\[
 \prod_{i=1}^{j}[[c_{i+1},\ldots,c_j,c_{j+1}-1]]_q
 =a_j(c_2,\ldots,c_j,c_{j+1}-1\mid q),
\]
which proves the Corollary.
\end{proof} 

Let 
$$
\phi(q,z):=\sum_{n\ge 2}\frac{q^{n-1}z^{n-1}}{[n]_q^2},\ \phi(q):=\phi(q,1).
$$
Since for any $r<1$,  for $n\ge 1,|q|\le r$ we have $|[n]_q|\ge 1-r$,
the series $\phi(q,z)$ converges absolutely in the region $|q|z<1$ and uniformly on its compact subsets, so defines a holomorphic function in this region.
Moreover, 
\(
\phi(q)=\sum_{n=2}^\infty \frac{1}{[n]_q[n]_{q^{-1}}}
\)
is a continuous, increasing function on $[0,1]$ with 
$\phi(0)=0$ and $\phi(1)=\sum_{n=2}^\infty \frac{1}{n^2}=\frac{\pi^2}{6}-1$. 
It follows that the function $\phi(q)(1+q)$ increases 
on $[0,1]$ from $0$ to $\frac{\pi^2}{3}-2\approx 1.29$.
Thus there exists a unique $\beta\in (0,1)$ 
solving the equation 
\(
\phi(q)(1+q)=1.
\)
Numerical computation shows that $\beta\approx 0.816$. 

\begin{proposition}\label{extrange1} (i) For $s\ge 1$, 
$$
{\rm Jump}_*(q,z)_{s+1}\le \phi(q)\phi(q,z)(1+q)^2{\rm Jump}_*(q,z)_s. 
$$
(ii) If $q<\beta$ then ${\rm Jump}(q,z)<\infty$ for some $z>1$. 

(iii) $q_*\ge \beta$. 
\end{proposition}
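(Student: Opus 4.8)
The plan is to establish the submultiplicative bound (i) and then deduce (ii) and (iii) from it by a geometric-series estimate together with a continuity argument in $z$. The whole point of the particular constant $\phi(q)\phi(q,z)(1+q)^2$ is that at $z=1$ it equals $(\phi(q)(1+q))^2$, so that the threshold for geometric decay is exactly the number $\beta$ defined by $\phi(q)(1+q)=1$.

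For (i), I would peel off the leading syllable of $x$. Write $x=[[c_1,\dots,c_N]]$ with $c_1\ge 3$ and exactly $s+1$ entries $\ge 3$. Since $s+1\ge 2$ there is a second large entry; let it be $c_{k+2}$, so that $c_2=\dots=c_{k+1}=2$, and set $x'=x(k+1)=[[c_{k+2},\dots,c_N]]$, which begins with a large entry and has exactly $s$ of them. The assignment $x\mapsto(c_1,k,x')$ is a bijection onto the index set of ${\rm Jump}_*(q,z)_{s+1}$, and $\ell(x)=\ell(x')+k+1$. Applying the second form of Corollary \ref{betterform} with $j=k+1$ (legitimate because $c_{j+1}=c_{k+2}\ge 3$, and using $c_2=\dots=c_{k+1}=2$) gives
$$
{\rm Jump}_x(q)\le {\rm Jump}_{x'}(q)\,\frac{q^{\,c_1-1+k}}{a_{k+1}(\underbrace{2,\dots,2}_{k},c_{k+2}-1\mid q)^2}.
$$
A short induction from \eqref{rec2} yields $a_{k+1}(2,\dots,2,m\mid q)=[k+1]_q[m]_q-q[k]_q$, and the key elementary inequality I would prove is the factorization
$$
a_{k+1}(\underbrace{2,\dots,2}_{k},m\mid q)\ \ge\ \frac{[k+2]_q\,[m]_q}{1+q}\qquad(m\ge 2),
$$
which reduces, after clearing denominators, to $(1+q)[k+1]_q-[k+2]_q=q[k]_q$ together with $[m]_q\ge[2]_q=1+q$. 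Substituting $m=c_{k+2}-1$ and $z^{\ell(x)}=z^{\ell(x')}z^{k+1}$, I would then sum over the three independent pieces of the peeled data: the sum over $c_1\ge 3$ is geometric, the sum over the block length $k$ is (via $n=k+2$) proportional to $\phi(q,z)$ and carries the $z$-weight, and the surviving factor $[c_{k+2}-1]_q^{-2}$, once reorganized against the $q$-power that the first entry of $x'$ contributes to ${\rm Jump}_{x'}$, should produce $\phi(q)$, with the $(1+q)^2$ coming from the squared denominator of the continuant bound. Collecting these factors is meant to give exactly (i).

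Given (i), parts (ii) and (iii) are immediate. Iterating (i) yields ${\rm Jump}_*(q,z)_s\le[\phi(q)\phi(q,z)(1+q)^2]^{s-1}{\rm Jump}_*(q,z)_1$, and ${\rm Jump}_*(q,z)_1<\infty$ whenever $qz<1$. If $q<\beta$ then $\phi(q)(1+q)<1$, so the common ratio at $z=1$ is $(\phi(q)(1+q))^2<1$; since $\phi(q,z)$ is holomorphic, hence continuous, in $z$ on the region $qz<1$, and since $q<\beta<1$ allows $z\in(1,1/q)$, one may choose $z>1$ with $qz<1$ and $\phi(q)\phi(q,z)(1+q)^2<1$. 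Then $\sum_{s\ge1}{\rm Jump}_*(q,z)_s$ converges geometrically, and by \eqref{starformula} ${\rm Jump}(q,z)<\infty$; this is (ii). Finally (iii) follows directly from the definition of $q_*$: (ii) exhibits for every $q<\beta$ a value $z>1$ with ${\rm Jump}(q,z)<\infty$, so $q_*\ge\beta$.

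The main obstacle is the last step of (i). The continuant $a_{k+1}(2,\dots,2,c_{k+2}-1\mid q)$ couples the peeled syllable to the first entry $c_{k+2}$ of $x'$, and the crude bound $[c_{k+2}-1]_q\ge 1+q$ only reproduces a geometric constant $\tfrac{q}{1-q}$ in place of $\phi(q)(1+q)^2$; since $\tfrac{q}{1-q}\phi(q)$ exceeds $1$ well before $q=\beta$, this weaker estimate does not reach the threshold $\beta$. One must instead retain the factor $[c_{k+2}-1]_q^{-2}$ and convert the dependence on $c_{k+2}$ into the sum defining $\phi(q)$ while keeping the remaining sum equal to ${\rm Jump}_*(q,z)_s$; arranging this decoupling so that the constant comes out as exactly $\phi(q)\phi(q,z)(1+q)^2$, rather than a weaker one, is the delicate heart of the argument.
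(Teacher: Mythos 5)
This is correct and is essentially the paper's own argument: the same peeling of the initial block $[[c_1,2^{(k)},c_{k+2},\dots]]$ at the second large entry, the same invocation of Corollary \ref{betterform} with $j=k+1$, and the same continuant estimate (your inequality $a_{k+1}(2^{(k)},m\mid q)\ge [k+2]_q[m]_q/(1+q)$ for $m\ge 2$ is the careful form of what the paper writes, slightly inaccurately, as an equality). The decoupling you flag as the ``delicate heart'' does close, by exactly the mechanism you suggest: since ${\rm Jump}_{y+1}(q)=q\,{\rm Jump}_y(q)$, writing $x'=[[m+2,\mathbf{c}]]$ gives
$$
\sum_{m\ge 1}\frac{{\rm Jump}_{[[m+2,\mathbf{c}]]}(q)}{[m+1]_q^2}=\Bigl(\sum_{m\ge 1}\frac{q^{m-1}}{[m+1]_q^2}\Bigr){\rm Jump}_{[[3,\mathbf{c}]]}(q)=q^{-1}\phi(q)\,{\rm Jump}_{[[3,\mathbf{c}]]}(q),
\qquad
\sum_{m\ge 1}{\rm Jump}_{[[m+2,\mathbf{c}]]}(q)=\tfrac{1}{1-q}\,{\rm Jump}_{[[3,\mathbf{c}]]}(q),
$$
so the weighted sum over $x'$ equals $q^{-1}(1-q)\phi(q)\,{\rm Jump}_*(q,z)_s$, and multiplying by $\sum_{c_1\ge 3}q^{c_1-1}=\tfrac{q^2}{1-q}$ and $\sum_{k\ge 0}\tfrac{q^kz^{k+1}}{[k+2]_q^2}=q^{-1}\phi(q,z)$ yields exactly $\phi(q)\phi(q,z)$, with $(1+q)^2$ coming from the squared continuant bound as you predicted.
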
 

\begin{proof}
(i) Let $m_0,m,p\ge1$, let $\boldsymbol c=(c_1,\ldots,c_N)$ be a
possibly empty string containing exactly $s-1$ entries $c_i\ge3$, and set
\[
 x=[[m_0+2,2^{(p-1)},m+2,c_1,\ldots,c_N]],
 \qquad
 y=[[m+2,c_1,\ldots,c_N]].
\]
Then $\ell(x)=\ell(y)+p$, and Corollary \ref{betterform}, applied with
$j=p$, gives
\begin{equation}\label{firstinteractionbound}
 {\rm Jump}_x(q)z^{\ell(x)}
 \le
 {\rm Jump}_y(q)z^{\ell(y)}
 \frac{q^{m_0+p}z^p}{B_{m,p}(q)^2},
 \qquad
 B_{m,p}(q):=a_p(2^{(p-1)},m+1\mid q).
\end{equation}
Here $2^{(r)}$ denotes a string of $r$ copies of $2$. Since
$a_r(2^{(r)}\mid q)=[r+1]_q$, the recurrence \eqref{rec2} gives
\[
 B_{m,p}(q)
 =[p]_q[m+1]_q-q[p-1]_q
 =1+q[m]_q[p]_q.
\]
Moreover,
\[
 [2]_qB_{m,p}(q)-[m+1]_q[p+1]_q
 =q\bigl([m]_q-1\bigr)\bigl([p]_q-1\bigr)\ge0,
\]
and therefore
\begin{equation}\label{Bmpbound}
 \frac1{B_{m,p}(q)^2}
 \le
 \frac{[2]_q^2}{[m+1]_q^2[p+1]_q^2}.
\end{equation}

It remains to keep track of the summation over $m_0$. For each such
string $\boldsymbol c$, put
\[
 u_{\boldsymbol c}:=[[2,c_1,\ldots,c_N]].
\]
Then $y=m+u_{\boldsymbol c}$, so translation equivariance gives
\[
 {\rm Jump}_y(q)=q^m{\rm Jump}_{u_{\boldsymbol c}}(q),
 \qquad
 \ell(y)=\ell(u_{\boldsymbol c}).
\]
Consequently,
\begin{align*}
 {\rm Jump}_*(q,z)_s
 &=\sum_{\boldsymbol c}\sum_{m\ge1}
 q^m{\rm Jump}_{u_{\boldsymbol c}}(q)z^{\ell(u_{\boldsymbol c})}\\
 &=\frac{q}{1-q}\sum_{\boldsymbol c}
 {\rm Jump}_{u_{\boldsymbol c}}(q)z^{\ell(u_{\boldsymbol c})},
\end{align*}
where the sums over $\boldsymbol c$ range over all finite strings with
exactly $s-1$ entries at least $3$. Summing \eqref{firstinteractionbound}
over $m_0,m,p\ge1$ and over these strings therefore yields
\(
 {\rm Jump}_*(q,z)_{s+1}
 \le H(q,z){\rm Jump}_*(q,z)_s,
\)
where the geometric factor $\sum_{m_0\ge1}q^{m_0}=\frac{q}{1-q}$ cancels
against the identical factor in the preceding expression for
${\rm Jump}_*(q,z)_s$, and
\(
 H(q,z):=\sum_{m,p\ge1}
 \frac{q^{m+p}z^p}{B_{m,p}(q)^2}.
\)
Using \eqref{Bmpbound}, we obtain
\begin{align*}
 H(q,z)
 &\le [2]_q^2
 \left(\sum_{m\ge1}\frac{q^m}{[m+1]_q^2}\right)
 \left(\sum_{p\ge1}\frac{q^pz^p}{[p+1]_q^2}\right)\\
 &=(1+q)^2\phi(q)\phi(q,z),
\end{align*}
which proves (i).

(ii) If $q<\beta$, choose $z>1$ sufficiently close to $1$ that
$qz<1$ and
\(
 (1+q)^2\phi(q)\phi(q,z)<1.
\)
Part (i), together with the finiteness of ${\rm Jump}_*(q,z)_1$, then
shows that 
\(
\sum_{s\ge1}{\rm Jump}_*(q,z)_s<\infty.
\)
Equation
\eqref{starformula} gives ${\rm Jump}(q,z)<\infty$.

(iii) follows from (ii) and Proposition \ref{extrange}.
\end{proof}

The bound of Proposition \ref{extrange1} was obtained 
by analyzing a single string $(2^{(p-1)},m+2)$, $m\ge 1$, preceded by an element $\ge 3$
in the negative continued fraction of $x$. 
We can further improve this bound by accounting for ``interactions'' between 
such strings. Let us start with taking into account the  
interactions of neighbors (distance $1$). 
Consider the series
$$
h_1(q,z)=\sum_{m=1}^\infty\sum_{p=1}^\infty  
\frac{q^{m+p}z^{p}}{a_{p+1}(m+2,2^{(p)}|q)^2},\ h_1(q):=h_1(q,1).
$$
We have 
\begin{equation}\label{ap}
a_{p+1}(m+2,2^{(p)}|q)=[m+1]_q[p+1]_q+q^{m+p+1},
\end{equation}
so 
$$
h_1(q,z)=\sum_{m,p\ge 1}\frac{q^{m+p}z^{p}}{([m+1]_q[p+1]_q+q^{m+p+1})^2}.
$$
For $q\in [0,1)$ the series $h_1(q,z)$ is dominated 
by the series $\phi(q)\phi(q,z)$, so it converges absolutely and uniformly on compact subsets in the region $qz<1$, and defines a continuous function in this region. Moreover, $h_1(q)$ extends to a continuous 
function on $[0,1]$ with $h_1(0)=0$ and 
$$
h_1(1)=\sum_{m,p\ge 1}\frac{1}{((m+1)(p+1)+1)^2}\approx 0.34.
$$
Let $\beta_1\in (0,1)$ be the smallest solution of the equation 
\begin{equation}\label{h=1}
h_1(q)(1+q)^2=1,
\end{equation} 
which exists since $4h_1(1)>1$. 

In fact, the left-hand side of \eqref{h=1} is strictly increasing on
$[0,1]$, so this solution is unique.  To see this, fix $m,p\ge1$, put
$d=m+p+1$,
\(
 P(q):=[m+1]_q[p+1]_q,\qquad A(q):=P(q)+q^d.
\)
The polynomial $P$ has positive coefficients and is palindromic of
degree $d-1$, hence
\(
 \frac{qP'(q)}{P(q)}\le\frac{d-1}{2}\qquad(0<q\le1).
\)
Also $P(q)\ge(m+1)(p+1)q^{d-1}$, and therefore
\[
 \frac{qA'(q)}{A(q)}
 \le \frac{d-1}{2}+\frac{d+1}{2}\frac{q^d}{P(q)+q^d}
 \le \frac{d-1}{2}+\frac{q}{1+q};
\]
the last inequality follows from
\(
 2\bigl((m+1)(p+1)+q\bigr)-(d+1)(1+q)
 =2mp+(m+p)(1-q)>0.
\)
Thus each summand
\[
 \frac{q^{m+p}(1+q)^2}
 {\bigl([m+1]_q[p+1]_q+q^{m+p+1}\bigr)^2}
\]
is strictly increasing.  Hence $(1+q)^2h_1(q)$ is strictly increasing,
and numerical computation gives
$\beta_1\approx0.8631673345$.

\begin{proposition}\label{extrange2} (i) For $s,k\ge 1$, 
$$
{\rm Jump}_*(q,z)_{s+k}\le \phi(q)h_1(q,z)^{k-1}\phi(q,z)(1+q)^{2k}{\rm Jump}_*(q,z)_s. 
$$
(ii) If $q<\beta_1$ then ${\rm Jump}(q,z)<\infty$ for some $z>1$. 

(iii) $q_*\ge \beta_1$. 
\end{proposition}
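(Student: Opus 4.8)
The plan is to prove (i) by induction on $k$, deduce (ii) from (i) by a geometric-series estimate, and obtain (iii) directly from (ii) together with Proposition~\ref{extrange}. For (i) the base case $k=1$ is precisely Proposition~\ref{extrange1}(i), i.e.\ ${\rm Jump}_*(q,z)_{s+1}\le \phi(q)\phi(q,z)(1+q)^2{\rm Jump}_*(q,z)_s$; all the content is in the inductive step, for which I would establish the sharper \emph{interior} ratio bound ${\rm Jump}_*(q,z)_{t+1}\le h_1(q,z)(1+q)^2{\rm Jump}_*(q,z)_t$ for $t\ge 2$. Granting this, multiplying the induction hypothesis for $k$ by $h_1(q,z)(1+q)^2$ sends $h_1(q,z)^{k-1}\mapsto h_1(q,z)^{k}$ and $(1+q)^{2k}\mapsto(1+q)^{2k+2}$, which is the assertion for $k+1$: for fixed $s$ the single peel reaching ${\rm Jump}_*(q,z)_s$ is bounded by the weaker base factor $\phi(q)\phi(q,z)(1+q)^2$, and each of the remaining $k-1$ interior peels by the factor $h_1(q,z)(1+q)^2$, reproducing the stated product.

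To prove the interior ratio bound I would rerun the peeling of Proposition~\ref{extrange1} through Corollary~\ref{betterform}, but regroup its telescoping product so that each interior leading entry $m+2\ge 3$ is kept together with its own trailing string $2^{(p)}$ and evaluated via the \emph{exact} continuant \eqref{ap}, $a_{p+1}(m+2,2^{(p)}|q)=[m+1]_q[p+1]_q+q^{m+p+1}$, rather than through the factorized lower bound $[m+1]_q[p+1]_q$. Because the accompanying weight $q^{m+p}z^p$ is unchanged, summing over $m,p\ge 1$ against the squared exact continuant $a_{p+1}(m+2,2^{(p)}|q)^2$ yields $h_1(q,z)$ in place of the $\phi(q)\phi(q,z)$ produced by the cruder factorized denominator $[m+1]_q^2[p+1]_q^2$; the extra term $q^{m+p+1}>0$, the nearest-neighbor correction, enlarges every denominator, and the two copies of $[2]_q=1+q$ supply $(1+q)^2$. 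The step I expect to be the main obstacle is exactly this bookkeeping: one must check that keeping the cross term $q^{m+p+1}$ is legitimate inside the telescoping product (all signs must cooperate so that the correction only helps), that the regrouped interior factors are genuinely bounded by the exact-continuant summands of $h_1(q,z)$, and that the two ends of the chain resum — via the same ``cancellation'' against the neighboring jump used in Proposition~\ref{extrange1} — to the single boundary factors $\phi(q)$ (the free leading entry of the first block) and $\phi(q,z)$ (the trailing string of the last peeled block). Getting this boundary asymmetry right is the delicate point.

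Given (i), part (ii) is routine. The continuous function $g(q):=h_1(q)(1+q)^2$ satisfies $g(0)=0$ and has smallest root $\beta_1$, so $g(q)<1$ for all $q\in[0,\beta_1)$; since $h_1(q,z)$ is continuous in $z$ on $\{qz<1\}$ with $h_1(q,1)=h_1(q)$, there is a $z>1$ (still with $qz<1$) for which $h_1(q,z)(1+q)^2<1$. Fixing such a $z$, the tail $\sum_{k\ge1}{\rm Jump}_*(q,z)_{1+k}$ is, by (i) with $s=1$, dominated by a convergent geometric series of ratio $h_1(q,z)(1+q)^2<1$, while ${\rm Jump}_*(q,z)_1<\infty$ because $qz<1$; by \eqref{starformula} this gives ${\rm Jump}(q,z)=q^{-1}\sum_{s\ge1}{\rm Jump}_*(q,z)_s<\infty$, which is (ii). Finally (iii) is immediate: (ii) produces, for every $q<\beta_1$, a $z>1$ with ${\rm Jump}(q,z)<\infty$, the defining property of $q_*$, so $q_*\ge\beta_1$; by Proposition~\ref{extrange} the series \eqref{jumpser} then converges to $\tfrac{q}{1-q}$ and Corollary~\ref{atomic} holds for all $q\in(0,\beta_1)$.
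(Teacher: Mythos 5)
Your parts (ii) and (iii) are fine and agree with the paper: for $q<\beta_1$, continuity of $h_1(q,z)$ in $z$ produces $z>1$ with $qz<1$ and $h_1(q,z)(1+q)^2<1$, part (i) with $s=1$ then dominates $\sum_{s\ge1}{\rm Jump}_*(q,z)_s$ by a convergent geometric series, and \eqref{starformula} together with Proposition \ref{extrange} finishes. The gap is in your proof of (i). The paper does not induct on $k$: it writes $x=[[m_0+2,2^{(p_1-1)},m_1+2,\dots,2^{(p_k-1)},m_k+2,c_1,\dots,c_N]]$, applies Corollary \ref{betterform} $k$ times in a single chain, and only then regroups the resulting product of $k$ continuants so that the $k-1$ \emph{interior} factors become the exact $h_1$-summands $a_{p_i+1}(m_{i-1}+2,2^{(p_i)}|q)^2$ of \eqref{ap}, with the leftover boundary factors $[p_1+1]_q^2$ and $[m_k+1]_q^2$ yielding $\phi(q,z)$ and $\phi(q)$. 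Your inductive step instead requires the single-level inequality ${\rm Jump}_*(q,z)_{t+1}\le h_1(q,z)(1+q)^2\,{\rm Jump}_*(q,z)_t$, and this is precisely what one application of Corollary \ref{betterform} cannot deliver: the continuant produced by peeling the block $(m_0+2,2^{(p-1)})$ is $a_p(2^{(p-1)},m_1+1|q)$, which couples the peeled twos with the leading entry $m_1+2$ of the \emph{remaining} tail $y$ (a quantity determined by $y$, not a free summation variable), while the leading entry $m_0+2$ of the peeled block never enters any denominator of that peel, since the continuant in Corollary \ref{betterform} starts from $c_2$. The $h_1$-pairing of a big entry with its own trailing string of twos only materializes when the continuants of two \emph{consecutive} peels are multiplied together — which is exactly why the exponent of $h_1$ in the statement is $k-1$ rather than $k$.

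Concretely, the bound you would need fails at the per-$y$ level in the dominant case. If $y$ begins with a $3$ (so $m_1=1$), then $a_p(2^{(p-1)},2|q)=[p+1]_q$, and the total peeling cost summed over all $x$ lying over this $y$ is $\sum_{m_0,p\ge1}q^{m_0+p}z^p/[p+1]_q^2=\tfrac{q}{1-q}\phi(q,z)$; on the other hand, comparing term by term via $[m+1]_q[p+1]_q+q^{m+p+1}>[2]_q[p+1]_q$ shows $h_1(q,z)(1+q)^2<\tfrac{q}{1-q}\phi(q,z)$ strictly. So the inequality needed for each fixed such $y$ is false, and since for small $q$ the sums ${\rm Jump}_*(q,z)_t$ are dominated by continued fractions with entries equal to $3$, there is no evident way to rescue the aggregated single-step bound either (the two quantities above in fact agree to order $q^4$, so the question is genuinely delicate). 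The fix is to abandon the induction and argue as the paper does: perform all $k$ peels at once and regroup the whole product globally, accepting the asymmetric boundary factors $\phi(q)$ and $\phi(q,z)$ at the two ends of the chain.
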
 

\begin{proof}
 Let $x=[[m_0+2,2^{(p_1-1)},m_1+2,...,2^{(p_k-1)},m_k+2,c_1,...,c_N]]$, $y=[[m_k+2,c_1,...,c_N]]$, where $m_i,p_i\ge 1$. Repeated application of Corollary \ref{betterform} gives 
$$
{\rm Jump}_x(q)z^{\ell(x)}\le {\rm Jump}_{y}(q)z^{\ell(y)}\frac{q^{\sum_{i=1}^k(m_{i-1}+p_i)}z^{\sum_{i=1}^k p_i}[2]_q^{2k}}{[p_1+1]_q^2\left(\prod_{i=2}^{k}a_{p_i+1}(m_{i-1}+2,2^{(p_i)}|q)^2\right)[m_k+1]_q^2}
$$
Sum this inequality over $m_0,\ldots,m_k\ge1$, over
$p_1,\ldots,p_k\ge1$, and over all finite strings
$c_1,\ldots,c_N$ containing exactly $s-1$ entries at least $3$.
The two endpoint sums combine exactly as in the proof of Proposition
\ref{extrange1}: the factor
$\sum_{m_0\ge1}q^{m_0}=\frac{q}{1-q}$ cancels against the translation-orbit
factor from the sum over $m_k$, while the remaining weighted endpoint
sum contributes $\phi(q)$. The sum over $p_1$ contributes
$\phi(q,z)$, and each pair $(m_{i-1},p_i)$ with $2\le i\le k$
contributes $h_1(q,z)$ by \eqref{ap}. Hence
\[
{\rm Jump}_*(q,z)_{s+k}
\le \phi(q)h_1(q,z)^{k-1}\phi(q,z)(1+q)^{2k}
{\rm Jump}_*(q,z)_{s},
\]
as claimed. 

(ii) follows from (i) and \eqref{starformula}. 

(iii) follows from (ii) and Proposition \ref{extrange}.
\end{proof} 

This bound can be further improved by taking into account string 
interactions at larger distances, up to distance $n$. 
To this end, consider the series
$$
h_n(q,z):=\sum_{m_1,...,m_n=1}^\infty\sum_{p_1,...,p_n=1}^\infty  
\frac{q^{\sum_{i=1}^n (m_i+p_i)}z^{\sum_{i=1}^n p_i}}{a_{p_1+...+p_n+1}(m_1+2,2^{(p_1-1)},m_2+2,2^{(p_2-1)},...,m_n+2,2^{(p_n)}|q)^2}.
$$
To make the case $n=2$ explicit, set
\(
 F(m,p\mid q):=[m]_q[p]_q+q^{m+p-1}.
\)
A direct induction from \eqref{rec2} gives
\(
 a_p(m+1,2^{(p-1)}\mid q)=F(m,p\mid q).
\)
We use the continuant concatenation identity
\begin{equation}\label{concon}
 a_{r+s}(\boldsymbol u,\boldsymbol v\mid q)
 =a_r(\boldsymbol u\mid q)a_s(\boldsymbol v\mid q)
 -q^{u_r-1}a_{r-1}(u_1,\ldots,u_{r-1}\mid q)
  a_{s-1}(v_2,\ldots,v_s\mid q).
\end{equation} 
Set
\[
 \boldsymbol u=(m_1+1,2^{(p_1-1)}),
 \qquad
 \boldsymbol v=(m_2+2,2^{(p_2-1)}).
\]
For these two blocks, \eqref{concon} gives
\[
 a_{p_2}(m_2+2,2^{(p_2-1)}\mid q)
 =[p_2]_q+qF(m_2,p_2\mid q)
\]
and
\[
 q^{u_{p_1}-1}a_{p_1-1}(u_1,\ldots,u_{p_1-1}\mid q)
 =F(m_1,p_1\mid q)-[m_1]_q.
\]
Consequently,
\begin{align*}
&D(m_1,p_1,m_2,p_2\mid q)\\
&\quad:=a_{p_1+p_2}
 (m_1+1,2^{(p_1-1)},m_2+2,2^{(p_2-1)}\mid q)\\
&\quad=F(m_1,p_1\mid q)
 \bigl([p_2]_q+qF(m_2,p_2\mid q)\bigr)
 -\bigl(F(m_1,p_1\mid q)-[m_1]_q\bigr)[p_2]_q\\
&\quad=[m_1]_q[p_2]_q
 +qF(m_1,p_1\mid q)F(m_2,p_2\mid q).
\end{align*}
Equivalently, \scriptsize
\begin{equation}\label{equivd}
D(m_1,p_1,m_2,p_2\mid q)
=q[m_1]_q[p_1]_q[m_2]_q[p_2]_q
 +q^{m_1+p_1}[m_2]_q[p_2]_q
+[m_1]_q[p_2]_q
 +q^{m_2+p_2}[m_1]_q[p_1]_q
 +q^{m_1+p_1+m_2+p_2-1}.
\end{equation} \normalsize
Thus 
\[
 h_2(q,z)=
 \sum_{m_1,p_1,m_2,p_2\ge1}
 \frac{q^{m_1+p_1+m_2+p_2}z^{p_1+p_2}}
 {D(m_1+1,p_1,m_2,p_2+1\mid q)^2}.
\]

In general, as explained in \cite{MO1}, 
$a_{p_1+...+p_n}(m_1+1,2^{(p_1-1)},m_2+2,2^{(p_2-1)},...,m_n+2,2^{(p_n-1)})$
is the numerator of the positive continued fraction 
$$
[m_1,p_1,m_2,p_2,...,m_n,p_n]_q=[[m_1+1,2^{(p_1-1)},m_2+2,2^{(p_2-1)},...,m_n+2,2^{(p_n-1)}]]_q.
$$
For $q\in [0,1)$ the series $h_n(q,z)$ is dominated 
by $\phi(q)(\phi(q)+1)^{n-1}(\phi(q,z)+1)^{n-1}\phi(q,z)$, so it converges absolutely and uniformly on compact subsets in the region $qz<1$, and defines a continuous function in this region. 
Let $h_n(q):=h_n(q,1)$ and define
\[
 \beta_n:=\sup\Bigl\{b\in[0,1]:
 h_n(w)(1+w)^2<1\ \text{for every }0\le w<b\Bigr\}.
\]
Thus $q<\beta_n$ implies $h_n(q)(1+q)^2<1$, 
which is the precise condition used below. Since $h_n$ is continuous and $h_n(0)=0$, if
$\beta_n<1$ then
\(
 h_n(\beta_n)(1+\beta_n)^2=1,
\)
so $\beta_n$ is the first solution of this equation. These constants can be computed
numerically. E.g., \eqref{equivd} implies that  
$\beta_2\approx0.9103$. 

\begin{proposition}\label{extrange3} (i) For $s,k\ge 1$, 
$$
{\rm Jump}_*(q,z)_{s+1+n(k-1)}\le \phi(q)h_n(q,z)^{k-1}\phi(q,z)(1+q)^{2k}{\rm Jump}_*(q,z)_s. 
$$
(ii) If $q<\beta_n$ then ${\rm Jump}(q,z)<\infty$ for some $z>1$. 

(iii) $q_*\ge \beta_n$. 
\end{proposition}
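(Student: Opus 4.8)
The plan is to follow the template of Propositions \ref{extrange1} and \ref{extrange2}, of which this is the ``interaction up to distance $n$'' generalization: everything reduces to inequality (i), since (ii) is a formal iteration and (iii) is then immediate from the definition of $q_*$ together with Proposition \ref{extrange}. For (i) I would take $x$ whose negative continued fraction begins with $1+n(k-1)$ \emph{strings}, where a string is a block $(m+2,2^{(p-1)})$ consisting of an entry $\ge 3$ followed by a (possibly empty) run of $2$'s, and then continues as a tail $y=[[m_*+2,c_1,\dots,c_N]]$ with $m_*\ge 1$ having exactly $s$ entries $\ge 3$, so that $y$ ranges over the terms of ${\rm Jump}_*(q,z)_s$. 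I group the leading strings as one initial string followed by $k-1$ consecutive \emph{windows} of $n$ strings each, and apply Corollary \ref{betterform} $k$ times, cutting at the left boundary of the initial string and of each window (each such boundary is an entry $\ge 3$, so the hypothesis $c_{j+1}\ge 3$ holds). Each cut peels off one window and contributes a factor $q^{C_j}/a_j(\dots)^2$ whose denominator is the continuant of that window with its last entry decremented.

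The crux is to recognize these peeled continuants. Using \eqref{ap} together with the description (recalled in the text, following \cite{MO1}) of $a_{p_1+\cdots+p_n}(m_1+1,2^{(p_1-1)},\dots,m_n+2,2^{(p_n-1)})$ as the numerator of $[m_1,p_1,\dots,m_n,p_n]_q$, the continuant of each interior window is, up to the last-entry decrement (which only strengthens the bound, exactly as $a_p(2^{(p-1)},m+1|q)\ge [p+1]_q[m+1]_q/[2]_q$ was used in Proposition \ref{extrange1}), the continuant appearing in the definition of $h_n(q,z)$. Summing the resulting inequality over all internal data $m_i,p_i\ge 1$ and over the tail collections with $s-1$ further entries $\ge 3$, the sums collapse window by window: the two boundary strings contribute the factors $\phi(q)$ and $\phi(q,z)$, each of the $k-1$ interior windows contributes one factor $h_n(q,z)$, the $k$ copies of $[2]_q^2=(1+q)^2$ produced by the decrements assemble into $(1+q)^{2k}$, and the tail sum, after discarding the residual interface factor $[m_*+1]_q^2\ge 1$, is exactly ${\rm Jump}_*(q,z)_s$. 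This is precisely inequality (i).

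For (ii), fix $q<\beta_n$; by the definition of $\beta_n$ we have $h_n(q)(1+q)^2<1$, so by continuity there is $z>1$ with $qz<1$ and $\lambda:=h_n(q,z)(1+q)^2<1$. For this $z$, inequality (i) reads ${\rm Jump}_*(q,z)_{s+1+n(k-1)}\le C\lambda^{k-1}\,{\rm Jump}_*(q,z)_s$ with $C=\phi(q)\phi(q,z)(1+q)^2$ independent of $k$. Choosing $k$ so large that $C\lambda^{k-1}<1$ and iterating along each residue class of $s$ modulo $1+n(k-1)$ (each base term ${\rm Jump}_*(q,z)_{s_0}$ being finite since $qz<1$) shows that $\sum_s{\rm Jump}_*(q,z)_s$ converges geometrically; by \eqref{starformula} this yields ${\rm Jump}(q,z)<\infty$, which is (ii), and hence $q_*\ge\beta_n$, which is (iii).

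I expect the main obstacle to be the combinatorial identification in the second paragraph: verifying that a single application of Corollary \ref{betterform} across a full window of $n$ strings produces exactly (a lower bound by) the continuant $a_{p_1+\cdots+p_n+1}(m_1+2,2^{(p_1-1)},\dots,m_n+2,2^{(p_n)}|q)$ defining $h_n$, and that the window-by-window summation telescopes cleanly into the boundary factors $\phi(q),\phi(q,z)$ rather than leaving stray geometric series. This is precisely where the $n=1$ bookkeeping of Proposition \ref{extrange2} must be upgraded, and it is the only step demanding genuine work beyond the formal iteration.
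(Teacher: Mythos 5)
Your proposal is exactly the argument the paper intends: its entire proof of this proposition is the sentence ``The proof is analogous to the proof of Proposition \ref{extrange2}'', and your window-by-window peeling via Corollary \ref{betterform} (one initial string plus $k-1$ windows of $n$ strings, $k$ applications each contributing a $(1+q)^2$, interior windows summing to $h_n(q,z)$, boundary strings to $\phi(q)$ and $\phi(q,z)$, and the residual $[m_*+1]_q^2\ge 1$ discarded) is precisely that analogy spelled out, with (ii) and (iii) deduced exactly as in Propositions \ref{extrange1} and \ref{extrange2}. No substantive difference from the paper's route.
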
 

\begin{proof} The proof is analogous to the proof of Proposition \ref{extrange2}.
\end{proof} 

It seems plausible that $\beta_j\to 1$ as $j\to\infty$, which 
would imply the desired equality $q_*=1$. 

\section{Behavior of numerators and denominators of $q$-rationals when $|q|=1$}\label{unitcircle}

\subsection{The main result}\label{unitmain}

For a rational number $x=r/s\ge 1$ written in lowest terms, write
\[
 [x]_q=\frac{R_x(q)}{S_x(q)},
 \ R_x(q),S_x(q)\in\Bbb Z[q]
\]
in the standard normalization, so that
$R_x(1)=r, S_x(1)=s$ (\cite{MO1}); namely, in the notation of Section \ref{preli}, 
\(
R_x(q)=a_N(c_1,...,c_N|q),\ S_x(q)=b_N(c_1,...,c_N|q),
\)
where $x=[[c_1,...,c_N]]$.
For an arbitrary rational number $x$, choose an integer $n$ such that
$x+n\ge 1$ and extend $R_x,S_x$ by the translation identities
\begin{equation}\label{unittranslation}
 S_{x+n}(q)=S_x(q),\qquad
 R_{x+n}(q)=q^nR_x(q)+[n]_qS_x(q)
\end{equation}
(this definition is independent of the choice of $n$). 
Then for all $x\in \Bbb Q$ the polynomial $S_x$ and the Laurent polynomial $R_x$ are relatively prime. 
Note that for rational $x\ge 1$ and integer $n\ge 2$ 
we have 
\(
[n-\tfrac{1}{x}]_q=[n]_q-\tfrac{q^{n-1}}{[x]_q},
\)
hence 
\begin{equation}\label{RSeq}
R_x(q)=S_{n-\frac{1}{x}}(q).
\end{equation}
Also for $x\in \Bbb Q$ put
\(
 T_x(q):=(1-q)R_x(q)-S_x(q).
\)
This combination has an especially nice translation property:
\(
T_{x+n}(q)=q^n T_x(q).
\)
Let
\[
 C_-:=\left\{q\in\Bbb C:|q|=1,\ {\rm Re}\,q\le\frac12\right\},
 \qquad
 C_+:=\left\{q\in\Bbb C:|q|=1,\ {\rm Re}\,q>\frac12\right\}.
\]
 We regard
$[x]_q=[R_x(q):S_x(q)]$ as a point of $\Bbb C\Bbb P^1$, and set
\(
 X_q:=\{[x]_q:x\in\Bbb Q\}\subset\Bbb C\Bbb P^1.
\)

\begin{theorem}\label{unitmainthm}
Let $x\in\Bbb Q$ and $|q|=1$.

\begin{enumerate}[label=\textup{(\roman*)}]
\item If $q\in C_-$, then
\begin{equation}\label{unitmainineq}
 |T_x(q)|+(1-q^{-1}-q)|S_x(q)|\le |1-q|^3.
\end{equation}
At the endpoints $q=e^{\pm \pi i/3}$, 
\eqref{unitmainineq} turns into the equality
$|T_x(q)|=1$. 

\item If $q\in C_+$, then
\begin{equation}\label{unitoutsideineq}
 |T_x(q)|\ge\sqrt{q+q^{-1}-1}\,|S_x(q)|,
\end{equation}
i.e. if $S_x(q)\ne 0$ then 
\begin{equation}\label{unitoutsidevalue}
 \left|(1-q)[x]_q-1\right|\ge\sqrt{q+q^{-1}-1}.
\end{equation}

\item Suppose in addition that $q$ is not a root of unity. Then
\begin{equation}\label{unitclosure}
 \overline{X_q}=
 \begin{cases}
 \Bbb C\Bbb P^1, q\in C_-,\\
 \lbrace z\in\Bbb C\Bbb P^1: |(1-q)z-1|\ge\sqrt{q+q^{-1}-1}\rbrace,\ q\in C_+.
 \end{cases}
\end{equation}
 In particular,
the boundary circle in the second line of \eqref{unitclosure} lies in
$\overline{X_q}$, so the constants in \eqref{unitoutsideineq},
\eqref{unitoutsidevalue} are sharp.
\end{enumerate}
\end{theorem}

\begin{corollary}\label{corthe} (of Theorem \ref{unitmainthm}(i)) For $x\in \Bbb Q$ and $q\in C_-$, $q\ne e^{\pm \pi i/3}$ we have 
$$
|T_x(q)|\le |1-q|^3,\ |R_x(q)|\le \frac{|1-q|^2}{\min(1,1-q^{-1}-q)},\ |S_x(q)|\le \frac{|1-q|^2}{\min(1,1-q^{-1}-q)}.
$$
\end{corollary} 

\begin{proof} The first inequality follows directly from Theorem \ref{unitmainthm}(i). 
Now, we have $R_x(q)=\frac{T_x(q)+S_x(q)}{1-q}$. Let $M:=\min(1,1-q^{-1}-q)$. 
By Theorem \ref{unitmainthm}(i) 
$$
M(|T_x(q)|+|S_x(q)|)\le |T_x(q)|+(1-q^{-1}-q)|S_x(q)|\le |1-q|^3,
$$
so $|R_x(q)|\le \frac{|1-q|^2}{M}.$ 
Hence by \eqref{RSeq} $|S_x(q)|\le \frac{|1-q|^2}{M}$. 
\end{proof} 

The rest of this section is devoted to the proof of Theorem \ref{unitmainthm} and to some related
endpoint and growth phenomena.

\subsection{The Jones polynomial bound}\label{unitjones}

Set
\(
 J_x(q):=qR_x(q)+(1-q)S_x(q).
\)
For positive rational $x$, this is the normalized Jones polynomial of
the corresponding rational four-plat link, up to the usual choices of mirror
and a monomial factor; on the unit circle the latter has absolute value
one (\cite{MO1}, Appendix A; cf. also \cite{Jones,Kauffman}).

\begin{proposition}\label{unitarybound}
Let $x>0$ be rational. If $|q|=1$ and $q\in C_-$, then
\[
 |J_x(q)|\le |1-q|^2.
\]
\end{proposition}

\begin{proof}
The rational four-plat computation of $J_x$ takes place in the rank-two
Temperley--Lieb state space
$\mathcal H_q={\rm Hom}({\bf 1},X^{\otimes4})$, 
where $X$ is the 2-dimensional representation of the quantum group $SL(2)_{(-q)^{1/2}}$, where $(-q)^{1/2}$ 
is chosen to have positive real part. The quantum dimension of $X$ equals 
$$
d=(-q)^{1/2}+(-q)^{-1/2}=|(-q)^{1/2}+(-q)^{-1/2}|=|1-q|.
$$
In the basis $u_0,u_1\in \mathcal H_q$ of the two non-crossing pairings of four points,
the unitary Temperley--Lieb Hermitian form has Gram matrix
\[
 G_q=\begin{pmatrix}
 d^2& d\\
 d& d^2
 \end{pmatrix}.
\]
Hence the form is positive definite for $d>1$ and positive semidefinite for $d\ge 1$,
i.e., precisely on the arc $C_-$. Equivalently, this is Squier's
Hermitian form on the reduced
Burau representation $\rho_q$ of $B_3$ with parameter $t=-q$, and it is positive semidefinite precisely when
$|1+t|=|1-q|\ge1$; see \cite{Squier}.

Let $v_q$ be the two-cup plat vector. Closing $v_q$ with its dual
produces two loops, so
\(
 \|v_q\|^2=d^2.
\)
For a rational four-plat braid $\beta_x$ one has
\(
 |J_x(q)|=|\langle v_q,\rho_q(\beta_x)v_q\rangle|.
\)
On the interior of $C_-$ the Burau representation is unitary with respect
to the above positive definite form. Therefore Cauchy--Schwarz gives
\(
 |J_x(q)|\le\|v_q\|^2=d^2.
\)
The endpoint cases follow by continuity.
\end{proof}

\subsection{Proof of Theorem \ref{unitmainthm}(i)}\label{unitboundproof}

For any rational $x$, the translation identities \eqref{unittranslation}
give
\begin{equation}\label{unitJtranslate}
 (1-q)J_{x+n}(q)
 =q^{n+1}T_x(q)+(1-q+q^2)S_x(q).
\end{equation}

First suppose that $x>0$. Proposition \ref{unitarybound} applied to
$x+n$ gives
\(
 \left|q^{n+1}T_x(q)+(1-q+q^2)S_x(q)\right|\le |1-q|^3
\)
for every integer $n\ge0$. If $q$ is not a root of unity, its positive
powers are dense in the unit circle. Hence
\(
 \left|zT_x(q)+(1-q+q^2)S_x(q)\right|\le |1-q|^3
\)
for every $|z|=1$. Maximizing the left-hand side over $z$ yields
\(
 |T_x(q)|+|1-q+q^2|\,|S_x(q)|\le |1-q|^3.
\)
If $q$ is a root of unity, the same conclusion follows by approximating
$q$ within $C_-$ by points which are not roots of unity and using
continuity. Since
$|1-q+q^2|=1-q^{-1}-q$
on $C_-$, this proves \eqref{unitmainineq} for positive rational $x$.

For arbitrary $x\in\Bbb Q$, choose $n$ such that $y=x+n>0$. Since
$T_y(q)=q^nT_x(q),\ S_y(q)=S_x(q)$, the inequality for $y$ immediately
gives the inequality for $x$. 

It remains to prove the equality at the endpoints. 
Let $\zeta:=e^{\pi i/3}$.
Then $1-\zeta+\zeta^2=0,\ |1-\zeta|=1$.
Thus the second term on the left-hand side of
\eqref{unitmainineq} vanishes at $\zeta$.

\begin{proposition}\label{unitendpointequality}
For every rational $x$,
\[
 |T_x(\zeta)|=|T_x(\zeta^{-1})|=1.
\]
\end{proposition}

\begin{proof}
It is enough to prove the assertion for positive rational numbers and
then use translation. 

We treat $q=\zeta$; the other endpoint follows by complex conjugation.
We use the standard Laurent matrix form of the Morier--Genoud--Ovsienko
continued fraction. Put
\[
 M_Q(a):=\begin{pmatrix}
 [a]_Q&Q^a\\
 1&0
 \end{pmatrix}.
\]
Then a Laurent representative of $(R_x,S_x)$ is obtained from the product
\[
 M_{Q_1}(a_1)M_{Q_2}(a_2)\cdots M_{Q_k}(a_k)\binom10,
\]
where the parameters $Q_j$ alternate between $q$ and $q^{-1}$; see
\cite{MO1}. 

For $Q=\zeta$ or $Q=\zeta^{-1}$ one has $1-Q=Q^{-1}$. Set
\(
 \ell_Q:=(1-Q,-1).
\)
A direct calculation gives
\(
 \ell_QM_Q(a)=-Q^{a-1}\ell_{Q^{-1}},
\)
since
\[
 \ell_QM_Q(a)
 =\bigl((1-Q)[a]_Q-1,(1-Q)Q^a\bigr)
 =(-Q^a,Q^{a-1}).
\]
Iterating this identity through the continued-fraction product
and evaluating on $e_1=(1,0)^{\mathsf T}$ gives
\[
 |(1-\zeta)R_x(\zeta)-S_x(\zeta)|=1,
\]
since every scalar factor and $|1-\zeta^{\pm1}|$ have modulus one.
Translation gives the result for every rational $x$.
\end{proof}

This proves Theorem \ref{unitmainthm}(i).

\subsection{Behavior of $R_x,S_x,T_x$ on $C_+$}\label{unitcontrast}

Corollary \ref{corthe} implies uniform boundedness of $R_x,S_x$ with respect to $x$ in the interior
of $C_-$. But at the endpoints of $C_-$ this breaks since $1-q^{-1}-q=0$. 

In fact, on $C_+$ the behavior of absolute values of $R_x,S_x,T_x$ is very different from $C_-$. Namely, fix
$q=e^{i\theta},\ 0<|\theta|<\frac\pi3.$
Then $|R_x(q)|$ and $|S_x(q)|$ are unbounded as
$x\in(1,2)\cap\Bbb Q$ varies (and hence the same holds for $T_x(q)$ by 
Theorem \ref{unitmainthm}(ii)). This is already visible on the Fibonacci
convergents
\[
 x_m=[1;\underbrace{1,\ldots,1}_{2m-1}]
 =\frac{F_{2m+1}}{F_{2m}}\in(1,2),\ m\ge 2.
\]
A standard matrix formula for $q$-rationals (see \cite{MO1}) gives the following Laurent representative:
\[
 \binom{\widetilde R_m(q)}{\widetilde S_m(q)}
 =\mathcal A(q)^m\binom10,
 \qquad
 \mathcal A(q)=
 \begin{pmatrix}
 1+q&q^{-1}\\
 1&q^{-1}
 \end{pmatrix}.
\]
 Now
\[
 \det\mathcal A(q)=1,
 \qquad
 {\rm tr}\,\mathcal A(q)=1+q+q^{-1}=1+2\cos\theta.
\]
For $0<|\theta|<\pi/3$, this trace is real and greater than $2$.
Therefore $\mathcal A(q)$ is hyperbolic, with eigenvalues
$\lambda,\lambda^{-1}$, where $\lambda>1$. The second component satisfies
\[
 \widetilde S_{m+1}
 =(1+q+q^{-1})\widetilde S_m-\widetilde S_{m-1},
 \qquad
 \widetilde S_0=0,\quad \widetilde S_1=1,
\]
and hence
\[
 \widetilde S_m(q)=
 \frac{\lambda^m-\lambda^{-m}}{\lambda-\lambda^{-1}}.
\]
Thus $|S_{x_m}(q)|\to\infty$ exponentially fast. Since
\(
 \widetilde R_m=\widetilde S_{m+1}-q^{-1}\widetilde S_m
\)
and $\lambda\ne q^{-1}$, the same is true of the numerator:
$|R_{x_m}(q)|\to\infty$ exponentially fast. 

At the endpoints the matrix $\mathcal A$ is 
parabolic.  For $q=\zeta$ one has
$\operatorname{tr}\mathcal A(\zeta)=2$ and $\det\mathcal A(\zeta)=1$, so
\[
 \mathcal A(\zeta)^m=I+m(\mathcal A(\zeta)-I),\qquad
 \binom{\widetilde R_m(\zeta)}{\widetilde S_m(\zeta)}
 =\binom{1+m\zeta}{m}.
\]
Thus $|\widetilde S_m(\zeta)|=m$ and
$|\widetilde R_m(\zeta)|=\sqrt{m^2+m+1}$, whereas
\(
 (1-\zeta)\widetilde R_m(\zeta)-\widetilde S_m(\zeta)=\zeta^{-1}.
\)
Hence the coordinates grow linearly while the distinguished linear
combination remains bounded; the statement at $\zeta^{-1}$ follows by
complex conjugation.

\subsection{Proof of Theorem \ref{unitmainthm}(ii),(iii)}\label{unitclosureproof}

At $q=1$, one has
\(
 T_x(1)=-S_x(1),\qquad q+q^{-1}-1=1,
\)
so both \eqref{unitoutsideineq} and \eqref{unitoutsidevalue} hold with
equality. For the remainder of the proof, fix $q\in\Bbb C$ with
$|q|=1$ and $q\ne1$. For $|u|=1$, define the M\"obius map
\begin{equation}\label{unitFu}
 F_u(z):=\frac{1-qu}{1-q}-\frac{u}{z},
 \qquad z\in\Bbb C\Bbb P^1,
\end{equation}
with the usual projective conventions at $0$ and $\infty$. Let
$\mathcal R_q$ be the smallest subset of $\Bbb C\Bbb P^1$ which contains $1$
and is invariant under all the maps $F_u$, $|u|=1$.
Equivalently,
\[
 \mathcal R_q=
 \left\{F_{u_1}\circ\cdots\circ F_{u_k}(1):
 k\ge0,\ |u_1|=\cdots=|u_k|=1\right\}.
\]
Put
\(
 a:=\frac1{1-q},\qquad A:=|a|=\frac1{|1-q|}.
\)
Since $|q|=1$, one has
\(
 \overline a=-qa,\qquad a+\overline a=1,
\)
and hence
\begin{equation}\label{unitFucentered}
 F_u(z)=a+u\left(\overline a-\frac1z\right).
\end{equation}
For finite $z\ne0$ this implies the basic identity
\begin{equation}\label{unitreachinvariant}
 |F_u(z)-a|^2-(A^2-1)
 =\frac{|z-a|^2-(A^2-1)}{|z|^2}.
\end{equation}

We will need

\begin{lemma}\label{unitreachableset} We have
\[
 \mathcal R_q=
 \begin{cases}
 \Bbb C\Bbb P^1,&{\rm Re}\,q<\dfrac12,\\[5pt]
 \Bbb C\Bbb P^1\setminus\{a\},&{\rm Re}\,q=\dfrac12,\\[5pt]
 \left\{z\in\Bbb C\Bbb P^1:|z-a|>\sqrt{A^2-1}\right\},
 &{\rm Re}\,q>\dfrac12.
 \end{cases}
\]
\end{lemma}

\begin{proof}
For fixed $z$, varying $u$ fills the circle centered at $a$ with
radius $|\overline a-1/z|$.  Let $\mathcal R_n$ be the set reached after
exactly $n$ steps.  If $\rho=|z-a|$, then \eqref{unitreachinvariant}
and $|z|\le A+\rho$ show that the smallest possible output radius is
\[
 m(\rho)=\left|A-\frac1{A+\rho}\right|.
\]
Consequently, whenever
$\mathcal R_n=\{z:|z-a|\ge s_n\}$ (with $\infty$ included) and
$A-1/(A+s_n)>0$, connectedness gives
\begin{equation}\label{unitradiusrecurrence}
 \mathcal R_{n+1}=\{z:|z-a|\ge s_{n+1}\},\qquad
 s_{n+1}=A-\frac1{A+s_n}.
\end{equation}
Indeed, the radius function is continuous on this connected exterior
set, has the displayed minimum, and tends to $\infty$ at $z=0$.
Directly from $\mathcal R_1=\{z:|z-a|=A\}$ one obtains
\[
 \mathcal R_2=\left\{z:|z-a|\ge
 \left|A-\frac1{2A}\right|\right\}.
\]

If $A>1$, put $A=\cosh\eta$.  Identity
\eqref{unitreachinvariant} excludes the invariant circle
$|z-a|=\sqrt{A^2-1}$ and its interior.  With the auxiliary value
$s_1=A$, recurrence \eqref{unitradiusrecurrence} has the solution
\[
 s_n=\sinh\eta\,\coth(n\eta),
\]
so $s_n\searrow\sqrt{A^2-1}$ and the reachable set is precisely the
strict exterior.  If $A=1$, the same recurrence gives $s_n=1/n$;
thus every point except $a$ is reached.

Now let $A<1$.  Since $|q|=1$, one has $A\ge1/2$.  If
$A\le1/\sqrt2$, the set $\mathcal R_2$ contains both $0$ and
$1/\overline a$ (and is already the whole sphere when
$A=1/\sqrt2$); hence the next iterate has every radius and
$\mathcal R_3=\Bbb C\Bbb P^1$.  If $1/\sqrt2<A<1$, write
$A=\cos\phi$, $0<\phi<\pi/4$.  Until the whole sphere is reached,
\eqref{unitradiusrecurrence} gives
\[
 s_n=\sin\phi\,\cot(n\phi).
\]
For $N=\lceil\pi/(2\phi)\rceil$, the set $\mathcal R_{N-1}$ contains
both $0$ and $1/\overline a$, because
$s_{N-1}\le A$ and
$s_{N-1}\le\sin\phi\tan\phi=|1/\overline a-a|$.
Thus $\mathcal R_N=\Bbb C\Bbb P^1$.  Since $A>1$, $A=1$, and $A<1$
are respectively equivalent to
$\operatorname{Re}q>1/2$, $=1/2$, and $<1/2$, the result follows.
\end{proof}

We now apply Lemma \ref{unitreachableset} to $q$-rationals. For $c\ge2$, 
$F_{q^{c-1}}(z)=[c]_q-\frac{q^{c-1}}z$. 
If $x=[[c_1,\ldots,c_N]]>1$, its canonical eventually-$2$ expansion is
\(
 x=[[c_1,\ldots,c_N+1,2,2,\ldots]],
\)
and $[[2,2,\ldots]]_q=1$. Therefore
\begin{equation}\label{unitqratcomposition}
 [x]_q=
 F_{q^{c_1-1}}\circ\cdots\circ
 F_{q^{c_{N-1}-1}}\circ F_{q^{c_N}}(1).
\end{equation}
Conversely, every finite composition of maps $F_{q^n}$, $n\ge1$,
applied to $1$ is the $q$-value of an eventually-$2$ negative continued
fraction and hence is a $q$-rational number.
Explicitly,
\[
 F_{q^{n_1}}\circ\cdots\circ F_{q^{n_k}}(1)
 =[[n_1+1,\ldots,n_k+1,2,2,\ldots]]_q,
\]
so the corresponding classical continued fraction is rational and at least
$1$. Thus the set of
$q$-rationals with $x\ge1$ is precisely the orbit of $1$ under the maps
$F_{q^n}$, $n\ge1$.
Namely, denote this orbit by
$\mathcal O_q:=
 \left\{F_{q^{n_1}}\circ\cdots\circ F_{q^{n_k}}(1):
 k\ge0,\ n_j\ge1\right\}.$
Then
\[
 \mathcal O_q=X_q^{\ge1}:=\{[x]_q:x\in\mathbb Q,\ x\ge1\},
 \qquad \mathcal O_q\subset\mathcal R_q.
\]

Assume now that $q$ is not a root of unity. Then its positive powers are
dense in the unit circle. Given a finite composition
$F_{u_1}\circ\cdots\circ F_{u_k}(1)$ with $|u_j|=1$, choose positive
integers $n_j(m)$ such that $q^{n_j(m)}\to u_j$ for every $j$. Since
M\"obius transformations depend continuously on their parameters and
arguments in the spherical topology,
\[
 F_{q^{n_1(m)}}\circ\cdots\circ F_{q^{n_k(m)}}(1)
 \longrightarrow F_{u_1}\circ\cdots\circ F_{u_k}(1).
\]
It follows that the closure of the $q$-rationals with $x\ge1$ is
$\overline{\mathcal R_q}$.
Thus
\[
 \mathcal O_q\subset\mathcal R_q\subset\overline{\mathcal O_q},
\]
and therefore
\[
 \overline{X_q^{\ge1}}
 =\overline{\mathcal O_q}
 =\overline{\mathcal R_q}.
\]
In particular, Lemma
\ref{unitreachableset} gives $\overline{X_q}=\Bbb C\Bbb P^1$ when
$q\in C_-$ and $q$ is not a root of unity. Notice that in this case
${\rm Re}\,q<1/2$, since the two points with ${\rm Re}\,q=1/2$ are the
sixth roots of unity $e^{\pm \pi i/3}$.

Suppose next that $q\in C_+$. Then $A>1$. Identity
\eqref{unitreachinvariant} and the initial inequality
$|1-a|=A>\sqrt{A^2-1}$ show that every positive $q$-rational satisfies the inequality
\begin{equation}\label{ineee}
 |z-a|>\sqrt{A^2-1}.
\end{equation}

More precisely, for $x\ge1$ put $z=[x]_q$.  If $S_x(q)\ne0$, then $z$ is the
affine coordinate $R_x(q)/S_x(q)$ and \eqref{ineee} holds.  If
$S_x(q)=0$, then $z=\infty$; in this case the homogeneous inequality
\eqref{unitoutsideineq} is immediate because $R_x(q)\ne0$.
The translation formula gives
\begin{equation}\label{unittranslationcenter}
 [x+n]_q-a=q^n([x]_q-a).
\end{equation}
Equation \eqref{unittranslationcenter} is understood projectively: the
translation map fixes $\infty$, while for finite values it rotates the centered
coordinate by the unit scalar $q^n$.
So the same statement holds for every rational $x$. We have 
\[
 A^2-1
 =\frac1{|1-q|^2}-1
 =\frac{q+q^{-1}-1}{|1-q|^2}.
\]
Multiplying \eqref{ineee} by $|1-q|$ gives
\eqref{unitoutsidevalue}, and then multiplying by $|S_x(q)|$ gives
\eqref{unitoutsideineq}. If $S_x(q)=0$, the latter inequality is
immediate.

Finally, when $q\in C_+$ is not a root of unity, the density argument
and Lemma \ref{unitreachableset} show that the closure of the
$q$-rationals $[x]_q$ with $x\ge1$ is
\[
 \left\{z\in\Bbb C\Bbb P^1: |z-a|\ge\sqrt{A^2-1}\right\}.
\]
(in particular, this set contains $\infty$). 
Equation \eqref{unittranslationcenter} and the inequality just proved
show that adjoining all rational translates does not enlarge this
closure.

This proves \eqref{unitclosure} and completes the proof of
Theorem \ref{unitmainthm}. 

\section{$q$-real numbers modulo $m$}\label{modtwosection}

For every integer $m\ge2$, coefficientwise reduction of $[x]_q$ modulo $m$ separates real numbers $x$, and moreover 
separates points of the Cantor line.  Below we first prove this injectivity statement.  We
then take $m=p$ prime, so that $[x]_q$ mod $p$ takes values in 
$\mathbb P^1(\mathbb F_p((q)))$, and study rational and quadratic irrational points.
For $p=2$ we show that the map $x\mapsto [x]_q$ is also surjective, hence bijective; moreover, it is a homeomorphism.

\subsection{Injectivity modulo $m$ and bijectivity modulo $2$}
Let
\[
 \R_{\rm C}:=(\R\setminus\Q)\sqcup\Q_+\sqcup\Q_-
 \sqcup\{-\infty,+\infty\}
\]
be the \emph{Cantor line}: for $r\in\Q$ its two copies satisfy
$r_-<r_+$, and the points are ordered in the evident way.  We endow $\Bbb R_{\rm C}$ with the order topology (making it homeomorphic to the Cantor set) 
and put
\[
 \R_{\rm C}^+:=[0_+,+\infty],\qquad
 \R_{\rm C}^1:=[1_+,+\infty].
\]
For $m\ge2$, let
$H_m:=(1-q)^{-1}\in(\mathbb Z/m\mathbb Z)[[q]]$, and denote
coefficientwise reduction by $\operatorname{red}_m$.  Define
\[
 \Phi_m:\mathbb R_{\rm C}\longrightarrow
 (\mathbb Z/m\mathbb Z)((q))\sqcup\{\infty\}
\]
by $\Phi_m(x)=\operatorname{red}_m([x]_q)$ for
$x\in(\mathbb R\setminus\mathbb Q)\sqcup\mathbb Q_+$,
$\Phi_m(r_-)=\operatorname{red}_m([r]_q^-)$, and
$\Phi_m(+\infty)=H_m$, $\Phi_m(-\infty)=\infty$.
Equip $(\mathbb Z/m\mathbb Z)((q))$ with its $q$-adic topology and the
adjoined point $\infty$ with the one-point compactification topology.

For $x\in\mathbb R_{\rm C}^1$ we use its split negative
continued-fraction code.  The point $+\infty$ has code $(\infty)$ and
$1_+$ has code $(2,2,\ldots)$.  If
$r=[[c_1,\ldots,c_N]]>1$, then
\[
 \operatorname{code}(r_-)=(c_1,\ldots,c_N,\infty),\qquad
 \operatorname{code}(r_+)=(c_1,\ldots,c_N+1,2,2,\ldots),
\]
while an irrational has its usual infinite code.  The $i$-th tail is
the point represented by the code beginning with its $i$-th symbol;
the tail beginning with $\infty$ is $+\infty$.

Reduction of the translation formula gives
\begin{equation}\label{modmtranslation}
 \Phi_m(x+n)=q^n\Phi_m(x)+[n]_q\qquad(n\in\mathbb Z).
\end{equation}
A common finite prefix forces agreement modulo a power of $q$ tending
to infinity with the prefix length.  At a terminating code, a nearby
next digit $a\to\infty$ gives agreement with $H_m$ modulo $q^{a-1}$.
Thus $\Phi_m$ is continuous at every finite point.  Finally, write
$x=-N+y$ with $0_+\le y<1_+$.  Then
$\Phi_m(y)\in q(\mathbb Z/m\mathbb Z)[[q]]$ and
\[
 \Phi_m(x)=q^{-N}\bigl(\Phi_m(y)-[N]_q\bigr),
\]
whose parenthesis has constant term $-1$.  Hence
$\operatorname{val}_q\Phi_m(x)=-N$, proving continuity at $-\infty$.

\begin{theorem}  The map $\Phi_m$ is injective for every $m\ge2$.  If
$p$ is prime, $\Phi_p$ is a closed embedding into
$\mathbb P^1(\mathbb F_p((q)))$; for $p=2$ it is a homeomorphism.
\end{theorem}

\begin{proof}
Let $x\in\mathbb R_{\rm C}^1$ have code
$(a_1,a_2,\ldots)$, finite or infinite, and let $x_i$ be its $i$-th
tail.  For a nonterminal tail, put
$F_i:=\Phi_m(x_i)\in1+q(\mathbb Z/m\mathbb Z)[[q]]$.  Then
\(
 F_i=[a_i]_q-q^{a_i-1}/F_{i+1},
\)
so $[q^j]F_i=1$ for $0\le j<a_i-1$ and
$[q^{a_i-1}]F_i=0$.  Hence
\[
 a_i=1+\min\{j\ge1:[q^j]F_i\ne1\},\qquad
 F_{i+1}=\frac{q^{a_i-1}}{[a_i]_q-F_i}.
\]
The value $H_m$ marks termination.  Iteration therefore recovers the
entire code from $\Phi_m(x)$, proving injectivity on
$\mathbb R_{\rm C}^1$ and, by \eqref{modmtranslation}, on
$\mathbb R_{\rm C}$.

For $m=2$, every series $F\in1+q\mathbb F_2[[q]]$ other than $H_2$ has
a first zero coefficient which follows a string of ones.  The displayed formulas therefore produce a
unique finite or infinite code, so
$\Phi_2:\mathbb R_{\rm C}^1\to1+q\mathbb F_2[[q]]$ is bijective.  Finally, by \eqref{modmtranslation}, 
for every $G\in\mathbb F_2((q))$, one can choose $n\gg0$ so that
$[n]_q+q^nG\in1+q\mathbb F_2[[q]]$. Translating back therefore gives a
unique preimage of $G$.  Since $-\infty$ maps to $\infty$, $\Phi_2$ is
bijective on the full spaces. Finally, compactness of $\mathbb R_{\rm C}$ and
$\mathbb P^1(\mathbb F_p((q)))$ implies that $\Phi_p$ is a closed embedding for every prime $p$, and 
$\Phi_2$ is a homeomorphism. 
\end{proof}

\subsection{$\Phi_p(x)$ for rational and quadratic irrational $x$}
Define the matrix
\[
 M_a^{(p)}(q):=
 \begin{pmatrix}[a]_q&-q^{a-1}\\1&0\end{pmatrix}
 \in\operatorname{Mat}_2(\mathbb F_p[q]).
\]
It represents the fractional-linear map
$z\mapsto[a]_q-\frac{q^{a-1}}{z}$.

\begin{proposition}\label{modtwoalgebraicity}
\begin{enumerate}[label=\textup{(\roman*)}]
 \item If $x$ is $\pm\infty$, $r_+,r_-$, or a real
 quadratic irrational, then $\Phi_p(x)$ is algebraic of degree at most
 $2$ over $\mathbb F_p(q)$.
 \item If $\Phi_p(x)\in\mathbb P^1(\mathbb F_p(q))$, then $x$ is $\pm\infty$, $r_+,r_-$, or a real quadratic irrational.\footnote{However, $x$ can be very complicated even for a fairly simple function $\Phi_p(x)$. For example, a computer calculation shows that 
if $\Phi_2(x)=(1+q)^{10}=1+q^2+q^8+q^{10}$ then the discriminant of the real quadratic field $\Bbb Q(x)$ has 59 decimal digits.}
\end{enumerate}
\end{proposition}

\begin{proof}
The points $\pm\infty,r_+,r_-$ have rational projective images.  If $x$
is a real quadratic irrational, its negative continued fraction is
eventually periodic.  The image of its periodic tail is therefore a
fixed point of a fractional-linear transformation over
$\mathbb F_p(q)$, so it has degree at most $2$; the finite preperiod does
not increase the degree.

Conversely, translate a finite $x$ into $\mathbb R_{\rm C}^1$.  Let
$x_i$ be its successive tails, $a_i$ their first digits, and
$F_i:=\Phi_p(x_i)$.  If $F_i=P_i/Q_i$ in lowest terms, normalized by
$P_i(0)=Q_i(0)=1$, then
\[
 E_i:=q^{-a_i+1}\bigl([a_i]_qQ_i-P_i\bigr)\in\mathbb F_p[q],
 \qquad F_{i+1}=\frac{Q_i}{E_i}.
\]
Here $E_i(0)=1$, $(Q_i,E_i)=1$, and
\(
 \deg E_i\le
 \max\{\deg Q_i,\deg P_i-a_i+1\}.
\)
Thus $\max(\deg P_i,\deg Q_i)$ never increases.  Only finitely many
normalized coprime pairs of bounded degree exist over $\mathbb F_p$.
Hence either a terminal tail is reached, or two tails coincide; in the
latter case the subsequent digits repeat.  Therefore $x$ is a split
rational point or a real quadratic irrational. 
\end{proof}

\begin{proposition}\label{modtworationality}
(i) Let 
\[
 x=[[b_1,\ldots,b_s,\overline{c_1,\ldots,c_\ell}]]
 \in\R_{\rm C}^1
\]
have an infinite eventually periodic code.  Write
\[
 M_{c_1}^{(p)}\cdots M_{c_\ell}^{(p)}
 =\begin{pmatrix}A&B\\C&D\end{pmatrix}.
\]
Then the image $Y=\Phi_p([[\overline{c_1,\ldots,c_\ell}]])$ 
of the purely periodic tail of $x$ is the unique root in
$1+q\mathbb F_p[[q]]$ of the equation
\begin{equation}\label{modtwofixedquadratic}
 CY^2+(D-A)Y-B=0.
\end{equation}
Consequently, $\Phi_p(x)$ is rational if and only if this polynomial
splits over $\mathbb F_p(q)$; otherwise it has degree exactly $2$.
For odd $p$, splitting is equivalent to
\[
 (D-A)^2+4BC\quad\text{being a square in }\mathbb F_p(q).
\]
For $p=2$, it is equivalent to
\begin{equation}\label{modtwoAScriterion}
 \frac{BC}{(A+D)^2}
 \in\{u^2+u:u\in\mathbb F_2(q)\}.
\end{equation}

(ii) Every terminating code $x=(b_1,\ldots,b_s,\infty)$ has rational
image $\Phi_p(x)\in \Bbb F_p(q)$.
\end{proposition}

\begin{proof}
(i) The fixed-point equation for the period matrix is
\eqref{modtwofixedquadratic}.  At $q=0$ it is $Y^2-Y=0$, and its
root $Y=1$ is simple, so Hensel's lemma gives a unique root in
$1+q\mathbb F_p[[q]]$.  The preperiod acts by an invertible
fractional-linear transformation over $\mathbb F_p(q)$, so it preserves
rationality.  For odd $p$, splitting is the discriminant condition.  In
characteristic $2$, equation \eqref{modtwofixedquadratic} is
\(
 CY^2+(A+D)Y+B=0.
\)
Since $A+D$ has constant term one, the substitution
$Z=CY/(A+D)$ gives
\(
 Z^2+Z=\frac{BC}{(A+D)^2},
\)
which proves \eqref{modtwoAScriterion}.

(ii) For a terminating code, the terminal tail has image
$H_p=(1-q)^{-1}\in\mathbb F_p(q)$.  Applying the finitely many
fractional-linear maps represented by
$M_{b_s}^{(p)},\ldots,M_{b_1}^{(p)}$ preserves rationality.
\end{proof}

\subsection{Quadratic series over $\mathbb F_2(q)$ and eventual periodicity}

Quadraticity over $\mathbb F_2(q)$ does not by itself imply eventual
periodicity.  For $F\in1+q\mathbb F_2[[q]]$, set $F_1=F$.  If $F_i=H_2$,
stop.  Otherwise define
\[
 a_i:=1+\min\{j\ge1:[q^j]F_i=0\},\qquad
 F_{i+1}:=\frac{q^{a_i-1}}{[a_i]_q-F_i}.
\]
Then $(a_i)$ is the negative continued-fraction code of
$\Phi_2^{-1}(F)$, and $F_i$ is the image of its $i$-th tail.

Suppose that $F$ has degree $2$ over
$\mathbb F_2(q)$.  Every $F_i$ then also has degree $2$, since it is an
invertible fractional-linear transform of $F$.  For each $i$, let
\(
 A_iT^2+B_iT+C_i\in\mathbb F_2[q][T]
\)
be the unique primitive irreducible polynomial satisfied by $F_i$.

\begin{theorem}
The code $(a_i)$ is eventually periodic if and only if the three
sequences $(\deg A_i)$, $(\deg B_i)$, and $(\deg C_i)$ are bounded.
\end{theorem}

\begin{proof}  Eventual periodicity is equivalent to equality
$F_i=F_j$ for some $i<j$: the forward implication is immediate, and the
reverse implication follows from the displayed deterministic
recursion.  Repetition plainly implies bounded degrees.  Conversely,
there are only finitely many primitive quadratic polynomials over
$\mathbb F_2[q]$ with bounded coefficient degrees, and each has at most
two roots.  Hence only finitely many $F_i$ occur, so two of them are
equal. 
\end{proof} 

\begin{example}
Let
\(
 F=1+q+q^2+q^4+q^8+\cdots,\qquad F^2+F=q.
\)
The polynomial $T^2+T+q$ is irreducible over $\mathbb F_2(q)$ by the
valuation at infinity.  The first zero coefficient of $F$ is in degree
$3$, so the first digit is $4$ and the next tail has minimal polynomial
\(
 (1+q+q^3)T^2+T+q^3.
\)
More generally, suppose a tail has minimal polynomial
$AT^2+T+C$, with $\deg A=\deg C=d$, and its next digit is $n+1$.
Substituting $F_i=[n+1]_q-q^n/F_{i+1}$ gives
\[
 q^{-n}\bigl(A[n+1]_q^2+[n+1]_q+C\bigr)T^2+T+Aq^n
\]
for the next tail.  The expression in parentheses is divisible by
$q^n$, and the middle coefficient is $1$, so this polynomial is
primitive.  Its leading and constant coefficients both have
degree $d+n$.  Starting from the displayed polynomial, these degrees
therefore increase strictly, so the criterion shows that the code is
not eventually periodic.  It cannot terminate either, since a
terminating code has rational image.  Thus $\Phi_2^{-1}(F)$ is neither
a split rational point nor a real quadratic irrational.
\end{example}

\subsection{Numbers $x$ with eventually even and eventually odd coefficients of $[x]_q$}
\begin{corollary}\label{modtwoparity}
Let $x$ have an eventually periodic code as in
Proposition~\ref{modtworationality}(i), and assume its splitting
condition.  Write the unit root of
\eqref{modtwofixedquadratic} in lowest terms as $Y=P/Q$, and put
\[
 M_a(q):=M_a^{(2)}(q)
 =\begin{pmatrix}[a]_q&q^{a-1}\\1&0\end{pmatrix},
\]
\[
 M_{b_1}\cdots M_{b_s}
 =\begin{pmatrix}A_0&B_0\\C_0&D_0\end{pmatrix},\qquad
 N:=A_0P+B_0Q,\quad \Delta:=C_0P+D_0Q.
\]
Then $\Phi_2(x)=N/\Delta$, and
\begin{align}
 \Phi_2(x)\in\mathbb F_2[q]
 &\iff \Delta\mid N,\label{modtwopolynomialcriterion}\\
 \Phi_2(x)\in H_2+\mathbb F_2[q]
 &\iff (1-q)\Delta\mid\Delta+(1-q)N.
 \label{modtwooddcriterion}
\end{align}
For a terminating code $(b_1,\ldots,b_s,\infty)$, the same formulas
hold with $P=1$ and $Q=1+q$. 
These tests are exhaustive.  Indeed, eventual evenness or oddness
makes $\Phi_2(x)$ rational; Proposition~\ref{modtwoalgebraicity}(ii)
then forces a terminating or eventually periodic code.
Formula \eqref{modmtranslation}, together with
$H_2=1+qH_2$, extends the criteria from $\mathbb R_{\rm C}^1$ to
$\mathbb R_{\rm C}^+$.

In particular, for $x\ge0$, write
$[x]_q=\sum_{j\ge0}c_jq^j$.  The coefficients $c_j$ are eventually
even exactly when \eqref{modtwopolynomialcriterion} holds, and are
eventually odd exactly when \eqref{modtwooddcriterion} holds.  For $x=r_-$ the same statement applies to $[x]_q^-$.
\end{corollary}

\begin{proof}
The preperiod matrix gives $\Phi_2(x)=N/\Delta$.  The first criterion is
immediate, while
\[
 H_2+\Phi_2(x)=\frac{\Delta+(1-q)N}{(1-q)\Delta}
\]
gives the second. The reduced series is a polynomial exactly
when the integral coefficients are eventually even, and it belongs to
$H_2+\mathbb F_2[q]$ exactly when they are eventually odd. 
\end{proof}

\subsection{Numbers with $[x]_q=1+q^n$ mod $2$}
As an example, let us compute the real numbers whose mod $2$
$q$-analogue is $1+q^n$.

\begin{proposition}\label{modtwooneplusqn}
Let $x_n:=\Phi_2^{-1}(1+q^n)$.  Then
$x_1=2_+,\qquad x_2=2_-$, and
for every $n\ge3$,
\begin{equation}\label{modtwoonepluscode}
 x_n=[[2,2,\overline{2^{(n-3)},4,2^{(n-3)},3}]],
\end{equation}
where $2^{(r)}$ denotes a string of $r$ copies of $2$. Thus
\begin{equation}\label{modtwooneplusvalue}
 x_n=
 \frac{n(n+1)-\sqrt{(n-2)(n-1)n(n+1)}}{2n-1},
\end{equation}
and it is a quadratic irrational. 
Thus the $x_n$ are precisely the points of $\mathbb R_{\rm C}^1$ for which $\Phi_2(x)$ has exactly one nonzero coefficient of positive degree. 
\end{proposition}

\begin{proof}
The first two cases follow from
$[2]_q=1+q$ and $[2]_q^-=1+q^2$.  Let $n\ge3$ and set
\[
 S_j:=1+q^j+\cdots+q^{n-1}\quad(1\le j\le n-1),\qquad S_n:=1,
\]
\[
 R_j:=[j+1]_q+q^{n-1}\quad(0\le j\le n-3),
 \qquad R_{n-2}:=S_1.
\]
For $\Psi_a(G):=q^{a-1}/([a]_q-G)$, direct calculation gives
\begin{align*}
 \Psi_2(1+q^n)&=\frac{S_n}{S_{n-1}},\\
 \Psi_2\left(\frac{S_j}{S_{j-1}}\right)
 &=\frac{S_{j-1}}{S_{j-2}} &&(3\le j\le n),\\
 \Psi_4\left(\frac{S_2}{S_1}\right)
 &=\frac{R_{n-2}}{R_{n-3}},\\
 \Psi_2\left(\frac{R_j}{R_{j-1}}\right)
 &=\frac{R_{j-1}}{R_{j-2}} &&(2\le j\le n-2),\\
 \Psi_3\left(\frac{R_1}{R_0}\right)
 &=\frac{S_{n-1}}{S_{n-2}}.
\end{align*}
All displayed outputs have constant term $1$, so the arrow labels are
the corresponding continued-fraction digits.  Thus the first two
digits are $2,2$, after which the period is
$2^{(n-3)},4,2^{(n-3)},3$, proving
\eqref{modtwoonepluscode}.

Let $L_a=\begin{pmatrix}a&-1\\1&0\end{pmatrix}$ and $k=n-3$.  Then
\[
 L_2^2(L_2^kL_4L_2^kL_3)L_2^{-2}
 =\begin{pmatrix}-2n-1&2n(n+1)\\1-2n&2n^2-1\end{pmatrix}.
\]
Hence
\(
 (2n-1)x_n^2-2n(n+1)x_n+2n(n+1)=0.
\)
The code gives $1<x_n<2$, so $x_n$ is the smaller root, namely
\eqref{modtwooneplusvalue}.  Finally, with $A=n(n-1)$, the radicand is
$A(A-2)=(A-1)^2-1$, which lies strictly between consecutive squares;
therefore $x_n$ is quadratic irrational. 
\end{proof} 

\section{$q$-complex numbers}  

\subsection{Definition and formulas for $[\tau]_q$}
Let $0<q<1$ and let $t:=-\log q>0$, so that $q=e^{-t}$. We now want to 
extend the notion of a $q$-number to complex numbers. 
To this end, we want to define a meromorphic function $f(t,\tau)=[\tau]_q$ in $\tau=x+iy\in \Bbb C_+$, i.e., a holomorphic map 
\(
f_t: \Bbb C_+\to \Bbb C\Bbb P^1,\ f_t(\tau):=f(t,\tau),
\)
which is continuous in $(t,\tau)$ with $f(0,\tau)=\tau$, modular equivariant as in Lemma \ref{modinv}, and whose boundary values in a suitable sense 
are $[x]_q$, $x\in \Bbb R$. Thus we must have
\(
f(t,\tau+1)=e^{-t}f(t,\tau)+1.
\)
Hence $g(t,\tau):=e^{t\tau}f(t,\tau)$ must satisfy the equation 
\(
g(t,\tau+1)=g(t,\tau)+e^{t(\tau+1)}.
\)
Such a function must have the form 
\(
g(t,\tau)=h(t,\tau)+\frac{e^{t\tau}}{1-e^{-t}},
\)
where $h(t,\tau)$ is periodic in $\tau$ with period $1$, i.e., 
\(
h(t,\tau)=H(t,\bold q^2)
\)
where $\bold q=e^{\pi i\tau}$
and $H(t,z)$ is meromorphic in $0<|z|<1$. Thus 
\begin{equation}\label{fhform}
f(t,\tau)=e^{-t\tau}h(t,\tau)+\frac{1}{1-e^{-t}},\ h(t,\tau)=e^{t\tau}\left(f(t,\tau)-\frac{1}{1-e^{-t}}\right).
\end{equation}
Modular equivariance also requires that
\(
f\left(t,-\frac{1}{\tau}\right)=-\frac{e^t}{f(t,\tau)}.
\)

Let $j(\tau)$ be the $j$-invariant of the elliptic curve $E_\tau:=\Bbb C/(\Bbb Z\oplus \tau\Bbb Z)$ and $J(\tau):=j(\tau)/1728$. It follows 
 that the Schwarzian derivative\footnote{Recall that $\Bbb S(f)=\frac{f'''}{f'}-\frac{3}{2}\left(\frac{f''}{f'}\right)^2$, where the derivatives here are with respect to $J$.} $\Bbb S(f)$ of $f(t,\tau)$ with respect to $J(\tau)$ 
is a meromorphic modular function\footnote{Below we will extensively use hypergeometric 
and modular functions. We refer the reader to \cite{WW} for basics 
on these functions.}
 (i.e., a function on $\Bbb C_+/PSL_2(\Bbb Z)$). 
In other words, $\Bbb S(f)$ is a meromorphic function of $J$:
\(
\Bbb S(f)(\tau)=Q(J(\tau))
\)
where $Q\in {\rm Mer}(\Bbb C)$.
Thus $f(t,\tau)=\frac{\psi_1(J(\tau))}{\psi_2(J(\tau))}$, where $\psi_1,\psi_2$ are a basis of solutions of the linear second order ODE 
\begin{equation}\label{pf}
\frac{d^2\psi}{dJ^2}+\frac{1}{2}Q(J)\psi=0.
\end{equation} 
Note that near every point $J_0$, $\psi_1=\frac{u_1}{v}$, $\psi_2=\frac{u_2}{v}$ 
where $u_1,u_2,v$ are holomorphic. This implies that the differential equation \eqref{pf}
is Fuchsian at $J_0$, i.e., the function $Q$ has poles at most of order $2$.  

Recall that $[x]_q\to \frac{1}{1-q}$ as $x\to +\infty$, 
but $[x]_q\to -\infty$ as $x\to -\infty$. Thus we should not expect $[\tau]_q$ to have a limit as $\tau\to \infty$. But we may require that it has limit $\frac{1}{1-q}$ when ${\rm Re}\tau,{\rm Im}\tau\to +\infty$. Namely, we make 

\vskip .05in
{\bf Assumption 1.} We have
$\lim_{{\rm Re}\tau,{\rm Im}\tau\to+\infty}f(t,\tau)=(1-e^{-t})^{-1}$.
\vskip .05in

By the removable singularity theorem, Assumption~1 is equivalent to
$H(t,z)$ being holomorphic at $z=0$.  It follows that $Q$ is holomorphic
at $\infty$, hence rational, and has at least a second-order zero there.

Let us also make

\vskip .05in
{\bf Assumption 2.} The map $\tau\mapsto f(t,\tau)$ is a local diffeomorphism, i.e., $df_\tau(t,\tau)\ne 0$ for all 
$\tau\in \Bbb C_+$. 
\vskip .05in

This is natural since $f(0,\tau)=\tau$. Assumption 2 implies that the function $Q$ 
is regular on $\Bbb C$ except $J=0$ and $J=1$, and at $0$ and $1$ we have 
\(
Q\sim \frac{4}{9J^2},\ Q\sim \frac{3}{8(J-1)^2}.
\)
Hence we have 
\begin{equation}\label{pfeq}
\frac{1}{2}Q=\frac{2}{9J^2}+\frac{3}{16(J-1)^2}+\frac{36c-23}{144J(J-1)}=\frac{36J^2-41J+32+36cJ(J-1)}{144J^2(J-1)^2}, \ c\in \Bbb C. 
\end{equation} 
Thus for $c=0$ equation \eqref{pf} is the classical Picard-Fuchs equation, and when $c\ne 0$, it is a $q$-deformed Picard-Fuchs equation. Namely, it is easy to check that the relation between $c$ and $q$ (i.e., $t$) is 
$$
c=-s^2=\frac{t^2}{4\pi^2},\text{ where } 
s:=\frac{t}{2\pi i}.
$$ 
Indeed, compare the local exponents at $J=\infty$.  From
\eqref{pfeq},
\(
 \frac12Q(J)=\frac{1+c}{4J^2}+O(J^{-3}),
\)
so the indicial roots of \eqref{pf} are
\[
 \frac{1+\sqrt{-c}}2,
 \qquad
 \frac{1-\sqrt{-c}}2.
\]
Thus the ratio of the two local monodromy eigenvalues is
$\exp(\mathord{\pm}2\pi i\sqrt{-c})$.  Let $\delta(t)$ be the
difference of the two local exponents, chosen continuously along the
deformation with $\delta(0)=0$.  Since the cusp monodromy is conjugate
to multiplication by $q=e^{-t}=e^{-2\pi i s}$, one has
$\delta(t)=\mathord{\pm}s+n$ for a locally constant integer $n$.  At
$t=0$ the equation is the classical Picard--Fuchs equation, so
$c(0)=0$ and $n=0$.  Therefore $\delta(t)^2=s^2$, and hence
$c=-s^2$.

Equation \eqref{pf} has Fuchsian singularities at $0,\infty,1$ (corresponding to $\tau=\rho:=e^{2\pi i/3}$, $\tau=\infty$ and $\tau=i$) and local monodromies given by the matrices $(T_tS_t)^{-1},T_t,S_t$ respectively, where 
$$
S_t=ie^{-t/2}\begin{pmatrix} 0 & -e^{t}\\ 1 & 0\end{pmatrix},\  T_t:=ie^{t/2}\begin{pmatrix} e^{-t} & 1\\ 0 & 1\end{pmatrix}, 
$$
so that $T_tS_t=\begin{pmatrix} -1 & 1\\ -1 & 0\end{pmatrix}$ and $S_t^2=(T_tS_t)^3=1$.
(These matrices define the Burau representation of the Braid group $B_3$, see \cite{MOV}). Thus, letting $\bold f=\binom{f_1}{f_2}$, we may assume that 
$$
\bold f(t,\tau+1)=T_t\bold f(t,\tau),\ \bold f(t,-\tfrac{1}{\tau})=S_t\bold f(t,\tau).
$$
(this means that $f_2(\tau+1)=ie^{t/2}f_2(\tau)$)

It is more convenient to go up to an $S_3$ Galois covering (of degree 6), $\xi:\Bbb C\Bbb P^1\to \Bbb C\Bbb P^1$ where the coordinate on the source $\Bbb C\Bbb P^1$ is the classical modular function $\lambda$ such that $E_\tau$ is given by the equation 
\(
y^2=x(x-1)(x-\lambda).
\)
This $\lambda$ is well defined up to the group $S_3$ generated by $\lambda\to 1-\lambda$ and $\lambda\to 1/\lambda$, and it is related to the $J$-invariant by the equation
$$
J(\tau)=\frac{4}{27}\frac{(1-\lambda+\lambda^2)^3}{\lambda^2(1-\lambda)^2}
$$
(defining the map $\xi$). Thus $\lambda$ is a modular function with respect to $\Gamma(2)$; 
it is given by 
$$
\lambda(\tau)=\frac{\theta_{10}(\tau)^4}{\theta_{00}(\tau)^4}=16\bold q+O(\bold q^2),
$$
where 
$$
\theta_{00}(\tau)=\sum_{n\in \Bbb Z}\bold q^{n^2},\ \theta_{10}(\tau)=\sum_{n\in \Bbb Z}\bold q^{(n+\frac{1}{2})^2}.
$$

In terms of $\lambda$, the Picard-Fuchs equation is
$$
\left(\partial_\lambda^2+\left(\frac{1}{\lambda}+\frac{1}{\lambda-1}\right)\partial_\lambda+\frac{1}{4\lambda(\lambda-1)}\right)F(\lambda)=0. 
$$
The inverse function of $\lambda(\tau)$ is given by 
$$
\tau=i\frac{F(\frac{1}{2},\frac{1}{2},1; 1-\lambda)}{F(\frac{1}{2},\frac{1}{2},1; \lambda)}
$$
where $F={}_2F_{1}$ is the Euler-Gauss hypergeometric function. 

Now consider $q\ne 1$. The deformed Picard-Fuchs equation \eqref{pfeq} in the $\lambda$-variable takes the form 
$$
\left(\partial_\lambda^2+\left(\frac{1}{\lambda}+\frac{1}{\lambda-1}\right)\partial_\lambda+\frac{1}{4\lambda(\lambda-1)}-\frac{s^2(\lambda^2-\lambda+1)}{\lambda^2(\lambda-1)^2}\right)F(\lambda)=0. 
$$
The points $\lambda=0,1,\infty$ all map to $\infty$ under the map $\xi$, with ramification indices $2,2,2$, so the parameters of the hypergeometric function should all 
get shifted by suitable multiples of $s$.
Indeed, the indicial exponents of the deformed equation are $\{-s,s\}$
at both $\lambda=0$ and $\lambda=1$, and
$\{-\frac12-s,-\frac12+s\}$ at $\lambda=\infty$.  For the Gauss
parameters
\(
 a=\frac12-s,\qquad b=\frac12-3s,\qquad c=1-2s,
\)
the exponent differences at $0,1,\infty$ are all $2s$.  Multiplication
of both solutions by the common gauge
$[\lambda(1-\lambda)]^{-s}$ changes the Gauss exponents precisely to
the three pairs above; the gauge cancels in their ratio.  Since a
second-order Fuchsian equation with three singular points is determined,
up to such a gauge, by its exponent data, this gives the ratio in
\eqref{tauq}.

More precisely, we should have 
\begin{equation}\label{tauq}
[\tau]_q=A_q\frac{F(\frac{1}{2}-s,\frac{1}{2}-3s,1-2s; 1-\lambda(\tau))}{F(\frac{1}{2}-s,\frac{1}{2}-3s,1-2s; \lambda(\tau))},
\end{equation}
where $A_q$ is a certain fractional linear transformation such that $A_{q=1}(z)=iz$ and the hypergeometric function 
$F$ is understood in the sense of analytic continuation. 
Namely, the function $\lambda(\tau)$ defines an isomorphism $\Bbb C_+/\Gamma(2)\cong \Bbb C\setminus \lbrace 0,1\rbrace$, i.e., is just the universal covering map $\lambda: \Bbb C_+\to \Bbb C\setminus \lbrace 0,1\rbrace$. So $F(a,b,c;\lambda(\tau))$ is just the function $F\circ \lambda:\Bbb C_+\to \Bbb C$ obtained by pulling back $F$ via $\lambda$. 
 
Recall that $\lambda(\tau)\to 0$ as ${\rm Im}\tau\to +\infty$, and 
$\lambda(\tau)\to 1$ as ${\rm Im}(-1/\tau)\to +\infty$ (as $\lambda(-1/\tau)=1-\lambda(\tau)$). 
Put $s_\varepsilon:=s+\varepsilon$, where $\varepsilon>0$.  Since
$s$ is purely imaginary and
\[
 (1-2s_\varepsilon)-\left(\tfrac12-s_\varepsilon\right)
 -\left(\tfrac12-3s_\varepsilon\right)=2s_\varepsilon,
\]
the real part of this difference is $2\varepsilon>0$, so Gauss'
formula applies at $z=1$.  We may
therefore define the parameter-regularized value
\begin{align*}
 F_s^{\mathrm{reg}}(1)
 &:={\lim}_{\varepsilon\to0^+}
 F\left(\tfrac12-s_\varepsilon,\tfrac12-3s_\varepsilon,
 1-2s_\varepsilon;1\right)\\
 &=\frac{\Gamma(1-2s)\Gamma(2s)}
 {\Gamma(\frac12-s)\Gamma(\frac12+s)}
 =i\frac{\cosh(\frac t2)}{\sinh t}
 =\frac{i}{q^{-1/2}-q^{1/2}}.
\end{align*}
For the original purely imaginary $s$, this is not the limit as
$z\to1$ at fixed $s$; it is a connection coefficient obtained by
meromorphic continuation in the parameter.  More precisely, if
\(
 a=\tfrac12-s,\qquad b=\tfrac12-3s,\qquad c=1-2s,
\)
then the connection formula at $\lambda=0$ reads
\begin{align*}
 F(a,b;c;1-\lambda)
 ={}&F_s^{\mathrm{reg}}(1)F(a,b;c;\lambda)\\
 &+B(s)\lambda^{2s}
 F(\tfrac12-s,\tfrac12+s;1+2s;\lambda),
\end{align*}
where
\[
 B(s)=\frac{\Gamma(1-2s)\Gamma(-2s)}
 {\Gamma(\frac12-s)\Gamma(\frac12-3s)}.
\]
Thus $F_s^{\mathrm{reg}}(1)$ is the coefficient of the holomorphic
local solution, rather than a boundary value of the hypergeometric
function at fixed $s$.  The cusp normalization requires the fractional
linear transformation $A_q$ to have its pole at this coefficient; the
symmetry $\lambda\mapsto1-\lambda$ requires its zero at the reciprocal
coefficient.  Hence
\[
 A_q\left(\frac{i}{q^{-1/2}-q^{1/2}}\right)=\infty,
 \qquad
 A_q\left(-i(q^{-1/2}-q^{1/2})\right)=0.
\]
Equivalently, these relations may first be obtained for
$\operatorname{Re}s>0$, where Gauss' formula is literal, and then
continued meromorphically to the purely imaginary value of $s$.
So we get 
$$
A_q(z)=a_q\frac{1-\frac{iz}{q^{-{1\over 2}}-q^{1\over 2}}}{z-\frac{i}{q^{-{1\over 2}}-q^{1\over 2}}}
$$
for some $a_q\in \Bbb C^\times$. So, using Euler transformation formulas for $F$, 
$$
[\tau]_q=a_q\frac{F(\frac{1}{2}-s,\frac{1}{2}-3s,1-2s; \lambda(\tau))-\frac{i}{q^{-{1\over 2}}-q^{1\over 2}}F(\frac{1}{2}-s,\frac{1}{2}-3s,1-2s; 1-\lambda(\tau))}{F(\frac{1}{2}-s,\frac{1}{2}-3s,1-2s; 1-\lambda(\tau))-\frac{i}{q^{-{1\over 2}}-q^{1\over 2}}F(\frac{1}{2}-s,\frac{1}{2}-3s,1-2s; \lambda(\tau))}=
$$
$$
a_q\left(\frac{1-\lambda(\tau)}{\lambda(\tau)}\right)^{2s}\frac{F(\frac{1}{2}-s,\frac{1}{2}+s,1+2s;1-\lambda(\tau))}{F(\frac{1}{2}-s,\frac{1}{2}+s,1+2s;\lambda(\tau))},
$$
i.e., $a_{q=1}=i$.  Also replacement of $\tau$ by $-\frac{1}{\tau}$ together with the formula 
$[-\frac{1}{\tau}]_q=-\frac{1}{q[\tau]_q}$ tells us that $a_q=-\frac{1}{qa_q}$. Thus we have 
$a_q=iq^{-\frac{1}{2}}=ie^{\pi is}$. So we get 
\begin{equation}\label{form1}
[\tau]_q=ie^{\pi is}\left(\frac{1-\lambda(\tau)}{\lambda(\tau)}\right)^{2s}\frac{F(\frac{1}{2}-s,\frac{1}{2}+s,1+2s;1-\lambda(\tau))}{F(\frac{1}{2}-s,\frac{1}{2}+s,1+2s;\lambda(\tau))}.
\end{equation}

In particular, for $\tau=i$ we have $\lambda(\tau)=\frac{1}{2}$, so we get 
\(
[i]_q=ie^{\pi is}=iq^{-\frac{1}{2}}.
\)
Not surprisingly, this agrees with the formula in \cite{O}, Subsection 2.3. 
Modular equivariance also implies that 
$$
q[\rho]_q+1=[1+\rho]_q=-\frac{q^{-1}}{[-\frac{1}{1+\rho}]_q}=-\frac{q^{-1}}{[\rho]_q},
$$
which yields 
\(
[\rho]_q=q^{-1}\rho, 
\)
also as in \cite{O}, Subsection 2.3. 
However, in general our definition of $[\tau]_q$ does not agree with that of 
\cite{O}, e.g. for a Gaussian integer $\tau$, the function $q\to [\tau]_q$ is typically not rational. 

Also, since 
$$
F(\tfrac{1}{2}-s,\tfrac{1}{2}+s,1+2s;1-z) = 
$$
$$
\frac{\Gamma(1+2s)\Gamma(2s)}{\Gamma(\frac{1}{2}+s)\Gamma(\frac{1}{2}+3s)}F(\tfrac{1}{2}-s,\tfrac{1}{2}+s,1-2s;z)  -\frac{i}{q^{-1/2}-q^{1/2}}
z^{2s}F(\tfrac{1}{2}+3s,\tfrac{1}{2}+s, 1+2s;z),
$$
we obtain, again using Euler transformations 

\begin{equation}\label{form2}
f(t,\tau)=[\tau]_q=iq^{-1/2}\frac{\Gamma(1+2s)\Gamma(2s)}{\Gamma(\frac{1}{2}+s)\Gamma(\frac{1}{2}+3s)}\left(\frac{1-\lambda(\tau)}{\lambda(\tau)}\right)^{2s}\frac{F(\tfrac{1}{2}-s,\tfrac{1}{2}+s,1-2s;\lambda(\tau))}{F(\tfrac{1}{2}-s,\tfrac{1}{2}+s,1+2s;\lambda(\tau))}
+\frac{1}{1-q}.
\end{equation}
So 
\begin{equation}\label{hformu}
h(t,\tau)=ie^{t/2}\frac{\Gamma(1+2s)\Gamma(2s)}{\Gamma(\frac{1}{2}+s)\Gamma(\frac{1}{2}+3s)}\left(\bold q\frac{1-\lambda_{\bold q}}{\lambda_{\bold q}}\right)^{2s}\frac{F(\tfrac{1}{2}-s,\tfrac{1}{2}+s,1-2s;\lambda_{\bold q})}{F(\tfrac{1}{2}-s,\tfrac{1}{2}+s,1+2s;\lambda_{\bold q})},
\end{equation} 
where $\lambda_{\bold q}=\lambda(\tau)$. 
This is manifestly an element of $\Bbb C[[\bold q]]$, as expected, since 
$$
\lambda_{\bold q}=16\bold q-128\bold q^2+...\in \bold q\Bbb C[[\bold q]].
$$ 
In fact, since $h$ is $1$-periodic in $\tau$, this is even an element of 
$\Bbb C[[\bold q^2]]$, although this is not obvious from the formula. 

The uniqueness used here may be summarized as follows.  Once $Q$ is
fixed, the ratio of two independent solutions of \eqref{pf} is unique
up to a constant fractional linear transformation.  The prescribed
local monodromies $T_t,S_t$, together with the two cusp normalizations
encoded in Assumption~1 and inversion, fix this fractional linear
freedom: the first two conditions determine the zero and pole of
$A_q$.  The equation $a_q^2=-q^{-1}$, together with the
deformation normalization $a_{q=1}=i$, fixes the remaining scalar.  Conversely, the numerator and denominator in
\eqref{form1} are independent solutions of the same equation.  Their
Wronskian never vanishes, and $\lambda:\mathbb C_+\to
\mathbb C\setminus\{0,1\}$ is unramified, so the resulting ratio is a
local diffeomorphism and satisfies Assumption~2.

Thus we obtain 

\begin{theorem} Let $f(t,\tau)$ be continuous in $(t,\tau)$ with $f(0,\tau)=\tau$.  For each $t>0$, put $q=e^{-t}$ and assume
that $f(t,\cdot)=[\cdot]_q$ is meromorphic in the upper half-plane,
satisfies
\begin{equation}\label{modeq}
[\tau+1]_q=q[\tau]_q+1,\qquad
[-\tfrac{1}{\tau}]_q=-\tfrac{1}{q[\tau]_q},
\end{equation}
and satisfies Assumptions~1 and~2.  Then $[\tau]_q$ is uniquely
determined and is given by either of the formulas \eqref{form1},
\eqref{form2}.  More precisely, these are the formulas for imaginary
$\tau$, where we use the principal branch of $F(a,b,c;z)$ on $(0,1)$,
defined by the power series
$\sum_{n\ge0}\frac{(a)_n(b)_n}{(c)_n\,n!}z^n$; the general case is
obtained by analytic continuation, as in \eqref{tauq}.
\end{theorem}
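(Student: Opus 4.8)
The plan is to recast the derivation preceding the statement as a genuine two-part argument: \emph{uniqueness}, that every meromorphic $f$ satisfying \eqref{modeq} together with Assumptions 1 and 2 must coincide with the stated formula, and \emph{existence}, that the function defined by \eqref{form1} does satisfy all the listed constraints. Uniqueness is the substantive half, and I would run down the chain of constraints, checking at each stage that the hypotheses force the claimed reduction.

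For uniqueness, I would first use the additive relation $[\tau+1]_q=q[\tau]_q+1$ to force $f$ into the shape \eqref{fhform}, with $h$ a $1$-periodic meromorphic function, i.e.\ of the form $H(t,\bold q^2)$. The key observation is then that both relations in \eqref{modeq} make $[\gamma\tau]_q$ a M\"obius transform of $[\tau]_q$ for every $\gamma\in PSL_2(\Bbb Z)$, while $J(\tau)$ is $PSL_2(\Bbb Z)$-invariant; since the Schwarzian derivative is insensitive to post-composition by M\"obius maps, $\Bbb S(f)$ taken with respect to $J$ is $PSL_2(\Bbb Z)$-invariant, hence a meromorphic function $Q(J)$. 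Consequently $f=f_1/f_2$ for a basis $f_1,f_2$ of solutions of the Fuchsian equation \eqref{pf}.

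Next I would invoke the two Assumptions to pin down $Q$. Assumption 1 makes $H$ holomorphic at $z=0$, forcing $Q$ rational with a double zero at $J=\infty$; Assumption 2, that $f_t$ is a local diffeomorphism, forces $Q$ regular away from $J=0,1$ with the prescribed second-order principal parts there, leaving precisely the one-parameter family \eqref{pfeq}. The remaining parameter $c$ is fixed by matching the local monodromy of \eqref{pf} to the Burau matrices $T_t,S_t$, giving $c=-s^2$ with $s=t/(2\pi i)$. Passing to the $\Gamma(2)$-cover $\lambda$ converts \eqref{pf}/\eqref{pfeq} into a Gauss hypergeometric equation, so $f$ becomes a ratio of hypergeometric functions, and the residual fractional-linear freedom $A_q$ is eliminated by two conditions: the boundary value as $\lambda\to 1$ (using $F(\tfrac12-s,\tfrac12-3s,1-2s;1)=i/(q^{-1/2}-q^{1/2})$) and the equivariance $[-1/\tau]_q=-1/(q[\tau]_q)$, which together give $a_q=iq^{-1/2}$. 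Euler's transformations then rewrite the answer as \eqref{form1} and \eqref{form2}. For existence I would check the converse directly: the $T$- and $S$-equivariance \eqref{modeq} hold because $f_1,f_2$ transform by $T_t,S_t$ under $\tau\mapsto\tau+1$ and $\tau\mapsto-1/\tau$; Assumption 1 is visible from the manifestly holomorphic $\bold q$-expansion \eqref{hformu}; and Assumption 2 follows since linearly independent solutions of a Fuchsian equation have nonvanishing Wronskian, so $df_t\ne 0$.

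The main obstacle is the monodromy computation fixing $c=-s^2$. One must compute the local monodromy eigenvalues of \eqref{pf}, equivalently the characteristic exponents of the hypergeometric equation at $0,1,\infty$, and match them against the eigenvalues of the prescribed matrices $S_t$, $T_t$, $(T_tS_t)^{-1}$; concretely, one verifies that the exponent differences of the equation with parameters $(\tfrac12-s,\tfrac12-3s,1-2s)$ reproduce exactly the rotation data of these Burau matrices. This is where the precise deformation $c=-s^2$ is forced, and where careful bookkeeping of branches and of the normalization $a_q$ (consistent with Lemma \ref{modinv}) is essential.
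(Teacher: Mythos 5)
Your proposal follows essentially the same chain of reasoning as the paper's own derivation in Subsection 7.1: the translation equation forces the form \eqref{fhform}, modular equivariance makes the Schwarzian with respect to $J$ a function $Q(J)$, Assumptions 1 and 2 reduce $Q$ to the one-parameter family \eqref{pfeq}, the Burau monodromy fixes $c=-s^2$, and the passage to $\lambda$ together with the boundary value at $\lambda\to 1$ and the $S$-equivariance pins down $A_q$ and yields \eqref{form1},\eqref{form2}. Your explicit separation into uniqueness and existence, and your identification of the monodromy/exponent matching as the step requiring the most care, are faithful to (and a reasonable tightening of) the paper's argument.
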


\begin{remark} Meromorphicity of hypergeometric functions with respect to parameters
implies that the function $[\tau]_q$ extends to a meromorphic function 
of $\tau\in \Bbb C_+$ and $s\in \Bbb C$. It  is a multivalued function of $q=e^{-2\pi is}$, however (i.e., only a function of $s=-\frac{\log q}{2\pi i}$), so it should better be denoted 
by $[\tau]_{(s)}$. The continuity assumption for $f$ with $f(0,\tau)=\tau$ selects the
branch $s=t/(2\pi i)$ continuous from $s=0$; the integer-shifted
branches correspond to different deformations. For example, for
$n\in \Bbb Z_{\ge 0}$
\eqref{form1} yields
\(
 F(-n,n+1,2n+2;z)
 =P_n^{2n+1,-2n-1}(1-2z),
\)
so
\begin{equation}\label{jacpol}
[\tau]_{(n+\frac{1}{2})}=(-1)^{n+1}
\left(\frac{1-\lambda(\tau)}{\lambda(\tau)}\right)^{2n+1}
\frac{P_n^{2n+1,-2n-1}(2\lambda(\tau)-1)}
{P_n^{2n+1,-2n-1}(1-2\lambda(\tau))},
\end{equation}
where $P_n^{2n+1,-2n-1}$ is the (renormalized) $n$-th Jacobi polynomial 
\[
P_{n}^{2n+1,\,-2n-1}(z)\;=\;
\sum_{k=0}^{n}
(-1)^{k}\,
\frac{(n+k)!}{(n-k)!\,k!}\,
\frac{(2n+1)!}{(2n+1+k)!}\,
\left(\frac{1-z}{2}\right)^{k}.
\]
E.g.,
\[
[\tau]_{(\frac{1}{2})}=-\frac{1-\lambda(\tau)}{\lambda(\tau)},\qquad
[\tau]_{(\frac{3}{2})}=\left(\frac{1-\lambda(\tau)}{\lambda(\tau)}\right)^3
\frac{1+\lambda(\tau)}{2-\lambda(\tau)},
\quad\text{etc.}
\]
Yet, Gauss' contiguous symmetries
(i.e., invariance of the hypergeometric equation up to isomorphism under shifting its parameters $a,b,c$ by $1$) imply a 
relation 
$$
[\tau]_{(s+1)}=\frac{\alpha_s(\tau)[\tau]_{(s)}+\beta_s(\tau)}{\gamma_s(\tau)[\tau]_{(s)}+\delta_s(\tau)},
$$
where $\alpha_s,\beta_s,\gamma_s,\delta_s$ are modular functions. As 
$\tau\to i\infty$ in the vertical direction (and hence also as $\tau\to x\in \Bbb Q$), 
the element $\begin{pmatrix} \alpha_s & \beta_s\\ \gamma_s & \delta_s\end{pmatrix}(\tau)\in PSL_2(\Bbb C)$ tends to the identity, resulting in $[x]_q$ depending only on $q$
for rational $x$. 
\end{remark}

\begin{remark}
If $s\in\Bbb Q$ has denominator $m>1$, the full monodromy group $G$
generated by $S_t$ and $T_t$ is the triangle group $\Delta(2,3,m)$.
The degree over $\Bbb C(\lambda)$, however, is governed by
\(
 H:=\rho_s(\Gamma(2))\subseteq G,
\)
which is the normal closure of $T_t^2$ in $G$.

When $H$ is finite, analytic continuation makes $f$ single-valued on
the finite covering of the $\lambda$-sphere corresponding to
$\ker(\rho_s|_{\Gamma(2)})$.  The local monodromies are finite, so the
regular-singular local expansions have no logarithmic part and $f$
extends meromorphically across the points over $0,1,\infty$ after
compactification.  Hence $f$ is algebraic over $\mathbb C(\lambda)$.
The following branch count therefore computes the algebraic degree,
not merely the size of an analytic monodromy orbit.

Although the order of a monodromy group does not determine the degree
of an arbitrary, possibly non-Galois, extension, in the present
situation it does.  Put $f=[\tau]_{(s)}$.  In the finite cases under
consideration, namely $m=2,3,4,5$, its nonconstancy follows directly
from modular equivariance. Indeed, if $f$ were constant, it would have a finite
value $z_0$, and
\(
 z_0=qz_0+1,\qquad z_0=-\frac1{qz_0}.
\)
Eliminating $z_0$ gives $q^2-q+1=0$, so $q$ would have order $6$, a
contradiction.  The branches of $f$ over the
$\lambda$-line are precisely
\(
 h\cdot f=\frac{a_hf+b_h}{c_hf+d_h},\qquad h\in H.
\)
They are pairwise distinct: if $h_1\cdot f=h_2\cdot f$ as germs, then
$h_2^{-1}h_1$ fixes the nonconstant meromorphic function $f$
identically, which is impossible for a nonidentity fractional linear
transformation.  Thus the branch orbit has cardinality $|H|$.
Moreover, every branch $h\cdot f$ already belongs to
$\Bbb C(\lambda,f)$.  Hence, when $H$ is finite, the minimal polynomial
of $f$ over $\Bbb C(\lambda)$ has $|H|$ distinct roots and splits in
$\Bbb C(\lambda,f)$; in particular, this extension is Galois with group
$H$ and has degree $|H|$.

For $m=2$ this group is
trivial, so $[\tau]_{(s)}$ is rational in $\lambda$, in agreement with
\eqref{jacpol}.  For odd $m$, $T_t$ is a power of $T_t^2$, hence
$H=G$.  Thus the generic algebraic degrees are $|A_4|=12$ for $m=3$
and $|A_5|=60$ for $m=5$.  For $m=4$, the normal closure of $T_t^2$
in $S_4$ is the Klein four group, so the generic degree is $4$.
Consequently $[\tau]_{(s)}$ is algebraic over $\Bbb C(\lambda)$ in
these cases, with generic degrees $1,12,4,60$ for
$m=2,3,4,5$, respectively.  Finally, if $m=6$ (i.e., $q$ is a sixth
root of $1$), then $G=\Delta(2,3,6)=\Bbb Z/6\ltimes\Bbb Z^2$, 
so $[\tau]_{(s)}$ expresses as a function of $\lambda$ 
in quadratures (i.e., via exponentials of integrals of algebraic functions).  

Note that unlike the real case, if $\tau\in \Bbb C_+$ is a quadratic irrational then 
$[\tau]_q$ is not, in general, a quadratic irrational over $\Bbb Q(q)$ - it is typically a transcendental function. However, in this case the elliptic curve $E_\tau$
has complex multiplication, thus $j(\tau)$ and $\lambda(\tau)$ are algebraic numbers (\cite{Si}). So if $s\in \Bbb Q$ has denominator $\le 5$ then $[\tau]_{(s)}$ is an algebraic number. 
\end{remark} 

\subsection{Asymptotics of $[\tau]_q$}
Since $h(t,\tau)$ is holomorphic in $\bold q$ and non-vanishing
at $\bold q=0$, it has no zeros or poles in the region $V(M)$ defined by ${\rm Im}\tau\ge M$ for some $M>0$. Let us fix such $M$. 

\begin{proposition}\label{minusin} (i) We have 
$$
[\tau]_q-\frac{1}{1-q} \sim \frac{iq^{-\frac{1}{2}}2^{\frac{4\log q}{\pi i}}\Gamma(1-\frac{\log q}{\pi i})\Gamma(-\frac{\log q}{\pi i})}{\Gamma(\frac{1}{2}-\frac{\log q}{2\pi i})\Gamma(\frac{1}{2}-\frac{3\log q}{2\pi i})}q^{\tau},\ {\rm Im}\tau\to +\infty
$$

(ii) Let $K:={\rm min}_{\tau\in V(M)}|h(t,\tau)|$ (so $K>0$).
If $\tau \in V(M)$ then 
\begin{equation}\label{firstin}
|[\tau]_q|\ge Kq^{{\rm Re}\tau}-\frac{1}{1-q}.
\end{equation}
Thus if ${\rm Re}\tau\le \frac{\log\frac{2}{K(1-q)}}{\log q}$ then 
\begin{equation}\label{secondin}
|[\tau]_q|\ge \tfrac{1}{2}Kq^{{\rm Re}\tau}.
\end{equation}
In particular, if ${\rm Re}\tau\to -\infty$ in $V(M)$ then 
$[\tau]_q\to \infty$. 

(iii) Let $L:={\rm max}_{\tau\in V(M)}|h(t,\tau)|$. If $\tau\in V(M)$ then 
$$
\left|[\tau]_q-\frac{1}{1-q}\right|\le Lq^{{\rm Re}\tau}.
$$
In particular, if ${\rm Re}\tau\to +\infty$ in $V(M)$ then 
$[\tau]_q\to \frac{1}{1-q}$. 
\end{proposition}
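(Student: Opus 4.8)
The plan is to reduce all three parts to the single identity
$[\tau]_q-\frac{1}{1-q}=e^{-t\tau}h(t,\tau)$ coming from \eqref{fhform}, combined with the control on $h$ near the cusp established just before the proposition. First I would record that since $q=e^{-t}$ with $t>0$ we have $|e^{-t\tau}|=q^{{\rm Re}\tau}$, so that $\left|[\tau]_q-\frac{1}{1-q}\right|=q^{{\rm Re}\tau}\,|h(t,\tau)|$ exactly. Everything then comes down to two-sided control of $|h|$ on $V(M)$. To see that $K=\min_{V(M)}|h|>0$ and $L=\max_{V(M)}|h|<\infty$ are genuinely attained and finite, I would use that $h(t,\tau)=H(t,\mathbf q^2)$ depends on $\tau$ only through $z=\mathbf q^2=e^{2\pi i\tau}$, and that $V(M)$ maps into the closed disk $|z|\le e^{-2\pi M}$, with the cusp $\tau\to i\infty$ corresponding to $z=0$. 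On this compact disk $H(t,\cdot)$ is holomorphic (Assumption 1) and non-vanishing (as noted before the proposition), so $|H|$ attains a strictly positive minimum $K$ and a finite maximum $L$; these coincide with the min and max of $|h|$ over $V(M)$ by continuity.

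Granting this, part (iii) is immediate: $\left|[\tau]_q-\frac{1}{1-q}\right|=q^{{\rm Re}\tau}|h|\le Lq^{{\rm Re}\tau}$, and letting ${\rm Re}\tau\to+\infty$ sends $q^{{\rm Re}\tau}\to 0$, so $[\tau]_q\to\frac{1}{1-q}$. For part (ii) I would apply the reverse triangle inequality to $[\tau]_q=e^{-t\tau}h+\frac{1}{1-q}$, using $\frac{1}{1-q}>0$, to get $|[\tau]_q|\ge q^{{\rm Re}\tau}|h|-\frac{1}{1-q}\ge Kq^{{\rm Re}\tau}-\frac{1}{1-q}$, which is \eqref{firstin}. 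The stated hypothesis ${\rm Re}\tau\le\frac{\log\frac{2}{K(1-q)}}{\log q}$ is precisely the inequality $\frac{1}{1-q}\le\frac12 Kq^{{\rm Re}\tau}$ (take logarithms and divide by $\log q<0$, which reverses the inequality), and feeding this back into \eqref{firstin} yields \eqref{secondin}. Finally, ${\rm Re}\tau\to-\infty$ forces $q^{{\rm Re}\tau}\to+\infty$, so $[\tau]_q\to\infty$.

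For part (i) I would extract the leading term of the same identity along the vertical direction, i.e.\ for purely imaginary $\tau$ (${\rm Re}\tau=0$, ${\rm Im}\tau\to+\infty$, so $\mathbf q\to 0$), in which case $e^{-t\tau}=q^{i{\rm Im}\tau}$. The constant is $h(t,0):=H(t,0)$, which I would compute directly from \eqref{hformu} by letting $\mathbf q\to 0$: using $\lambda_{\mathbf q}=16\mathbf q+O(\mathbf q^2)$ gives $\mathbf q\,\frac{1-\lambda_{\mathbf q}}{\lambda_{\mathbf q}}\to\frac{1}{16}$, while both hypergeometric factors tend to $F(\cdots;0)=1$, so that $h(t,0)=ie^{t/2}\frac{\Gamma(1+2s)\Gamma(2s)}{\Gamma(\frac12+s)\Gamma(\frac12+3s)}16^{-2s}$. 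Substituting $s=-\frac{\log q}{2\pi i}$, $e^{t/2}=q^{-1/2}$, and $16^{-2s}=2^{-8s}=2^{\frac{4\log q}{\pi i}}$ turns this into exactly the constant displayed in (i); since $h(t,\tau)\to h(t,0)\neq 0$, the ratio of $[\tau]_q-\frac{1}{1-q}$ to the displayed expression tends to $1$, giving the claimed $\sim$.

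The only genuinely delicate points are bookkeeping rather than conceptual. The first is the claim $K>0$: it relies on the non-vanishing of $H$ holding \emph{across} the cusp $z=0$, not merely on the punctured disk, which is exactly what Assumption 1 together with $h(t,0)\ne 0$ provides, so I would present the compactness argument in the $z=\mathbf q^2$ coordinate with some care. The second is matching the $\Gamma$-and-power constant of \eqref{hformu} to the $q$-exponential form written in (i); this is a routine but fiddly substitution of $s=-\frac{\log q}{2\pi i}$, and I would simply verify the four $\Gamma$-arguments and the power of $2$ line up as above.
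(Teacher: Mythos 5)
Your proposal is correct and follows essentially the same route as the paper: parts (ii) and (iii) are read off from the decomposition $[\tau]_q=e^{-t\tau}h(t,\tau)+\frac{1}{1-q}$ of \eqref{fhform} together with the non-vanishing of $h$ on $V(M)$, and part (i) comes from evaluating the limit of $h$ at the cusp via \eqref{hformu}, using $\lambda_{\bold q}=16\bold q+O(\bold q^2)$; your substitution $s=-\frac{\log q}{2\pi i}$ reproduces exactly the constant in the statement. The paper's proof is a two-line sketch of precisely this argument, so your write-up simply supplies the details (including the compactness argument in the $z=\bold q^2$ coordinate that makes $K>0$ honest).
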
 

\begin{remark} The minimum and maximum exist since $h$ is $1$-periodic and extends to a finite nonzero value at the cusp, so one may work on the compactified strip $0\le\operatorname{Re}\tau\le1$, $\operatorname{Im}\tau\ge M$.
\end{remark}

\begin{proof}
(i) This follows from the equality
$$
\lim_{\bold q\to 0}h(t,\tfrac{\log \bold q}{\pi i})=ie^{\pi is}\frac{2^{-8s}\Gamma(1+2s)\Gamma(2s)}{\Gamma(\frac{1}{2}+s)\Gamma(\frac{1}{2}+3s)},
$$
which is obvious from \eqref{hformu}.

(ii), (iii) follow from the definition of $K,L$ and \eqref{fhform}.
\end{proof} 

\subsection{The poles of $[\tau]_q$} 
Since $f(t,\tau+1)=qf(t,\tau)+1$, poles of $f(t,\tau)$ come in cosets of the form $\tau_0+\Bbb Z$. 

\begin{proposition} \label{poles}
The function $f(t,-)$ has infinitely many poles modulo translations by $\Bbb Z$, and all of them are simple.
\end{proposition}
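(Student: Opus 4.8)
The plan is to derive both assertions from the observation that $[\tau]_q=f(t,\tau)$ is a ratio of two solutions of a second–order linear ODE on $\Bbb C_+$ whose coefficient has \emph{no} singularities in the interior.

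First I would establish that the Schwarzian derivative $\Bbb S(f)$ of $f$ taken with respect to $\tau$ is holomorphic on all of $\Bbb C_+$. Off the $PSL_2(\Bbb Z)$–orbits of $i$ and $\rho$ this is immediate from the chain rule
$$
\Bbb S(f)_\tau(\tau)=Q(J(\tau))\,J'(\tau)^2+\Bbb S(J)_\tau(\tau),
$$
since $Q$ is rational and $J$ is a local biholomorphism there. At the ramification points $\tau=i$ (where $J-1$ has a double zero) and $\tau=\rho$ (where $J$ has a triple zero) the double poles of the two summands cancel exactly, because of the chosen local behaviour $Q\sim\tfrac{4}{9J^2}$ near $J=0$ and $Q\sim\tfrac{3}{8(J-1)^2}$ near $J=1$; this cancellation is precisely the content of Assumption 2, which says $df_t(\tau)\ne0$, i.e.\ that $f$ is an immersion. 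Consequently $f=y_1/y_2$ where $y_1,y_2$ form a basis of solutions of $y''+\tfrac12\Bbb S(f)_\tau\,y=0$, holomorphic on $\Bbb C_+$, with constant nonzero Wronskian. A double zero of $y_2$ would force $y_2\equiv0$, and a common zero of $y_1,y_2$ would annihilate the Wronskian; both are impossible. Hence every zero of the denominator $y_2$ is simple and avoids the zeros of $y_1$, so each pole of $f=y_1/y_2$ is simple.

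For the infinitude I would first manufacture infinitely many \emph{zeros} of $f$ escaping to $i\infty$. Writing $g(\tau):=f(t,\tau)-\tfrac{1}{1-q}=e^{-t\tau}h(t,\tau)$ as in \eqref{fhform}, and using that $h=H(t,\bold q^2)$ is holomorphic and nonvanishing at $\bold q=0$ with value $H_0:=H(t,0)\ne0$, the equation $f=0$ reads $e^{-t\tau}h(t,\tau)=-\tfrac{1}{1-q}$. Since $|\bold q^2|=e^{-2\pi\,{\rm Im}\,\tau}\to0$ uniformly in ${\rm Re}\,\tau$, the factor $h$ is uniformly close to $H_0$ on $\{{\rm Im}\,\tau\ge M\}$ (cf.\ Proposition \ref{minusin}), so $g$ is uniformly close to $H_0e^{-t\tau}$. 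The model equation $H_0e^{-t\tau}=-\tfrac{1}{1-q}$ has solutions forming a vertical arithmetic progression with fixed real part and imaginary part tending to $+\infty$; a Rouché/Hurwitz comparison on small circles about these model points (valid once the imaginary part is large) then produces genuine zeros $\zeta_n$ of $f$ with ${\rm Im}\,\zeta_n\to+\infty$.

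Next I would transport these zeros to poles using the $S$–equivariance $[-1/\tau]_q=-1/(q[\tau]_q)$ from \eqref{modeq}: since $f(\zeta_n)=0$, each point $-1/\zeta_n$ is a pole of $f$, and as $\zeta_n\to i\infty$ we get $-1/\zeta_n\to0$, so these poles accumulate at the cusp $0$. Distinct poles accumulating at $0$ necessarily lie in distinct $\Bbb Z$–cosets, for if $-1/\zeta_n-(-1/\zeta_m)\in\Bbb Z\setminus\{0\}$ its modulus is $\ge1$, which is impossible once both points lie within distance $\tfrac12$ of $0$. Hence all but finitely many $-1/\zeta_n$ represent distinct cosets, and by the first paragraph each pole is simple; this is the claimed statement (poles come in $\Bbb Z$–cosets because $f(\tau+1)=qf(\tau)+1$). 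The routine part here is the simplicity; the real content, and the step I expect to be most delicate, is making the asymptotic $g\sim H_0e^{-t\tau}$ effective enough to run Rouché — controlling $h-H_0$ uniformly on the comparison circles and checking the model zeros are nondegenerate — after which the coset bookkeeping for the $S$–images is elementary. Everything rests on the holomorphicity and nonvanishing of $h$ at the cusp, i.e.\ on Assumption 1 and Proposition \ref{minusin}.
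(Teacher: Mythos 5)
Your proof is correct, and while it rests on the same underlying mechanism as the paper's (the cusp asymptotic $f(t,\tau)-\tfrac{1}{1-q}\sim H_0e^{-t\tau}$, so that $f$ winds around a fixed annulus of values infinitely often as ${\rm Im}\,\tau\to+\infty$), the packaging is genuinely different. The paper works directly with poles: it uses equivariance to reduce pole-finding in each translate $D_{\rm g}$ of the fundamental domain to solving $f(t,\sigma)=[x]_q$ near the cusp of the $\Gamma(2)$ fundamental domain, computes the annulus of values swept by the left-hand side of its equation \eqref{eqpole}, and checks that $[x]_q$ lands in that annulus for infinitely many rational $x$ (e.g.\ small ones). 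You instead specialize to the single target value $[0]_q=0$, manufacture the zeros by an explicit Rouch\'e comparison against the model $H_0e^{-t\tau}=-\tfrac{1}{1-q}$, transport them to poles by the $S$-equivariance $[-1/\tau]_q=-1/(q[\tau]_q)$, and get distinctness of $\Bbb Z$-cosets for free from accumulation at $0$. What your route buys is a more self-contained and quantitatively explicit count (the Rouch\'e step and the coset bookkeeping are airtight, and you avoid the range-of-values analysis of the hypergeometric prefactor); what the paper's route buys is finer information, namely a description of \emph{which} rational cusps $x$, equivalently which domains $D_{\rm g}$, carry strings of poles, and asymptotically (as $q\to1$) that all $x\in(-1,1)$ contribute. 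For simplicity of the poles, your Wronskian argument via the holomorphic Schwarzian is a correct but roundabout rendering of what the paper invokes in one line: Assumption 2 says $df_t\ne0$ as a map to $\Bbb C\Bbb P^1$, which at a pole is literally the statement that the pole is simple; your observation that the double poles of $Q(J)J'^2$ and $\Bbb S(J)$ cancel at $i$ and $\rho$ is the correct local explanation of why Assumption 2 forces the exponents in \eqref{pfeq}, but it is not needed for the conclusion.
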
 

\begin{proof} Denote the standard fundamental domain of $PSL_2(\Bbb Z)$ on $\Bbb C_+$ given by 
$$
|\tau|\ge 1, -\frac{1}{2}\le {\rm Re}\tau\le \frac{1}{2}
$$ 
by $D_1$ and 
for ${\rm g}\in PSL_2(\Bbb Z)$ let $D_{\rm g}:={\rm g}D_1$. We have 
\(
f(t,{\rm g}^{-1}\tau)=\rho_q({\rm g})^{-1}f(t,\tau)
\)
where $\rho_q$ is the Burau representation defined by the matrices $S_t,T_t$. So if $\tau\in D_{\rm g}$ is a pole of $f$ then 
\(f(t,{\rm g}^{-1}\tau)=[x]_q\) with ${\rm g}x=\infty$. So we should look 
for points $\sigma\in D_1$ where $f(t,\sigma)=[x]_q$ 
for various $x\in \Bbb Q\cup\infty$, and then the poles of $f$ in $D_{\rm g}$ are $\tau={\rm g}\sigma$ where ${\rm g}\in PSL_2(\Bbb Z)$ maps $x$ to $\infty$. 

Explicitly, by \eqref{form1} the poles of $f(t,\tau)$ are zeros 
of the function $F(\frac{1}{2}-s,\frac{1}{2}+s,1+2s;\lambda(\tau))$, which are images under elements ${\rm g}_x\in PSL_2(\Bbb Z)$ mapping $x$ to $\infty$ of 
solutions $\sigma$ of the equation 
\begin{equation}\label{eqpole}
ie^{t/2}q^\sigma\frac{\Gamma(1+2s)\Gamma(2s)}{\Gamma(\frac{1}{2}+s)\Gamma(\frac{1}{2}+3s)}\left(\bold q\frac{1-\lambda_{\bold q}}{\lambda_{\bold q}}\right)^{2s}\frac{F(\tfrac{1}{2}-s,\tfrac{1}{2}+s,1-2s;\lambda_{\bold q})}{F(\tfrac{1}{2}-s,\tfrac{1}{2}+s,1+2s;\lambda_{\bold q})}=[x]_q-\frac{1}{1-q},\ x\in \Bbb Q
\end{equation}
in the standard fundamental domain $E$ of $\Gamma(2)$ defined by 
$$
|\tau\pm \tfrac{1}{2}|\ge \tfrac{1}{2},\ -1\le {\rm Re}(\tau)\le 1.
$$ 
Note that
\[
\left|\frac{\Gamma(\frac{1}{2}+s)\Gamma(\frac{1}{2}+3s)}
{\Gamma(1+2s)\Gamma(2s)}\right|
=\frac{|\sin 2\pi s|}{\sqrt{\cos \pi s\cos 3\pi s}}
=\frac{q^{-1/2}-q^{1/2}}{\sqrt{q+q^{-1}-1}}.
\]
Thus the left hand side of \eqref{eqpole} takes any value 
of  the form $re^{i\theta}$, $0\le \theta<2\pi$, with 
$$
\frac{q\sqrt{q+q^{-1}-1}}{1-q}<r<\frac{q^{-1} \sqrt{q+q^{-1}-1}}{1-q}
$$
arbitrarily close to the cusp at infinity in $E$. 

Indeed, let $h_\infty\ne0$ be the
cusp value of $h(t,\sigma)$.  Uniformly on vertical translates of a
compact disk in the strip $-1<\operatorname{Re}\sigma<1$, the left
side of \eqref{eqpole} has the form
\[
 q^\sigma h(t,\sigma)=h_\infty q^\sigma(1+o(1))
 \qquad(\operatorname{Im}\sigma\to\infty).
\]
For every $w$ in the open annulus
$|h_\infty|q<|w|<|h_\infty|q^{-1}$, the equation
$h_\infty q^\sigma=w$ has infinitely many solutions $\sigma_k$ with
$-1<\operatorname{Re}\sigma_k<1$ and
$\operatorname{Im}\sigma_k\to\infty$.  Its derivative at each such
solution is $(\log q)w\ne0$.  On a fixed sufficiently small circle
about $\sigma_k$, the error term tends uniformly to zero; Rouch\'e's
theorem therefore gives a zero of
$q^\sigma h(t,\sigma)-w$ inside that circle for every sufficiently
large $k$.  The modulus of $h_\infty$ is
$\sqrt{q+q^{-1}-1}/(1-q)$, which gives exactly the two displayed radii.

Hence if  
$$
\frac{1-q^{-1}\sqrt{q+q^{-1}-1}}{1-q}<[x]_q<\frac{1-q\sqrt{q+q^{-1}-1}}{1-q}
$$
then there is an infinite string of solutions $\sigma_{n,x}$ convergent to this cusp.
Since the lower bound here is less than $[0]_q^-=1-q^{-1}$ (and goes to $-1$ as $q\to 1$), while the upper bound is greater than $[0]_q=0$ (and goes to $1$ as $q\to 1$), we do get infinitely many solutions, e.g. ones corresponding to small $x$. Asymptotically as $q\to 1$, all $x\in (-1,1)$ 
will contribute. 

Finally, at a pole $\tau_0$ use $1/f$ as the local affine coordinate on
$\mathbb{CP}^1$ near $\infty$.  Assumption~2 says that
$d(1/f)_{\tau_0}\ne0$, so $1/f$ has a simple zero and $f$ has a simple
pole.
\end{proof} 

\begin{remark} By modular symmetry, the zeros of $f$ are related to its poles by the map $\tau\mapsto -1/\tau$, and all of them are simple. 
\end{remark}

\subsection{Boundary values of $[\tau]_q$ at rational points.} 
Let us now discuss the boundary values of $f(t,\tau)$. 
Let $x\in \Bbb Q$, and suppose that a sequence $\tau_n\in \Bbb C_+$ 
converges to $x$. 

\begin{definition} We say that $\tau_n$ 
converges to $x$ from the left (respectively, right) if for any (or, equivalently, some) ${\rm g}\in PSL_2(\Bbb Z)$ such that ${\rm g}\infty=x$, we have ${\rm Re}({\rm g}^{-1}\tau_n)\to +\infty$ (respectively, ${\rm Re}({\rm g}^{-1}\tau_n)\to -\infty$).

We say that $\tau_n$ is $M$-bounded if ${\rm g}^{-1}\tau_n\in V(M)$ for all $n$. 
\end{definition} 

The left/right part of the definition is unchanged if
$PSL_2(\mathbb Z)$ is replaced by $PSL_2(\mathbb R)$.  For
$M$-boundedness, however, the numerical value of $M$ depends on the
choice of a real matrix: two real matrices sending $\infty$ to $x$
differ on the right by an upper triangular matrix, and this rescales
the imaginary part by a positive constant.  Thus, with
$PSL_2(\mathbb R)$, only the property of being $M$-bounded for some
$M>0$ is choice-independent; the definition with a fixed numerical
$M$ uses $PSL_2(\mathbb Z)$.

\begin{lemma} 
$\tau_n\to x$ from the left (right) iff 
$\frac{{\rm Re}(\tau_n-x)}{|\tau_n-x|^2}\to -\infty$
(respectively, $\frac{{\rm Re}(\tau_n-x)}{|\tau_n-x|^2}\to +\infty$).  
It is $M$-bounded iff $\frac{{\rm Im}(\tau_n-x)}{|\tau_n-x|^2}\ge Mb^2$, where 
$b$ is the denominator of $x$ in the lowest terms representation. 
\end{lemma}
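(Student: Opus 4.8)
The plan is to reduce the entire statement to a single closed-form computation of the M\"obius action of $g^{-1}$. First I would write $x=a/b$ in lowest terms and choose a representative $g=\begin{pmatrix} a & c\\ b & d\end{pmatrix}\in SL_2(\Bbb Z)$ with $ad-bc=1$, so that $g\infty=a/b=x$; note that the bottom row $(b,d)$ is forced to have $b$ equal to the denominator of $x$. Then $g^{-1}=\begin{pmatrix} d & -c\\ -b & a\end{pmatrix}$, and writing $w:=\tau-x$ I would compute $g^{-1}\tau=\frac{d\tau-c}{-b\tau+a}$. Substituting $\tau=\tfrac{a}{b}+w$, the denominator becomes $-bw$ and the numerator becomes $\tfrac{ad-bc}{b}+dw=\tfrac1b+dw$, so everything collapses to the single identity
$$
g^{-1}\tau=-\frac{1}{b^2 w}-\frac{d}{b}.
$$

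Next I would separate real and imaginary parts using $1/w=\bar w/|w|^2$. Since $-d/b$ is real, this yields
$$
{\rm Re}(g^{-1}\tau)=-\frac{1}{b^2}\,\frac{{\rm Re}(\tau-x)}{|\tau-x|^2}-\frac{d}{b},\qquad
{\rm Im}(g^{-1}\tau)=\frac{1}{b^2}\,\frac{{\rm Im}(\tau-x)}{|\tau-x|^2}.
$$
From the first identity, the condition ${\rm Re}(g^{-1}\tau_n)\to+\infty$ defining convergence from the left holds exactly when $\frac{{\rm Re}(\tau_n-x)}{|\tau_n-x|^2}\to-\infty$, and symmetrically ${\rm Re}(g^{-1}\tau_n)\to-\infty$ (the right) corresponds to $\frac{{\rm Re}(\tau_n-x)}{|\tau_n-x|^2}\to+\infty$. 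From the second identity, $M$-boundedness ${\rm Im}(g^{-1}\tau_n)\ge M$ is precisely $\frac{{\rm Im}(\tau_n-x)}{|\tau_n-x|^2}\ge Mb^2$. This gives all three equivalences at once.

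Finally I would justify the ``for any (or, equivalently, some)'' clause in the definition. Two representatives $g,g'$ with $g\infty=g'\infty=x$ differ on the right by the stabilizer of $\infty$ in $PSL_2(\Bbb Z)$, i.e.\ by an integer translation $T^n\colon\tau\mapsto\tau+n$, whence ${g'}^{-1}\tau=g^{-1}\tau-n$. This shifts ${\rm Re}(g^{-1}\tau)$ by the constant $-n$ and leaves ${\rm Im}(g^{-1}\tau)$ unchanged, so it affects neither the $\pm\infty$ limits nor the inequality, proving well-definedness. The same formula shows that replacing $g$ by an arbitrary element of $PSL_2(\Bbb R)$ fixing $x$ only rescales $w$ by a positive constant and shifts by a real constant, which preserves the left/right dichotomy; this is the remark preceding the lemma.

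The argument is entirely elementary; there is no real obstacle. The only point deserving care is the sign bookkeeping in the first paragraph, namely that the seemingly orientation-reversing correspondence (``from the left'', ${\rm Re}(g^{-1}\tau_n)\to+\infty$, matching $\frac{{\rm Re}(\tau_n-x)}{|\tau_n-x|^2}\to-\infty$) is exactly accounted for by the minus sign in $g^{-1}\tau=-\tfrac{1}{b^2 w}-\tfrac{d}{b}$. Once the closed form for $g^{-1}\tau$ is in hand, the rest is reading off real and imaginary parts.
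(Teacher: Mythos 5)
Your proof is correct and follows essentially the same route as the paper: the closed form $g^{-1}\tau=-\frac{1}{b^{2}(\tau-x)}-\frac{d}{b}$ is exactly the computation the paper performs (with the same matrix $g=\begin{pmatrix} a & c\\ b & d\end{pmatrix}$) in the following subsection, and reading off real and imaginary parts yields all three equivalences with the signs as you state. Your closing observation that changing the representative $g$ only translates (or, over $PSL_2(\Bbb R)$, affinely rescales by a positive factor) $g^{-1}\tau$ is likewise the content of the paper's remark immediately preceding the lemma.
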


\begin{proof}
Write $x=a/b$ in lowest terms and choose
$g=\begin{pmatrix}a&c\\ b&d\end{pmatrix}$ with $ad-bc=1$.  If
$\tau=x+w$, then
\[
 g^{-1}\tau=\frac{-d\tau+c}{b\tau-a}
 =-\frac db-\frac{1}{b^2w}.
\]
Consequently,
\[
 \operatorname{Re}(g^{-1}\tau)
 =-\frac db-\frac{\operatorname{Re}w}{b^2|w|^2},
 \qquad
 \operatorname{Im}(g^{-1}\tau)
 =\frac{\operatorname{Im}w}{b^2|w|^2}.
\]
These identities give the three asserted equivalences.  They are
independent of the chosen $g$: two matrices sending $\infty$ to $x$
differ on the right by an integral translation, which changes only
$\operatorname{Re}(g^{-1}\tau)$ by a bounded integer and leaves its
imaginary part unchanged.
\end{proof}

The modular equivariance property \eqref{modeq}, Assumption 1 and Proposition \ref{minusin}(ii) imply 

\begin{proposition}\label{ratbou} Let $x\in \Bbb Q$ and suppose $\tau_n\to x$ is $M$-bounded. 

(i) If $\tau_n\to x$ from the right then $[\tau_n]_q\to [x]_q$.

(ii) If $\tau_n\to x$ from the left then 
$[\tau_n]_q\to [x]_q^-$.
\end{proposition}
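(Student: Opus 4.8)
The plan is to transport the problem to the cusp at $\infty$, where Proposition \ref{minusin} already describes both directions of approach, and then to move it to $x$ by modular equivariance. First I would fix an element ${\rm g}={\rm g}_x\in PSL_2(\Bbb Z)$ with ${\rm g}\infty=x$ and set $\sigma_n:={\rm g}^{-1}\tau_n$. By the definition of $M$-boundedness, $\sigma_n\in V(M)$ for every $n$; by the equivariance $f(t,{\rm g}^{-1}\tau)=\rho_q({\rm g})^{-1}f(t,\tau)$ used in the proof of Proposition \ref{poles}, we have $[\tau_n]_q=\rho_q({\rm g})\cdot[\sigma_n]_q$, where $\rho_q({\rm g})$ acts on $\Bbb C\Bbb P^1$ as the fractional-linear transformation given by the Burau matrix. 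Since $\rho_q({\rm g})$ is a homeomorphism of $\Bbb C\Bbb P^1$, it suffices to compute $\lim_n[\sigma_n]_q$ and then apply $\rho_q({\rm g})$.

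By the definition of one-sided convergence, $\tau_n\to x$ from the right means ${\rm Re}\,\sigma_n\to-\infty$ and from the left means ${\rm Re}\,\sigma_n\to+\infty$, while in both cases $\sigma_n$ remains in $V(M)$. In the right-hand case Proposition \ref{minusin}(ii) gives $[\sigma_n]_q\to\infty$, so $[\tau_n]_q\to\rho_q({\rm g})\cdot\infty$; in the left-hand case Proposition \ref{minusin}(iii) gives $[\sigma_n]_q\to\frac{1}{1-q}$, so $[\tau_n]_q\to\rho_q({\rm g})\cdot\frac{1}{1-q}$. Thus the whole content of the proposition reduces to the two identities $\rho_q({\rm g})\cdot\infty=[x]_q$ and $\rho_q({\rm g})\cdot\frac{1}{1-q}=[x]_q^-$.

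To establish these I would pass back to the real line, where $u\mapsto[u]_q$ is $\rho_q$-equivariant by \eqref{xp1q} and Lemma \ref{modinv}, so that $[{\rm g}u]_q=\rho_q({\rm g})[u]_q$ for real $u$. Writing ${\rm g}=\begin{pmatrix} a&b\\ c&d\end{pmatrix}$ with $c\neq0$ (as $x={\rm g}\infty$ is finite), a one-line computation gives ${\rm g}u-x=-1/(c(cu+d))=-1/(c^2u+cd)$, so ${\rm g}u\to x+$ as $u\to-\infty$ and ${\rm g}u\to x-$ as $u\to+\infty$. Combining this with Proposition \ref{ratcon}(iii), which says $[u]_q\to\infty$ as $u\to-\infty$ and $[u]_q\to\frac{1}{1-q}$ as $u\to+\infty$, and with the right-continuity and left-limit statements of Proposition \ref{ratcon}(i),(ii), I obtain
$$\rho_q({\rm g})\cdot\infty=\lim_{u\to-\infty}[{\rm g}u]_q=\lim_{w\to x+}[w]_q=[x]_q,\qquad \rho_q({\rm g})\cdot\tfrac{1}{1-q}=\lim_{u\to+\infty}[{\rm g}u]_q=\lim_{w\to x-}[w]_q=[x]_q^-.$$
Substituting these into the two cases of the previous paragraph proves (i) and (ii).

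The main obstacle is purely one of orientation bookkeeping: correctly matching the complex notion of sidedness (phrased via ${\rm Re}\,\sigma_n\to\pm\infty$) with the real one (approach to $x$ from $x+$ or $x-$), since a sign slip in the computation of ${\rm g}u-x$ would interchange (i) and (ii). Everything else is a direct application of the already-proved Propositions \ref{minusin} and \ref{ratcon} together with the continuity of the Möbius action $\rho_q({\rm g})$ on $\Bbb C\Bbb P^1$.
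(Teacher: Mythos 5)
Your argument is correct and is exactly the intended proof: the paper disposes of this proposition in one line by citing modular equivariance, Assumption 1 and Proposition \ref{minusin}, and your write-up fills in precisely those details (transport to the cusp via ${\rm g}^{-1}$, apply Proposition \ref{minusin}(ii) and (iii) according to the sign of ${\rm Re}\,\sigma_n$, then identify $\rho_q({\rm g})\cdot\infty=[x]_q$ and $\rho_q({\rm g})\cdot\tfrac{1}{1-q}=[x]_q^-$). The orientation bookkeeping you worried about is right -- ${\rm g}u-x=-1/(c(cu+d))$ does send $u\to-\infty$ to $x^{+}$ -- and your derivation of the two identities from Proposition \ref{ratcon} is a clean way to make explicit what the paper leaves implicit.
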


\begin{proof}
Choose $g\in PSL_2(\mathbb Z)$ with $g\infty=x$ and put
$\sigma_n=g^{-1}\tau_n$.  Modular equivariance gives
$[\tau_n]_q=\rho_q(g)[\sigma_n]_q$.

If the approach is from the right, then
$\operatorname{Re}\sigma_n\to-\infty$ while
$\operatorname{Im}\sigma_n\ge M$.  Proposition \ref{minusin}(ii)
therefore gives $[\sigma_n]_q\to\infty$ in $\mathbb{CP}^1$, and hence
\(
 [\tau_n]_q\longrightarrow\rho_q(g)\infty=[x]_q,
\)
the last equality being the modular-equivariant definition of the
$q$-rational number.

If the approach is from the left, Proposition \ref{minusin}(iii)
gives $[\sigma_n]_q\to1/(1-q)$.  To identify its image, take real
$u\to+\infty$.  Then $g u\to x$ from the left and
$[u]_q\to1/(1-q)$, so modular equivariance and Proposition
\ref{ratcon} imply
\[
 \rho_q(g)\frac1{1-q}
 =\lim_{u\to+\infty}[g u]_q=[x]_q^-.
\]
This proves both assertions.
\end{proof}

\subsection{Boundary values of $[\tau]_q$ at irrational points.}

The behavior of $[\tau]_q$ near irrational real numbers is more complicated, due to its rapid oscillation when approaching the real line (as seen for example from Proposition \ref{poles}). To say something about this behavior, we need to quantify the convergence statements of Proposition \ref{ratbou}. 
 E.g., let us do so for part (i), for $x>1$. 
To this end, assume that $y>1$ and 
$x=m-\frac{1}{y}$ where $m\in \Bbb Z_{\ge 2}$. Let $\tau=m-\frac{1}{\sigma}$. 
Then 
$$
[x]_q=[m]_q-\frac{q^{m-1}}{[y]_q},\
[\tau]_q=[m]_q-\frac{q^{m-1}}{[\sigma]_q}.
$$
Thus 
$$
|[\tau]_q-[x]_q|=\frac{q^{m-1}|[\sigma]_q-[y]_q|}{|[\sigma]_q|[y]_q}. 
$$
So if $0<\alpha<1$, $0<\varepsilon\le 1-q^{1-\alpha}$, 
$|[\sigma]_q-[y]_q|\le \varepsilon$ and $\tau=m-\frac{1}{\sigma}$ then 
\begin{equation}\label{1step}
|[\tau]_q-[x]_q|\le \frac{q^{m-1}}{1-\varepsilon}|[\sigma]_q-[y]_q|\le q^\alpha|[\sigma]_q-[y]_q|\le q^\alpha\varepsilon. 
\end{equation} 

Now let $x=[[c_1,c_2,...,c_N]]$, $c_i\ge 2$, ${\rm g}\in PSL_2(\Bbb Z)$ be such that ${\rm g}0=x$, 
and $\tau={\rm g}\sigma$. Then, repeatedly applying \eqref{1step}, we get 
that if $|[\sigma]_q|\le \varepsilon$,
then 
$$
|[\tau]_q-[x]_q|\le \frac{q^{1+C_N}}{(\varepsilon; q^\alpha)_N}|[\sigma]_q|.
$$

To spell out the iteration, first apply the terminal translation
$z\mapsto z+c_N$.  It contributes the error
$q^{c_N}|[\sigma]_q|$.  Since $c_N\ge2$, this is at most
$q^\alpha\varepsilon$ when $|[\sigma]_q|\le\varepsilon$.  Each of the
remaining $N-1$ steps has the form treated in \eqref{1step}: if the
current error is at most $q^{j\alpha}\varepsilon$, it contributes the
factor $q^{c_i-1}/(1-q^{j\alpha}\varepsilon)$ and leaves an error at
most $q^{(j+1)\alpha}\varepsilon$.  Multiplication gives the sharper
denominator $\prod_{j=1}^{N-1}(1-\varepsilon q^{j\alpha})$ and the
numerator
\(
 q^{c_N+\sum_{i=1}^{N-1}(c_i-1)}=q^{1+C_N}.
\)
Replacing this product by the smaller product
$(\varepsilon;q^\alpha)_N=\prod_{j=0}^{N-1}(1-\varepsilon
q^{j\alpha})$ gives the displayed estimate.  On replacing $\sigma$ by
$-1/\sigma$, Lemma \ref{modinv} gives
$|[-1/\sigma]_q|=q^{-1}|[\sigma]_q|^{-1}$, which explains both the
power $q^{C_N}$ and the hypothesis in Lemma \ref{estt}.

So replacing $\sigma$ with $-1/\sigma$ and setting $\varepsilon=1-q^{1-\alpha}$, we obtain
\begin{lemma}\label{estt}
If ${\rm g}\in PSL_2(\Bbb Z)$ is such that ${\rm g}\infty=x$ 
and $\tau={\rm g}\sigma$ then 
$$
|[\tau]_q-[x]_q|\le \frac{q^{C_N}}{(1-q^{1-\alpha}; q^\alpha)_N}\frac{1}{|[\sigma]_q|},
$$
provided that $|[\sigma]_q|\ge \frac{1}{q(1-q^{1-\alpha})}$.
\end{lemma}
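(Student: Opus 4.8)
The plan is to prove Lemma~\ref{estt} by iterating the one-step bound \eqref{1step} along the negative continued fraction of $x$, and then to convert the resulting estimate---which is naturally phrased for $[\sigma]_q$ small---into the statement, where $|[\sigma]_q|$ is large, by a single reflection $\sigma\mapsto -1/\sigma$.

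First I would record the factorization of the relevant modular element. Writing $g_c(z):=c-1/z$, the element ${\rm g}=g_{c_1}\circ\cdots\circ g_{c_N}$ satisfies ${\rm g}\infty=[[c_1,\dots,c_N]]=x$, and $\tau={\rm g}\sigma$ is just $x$ with its innermost entry replaced by $\sigma$. It is cleaner to first prove the companion estimate quoted in the text just before the lemma: for the natural element with ${\rm g}0=x$ (obtained from the previous one by post-composing with $\sigma\mapsto -1/\sigma$) and for $|[\sigma]_q|\le \varepsilon:=1-q^{1-\alpha}$, one has
$$|[\tau]_q-[x]_q|\le \frac{q^{1+C_N}}{(\varepsilon;q^\alpha)_N}\,|[\sigma]_q|.$$
For this element the innermost operation is a translation $z\mapsto z+c_N$, and iterating \eqref{modeq} gives the base difference $[c_N+\sigma]_q-[c_N]_q=q^{c_N}[\sigma]_q$; this is what ultimately produces the factor $|[\sigma]_q|$ and, together with the outer powers $\prod_k q^{c_k-1}$, the exponent $1+C_N$.

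The core of the proof is to propagate \eqref{1step} through the $N$ levels. At each level one writes $\tau=c-1/\tau'$, $x=c-1/x'$ with $x'=[[c_{k+1},\dots,c_N]]$ a shorter (real, $>1$) continued fraction; since $[x']_q\ge 1$ and $|[\tau']_q|\ge 1-\epsilon$ once the inner difference is at most $\epsilon$, the sharp form of \eqref{1step} multiplies the difference by $q^{c-1}/(1-\epsilon)$. Because $c=c_k\ge 2$ gives $q^{c-1}\le q$ and the choice $\varepsilon=1-q^{1-\alpha}$ gives $1-\epsilon\ge q^{1-\alpha}$, each such factor is at most $q^\alpha$; hence the successive differences shrink geometrically, the hypothesis $\epsilon\le 1-q^{1-\alpha}$ is preserved at every level, and each invocation of \eqref{1step} is legitimate. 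Retaining the exact factors $q^{c_k-1}/(1-\epsilon_k)$ and choosing the admissible bounds $\epsilon_k$ to decay by $q^\alpha$ per level, the accumulated denominators telescope into a $q$-Pochhammer symbol bounded below by $(\varepsilon;q^\alpha)_N$, which yields the companion estimate.

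Finally I would deduce the lemma by the reflection. Setting $\sigma=-1/\sigma'$ and using modular equivariance \eqref{modeq} in the form $[-1/\sigma']_q=-1/(q[\sigma']_q)$, so that $|[\sigma]_q|=1/(q|[\sigma']_q|)$, the hypothesis $|[\sigma]_q|\le 1-q^{1-\alpha}$ becomes exactly $|[\sigma']_q|\ge \frac{1}{q(1-q^{1-\alpha})}$; the factor $|[\sigma]_q|$ on the right converts $q^{1+C_N}$ into $q^{C_N}$ and leaves $1/|[\sigma']_q|$; and the condition ${\rm g}0=x$ turns into ${\rm g}\infty=x$ after composing ${\rm g}$ with $\sigma\mapsto -1/\sigma$. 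Relabelling $\sigma'$ as $\sigma$ gives the stated inequality. I expect the only genuine obstacle to be the bookkeeping in the iteration: verifying that the smallness hypothesis survives at every level so that \eqref{1step} really applies, and that the product of the $(1-\epsilon_k)^{-1}$ factors assembles into $(\varepsilon;q^\alpha)_N^{-1}$ with exactly $N$ factors and the correct power $q^{1+C_N}$; everything else is a routine propagation of \eqref{1step}.
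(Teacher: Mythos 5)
Your proposal is correct and follows essentially the same route as the paper: the paper's own proof consists precisely of iterating the one-step bound \eqref{1step} down the negative continued fraction of $x$ (with the innermost level reduced to the translation identity $[c_N+\sigma]_q-[c_N]_q=q^{c_N}[\sigma]_q$, the geometric decay of the admissible bounds $\epsilon_k$ by $q^\alpha$ producing the Pochhammer denominator and the exponent $1+C_N$), and then converting the small-$[\sigma]_q$ estimate into the stated large-$[\sigma]_q$ form via the reflection $\sigma\mapsto-1/\sigma$ and $[-1/\sigma]_q=-1/(q[\sigma]_q)$. Your bookkeeping (including the observation that the product of at most $N$ factors is bounded below by $(\varepsilon;q^\alpha)_N$) matches what the paper leaves implicit.
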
 

Let $x=\frac{a}{b}$ be the representation of $x$ in lowest terms. 
Thus we can take ${\rm g}=\begin{pmatrix} a & c\\ b & d\end{pmatrix}$, 
where $ad-bc=1$ and $0\le d<b$; such a matrix is unique. Thus 
$$
\sigma=\frac{-d\tau+c}{b\tau-a}=-\frac{d}{b}-\frac{1}{b^2(\tau-x)},
$$
so 
$$
{\rm Re}\sigma=-\frac{d}{b}-\frac{{\rm Re}(\tau-x)}{b^2|\tau-x|^2},\ {\rm Im}\sigma=\frac{{\rm Im}\tau}{b^2|\tau-x|^2}.
$$ 
Hence by Proposition \ref{minusin}(ii), if 
$$
-\frac{d}{b}-\frac{{\rm Re}(\tau-x)}{b^2|\tau-x|^2}\le \frac{\log\frac{2}{K(1-q)}}{\log q},\ \frac{{\rm Im}(\tau-x)}{b^2|\tau-x|^2}\ge M
$$ 
then 
\(
|[\sigma]_q|\ge \tfrac{1}{2}Kq^{{\rm Re}\sigma}.
\)

Put $\varepsilon_0:=1-q^{1-\alpha}$, the largest error allowed in
\eqref{1step}; indeed, $1-\varepsilon_0=q^{1-\alpha}$ is exactly what
turns the factor $q^{m-1}/(1-\varepsilon_0)$ into at most $q^\alpha$
for $m\ge2$. Proposition \ref{minusin}(ii) gives
\(
 |[\sigma]_q|\ge \tfrac12Kq^{{\rm Re}\sigma}
\)
once
\(
 {\rm Re}\sigma\le
 \frac{\log(2/(K(1-q)))}{\log q}.
\)
To apply Lemma \ref{estt}, however, we need the stronger lower bound
$|[\sigma]_q|\ge1/(q\varepsilon_0)$.  It is enough to impose
\[
 \frac12Kq^{{\rm Re}\sigma}>\frac1{q\varepsilon_0},
 \qquad\text{equivalently}\qquad
 {\rm Re}\sigma<
 \frac{\log(2/(Kq\varepsilon_0))}{\log q}.
\]
Since $q\varepsilon_0<1-q$, this stronger restriction automatically
implies the hypothesis of Proposition \ref{minusin}(ii).  Substituting
\[
 {\rm Re}\sigma=-\frac db-
 \frac{{\rm Re}(\tau-x)}{b^2|\tau-x|^2}
\]
is exactly what produces the first inequality in \eqref{circ}.  In
that inequality the $b^2$ part of $b(b-d)=b^2-bd$ records the extra
factor $q$, while the term $-bd$ comes from $-d/b$ in the formula for
${\rm Re}\,\sigma$.

We thus obtain the following quantitative statement.

\begin{proposition}\label{estt1} If
\begin{equation}\label{circ}
{\rm Re}\frac{\tau-x}{|\tau-x|^2}>
 b^2\frac{\log\frac{K\varepsilon_0}{2}}{\log q}+b(b-d),
\qquad
{\rm Im}\frac{\tau-x}{|\tau-x|^2}>Mb^2,
\end{equation}
then
\[
 |[\tau]_q-[x]_q|\le Cq^{C_N},
\]
where
$C:=K^{-1}(1-q^{1-\alpha};q^\alpha)_\infty^{-1}$.
\end{proposition}

\begin{proof}
The second inequality in \eqref{circ} gives ${\rm Im}\,\sigma>M$,
while the first gives
\[
 {\rm Re}\,\sigma<
 -1-\frac{\log(K\varepsilon_0/2)}{\log q}
 =\frac{\log(2/(Kq\varepsilon_0))}{\log q}.
\]
The cusp value of $h$ and the definition of $K$ imply
\(
 K\le\frac{\sqrt{q+q^{-1}-1}}{1-q}.
\)
Since $\varepsilon_0<1-q$, it follows that
\[
 \frac{Kq\varepsilon_0}{2}
 <\frac q2\sqrt{q+q^{-1}-1}
 =\frac12\sqrt{q-q^2+q^3}<\frac12.
\]
Thus the displayed upper bound for ${\rm Re}\,\sigma$ is negative.

In fact, the first inequality in \eqref{circ} gives
\[
 q^{-\operatorname{Re}\sigma}<\frac{Kq\varepsilon_0}{2}<\frac12.
\]
Moreover, $q\varepsilon_0<1-q$, so it is stronger than the hypothesis
in Proposition \ref{minusin}(ii).  Hence
\[
 |[\sigma]_q|\ge\frac12Kq^{{\rm Re}\sigma}
 >\frac1{q\varepsilon_0},
\]
and Lemma \ref{estt} applies.  Since ${\rm Re}\,\sigma<0$, it gives
\begin{align*}
 |[\tau]_q-[x]_q|
 &\le \frac{q^{C_N}}{(\varepsilon_0;q^\alpha)_N}
       \frac{2}{K}q^{-{\rm Re}\sigma}\\
 &\le K^{-1}(\varepsilon_0;q^\alpha)_\infty^{-1}q^{C_N},
\end{align*}
which is the asserted estimate.
\end{proof}

Now let $V_x=V_x(M,\alpha)$ be the set defined by the two inequalities
in \eqref{circ}.  Each boundary is a circle through $x$ (or a line in
the degenerate case), and the two boundaries meet orthogonally at $x$.
Depending on the sign of the first constant in \eqref{circ}, the first
inequality selects the interior or the exterior of its circle; no sign
assumption is needed in the argument. Using translations by $\Bbb Z$,
we may extend this definition to all rational $x$.

The following theorem is an analog of Proposition \ref{ratbou}(i) to the case of irrational $x$. 

\begin{theorem} Let $x\in \Bbb R\setminus \Bbb Q$, and let $\tau_n\in \Bbb C_+$, $n\ge 1$.  Suppose that for each $n$ there exists $x<y_n\in \Bbb Q$ such that $\tau_n\in V_{y_n}$, and $y_n\to x$. Then 
$\tau_n\to x$ and $[\tau_n]_q\to [x]_q$.
\end{theorem}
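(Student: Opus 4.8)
The plan is to prove the two conclusions separately, extracting from the single hypothesis $y_n\to x$ (with $x$ irrational) the two arithmetic facts that actually drive the argument: that the denominators and the continued-fraction complexities of the $y_n$ both tend to infinity. First I would record that if $y_n=a_n/b_n$ in lowest terms, then $b_n\to\infty$. Indeed, two distinct fractions with denominators at most $B$ differ by at least $1/B^2$, so any subsequence of the $y_n$ with bounded denominators would be eventually constant, forcing $x$ to be rational, a contradiction.

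For the convergence $\tau_n\to x$ I would use only the imaginary-part inequality in the definition of $V_{y_n}$, namely ${\rm Im}\frac{\tau_n-y_n}{|\tau_n-y_n|^2}>Mb_n^2$ from \eqref{circ}. Since $y_n$ is real this left side equals $\frac{{\rm Im}\,\tau_n}{|\tau_n-y_n|^2}$, and since ${\rm Im}\,\tau_n\le|\tau_n-y_n|$ it follows that $|\tau_n-y_n|<\frac{1}{Mb_n^2}$. This is the one geometric point: that inequality cuts out a horoball tangent to $\Bbb R$ at $y_n$ of Euclidean diameter of order $1/(Mb_n^2)$, which shrinks as $b_n\to\infty$. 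Combined with $y_n\to x$ and the triangle inequality this gives $\tau_n\to x$.

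For the convergence $[\tau_n]_q\to[x]_q$ I would split the error as $[\tau_n]_q-[x]_q=\bigl([\tau_n]_q-[y_n]_q\bigr)+\bigl([y_n]_q-[x]_q\bigr)$ and control the two pieces by different mechanisms. The first piece is exactly Proposition \ref{estt1}: since $\tau_n\in V_{y_n}$ and $y_n=[[c_1^{(n)},\dots,c_{N_n}^{(n)}]]$, we obtain $|[\tau_n]_q-[y_n]_q|\le Cq^{C_{N_n}}$, where $C_{N_n}=\sum_{j}(c_j^{(n)}-1)$ and $C$ is a constant independent of $n$. The second piece is pure continuity: since $(0,1)\subset D$, Proposition \ref{ratcon} says $x\mapsto[x]_q$ is continuous at the irrational point $x$, hence $[y_n]_q\to[x]_q$.

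The main obstacle, and the step I would check most carefully, is that $C_{N_n}\to\infty$, which is what collapses the bound $Cq^{C_{N_n}}$ to zero (using $0<q<1$). This I would derive from stabilization of negative continued fractions: because $x$ is irrational and $y_n\to x$, for each fixed $K$ one has $c_j^{(n)}=c_j(x)$ for all $j\le K$ once $n$ is large, whence $C_{N_n}\ge\sum_{j\le K}(c_j(x)-1)$; letting $K\to\infty$ and using that $x$ has an infinite expansion with all $c_j\ge 2$ forces $C_{N_n}\to\infty$. With both pieces tending to zero, the triangle inequality yields $[\tau_n]_q\to[x]_q$, completing the proof.
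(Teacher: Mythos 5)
Your proposal is correct and follows essentially the same route as the paper: it shows the denominators $b_n\to\infty$ so that the horoball condition in \eqref{circ} forces $\tau_n\to x$, then splits $[\tau_n]_q-[x]_q$ into $([\tau_n]_q-[y_n]_q)+([y_n]_q-[x]_q)$, controlling the first term by Proposition \ref{estt1} (with $C_{N_n}\to\infty$ via stabilization of the negative continued fraction of $y_n$ at the irrational $x$) and the second by the continuity statement of Proposition \ref{ratcon}. The only detail you leave implicit is the initial reduction by translation to $x>1$ so that $y_n$ admits a negative continued fraction expansion, which the paper dispatches in one sentence.
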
 

\begin{proof} By translation $\Bbb Z$-equivariance we may assume without loss of generality that $x>1$.
Since $y_n\to x\notin\Bbb Q$, their reduced denominators $b_n$ tend to
infinity.  The second inequality defining $V_{y_n}$ implies
$|\tau_n-y_n|<1/(Mb_n^2)$, and hence $\tau_n\to x$.
Moreover, the lengths $N_n$ of the finite negative continued fractions
of $y_n$ tend to infinity.  Indeed, a subsequence of bounded length
would, after passing to a further subsequence, either have all entries
bounded (and hence eventually constant), or have a first entry tending
to infinity; in the latter case its limit is a shorter finite negative
continued fraction.  Either alternative would make the limit rational.

Therefore Proposition \ref{estt1} gives $[\tau_n]_q-[y_n]_q\to0$.
Also by Proposition \ref{ratcon}, $[y_n]_q- [x]_q\to 0$ as $n\to \infty$. Thus 
$[\tau_n]_q-[x]_q\to 0$ as $n\to \infty$, as claimed.
\end{proof} 

In a similar way, using Proposition \ref{minusin}(iii), one can prove an analog of Proposition \ref{ratbou}(ii)
for irrational $x$, but we will not do it here.


\begin{thebibliography}{999999}
\bibitem[B]{B} A. Baker, Transcendental Number Theory, Cambridge Univ. Press (1975).
\bibitem[BBL]{BBL} A. Bapat, L. Becker, A. M. Licata, q-deformed rational numbers and the 2-Calabi--Yau category of type A2, Forum of Mathematics, Sigma (2023), Vol. 11:e47 p. 1-41, arXiv:2202.07613. 
\bibitem[C]{C} F. Carlson, \"Uber Potenzreihen mit ganzzahligen Koeffizienten, Math. Z. 9 (1921), p.1--13.
\bibitem[EGMS]{EGMS} A. Elzenaar, J. Gong, G.J. Martin, J. Schillewaert, Bounding deformation spaces of Kleinian groups with two generators, arXiv:2405.15970.
\bibitem[EO]{EO} P. Etingof, V. Ovsienko, Coefficients of $q$-real numbers:\\ their combinatorial meaning and growth, to appear. 
\bibitem[EVW]{EVW}  S.~J. Evans, A.~P. Veselov, and B. Winn, ``Quantum Kronecker Fractions,'' \emph{Journal of Experimental Mathematics} \textbf{2} (2026), no.~1, 63--80, DOI \texttt{10.56994/JXM.002.001.003}.  
\bibitem[Is]{Is} M. E. H. Ismail, The Basic Bessel Functions and Polynomials, SIAM Journal on Mathematical Analysis, 12(3): 454-468, 1981.
\bibitem[Jo]{Jones} V. F. R. Jones, A polynomial invariant for knots via von Neumann algebras, Bull. Amer. Math. Soc. \textbf{12} (1985), 103--111.
\bibitem[Ka]{Kauffman} L. H. Kauffman, State models and the Jones polynomial, Topology \textbf{26} (1987), 395--407.
\bibitem[It]{It} H. Ito, Diophantine approximation by negative continued fraction,
Tokyo J. Math. 46(1): 1--18 (2023), \url{https://arxiv.org/pdf/2005.04371}
\bibitem[KS]{KS} J. S. Kim, D. Stanton, Three families of $q$-Lommel polynomials, 
\url{https://arxiv.org/pdf/2105.10096}
\bibitem[KLS]{KLS} R. Koekoek, P. Lesky, R. Swarttouw, 
Hypergeometric orthogonal polynomials
and their q-analogues, Springer, 2010. 
\bibitem[Le]{Le} L. Leclere, q-analogues des nombres r\'eels et des matrices
unimodulaires: aspects alg\'ebriques, combinatoires
et analytiques, \url{https://sophie-moriergenoud.perso.math.cnrs.fr/Manuscript_Leclere.pdf}, \url{https://theses.fr/2024REIMS019}.
\bibitem[LM]{LM} L. Leclere, S. Morier-Genoud, 
q-deformations of the modular group and of the real quadratic irrational numbers, 
Adv. in Appl. Math. 130 (2021), arXiv:2101.02953. 
\bibitem[LMOV]{LMOV} L. Leclere, S. Morier-Genoud, V. Ovsienko, A. Veselov, On radius of convergence of q-deformed real numbers. Mosc.
Math. J. 24 (2024), 1-19, arXiv:2102.00891.
\bibitem[MO1]{MO1} S. Morier-Genoud, V. Ovsienko, q-deformed rationals and q-continued fractions, Forum Math. Sigma 8 (2020), e13,
55 pp., arXiv:1812.00170.
\bibitem[MO2]{MO2} S. Morier-Genoud, V. Ovsienko, On q-deformed real numbers, Exp. Math. 31 (2022), 652--660, arXiv:1908.04365.
\bibitem[MO3]{MO3} S. Morier-Genoud, V. Ovsienko, $q$-deformed rationals and irrationals, arXiv:2503.23834.
\bibitem[MOV]{MOV} S. Morier-Genoud, V. Ovsienko, A. P. Veselov, Burau representation of braid groups and q-rationals, Int. Math. Res. Not.
IMRN 2024, no. 10, 8618-8627, arXiv:2309.04240.
\bibitem[O]{O} V. Ovsienko, Towards quantized complex numbers: $q$-deformed Gaussian integers and the Picard group, arXiv:2103.10800, Open Communications in Nonlinear Mathematical Physics, Volume 1, ocnmp:7480. 
\bibitem[P]{P} G. P\'olya, \"Uber Potenzreihen mit ganzzahligen Koeffizienten, Math. Ann. 77 (1916), p. 497--513.
\bibitem[Si]{Si} J. H. Silverman, The Arithmetic of Elliptic Curves, Graduate Texts in Mathematics. Vol. 106 (2nd ed.). Springer Science. doi:10.1007/978-0-387-09494-6. ISBN 978-0-387-09493-9.
\bibitem[Sq]{Squier} C. C. Squier, The Burau representation is unitary, Proc. Amer. Math. Soc. \textbf{90} (1984), no. 2, 199--202.
\bibitem[WW]{WW} E. T. Whittaker, G. N. Watson, A Course of Modern Analysis (4th ed., 1927).
\end{thebibliography}
 \end{document}